\newcommand{\be}{\begin{equation}} 
\newcommand{\ee}{\end{equation}}
\newcommand{\bee}{\begin{equation*}}
\newcommand{\eee}{\end{equation*}}
\newcommand{\dive}{\mbox{div}}
\newcommand{\ctthe}{\cot\theta}
\newcommand{\sthe}{\sin\theta}
\newcommand{\cthe}{\cos\theta}
\newtheorem{thm}{Theorem}[section]
\newtheorem{rmk}{Remark}[section]
\newtheorem{lem}{Lemma}[section]
\newtheorem{cor}{Corollary}[section]\label{key}
\begin{document}

\title{Vanishing viscosity limit for homogeneous axisymmetric no-swirl solutions of stationary Navier-Stokes equations}
\author{Li Li\footnote{Department of Mathematics, Harbin Institute of Technology, Harbin 150080, China. Email: lilihit@126.com}, 
YanYan Li\footnote{Department of Mathematics, Rutgers University, 110 Frelinghuysen Road, Piscataway, NJ 08854, USA. Email: yyli@math.rutgers.edu}, 
Xukai Yan\footnote{School of Mathematics, Georgia Institute of Technology, 686 Cherry St NW, Atlanta, GA 30313, USA. Email: xukai.yan@math.gatech.edu}}
\date{}
\maketitle

\abstract{$(-1)$-homogeneous axisymmetric no-swirl solutions of three dimensional incompressible stationary Navier-Stokes equations which are smooth on the unit sphere minus the north and south poles have been classified. 
  In this paper we study the vanishing viscosity limit of sequences of these solutions. As the viscosity tends to zero, some  sequences of solutions $C^m_{loc}$ converge to solutions of Euler equations on the sphere minus the poles,  while for other sequences of solutions, transition layer behaviors occur. 
For every latitude circle, there are sequences which $C^m_{loc}$ converge respectively to different solutions of the Euler equations on the spherical caps above and below the latitude circle.  
We give detailed analysis of these convergence and transition layer behaviors. }

\section{Introduction}\label{sec:intro}



We consider $(-1)$-homogeneous solutions of incompressible stationary Navier-Stokes equations in $\mathbb{R}^3$:
\begin{equation}\label{NS}
\left\{
\begin{split}
	& -\nu \triangle u + u\cdot \nabla u +\nabla p = 0, \\
	& \dive\textrm{ } u=0.
\end{split}
\right.
\end{equation}

The incompressible stationary Euler equations in $\mathbb{R}^3$ are given by:
\begin{equation}\label{Euler}
\left\{
\begin{split}
	& v\cdot \nabla v +\nabla q = 0, \\
	& \dive\textrm{ } v=0.
\end{split}
\right.
\end{equation}

Equations (\ref{NS}) and (\ref{Euler}) are invariant under the scaling $u(x)\to \lambda u(\lambda x)$ and $p(x)\to \lambda^2 p(\lambda x)$, $\lambda>0$. We study solutions which are invariant under the scaling. For such solutions $u$ is $(-1)$-homogeneous and $p$ is $(-2)$-homogeneous. We call them $(-1)$-homogeneous solutions according to the homogeneity of $u$.

Landau discovered in \cite{Landau} a three parameter family of explicit $(-1)$-homogeneous solutions of (\ref{NS}), which are axisymmetric with no swirl. Tian and Xin proved in \cite{TianXin} that all $(-1)$-homogeneous, axisymmetric nonzero solutions of (\ref{NS}) which are smooth on the unit sphere $\mathbb{S}^2$ are Landau solutions. They also gave in the paper explicit expressions of all $(-1)$-homogeneous axisymmetric solutions of (\ref{Euler}). \v{S}ver\'{a}k proved in \cite{Sverak} that all $(-1)$-homogeneous nonzero solutions which are smooth on $\mathbb{S}^2$ are Landau solutions. We studied in \cite{LLY1} and \cite{LLY2} $(-1)$-homogeneous axisymmetric solutions of (\ref{NS})  which are smooth on $\mathbb{S}^2$ minus the north and south poles. In particular, we classified in \cite{LLY2} all such solutions with no swirl. $(-1)$-homogeneous solutions of (\ref{NS}) and (\ref{Euler}) have been studied in \cite{CK},  \cite{G}, \cite{KP}, \cite{KPS}, \cite{KS}, \cite{Luo-Shvydkoy}, \cite{MT}, \cite{PP1}, \cite{PP2}, \cite{PP3}, \cite{Serrin}, \cite{Shvydkoy}, \cite{SL}, \cite{SQ}, \cite{W} and \cite{Y}.


In spherical coordinates $(r,\theta, \phi)$, where $r$ is the radial distance from the origin, $\theta$ is the angle between the radius vector and the positive $x_3$-axis, and $\phi$ is the meridian angle about the $x_3$-axis, a vector field $u$ is written as
\[
	u = u_r \vec{e}_r+ u_\theta \vec{e}_{\theta} + u_\phi \vec{e}_{\phi},
\]
where
\[
	\vec{e}_r = \left(
	\begin{matrix}
		\sthe\cos\phi \\
		\sthe\sin\phi \\
		\cthe
	\end{matrix} \right),  \hspace{0.5cm}
	\vec{e}_{\theta} = \left(
	\begin{matrix}
		\cthe\cos\phi  \\
		\cthe\sin\phi   \\
		-\sthe	
	\end{matrix} \right), \hspace{0.5cm}
	\vec{e}_{\phi} = \left(
	\begin{matrix}
		-\sin\phi \\  \cos\phi \\ 0
	\end{matrix} \right).
\]

We use $N$ and $S$ to denote respectively the north and south poles of $\mathbb{S}^2$. 
A vector field $u$ is called axisymmetric if $u_r$, $u_{\theta}$ and $u_{\phi}$ depend only on $r$ and $\theta$, and is called {\it no-swirl} if $u_{\phi}=0$. For any $(-1)$-homogeneous axisymmetric no-swirl solution $(u,p)$ of (\ref{NS}), $u_r$ and $p$ (modulo a constant) can be expressed by $u_{\theta}$ and its derivatives as follows
\begin{equation}\label{eqNS_1}
\begin{split}
   & u_{r} =- \frac{d u_{ \theta}}{d \theta} -\ctthe u_{\theta},  \\
   & 2p=-\frac{d^2 u_{r}}{d\theta^2} - (\ctthe - u_{ \theta}) \frac{d u_{ r}}{d\theta} - u_{r}^2 -u_{\theta}^2.
   \end{split}
\end{equation}
Similarly, for any $(-1)$-homogeneous axisymmetric no-swirl solution $(v,q)$ of (\ref{Euler}), $v_r$ and $q$ can be expressed by $v_{\theta}$ and its derivatives as follows
\begin{equation}\label{eqEuler_1}
  v_r=- \frac{d v_{\theta}}{d \theta} -\ctthe v_{\theta},  \quad 
     2q=v_{\theta}\frac{d v_r}{d\theta}-v_r^2-v_{\theta}^2.
   \end{equation}

In this paper, we analyze the behavior of any sequence of $(-1)$-homogeneous axisymmetric no-swirl solutions $\{(u_{\nu_k}, p_{\nu_k})\}$ of (\ref{NS}), with vanishing viscosity $\nu_k\to 0$. We will show that in some cases there are subsequences converging to solutions of (\ref{Euler}) on $\mathbb{S}^2$ and in some other cases there are transition layer behaviors. There have been a large amount of research work on vanishing viscosity limit for incompressible Navier-Stokes equations. See for instance \cite{C}, \cite{CC}, \cite{Constantin-Vicol}, \cite{DM}, \cite{DN},  \cite{F}, \cite{Guo-Nguyen}, \cite{Iyer1}, \cite{Iyer2}, \cite{Maekawa}, \cite{Masmoudi},  \cite{SC1}, \cite{SC2}, \cite{Temam-Wang} and \cite{Wang}. 


Based on our result in \cite{LLY2}, we have the following theorem.
\begin{thm}\label{thm1_0_0}
  (i) Let $0<\nu<1$, $(u_{\nu}, p_{\nu})$ be $(-1)$-homogeneous axisymmetric no-swirl solutions of (\ref{NS}) which are smooth on $\mathbb{S}^2\setminus\{S,N\}$.
  Then for any $0<\theta_1<\theta_2<\theta_3<\theta_4<\pi$, there exists some positive constant $C$, depending only on the $\{\theta_i\}$, such that
  \[
      \int_{\mathbb{S}^2\cap\{\theta_1<\theta<\theta_4\}}|u_{ \nu, \theta}|^2\le C\left(\int_{\mathbb{S}^2\cap\{\theta_2<\theta<\theta_3\}}|u_{\nu, \theta}|^2+\nu^2\right).
  \]
  
  \medskip
  
  (ii) Let $\nu_k\to 0^+$, $(u_{\nu_k}, p_{\nu_k})$ be $(-1)$-homogeneous axisymmetric no-swirl solutions of (\ref{NS}) which are smooth on $\mathbb{S}^2\setminus\{S,N\}$.
  If  $\displaystyle \sup_{k}\nu_k^{-2}\int_{\mathbb{S}^2\cap\{a<\theta<b\}}|u_{\nu_k, \theta}|^2<\infty$ for some $-1<a<b<1$,
 then there exists some solution $(\tilde{u}, \tilde{p})$ of (\ref{NS}) with $\nu=1$ which is smooth on $\mathbb{S}^2\setminus\{S,N\}$, such that, after passing to a subsequence, for any $\epsilon>0$, and any integer $m$, 
  \[
  \lim_{k\to \infty}||(\frac{u_{\nu_k}}{\nu_k}, \frac{p_{\nu_k}}{\nu_k^2})-(\tilde{u},\tilde{p})||_{C^m(\mathbb{S}^2\cap\{\epsilon<\theta<\pi-\epsilon\})}=0.
  \]
\end{thm}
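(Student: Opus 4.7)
The scaling $u=\nu\tilde u$, $p=\nu^2\tilde p$ converts a solution of (\ref{NS}) at viscosity $\nu$ into a solution at viscosity $1$, and this scaling is exactly the vehicle for reducing (ii) to (i). I would therefore establish (i) first and then run a standard compactness argument to deduce (ii). Substituting (\ref{eqNS_1}) into the $\theta$-component of (\ref{NS}) eliminates $u_r$ and $p$ and yields, on any compact subset of $(0,\pi)$, a second-order quasilinear ODE for $u_{\nu,\theta}(\theta)$ whose principal part is $\nu$ times a linear operator with coefficients smooth away from the poles and whose nonlinearity is polynomial (quadratic/cubic) in $u_\theta$ and $u_\theta'$.

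\emph{Proof of (i).} The inequality is a \emph{reverse}-Caccioppoli estimate: the $L^2$ norm on the larger interval $(\theta_1,\theta_4)$ is controlled by that on the smaller $(\theta_2,\theta_3)$ plus a $\nu^2$ error. This is natural in the ODE setting because the equation is non-degenerate on compact subsets of $(0,\pi)$, so quantitative unique continuation should hold. I would prove it by contradiction and compactness: assume a sequence $(u_{\nu_j},p_{\nu_j})$ violates (i) with ratio tending to infinity; set
\[
    A_j^2:=\int_{\theta_1<\theta<\theta_4}|u_{\nu_j,\theta}|^2\sin\theta\,d\theta \ \ge\ j\Bigl(\int_{\theta_2<\theta<\theta_3}|u_{\nu_j,\theta}|^2\sin\theta\,d\theta+\nu_j^2\Bigr),
\]
and $v_j:=u_{\nu_j,\theta}/A_j$, so that $\|v_j\|_{L^2(\theta_1,\theta_4)}=1$, $\|v_j\|_{L^2(\theta_2,\theta_3)}\to 0$ and $\nu_j/A_j\to 0$. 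Dividing the ODE by the appropriate power of $A_j$ and using interior ODE bootstrapping produces uniform $C^m_{loc}$ bounds for $v_j$ on $(\theta_1,\theta_4)$; passing to a $C^m_{loc}$ limit along a subsequence gives a $v\not\equiv 0$ on $(\theta_1,\theta_4)$ which solves a second-order ODE with smooth, non-degenerate coefficients and vanishes identically on $(\theta_2,\theta_3)$. Cauchy uniqueness for the ODE then forces $v\equiv 0$ on $(\theta_1,\theta_4)$, the desired contradiction.

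\emph{Proof of (ii).} Set $(\tilde u_k,\tilde p_k):=(u_{\nu_k}/\nu_k,p_{\nu_k}/\nu_k^2)$; direct substitution shows this is a sequence of $(-1)$-homogeneous axisymmetric no-swirl solutions of (\ref{NS}) with $\nu=1$, smooth on $\mathbb{S}^2\setminus\{S,N\}$. The hypothesis of (ii) asserts that $\tilde u_{k,\theta}$ is uniformly bounded in $L^2$ on the specified interior strip. Given $\epsilon>0$, apply (i) to $(u_{\nu_k},p_{\nu_k})$ with $(\theta_1,\theta_4)=(\epsilon,\pi-\epsilon)$ and $(\theta_2,\theta_3)$ a small sub-interval of the strip from the hypothesis; dividing by $\nu_k^2$ yields a uniform $L^2$ bound on $\tilde u_{k,\theta}$ on $(\epsilon,\pi-\epsilon)$. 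Bootstrapping on the ODE at $\nu=1$ upgrades this to uniform $C^m$ bounds on $\mathbb{S}^2\cap\{\epsilon<\theta<\pi-\epsilon\}$ for every $m$ and $\epsilon$; a diagonal Arzel\`a--Ascoli extraction then produces a subsequence converging in $C^m_{loc}$ on $\mathbb{S}^2\setminus\{S,N\}$ to some $(\tilde u,\tilde p)$ which, by passing to the limit in the equation, solves (\ref{NS}) at $\nu=1$ and is smooth on $\mathbb{S}^2\setminus\{S,N\}$.

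\emph{Main obstacle.} The crux is implementing the compactness step in (i) correctly: one must choose the right rescaling to isolate the effective limit equation (linear if $A_j\to 0$, possibly still nonlinear if $A_j$ is bounded away from zero) and ensure that the limit $v$ retains its $L^2$ mass in the interior rather than having it escape to the endpoints of $(\theta_1,\theta_4)$. This likely requires carrying out the contradiction on slightly shrunken intervals and iterating, together with a careful case analysis on the asymptotics of the triple $(\nu_j,A_j,\nu_j/A_j)$, so that the zero Cauchy data forced by the vanishing of $v$ on $(\theta_2,\theta_3)$ genuinely propagate via the limiting ODE to all of $(\theta_1,\theta_4)$.
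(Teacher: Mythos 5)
Your part (ii) reduction (rescale by $\nu_k$, get uniform interior $L^2$ bounds from (i), bootstrap at viscosity $1$, diagonalize) is essentially the paper's argument and is fine modulo (i). The problem is your proof of (i), and the obstacle you flag at the end is not a technicality but a genuine gap. The paper does not argue by contradiction and compactness at all: it exploits the fact that every such solution satisfies the first-order Riccati equation $\nu(1-x^2)U_\theta'+2\nu xU_\theta+\tfrac12U_\theta^2=P_c(x)$ with $U_\theta=u_\theta\sin\theta$, $x=\cos\theta$, where $P_c$ is a quadratic polynomial with three unknown coefficients. Its Lemma 2.1 integrates this equation between points where $|U_\theta|$ is controlled by its local $L^2$ norm and inverts a $3\times3$ Vandermonde-type matrix to recover $c$, yielding the two-sided bound $((s-r)^4|c|-(s-r)\nu^2)/C\le\int_r^sU_\theta^2\le C(|c|+\nu^2)$ on every subinterval; part (i) is then immediate, with an explicit constant depending only on the $\theta_i$. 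This quantitative link between the interval $L^2$ norm and the parameter $c$ is the missing idea in your write-up.

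Concretely, your compactness scheme breaks in the regime that matters. First, the claimed uniform $C^m_{loc}$ bounds for $v_j=u_{\nu_j,\theta}/A_j$ are false when $\nu_j\to0$ with $A_j$ not comparably small: the paper's Theorems 1.8--1.9 exhibit solutions with interior transition layers whose gradients are of order $\nu^{-1}$, so no $C^1_{loc}$ compactness is available and the limit need not satisfy any differential equation. Second, when $\nu_j/A_j\to0$ the formal limit relation is the algebraic identity $\tfrac12 v^2=\lim P_{c_j}/A_j^2$, for which ``Cauchy uniqueness'' is meaningless; and with only weak-$L^2$ convergence one cannot even pass to the limit in $v_j^2$, so vanishing of $v$ on $(\theta_2,\theta_3)$ gives no information. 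Third, even in the favorable regimes, $C^m_{loc}$ convergence on the open interval $(\theta_1,\theta_4)$ allows the unit $L^2$ mass to concentrate at the endpoints, in which case $v\equiv0$ and there is no contradiction; shrinking the interval does not help because the estimate you would need on the slightly smaller interval is exactly the inequality being proved. To repair the proof you should abandon the blow-up argument and derive (i) directly from the Riccati structure as above (or prove an equivalent of the paper's Lemma 2.1); once that is in place, your part (ii) goes through, with the $L^\infty$ bound at viscosity $1$ supplied either by your bootstrap or by the bound $|U_\theta|\le 5\sqrt{1+|c|}$ that follows from the known boundary values $\tau_1,\tau_2,\tau_1',\tau_2'$.
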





As in \cite{LLY1} and \cite{LLY2}, we work with variable $x:=\cos\theta$ and vector $U:=u\sin\theta$. We use $"$ $'$ $"$ to denote the derivative with respect to $x$. 

For $\nu\ge 0$, let 
\begin{equation}\label{eqdef_1}
     \bar{c}_3(c_1,c_2; \nu)=-\frac{1}{2}(\sqrt{\nu^2+c_1}+\sqrt{\nu^2+c_2})(\sqrt{\nu^2+c_1}+\sqrt{\nu^2+c_2}+2\nu),
\end{equation}
and introduce
\begin{equation*}
     J_\nu:=\{c\in \mathbb{R}^3\mid c=(c_1,c_2,c_3), c_1\ge -\nu^2, c_2\ge -\nu^2, c_3\ge \bar{c}_3(c_1,c_2;\nu)\}.
     \end{equation*}
It is easy to see that $J_{\nu'}\subset J_{\nu}$ for any $0\le \nu'\le \nu$. We use $\mathring{J}_{\nu}$ to denote the interior of $J_{\nu}$. For $\nu>0$, it is known from Theorem 1.2 in \cite{LLY1} that  all (-1)-homogeneous axisymmetric no-swirl solutions of (\ref{NS}) which are smooth in  $\mathbb{S}^2\setminus\{S,N\}$ are given by  $u=U'_{\theta}(x)\vec{e}_r+\sin\theta U_{\theta}(x)\vec{e}_{\theta}$ where  $U_{\theta}$ satisfies
\be \label{eq:NSE}
	\nu (1-x^2)U_\theta' + 2\nu x U_\theta + \frac{1}{2}U_\theta^2 = P_c(x):=c_1(1-x)+c_2(1+x)+c_3(1-x^2), \quad -1<x<1,
\ee
for some $c=(c_1,c_2,c_3)\in J_{\nu}$.

Let $\tilde{U}_{\theta}:=\frac{U_{\theta}}{\nu}$, then $U_{\theta}$ is a solution of (\ref{eq:NSE}) if and only if $\tilde{U}_{\theta}$ is a solution of
\begin{equation}\label{eqNSE_1}
    (1-x^2)\tilde{U}_\theta' + 2 x \tilde{U}_\theta + \frac{1}{2}\tilde{U}_\theta^2 = P_{\frac{c}{\nu^2}}(x), \quad -1<x<1.
\end{equation}

 Similar to the above, let $V=v\sin\theta $, then all $(-1)$-homogeneous axisymmetric solutions of Euler equations (\ref{Euler}) are given by $v=V'_{\theta}\vec{e}_r+\sin\theta V_{\theta}\vec{e}_{\theta}+a\vec{e}_{\phi}$, where $a$ is a constant and $V_{\theta}$ satisfies, for some $c$,
\be \label{eq:EE}
	\frac{1}{2}V_\theta^2 = P_c(x), 
\ee
where $P_c(x)$ is the second order polynomial given in (\ref{eq:NSE}).  Introduce a subset of $\partial J_0$:
\[
   \partial'J_0:=\{(0,0,c_3)\mid c_3>0\}\cup \{(c_1,0,c_3)\mid c_1>0, c_3\ge -\frac{1}{2}c_1\}\cup\{(0,c_2,c_3)\mid c_2>0,c_3\ge -\frac{1}{2}c_2\}
\]

By Lemma \ref{lem7_1} in the Appendix, $P_c\ge 0$ on $[-1,1]$ if and only if $c\in J_0$; $P_c>0$ on [-1,1] if and only if $c\in \mathring J_0$; and $P_c>0$ on $(-1,1)$ if and only if $c\in \mathring J_0\cup \partial' J_0$.

%
%


For $c\in J_{0}$, let $v^{\pm}_c=v^{\pm}_{c, r}\vec{e}_r + v^{\pm}_{c, \theta} \vec{e}_\theta$, where
$$
v^{\pm}_{c, \theta} (r, \theta, \varphi)=\pm \frac { \sqrt{ 2P_c(\cos\theta) } }{r \sin\theta}, \quad 
v^{\pm}_{c, r}(r, \theta, \varphi)=\pm \frac{P_c'(\cos\theta)}{r\sqrt{2P_c(\cos\theta)}}, 
$$
and
$$
   q_c(r, \theta, \varphi)=-\frac{1}{2r^2}(P_c''(\cos\theta)+\frac{2P_c(\cos\theta)}{\sin^2\theta}).
$$

It is easy to see from the above (see also \cite{TianXin}) that $\{(v^{\pm}_c, q_c)\ |\ c\in \mathring J_0\cup \partial' J_0\}$ is the set of (-1)-homogeneous axisymmetric no-swirl
solutions of (\ref{Euler}) which are smooth in $\mathbb S^2 \setminus\{S, N\}$.

Next, we prove that if a sequence of $(-1)$-homogeneous axisymmetric no-swirl solutions $\{(u_{\nu_k}, p_{\nu_k})\}$ of (\ref{NS}) converges weakly in $L^2(\mathbb{S}^2\cap \{\theta_1<\theta<\theta_2\})$ to  $(v^{+}_{c},q_c)$ or $(v^{-}_{c},q_c)$ for some $c\in \mathring{J}_0$, then the convergence is $C^m_{loc}$ for any positive integer $m$. More precisely we have the following theorem.

  
   \begin{thm}\label{thm1_0_1}
     For $0<\theta_1<\theta_2<\pi$ and $\nu_k\to 0^+$,  let $\{(u_{\nu_k}, p_{\nu_k})\}$  be smooth $(-1)$-homogeneous, axisymmetric, no-swirl solutions of (\ref{NS}) in the open cone in $\mathbb{R}^3$ generated by $\mathbb{S}^2\cap \{\theta_1<\theta<\theta_2\}$. Assume that $\{u_{\nu_k, \theta}\}$ weakly converges to $v=v^+_c$ or $v^-_c$ in $L^2(\mathbb{S}^2\cap \{\theta_1<\theta<\theta_2\})$ for some $c\in \mathring{J}_0$. Then for any $\epsilon>0$ and any positive integer $m$, there exists some constant $C$, depending only on $\theta_1,\theta_2, \epsilon, m$ and $\sup_{\nu_k}||u_{\nu_k, \theta}||_{L^2(\mathbb{S}^2\cap \{\theta_1<\theta<\theta_2\})}$, such that 
     \[
        ||(u_{\nu_k}, p_{\nu_k})-(v, q_c)||_{C^m(\mathbb{S}^2\cap \{\theta_1+\epsilon<\theta<\theta_2-\epsilon)}\le C\nu_k.
     \]
  \end{thm}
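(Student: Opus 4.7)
The plan is to exploit the Riccati structure of (\ref{eq:NSE}), linearize it, and perform WKB/Liouville--Green analysis. Throughout, write $x=\cos\theta$, $U_k(x):=\sin\theta\cdot u_{\nu_k,\theta}$, $I:=(\cos\theta_2,\cos\theta_1)$; WLOG take the $v=v_c^+$ case, with Euler profile $V^+(x):=\sqrt{2P_c(x)}>0$ on $[-1,1]$ (using $c\in\mathring J_0$). Each $U_k$ satisfies (\ref{eq:NSE}) with some parameter $c^{(k)}\in J_{\nu_k}$, and the weak-convergence hypothesis bounds $\|U_k\|_{L^2(I)}\le M$ uniformly. By (\ref{eqNS_1}) and (\ref{eqEuler_1}), it suffices to prove the claimed $C^m$ estimate for $U_k-V^+$ on each compact subset of $I$.

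I would first test (\ref{eq:NSE}) against $\phi\in C_c^\infty(I)$ and integrate the viscous terms by parts, obtaining $\int P_{c^{(k)}}\phi\,dx=\tfrac12\int U_k^2\phi\,dx+O(\nu_k M)$. Three suitable choices of $\phi$ then bound the quadratic polynomial $P_{c^{(k)}}$ and hence the parameters $\{c^{(k)}\}\subset\mathbb{R}^3$; after passing to a subsequence, $c^{(k)}\to c^\star\in J_0$. Next, the Riccati substitution $U_k=2\nu_k(1-x^2)y_k'/y_k$ transforms (\ref{eq:NSE}) into the linear Schr\"odinger-type equation $y_k''=\frac{P_{c^{(k)}}}{2\nu_k^2(1-x^2)^2}y_k$. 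For $k$ large the coefficient is strictly positive on $[-1,1]$ and of order $\nu_k^{-2}$, so Liouville--Green/WKB yields two linearly independent, exponentially growing/decaying solutions with smooth $O(\nu_k)$ remainders. Except on exponentially thin transition regions, the solution $y_k$ is dominated by one branch; the hypothesis $u_{\nu_k,\theta}\rightharpoonup v_{c,\theta}^+>0$ forces the growing branch to dominate, giving $U_k=\sqrt{2P_{c^{(k)}}}+O(\nu_k)$ uniformly on compact subsets of $I$. Combined with $c^{(k)}\to c^\star$, this yields $U_k\to\sqrt{2P_{c^\star}}$ uniformly; uniqueness of weak limits then forces $c^\star=c$, so the full sequence $c^{(k)}\to c$.

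For the quantitative estimate, set $\bar W_k:=U_k-V^+$ and subtract $\tfrac12(V^+)^2=P_c$ from (\ref{eq:NSE}) to obtain
\[
\nu_k(1-x^2)\bar W_k'+\bigl(V^++2\nu_k x+\tfrac12\bar W_k\bigr)\bar W_k=P_{c^{(k)}-c}(x)-\nu_k\bigl[(1-x^2)(V^+)'+2xV^+\bigr].
\]
On $[\cos\theta_2+\epsilon,\cos\theta_1-\epsilon]$, $V^+\ge V_0>0$, and since $\bar W_k\to 0$ uniformly, the coefficient of $\bar W_k$ is bounded below by $V_0/2$ for $k$ large. Slow-manifold/singular-perturbation analysis of this first-order Riccati equation gives $\bar W_k(x)=P_{c^{(k)}-c}(x)/V^+(x)+O(\nu_k)$ uniformly on the interior interval. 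Testing against three $\phi_j\in C_c^\infty(I)$ such that the $3\times 3$ matrix $M_{ji}:=\int\phi_j\,(\partial P_c/\partial c_i)/V^+\,dx$ is invertible (using the linear independence of $1-x,1+x,1-x^2$), the weak-convergence identity $\int\bar W_k\phi_j\,dx\to 0$ combined with the pointwise expansion yields $|c^{(k)}-c|\le C\nu_k$, with $C$ depending only on $M,\theta_1,\theta_2,\epsilon,c$. Substituting back produces $\|\bar W_k\|_{L^\infty}\le C\nu_k$ on $[\cos\theta_2+\epsilon,\cos\theta_1-\epsilon]$.

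Finally, differentiating (\ref{eq:NSE}) $m$ times and iterating the slow-manifold argument yields $\|\bar W_k\|_{C^m}\le C_m\nu_k$ on a slightly smaller subinterval; via (\ref{eqNS_1}) and (\ref{eqEuler_1}) this upgrades to the $C^m$ bound on $(u_{\nu_k},p_{\nu_k})-(v,q_c)$ claimed in the theorem. The hardest step is extracting the sharp $O(\nu_k)$ rate for $|c^{(k)}-c|$ from the qualitative weak convergence; the crucial mechanism is that the Riccati/WKB slow-manifold reduction replaces $\bar W_k$ by an explicit algebraic function of $c^{(k)}-c$ modulo $O(\nu_k)$, converting the weak-convergence condition into a nondegenerate linear system that determines $c^{(k)}-c$ at the sharp rate.
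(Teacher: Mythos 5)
Your overall architecture is sound and partly parallels the paper: reducing to the ODE (\ref{eq:NSE}) on $I$, bounding the parameters $c^{(k)}$ by integrating the equation against a few test functions (the paper does the same thing with well-chosen points, as in Lemma \ref{lem2_0}), and using the weak convergence only to select the ``positive branch'' (the paper's Claim finds a point $a_k$ with $U_{\nu_k,\theta}(a_k)\ge 1/C$ and propagates the sign via Lemma \ref{lem2_2} and Lemma \ref{lem2_4}). Your Riccati substitution $U_k=2\nu_k(1-x^2)y_k'/y_k$ does linearize the equation into $2\nu_k^2(1-x^2)^2y_k''=P_{c^{(k)}}y_k$, and a WKB analysis is a legitimate alternative to the paper's comparison/maximum-type lemmas (Lemma \ref{lem2_5}, Corollary \ref{cor2_2}, Lemma \ref{lem2_7}) for producing the quantitative estimate $U_k=\sqrt{2P_{c^{(k)}}}+O(\nu_k)$ in $C^m$ on compact subsets of $I$.

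The genuine gap is your extraction of the rate $|c^{(k)}-c|\le C\nu_k$. From $\bar W_k=P_{c^{(k)}-c}/V^++O(\nu_k)$ and the hypothesis you only know $\int\bar W_k\phi_j\,dx\to 0$, with no rate whatsoever; inverting your $3\times 3$ system therefore yields $|c^{(k)}-c|=o(1)+O(\nu_k)$, not $O(\nu_k)$, and consequently only $\|\bar W_k\|_{L^\infty}=o(1)$ on the interior interval. Moreover this step cannot be repaired from the stated hypotheses: take $c_k=c+\sqrt{\nu_k}\,(1,0,0)$ and $U_k=U^+_{\nu_k,\theta}(c_k)$; then $c_k\in J_{\nu_k}$ for large $k$, Theorem \ref{thm1_1} gives $\|U_k-\sqrt{2P_{c_k}}\|_{L^\infty(-1,1)}\le C\nu_k$, hence $u_{\nu_k,\theta}\to v^+_{c,\theta}$ uniformly (a fortiori weakly in $L^2$) on the band, yet $|c_k-c|=\sqrt{\nu_k}$ and $\|U_k-\sqrt{2P_c}\|_{L^\infty(\bar I)}\ge \sqrt{\nu_k}/C$. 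Note that the paper's own proof never derives such a rate: its quantitative output is $\|U_{\nu_k,\theta}-\sqrt{2P_{c_k}}\|_{C^m([r+\epsilon,s-\epsilon])}\le C\nu_k$ via Lemma \ref{lem2_7}, and the weak convergence is used only qualitatively (to bound $|c_k|$, to get $P_{c_k}\ge 1/C$ on the interior, and to fix the sign of $U_{\nu_k,\theta}$); so the mechanism you single out as the crucial one is precisely the step that fails. Two smaller points: you may not assert $c^{(k)}\in J_{\nu_k}$, since the solutions are only assumed smooth on the cone over $\{\theta_1<\theta<\theta_2\}$ and the classification producing $J_{\nu}$ is for solutions smooth on all of $\mathbb{S}^2\setminus\{S,N\}$ (writing the right-hand side as $P_{c^{(k)}}$ with $c^{(k)}\in\mathbb{R}^3$, as the paper does, is all you are entitled to); and the strict positivity of the Schr\"odinger potential should be established on $\bar I$ only (for instance from $\liminf_k\int_J U_k^2\ge\int_J 2P_c>0$ for subintervals $J$), not claimed on $[-1,1]$ before $c^{\star}$ has been identified with $c$, which as written is circular.
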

 
In the above theorem we have only analyzed axisymmetric no-swirl solutions $\{u_\nu, p_\nu\}$.  Concerning
general solutions we raise the following

\medskip

\noindent {\bf Question 1.}\   Let $\Omega\subset \mathbb{S}^2$ be an open set, and let $\{ (u_{\nu_k}, p_{\nu_k})\}$, $\nu_k\to 0^+$, and
$(v, q)$ be smooth $(-1)$-homogeneous solutions of (\ref{NS}) and (\ref{Euler})  respectively in the open cone
in $\mathbb{R}^3$ generated by
$\Omega$.   Assume that $u_{\nu_k}$ weakly  converges to $v$ in $L^2(\Omega)$ as ${\nu_k} \to 0^+$. Is it true that 
for every  non-negative integer $m$,
$\{(u_{\nu_k}, p_{\nu_k})\}$ converges  to $(v,q)$  in $C_{loc}^m(\Omega)$?

%
%
%
%
%


Given part (ii) of Theorem \ref{thm1_0_0}, we will only consider below the behavior of $(u_{\nu_k},p_{\nu_k})$ when $\nu_k^{-2}\int_{\mathbb{S}^2\cap\{\frac{\pi}{4}<\theta<\frac{\pi}{2}\}}|u_{\nu_k, \theta}|^2\to \infty$ as $k\to \infty$. For instance, Theorem \ref{thm1_0}  below gives asymptotic profiles of $\{(u_{\nu_k},p_{\nu_k})\}$ under the condition.



\begin{thm}\label{thm1_0}
(i) %
There exist $(-1)$-homogeneous axisymmetric no-swirl solutions
  \newline $\{(u^{\pm}_{\nu}(c), p^{\pm}_{\nu}(c))\}_{0< \nu\le 1}$ of (\ref{NS}), belonging to $C^{0}(\mathring{J}_{\nu}\times (0,1], C^m(\mathbb{S}^2\setminus(B_{\epsilon}(S)\cup B_{\epsilon}(N))))$  for every integer $m\ge 0$,  such that for every compact subset $K\subset \mathring{J}_{0}$, and every  $\epsilon>0$, there exists some constant $C$ depending only on $\epsilon, K$ and $m$, such that
  \[
 ||(u^{\pm}_{\nu}(c), p^{\pm}_{\nu}(c))-(v^{\pm}_c, q_c)||_{C^m(\mathbb{S}^2\setminus\{B_{\epsilon}(S)\cup B_{\epsilon}(N)\})}\le C\nu, \quad  c\in K.
       \]
  (ii) For every $0<\theta_0<\pi$, there exist $(-1)$-homogeneous axisymmetric no-swirl solutions  $\{(u_{\nu}(c, \theta_0), p_{\nu}(c, \theta_0))\}_{0<\nu\le 1}$ of (\ref{NS}), belonging to $C^{0}(\mathring{J}_{\nu}\times(0,1]\times(0,\pi), C^m(\mathbb{S}^2\setminus(B_{\epsilon}(S)\cup B_{\epsilon}(N))))$  for every integer $m\ge 0$, such that for every compact subset $K\subset \mathring{J}_0$, and every  $\epsilon>0$, there exists some constant $C$ depending on $\epsilon, K$ and $m$, such that
  \[
  \begin{split}
     & ||(u_{\nu}(c, \theta_0), p_{\nu}(c, \theta_0))-(v^{+}_c, q_c)||_{C^m(\mathbb{S}^2\cap \{\theta_0+\epsilon<\theta<\pi-\epsilon\})}\\
     & +||(u_{\nu}(c, \theta_0), p_{\nu}(c, \theta_0))-(v^{-}_c, q_c)||_{C^m(\mathbb{S}^2\cap\{\epsilon<\theta<\theta_0-\epsilon\})}\le C\nu, \quad c\in K.
     \end{split}
  \]
\end{thm}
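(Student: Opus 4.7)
The plan is to construct both families of solutions explicitly via the Riccati substitution
\[
U_\theta(x) \;=\; 2\nu(1-x^2)\,\frac{W'(x)}{W(x)},
\]
which transforms the first-order equation (\ref{eq:NSE}) into the linear second-order equation
\[
W''(x) \;=\; \frac{P_c(x)}{2\nu^2(1-x^2)^2}\,W(x), \qquad -1<x<1.
\]
Every strictly positive $C^\infty$ solution $W$ of this linear equation produces a smooth $U_\theta$ solving (\ref{eq:NSE}), and conversely every $U_\theta$ smooth on $(-1,1)$ arises this way from a projectively unique positive $W$. For $c\in\mathring{J}_0$ we have $P_c>0$ on $[-1,1]$ with bounds uniform on compact $K\subset\mathring{J}_0$, so the coefficient is of order $\nu^{-2}$ and strictly positive on any $[-1+\epsilon,1-\epsilon]$, placing us in the regime where Liouville--Green/WKB analysis is sharp and free of turning points.

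For part (i) I would construct two distinguished positive solutions $W_\pm(x;c,\nu)$ of the linear equation via the Liouville--Green ansatz
\[
W_\pm(x;c,\nu)\;=\;Q(x;c)^{-1/2}\exp\!\Bigl(\pm\tfrac{1}{\nu}\textstyle\int_0^x Q(t;c)\,dt\Bigr)\bigl(1+\nu R_\pm(x;c,\nu)\bigr),
\]
where $Q(x;c):=\sqrt{P_c(x)/2}\,/\,(1-x^2)$. Substituting the ansatz into the linear equation reduces it to a Volterra integral equation for $R_\pm$, solvable by contraction in a weighted sup-norm on $[-1+\epsilon,1-\epsilon]$; this yields $R_\pm$ bounded in $C^m$ uniformly in $(c,\nu)\in K\times(0,1]$ with continuous dependence on the parameters. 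Taking $(u^\pm_\nu(c),p^\pm_\nu(c))$ to be the NS solution generated by $W=W_\pm$, direct computation using (\ref{eqNS_1}) gives $U^\pm_\theta=\pm\sqrt{2P_c}+O(\nu)$ in $C^m([-1+\epsilon,1-\epsilon])$, and the companion $C^m$ bounds on $u_r$ and $p$ follow from (\ref{eqNS_1}) and (\ref{eqEuler_1}) (after enlarging $m$ to absorb $\theta$-derivatives).

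For part (ii), with $x_0:=\cos\theta_0$, set
\[
W(x;c,\nu,\theta_0)\;:=\;W_+(x;c,\nu)+\beta(c,\nu,\theta_0)\,W_-(x;c,\nu),\qquad \beta(c,\nu,\theta_0):=\frac{W_+(x_0;c,\nu)}{W_-(x_0;c,\nu)}>0,
\]
so the two exponential modes balance exactly at $x_0$. Since $\beta>0$ and $W_\pm>0$, $W$ is strictly positive on $(-1,1)$ and the induced $U_\theta$ is smooth. For $|x-x_0|\ge\epsilon$ one has $|\int_{x_0}^x Q(t;c)\,dt|\ge c_0\epsilon$ for some $c_0=c_0(K,\epsilon)>0$, so the ratio $\beta W_-(x)/W_+(x)=e^{-2\int_{x_0}^x Q(t;c)\,dt/\nu}$ is either at most $e^{-2c_0\epsilon/\nu}$ or at least $e^{2c_0\epsilon/\nu}$; hence on each such region one of $W_\pm$ dominates $W$ up to an exponentially small multiplicative error, and $U_\theta$ coincides with the corresponding $U^\pm_\theta$ from part (i) modulo a correction of size $O(\nu e^{-c_0\epsilon/\nu})$. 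Combining with part (i), this yields the two-sided $C^m$ estimate claimed; the pairing of $v^\pm_c$ with each side of $\theta_0$ is dictated by the growth direction of $W_\pm$.

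\emph{Main obstacle.} The principal technical issue is the rigorous construction of $W_\pm$ with uniform-in-$(c,\nu)$ $C^m$ bounds and continuous dependence on $(c,\nu)$ required by the $C^0(\mathring{J}_\nu\times(0,1],C^m(\cdots))$ regularity in the statement. This is handled by the classical Liouville--Green analysis via Volterra integral equations, exploiting the uniform positivity of $P_c$ on $K\subset\mathring{J}_0$ to avoid turning points. A secondary subtlety is obtaining the $C^m$ (not merely $C^0$) rate in part (ii) away from $\theta_0$: each $x$-derivative of the subdominant-mode error contributes at most a power of $\nu^{-1}$, which is easily dominated by the factor $e^{-c_0\epsilon/\nu}$ as $\nu\to 0^+$.
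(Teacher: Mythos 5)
Your route is genuinely different from the paper's. The paper constructs nothing new here: for (i) it takes $(u^{\pm}_{\nu}(c),p^{\pm}_{\nu}(c))$ to be the solutions generated by the extreme solutions $U^{\pm}_{\nu,\theta}(c)$ of (\ref{eq:NSE}) supplied by Theorem A (with continuity in $(c,\nu)$ quoted from Theorem 1.1 of \cite{LLY2}), for (ii) it takes the solution vanishing at $x_0=\cos\theta_0$ supplied by the foliation, and the $O(\nu)$ rates are then exactly Theorem \ref{thm1_1} and Theorem \ref{thm1_3}(iii), transferred to $(u,p)$ via (\ref{eqNS_1})--(\ref{eqEuler_1}). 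Your Riccati linearization $U_\theta=2\nu(1-x^2)W'/W$, $W''=P_c\,W/(2\nu^2(1-x^2)^2)$, together with a Liouville--Green analysis, is a legitimate alternative that would in particular reprove the interior estimates of Theorem \ref{thm1_1} from scratch, and the two-mode combination $W_++\beta W_-$ is a nice way to see the jump in (ii).

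There are, however, genuine gaps as written. First, the theorem asks for solutions of (\ref{NS}) smooth on $\mathbb{S}^2\setminus\{S,N\}$, i.e.\ $U_\theta$ must solve (\ref{eq:NSE}) on all of $(-1,1)$, equivalently $W>0$ on all of $(-1,1)$. Your $W_\pm$ are produced by a Volterra contraction on $[-1+\epsilon,1-\epsilon]$ only; their extensions are exact solutions of the linear equation, but the coefficient of the dominant mode at the nearby endpoint is not controlled, so the extended $W$ may vanish in $(-1,-1+\epsilon)$ or $(1-\epsilon,1)$, where $U_\theta$ then blows up and no solution on the punctured sphere is obtained. The fix is to define $W_+$ (resp.\ $W_-$) as the principal solution at $x=-1$ (resp.\ $x=+1$), which is positive on $(-1,1)$ by disconjugacy since the potential is positive for $c\in\mathring{J}_0$, and then prove that the Liouville--Green ansatz approximates precisely these solutions; this identification is the missing step, and the same issue propagates to (ii), where positivity of $W_++\beta W_-$ relies on positivity of both $W_\pm$. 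Second, the asserted regularity $C^{0}(\mathring{J}_{\nu}\times(0,1],C^m)$ requires the families to be defined and continuous for all $c\in\mathring{J}_\nu$, which is strictly larger than $\mathring{J}_0$ and contains $c$ for which $P_c$ vanishes or changes sign on $[-1,1]$; there $Q=\sqrt{P_c/2}/(1-x^2)$ is not real and your construction produces nothing, whereas the paper gets this for free from the classification in \cite{LLY2}. Two smaller points: uniformity of the factor $1+\nu R_\pm$ over all $\nu\in(0,1]$ is overstated (it need not stay positive for $\nu$ of order one; harmless for the estimate, which is trivial for $\nu$ bounded below, but it again presupposes a $\nu$-independent global definition of the family), and your $\beta=W_+(x_0)/W_-(x_0)$ balances magnitudes rather than enforcing $U_\theta(x_0)=0$, which is acceptable for the stated bounds since the transition then sits within $O(\nu)$ of $x_0$.
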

Notice that for every $c$ in $\mathring{J}_0$, $P_c>0$ on $[-1,1]$, and $v^{+}_c \ne v^{-}_c$ on $\mathbb{S}^2\cap\{\theta=\theta_0\}$. The limit functions in Theorem \ref{thm1_0} (ii) have jump discontinuities across the circle $\{\theta=\theta_0\}$.

In the following we give more detailed study on the behaviors of $\{(u_{\nu_k}, p_{\nu_k})\}$ which include that in regions where limit functions are not smooth and transition layer behaviors occur.


Define, for $\nu>0$ and $c\in J_{\nu}$,

\begin{equation}\label{eq_2}
\begin{array}{lll}
	&\tau_{1}(\nu,c_1):=2\nu-2\sqrt{\nu^2+c_1}, \quad \quad &\tau_{2}(\nu,c_1):=2\nu+2\sqrt{\nu^2+c_1}, \\
	&\tau_{1}'(\nu,c_2):=-2\nu-2\sqrt{\nu^2+c_2}, \quad \quad &\tau_{2}'(\nu,c_2):=-2\nu+2\sqrt{\nu^2+c_2}.
\end{array}
\end{equation}

By Theorem 1.1 and Theorem 1.3 in \cite{LLY2}, using the scaling in (\ref{eqNSE_1}), we have the following theorem.\\
\textbf{Theorem A} (\cite{LLY2}) \emph{For each $\nu>0$, there exist $U^{+}_{\nu, \theta}(c)(x)\in C^0(J_{\nu}\times [-1,1))$ and $U^{-}_{\nu, \theta}(c)(x)\in C^0(J_{\nu}\times (-1,1])$ such that for every $c\in J_{\nu}$,  $U^{\pm}_{\nu, \theta}(c)\in C^{\infty}(-1,1)$ satisfy (\ref{eq:NSE}) in $(-1,1)$, and $U^{-}_{\nu, \theta}(c)\le U_{\nu, \theta}\le U^+_{\nu, \theta}(c)$ for any solution $U_{\nu, \theta}$ of (\ref{eq:NSE}) in $(-1,1)$.
If $c_3>\bar{c}_3(c_1,c_2, \nu)$,  then $U^{-}_{\nu, \theta}(c)<U^{+}_{\nu, \theta}(c)$ in $(-1,1)$, and the graphs of all  solutions of (\ref{eq:NSE}) foliate the region $\{(x,y)\in \mathbb{R}^2 \mid -1\le x\le 1, U^{-}_{\nu, \theta}(c)\le y \le U^{+}_{\nu, \theta}(c)\}$.
Moreover,
\begin{equation*}
   \begin{split}
      & U_{\nu, \theta}^+(-1)=\tau_2(\nu,c_1), \quad U_{\nu, \theta}^+(1)=\tau_2'(\nu,c_2), \\
      & U_{\nu, \theta}^-(-1)=\tau_1(\nu,c_1), \quad U_{\nu, \theta}^-(1)=\tau_1'(\nu,c_2),
   \end{split}
\end{equation*}
and if $U_{\nu, \theta}$ is a solution other than $U_{\nu, \theta}^\pm$, then
\begin{equation*}
   U_{\nu, \theta}(-1)=\tau_1(\nu,c_1), \quad U_{\nu, \theta}(1)=\tau_2'(\nu,c_2).
\end{equation*}
If $c_3=\bar{c}_3(c_1,c_2,\nu)$, then
\begin{equation*}
   U_{\nu, \theta}^+(c)\equiv U^-_{\nu, \theta}(c)\equiv U^*_{\nu, \theta}(c_1,c_2):= (\nu+\sqrt{\nu^2+c_1})(1-x)+(-\nu-\sqrt{\nu^2+c_2})(1+x).
\end{equation*}
In particular, $U^*_{\nu, \theta}(c_1,c_2)(-1)=\tau_2(\nu,c_1)$ and  $U^*_{\nu, \theta}(c_1,c_2)(1)=\tau_1'(\nu,c_2)$.}

For $c_1,c_2\ge 0$, $c_1+c_2>0$, denote
\begin{equation*}
    c_3^*(c_1,c_2)=\bar{c}_3(c_1,c_2; 0)=-\frac{1}{2}(c_1+2\sqrt{c_1c_2}+c_2)< 0,
     \end{equation*}
     \begin{equation}\label{eqP_1}
        P_{(c_1,c_2)}^*(x):=P_{(c_1,c_2,c_3^*(c_1,c_2))}(x)=
           -c_3^*(c_1,c_2)\left(x-\frac{\sqrt{c_1}-\sqrt{c_2}}{\sqrt{c_1}+\sqrt{c_2}}\right)^2. 
     \end{equation}
  Then
     \begin{equation}\label{eqP_2}
          P_c(x)=P_{(c_1,c_2)}^*(x)+(c_3-c_3^*(c_1,c_2))(1-x^2).
          \end{equation}


Clearly,
          $c_3^*(c_1,c_2)=\min\{c_3\in \mathbb{R}\mid P_c(x)\ge 0 \textrm{ on }[-1,1]\}$.

In this paper we will call $U_{\nu, \theta}^+(c)$ and $U_{\nu, \theta}^-(c)$ the upper solution and lower solution of (\ref{eq:NSE}) respectively.



Consider sequences $\{(u_{\nu_k},p_{\nu_k})\}$ satisfying (\ref{NS}) with $\nu_k\to 0^+$.  Then $U_{\nu_k, \theta}=u_{\nu_k, \theta}\sin\theta$ satisfies (\ref{eq:NSE}) for some $P_{c_k}$, $c_k\in J_{\nu_k}$. As mentioned ealier, we only consider below the case when $\nu_k^{-2}\int_{\mathbb{S}^2\cap\{a<\theta<b\}}|u_{\nu_k, \theta}|^2\to \infty$ for some $a,b\in (-1,1)$. By Lemma \ref{lem2_0}, this is equivalent to the condition that $\nu_k^{-2}|c_k|\to \infty$. If $\lim_{k\to \infty}\nu_k^{-2}|c_k|<\infty$, then $c_k\to 0$, and by Theorem \ref{thm1_0_0} (ii), $u_{\nu_k}\to 0$ in $C_{loc}^m(\mathbb{S}^2\setminus\{S,N\})$ for every $m$.  

The behaviors of $\{U_{\nu_k, \theta}^{\pm}\}$ are different from other solutions. In most cases, $U_{\nu_k, \theta}^{\pm}$ converge to solutions of Euler equation  (\ref{eq:EE}) on all $[-1,1]$, while for other solutions, boundary layer behavior occurs.

We first present the convergence results of  $\{U_{\nu_k, \theta}^{\pm}\}$ on $[-1,1]$. If $\min_{[-1,1]}P_c>0$, we have, after passing to a subsequence,  the convergence of $\{U_{\nu_k, \theta}^{\pm}(c_k)\}$, $c_k\to c$, to the solution of the Euler equation $\pm \sqrt{2P_c}$ on $[-1,1]$. 

\begin{thm}\label{thm1_1}
   Let $\nu_k\to 0^+$, $c_k\in J_{\nu_k}$, $\nu_k^{-2}|c_k|\to \infty$. Assume $\hat{c}_{k}:=|c_k|^{-1}c_k\to \hat{c}\in \mathring{J}_0$.    Then for any $\epsilon>0$ and any positive integer $m$, there exists some constant $C$, depending only on $\epsilon, m$  and $\hat{c}$, such that for large $k$, 
   \begin{equation}\label{eq1_1}
   \begin{split}
   & ||U^{+}_{\nu_k, \theta}(c_k)-\sqrt{2P_{c_k}}||_{L^{\infty}(-1,1)}+||U^-_{\nu_k, \theta}(c_k)+\sqrt{2P_{c_k}}||_{L^{\infty}(-1,1)}\le C\nu_k,\\
   &  ||U^+_{\nu_k, \theta}(c_k)-\sqrt{2P_{c_k}}||_{C^m[-1,1-\epsilon]}+||U^-_{\nu_k, \theta}(c_k)+\sqrt{2P_{c_k}}||_{C^m[-1+\epsilon,1]}\le C\nu_k.
   \end{split}
\end{equation}
\end{thm}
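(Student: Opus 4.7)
The plan is to compare the upper and lower solutions $U^{\pm}_{\nu_k,\theta}(c_k)$ with the Euler profiles $\pm\sqrt{2P_{c_k}}$ via an explicit super/sub-solution argument for the Riccati-type ODE (\ref{eq:NSE}). The natural small parameter is $\epsilon_k := \nu_k/\sqrt{|c_k|}$, which tends to $0$ by hypothesis. Setting $Y := U/\sqrt{|c_k|}$ transforms (\ref{eq:NSE}) into
\[
\epsilon_k(1-x^2)Y' + 2\epsilon_k x Y + \tfrac12 Y^2 = P_{\hat c_k}(x), \qquad -1<x<1,
\]
and since $\hat c_k \to \hat c \in \mathring J_0$, the characterization of $\mathring J_0$ recalled after (\ref{eqP_2}) ensures $\min_{[-1,1]} P_{\hat c_k} \geq \delta > 0$ for all large $k$. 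This uniform positivity is what drives the argument, and rescaling back ($U = \sqrt{|c_k|}\,Y$, $\nu_k = \sqrt{|c_k|}\,\epsilon_k$) will convert $O(\epsilon_k)$ bounds into the desired $O(\nu_k)$ bounds.

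\textbf{Barriers and $L^\infty$ bound.} Let $V_k := \sqrt{2P_{\hat c_k}}$. Substituting $V_k \pm M\epsilon_k$ into the rescaled equation yields, after collecting terms,
\[
\epsilon_k(1-x^2)(V_k\pm M\epsilon_k)' + 2\epsilon_k x (V_k\pm M\epsilon_k) + \tfrac12(V_k\pm M\epsilon_k)^2 - P_{\hat c_k} = \epsilon_k V_k\bigl[B_k(x)\pm M\bigr] + O(\epsilon_k^2),
\]
where $B_k(x) := (1-x^2)V_k'/V_k + 2x$ is uniformly bounded. Choosing $M$ large depending only on $\hat c$, I conclude that $V_k+M\epsilon_k$ is a super-solution and $V_k-M\epsilon_k$ a sub-solution of the rescaled equation on $(-1,1)$. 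Theorem~A supplies the explicit boundary values $Y^\pm(\pm 1)$, and a first-order Taylor expansion verifies $|Y^\pm(\pm 1) \mp V_k(\pm 1)|\leq M\epsilon_k$ after enlarging $M$ if necessary. Applying the standard ODE comparison principle on $[-1+\delta, 1-\delta]$ and sending $\delta\to 0^+$ by continuity yields $|Y^+-V_k|\leq M\epsilon_k$ on $[-1,1]$; the analogous argument with $-V_k\pm M\epsilon_k$ handles $Y^-$. Rescaling then gives the first line of (\ref{eq1_1}).

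\textbf{$C^m$ estimate.} For the second line, let $w := Y^+ - V_k$, which satisfies
\[
\epsilon_k(1-x^2) w' + (V_k + 2\epsilon_k x) w + \tfrac12 w^2 = -\epsilon_k A_k(x), \qquad A_k := (1-x^2)V_k' + 2xV_k.
\]
The key observation is that since $Y^+(-1)=\tau_2(\epsilon_k, \hat c_{k,1})$ exactly solves the characteristic (algebraic) equation obtained by evaluating the rescaled ODE at $x=-1$, the right-hand side of the $w$-equation vanishes at $x=-1$ and factorizes as $(1+x)\tilde N(x)$ with $\tilde N$ smooth. I would then construct a formal asymptotic expansion $Y^+ \sim V_k + \sum_{j\ge 1}\epsilon_k^j Y_j$, with $Y_1 = -A_k/V_k$ and each $Y_j$ smooth on $[-1,1-\delta]$ and uniformly bounded in $C^m$. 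The truncation $\Phi_N := V_k + \sum_{j=1}^N \epsilon_k^j Y_j$ solves the rescaled equation with residual $O(\epsilon_k^{N+1})$ and matches $Y^+$ at $x=\pm 1$ to the same order. Choosing $N>m$ and rerunning the barrier argument with $\Phi_N \pm M\epsilon_k^{N+1}$ gives $\|Y^+-\Phi_N\|_{L^\infty[-1,1-\delta]} = O(\epsilon_k^{N+1})$, and bootstrapping the derivatives of $Y^+-\Phi_N$ through its ODE yields $\|w\|_{C^m[-1, 1-\delta]} = O(\epsilon_k)$. After rescaling, this is the claimed $C^m$ bound for $U^+$; the argument for $U^-$ on $[-1+\epsilon, 1]$ is symmetric.

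\textbf{Main obstacle.} The principal technical difficulty is propagating the smallness $w = O(\epsilon_k)$ to its higher derivatives all the way up to the singular endpoint $x=-1$ (and $x=1$ for $U^-$), where the coefficient $\epsilon_k(1-x^2)$ in (\ref{eq:NSE}) degenerates. What saves the argument is the exact matching of boundary data: $Y^+(-1)$ solves the algebraic equation obtained by evaluating (\ref{eq:NSE}) at $x=-1$, which forces the source of the residual equation to vanish at $x=-1$ at each order of the expansion, removes the apparent singularity after division by $(1+x)$, and permits repeated differentiation without loss. Keeping track of this cancellation uniformly in $k$, with constants depending only on $\hat c$, $m$, and $\epsilon$, is the main point to verify carefully.
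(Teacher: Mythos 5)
Your opening reduction via $\epsilon_k=\nu_k/\sqrt{|c_k|}$ is exactly the paper's first step, and your $L^\infty$ argument, while different in mechanism, is essentially sound: you use explicit barriers $\sqrt{2P_{\hat c_k}}\pm M\epsilon_k$ plus a comparison principle (justified at the degenerate endpoints by your $\delta\to0^+$ limiting argument, or by Lemma 2.4 of \cite{LLY2}), whereas the paper gets a uniform bound and a positive lower bound for $U^+_{\nu_k,\theta}$ from Lemma \ref{lem2_1}, Theorem A and Lemma \ref{lem2_2}, and then estimates $\tfrac12 (U^+_{\nu_k,\theta})^2-P_{c_k}$ at an interior maximum point (Lemma \ref{lem2_6}, Corollary \ref{cor2_2}). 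Either route yields the first line of (\ref{eq1_1}).

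The gap is in the $C^m$ estimate up to $x=-1$, which is the real content of the second line of (\ref{eq1_1}) beyond interior bounds. Your key claim --- that since $Y^+(-1)=\tau_2$ solves the algebraic equation at $x=-1$, ``the right-hand side of the $w$-equation vanishes at $x=-1$ and factorizes as $(1+x)\tilde N(x)$'' --- is false: the source of your $w$-equation is $-\epsilon_k A_k(x)$ with $A_k=(1-x^2)V_k'+2xV_k$, and $A_k(-1)=-2V_k(-1)\neq 0$; likewise the residual $R_N$ of the truncated outer expansion is $O(\epsilon_k^{N+1})$ at $x=-1$ but carries no factor of $(1+x)$. Without that factorization, the bootstrap you propose loses a factor $(1+x)^{-1}$ at each differentiation: from $\epsilon_k(1-x^2)z'=-R_N-(\cdot)\,z$ one only gets $|z^{(j)}|\lesssim \epsilon_k^{N+1-j}(1+x)^{-j}$, which controls derivatives on $[-1+\epsilon_k^{\gamma},1-\delta]$ but says nothing on the remaining shrinking neighborhood of $x=-1$; ``repeated differentiation without loss'' is precisely the unproved step, and it is the one you yourself flag as the main obstacle. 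What is needed (and what the paper supplies) is a genuine endpoint argument: since $\nu(1-x^2)$ vanishes at $x=-1$, differentiating the equation $m+1$ times and evaluating at $x=-1$ gives an algebraic recursion for $\frac{d^{m+1}}{dx^{m+1}}U^+_{\nu,\theta}(-1)$ whose leading coefficient is $U^+_{\nu,\theta}(-1)+O(\nu)=\tau_2+O(\nu)\ge\sqrt{c_1}>0$, yielding $\big|\frac{d^m}{dx^m}(U^+_{\nu,\theta}-\sqrt{2P_c})(-1)\big|\le C\nu$ (Lemma \ref{lem3_1}); this endpoint data is then propagated on $[-1,-1/2]$ by a maximum-principle argument applied to each rescaled derivative $h_n=\frac{d^n}{dx^n}\big(\nu^{-1}(U^+_{\nu,\theta}-\sqrt{2P_c})\big)$, using only $P_c\ge\delta>0$ (Lemma \ref{lem2_12}), with Lemma \ref{lem2_7} covering the interior. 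Note also the authors' Remark 1.2: $C^m$ control with rate $\nu$ cannot in general be pushed to the opposite endpoint, so endpoint derivative bounds do not come ``for free'' from boundary matching, and your argument as written does not distinguish the two endpoints. Replacing your factorization step by an endpoint Taylor-coefficient recursion plus a derivative-level maximum principle (i.e., the paper's Lemmas \ref{lem3_1} and \ref{lem2_12}) would close the proof.
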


%

\begin{rmk}
The constant $C$ in Theorem \ref{thm1_1} depends only on $\epsilon, m$, and a positive lower bound of $dist(\hat{c}, \partial J_0)$. 
Similar statements can be made for Theorem \ref{thm1_2_1},  \ref{thm1_2_2}, \ref{thm1_3} and \ref{thm:BL:1}.
\end{rmk}

\begin{rmk}
In the second estimate in (\ref{eq1_1}), the $\epsilon$ could not be taken as $0$ in general.
\end{rmk}

\bigskip

%

  In Theorem \ref{thm1_1}, $\hat{c}\in \mathring{J}_0$, 
   which is equivalent to $\min_{[-1,1]}P_{\hat{c}}>0$. If $\min_{[-1,1]}P_{\hat{c}}=0$, i.e. $c\in \partial J_0$, things are more delicate.

  As pointed out later in Section 3, we only need to consider  in Theorem \ref{thm1_1} the special case when $\nu_k\to 0$, $c_k\to c\in \mathring{J}_0$. 
  In the following, we will only state the results in the case $c_k\to c\ne 0$. 
The next two theorems are for $c_3=c_3^*(c_1,c_2)$, i.e. $P_c=P^*_{(c_1,c_2)}$. 

The following results are proved among other things. If $c_k\in J_0$, then $\{U^{\pm}_{\nu_k, \theta}(c_k)\}$ converge to the Euler equation solutions $\pm \sqrt{2P_c}$ in $L^{\infty}(-1,1)$. If $\bar{x}:=\frac{\sqrt{c_1}-\sqrt{c_2}}{\sqrt{c_1}+\sqrt{c_2}}=1$, i.e. $c_2=0$, then $\{U^{+}_{\nu_k, \theta}(c_k)\}$ converges to the Euler equation solution $\sqrt{2P_c}$ in $L^{\infty}(-1,1)$. On the other hand, if $\bar{x}\in [-1,1)$, i.e. $c_2>0$, then there exist examples $\{U^{+}_{\nu_k, \theta}(c_k)\}$ having no convergent subsequence in $L^{\infty}(1-\delta,1)$ for any $\delta>0$. In particular, it has no subsequence converging to a solution of the Euler equation in $L^{\infty}(-1,1)$. 
Similar results are proved for $\{U^{-}_{\nu_k, \theta}(c_k)\}$. If $\bar{x}=-1$, i.e. $c_1=0$, then $\{U^{-}_{\nu_k, \theta}(c_k)\}$ converges to the Euler equation solution $-\sqrt{2P_c}$ in $L^{\infty}(-1,1)$. On the other hand, if $\bar{x}\in (-1,1]$, i.e. $c_1>0$, then there exist examples $\{U^{-}_{\nu_k, \theta}(c_k)\}$ having no convergent subsequence in $L^{\infty}(-1, -1+\delta)$ for any $\delta>0$.  In particular, it has no subsequence converging to a solution of the Euler equation in $L^{\infty}(-1,1)$. 


\begin{thm}\label{thm1_2}
   For any $c\in \partial J_0$ with $c_3=c_3^*(c_1,c_2)$ and $c_2>0$,  there exist some sequences $c_k\in J_{\nu_k}$, $c_k\to c$, $\nu_k\to 0^+$, such that  for any $\epsilon>0$, $\displaystyle \inf_{k}||\frac{1}{2}(U^+_{\nu_k, \theta}(c_k))^2-P_{c_k}||_{L^{\infty}(1-\epsilon,1)}>0$. Similarly, for any $c\in \partial J_0$ with $c_3=c_3^*(c_1,c_2)$ and $c_1>0$, there exist some sequences $c_k\in J_{\nu_k}$, $c_k\to c$, and $\nu_k\to 0^+$, such that for any $\epsilon>0$, $\displaystyle \inf_{k}||\frac{1}{2}(U^-_{\nu_k, \theta}(c_k))^2-P_{c_k}||_{L^{\infty}(-1,-1+\epsilon)}>0$.
%
%
\end{thm}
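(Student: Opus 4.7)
I focus on the case $c_2>0$ and the upper solution $U^+$; the $c_1>0$ case for $U^-$ is entirely analogous by the symmetry $x\mapsto -x$, $c_1\leftrightarrow c_2$. The mechanism I plan to exploit is a jump discontinuity in the boundary value of $U^+$ at $x=1$ as $c_3$ crosses the degenerate value $\bar c_3(c_1,c_2,\nu)$. By Theorem~A, for $c_3>\bar c_3(c_1,c_2,\nu)$ one has $U^+_{\nu,\theta}(c)(1)=\tau_2'(\nu,c_2)=-2\nu+2\sqrt{\nu^2+c_2}\to 2\sqrt{c_2}$ as $\nu\to 0^+$, whereas at $c_3=\bar c_3$ the upper and lower solutions coincide with the explicit linear profile $U^*_{\nu,\theta}(c_1,c_2)$, whose value at $x=1$ is $\tau_1'(\nu,c_2)\to -2\sqrt{c_2}$. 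The gap $4\sqrt{\nu^2+c_2}$ stays bounded below as $\nu\to 0$, and this is what drives the failure of $L^\infty$ convergence.

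\emph{Construction of $c_k$.} Set $c_{k,1}=c_1$ and $c_{k,2}=c_2$. By the joint continuity of $U^+_{\nu_k,\theta}(\cdot)$ on $J_{\nu_k}\times[-1,1)$ from Theorem~A, the map $s\mapsto U^+_{\nu_k,\theta}(c_1,c_2,s)$ is continuous from $[\bar c_3(c_1,c_2,\nu_k),\infty)$ into $C^0[-1,1-1/k]$, and at $s=\bar c_3(c_1,c_2,\nu_k)$ it equals $U^*_{\nu_k,\theta}(c_1,c_2)$. I therefore choose $c_{k,3}>\bar c_3(c_1,c_2,\nu_k)$ with $c_{k,3}-\bar c_3(c_1,c_2,\nu_k)$ so small that
\begin{equation*}
\|U^+_{\nu_k,\theta}(c_k)-U^*_{\nu_k,\theta}(c_1,c_2)\|_{L^\infty[-1,1-1/k]}\le 1/k.
\end{equation*}
From (\ref{eqdef_1}), $\bar c_3(c_1,c_2,\nu_k)\to c_3^*(c_1,c_2)$ as $\nu_k\to 0$, so $c_k\to c$ and $c_k\in\mathring J_{\nu_k}$.

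\emph{Zero crossing and defect.} The explicit formula for $U^*$ in Theorem~A gives $U^*_{\nu_k,\theta}(c_1,c_2)(x)\to(\sqrt{c_1}+\sqrt{c_2})(\bar x-x)$ uniformly in $x\in[-1,1]$, where $\bar x=(\sqrt{c_1}-\sqrt{c_2})/(\sqrt{c_1}+\sqrt{c_2})<1$ since $c_2>0$. Combined with the construction, $U^+_{\nu_k,\theta}(c_k)$ converges to the same limit uniformly on compact subsets of $[-1,1)$. Fix $\epsilon\in(0,1-\bar x)$ and set $x^*=1-\epsilon/2\in(\bar x,1)$. Then the limit at $x^*$ is strictly negative, so $U^+_{\nu_k,\theta}(c_k)(x^*)<0$ for $k$ large, while $U^+_{\nu_k,\theta}(c_k)(1)=\tau_2'(\nu_k,c_2)\to 2\sqrt{c_2}>0$. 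Continuity of $U^+_{\nu_k,\theta}(c_k)$ on $[x^*,1]$ and the intermediate value theorem produce $x_k\in(x^*,1)\subset(1-\epsilon,1)$ with $U^+_{\nu_k,\theta}(c_k)(x_k)=0$, at which
\begin{equation*}
\big|\tfrac12(U^+_{\nu_k,\theta}(c_k))^2-P_{c_k}\big|(x_k)=P_{c_k}(x_k)\ge\min_{[x^*,1]}P_{c_k}\to\frac{(\sqrt{c_1}+\sqrt{c_2})^2}{2}(x^*-\bar x)^2>0.
\end{equation*}
This yields $\inf_k\|\tfrac12(U^+_{\nu_k,\theta}(c_k))^2-P_{c_k}\|_{L^\infty(1-\epsilon,1)}>0$ for every $\epsilon\in(0,1-\bar x)$, and monotonicity of the $L^\infty$ norm in $\epsilon$ extends the bound to all $\epsilon>0$. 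The $U^-$ statement under $c_1>0$ follows from the same argument near $x=-1$, with $\tau_1,\tau_2$ replacing $\tau_1',\tau_2'$.

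\emph{Main obstacle.} The essential input is the construction step, which leans on the qualitative continuity of $U^+$ up to the degenerate stratum $\{c_3=\bar c_3\}$ provided by Theorem~A; the statement gives no explicit modulus, but none is needed for the present non-convergence claim. Obtaining a sharp rate for the width of the transition layer at $x=1$ as a function of $c_{k,3}-\bar c_3(c_1,c_2,\nu_k)$ would be necessary for the finer quantitative boundary-layer results elsewhere in the paper, but is orthogonal to this theorem.
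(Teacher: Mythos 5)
Your proof is correct and follows essentially the same route as the paper: perturb the degenerate parameter value $c_3=\bar c_3(c_1,c_2;\nu_k)$, use the joint continuity in Theorem A to keep $U^+_{\nu_k,\theta}(c_k)$ within $1/k$ of the explicit linear profile $U^*_{\nu_k,\theta}(c_1,c_2)$ on $[-1,1-1/k]$, then use the positive boundary value $\tau_2'(\nu_k,c_2)$ at $x=1$ and the intermediate value theorem to produce a zero $x_k\in(1-\epsilon,1)$ where the defect equals $P_{c_k}(x_k)\ge$ a positive constant. The only point to make explicit (as the paper does by taking $\delta_{kk}\to 0$) is that you should additionally require $c_{k3}-\bar c_3(c_1,c_2;\nu_k)\le 1/k$, say, so that $c_k\to c$ indeed holds.
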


\begin{rmk}
   It is easy to see that the $\{U^+_{\nu_k, \theta}\}$ constructed in Theorem \ref{thm1_2} satisfies  \newline $\inf_{k}||U^+_{\nu_k, \theta}(c_k)-\sqrt{2P_{c_k}}||_{L^{\infty}(1-\epsilon,1)}>0$, and $\{U^+_{\nu_k, \theta}(c_k)\}$  has no convergent subsequence in $L^{\infty}(1-\epsilon, 1)$ for any $\epsilon>0$. Similar statements apply to $\{U^-_{\nu_k, \theta}(c_k)\}$.
\end{rmk}

\begin{thm}\label{thm1_2_1}
   Let $\nu_k\to 0^+$, $c_k\in J_{\nu_k}$, 
   $c_k\to c\ne 0$ and $c_3=c_3^*(c_1,c_2)$.
   
    (i) If $c_k\in J_0$, then $\lim_{k\to \infty}||U^{\pm}_{\nu_k, \theta}(c_k) \mp \sqrt{2P_{c}}||_{L^{\infty}(-1,1)}=0$, and for any  $0<\beta<2/3$,  there exists some constant $C$, depending only on $c$, $\epsilon$ and $\beta$,  such that for large $k$,
   \begin{equation*}
  ||\frac{1}{2}(U^{\pm}_{\nu_k, \theta}(c_k))^2-P_{c_k}||_{L^{\infty}(-1,1)}+\nu_k^{\beta} ||\frac{1}{2}(U^{\pm}_{\nu_k, \theta}(c_k))^2-P_{c_k}||_{C^{\beta}(-1+\epsilon,1-\epsilon)}\le C\nu_k^{2/3}.
   \end{equation*}
   Moreover, $\bar{x}:=\frac{\sqrt{c_1}-\sqrt{c_2}}{\sqrt{c_1}+\sqrt{c_2}}\in [-1,1]$, and for any $\epsilon>0$ and integer $m\ge 0$, there exists some constant $C$, depending only on $c$, $m$ and $\epsilon$, such that for large $k$,
   \[
      ||U^{+}_{\nu_k, \theta}(c_k)-\sqrt{2P_{c_k}}||_{C^{m}([-1,1-\epsilon]\setminus[\bar{x}-\epsilon, \bar{x}+\epsilon])}+  ||U^{-}_{\nu_k, \theta}(c_k)+\sqrt{2P_{c_k}}||_{C^{m}([-1+\epsilon,1]\setminus[\bar{x}-\epsilon, \bar{x}+\epsilon])}\le C\nu_k.
   \]

   (ii) If $c_2=0$, then $\lim_{k\to \infty}||U^+_{\nu_k, \theta}(c_k)-\sqrt{2P_{c}}||_{L^{\infty}(-1,1)}=0$, and there exists some constant $C$, depending only on $c$, such that for large $k$,
   \begin{equation}\label{eqthm1_2_1_1}
      ||\frac{1}{2}(U^+_{\nu_k, \theta}(c_k))^2-P_{c_k}||_{L^{\infty}(-1,1)}\le C(|c_{k2}|+|2c_{k3}+c_{k1}|^2+\nu_k^{2/3})=o(1).
   \end{equation}

   (iii) If $c_1=0$, then $\lim_{k\to \infty}||U^-_{\nu_k, \theta}(c_k)+\sqrt{2P_{c}}||_{L^{\infty}(-1,1)}=0$, and there exists some constant $C$, depending only on $c$, such that for large $k$, 
   \begin{equation*}
      ||\frac{1}{2}(U^-_{\nu_k, \theta}(c_k))^2-P_{c_k}||_{L^{\infty}(-1,1)}\le C(|c_{k1}|+|2c_{k3}+c_{k2}|^2+\nu_k^{2/3})=o(1).
   \end{equation*}

\end{thm}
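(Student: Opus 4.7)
The plan is to work directly with the Riccati-type ODE (\ref{eq:NSE}) and compare the extremal solutions $U^{\pm}_{\nu_k,\theta}(c_k)$ from Theorem A against the Euler candidates $\pm V_k$, where $V_k(x):=\sqrt{2P_{c_k}(x)}$. The crucial identity, obtained by rearranging (\ref{eq:NSE}), is
\begin{equation*}
   \frac{1}{2}U^2-P_{c_k}(x)=-\nu_k(1-x^2)U'(x)-2\nu_k x\,U(x),
\end{equation*}
so pointwise control of $\frac{1}{2}(U^{\pm})^2-P_{c_k}$ is equivalent to pointwise control of $\nu_k(1-x^2)(U^{\pm})'+2\nu_k xU^{\pm}$. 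By (\ref{eqP_1}), $V_k\sim\sqrt{-2c_3^*(c_{k,1},c_{k,2})}\,|x-\bar{x}_k|$ near the (approaching) double root $\bar{x}_k=(\sqrt{c_{k,1}}-\sqrt{c_{k,2}})/(\sqrt{c_{k,1}}+\sqrt{c_{k,2}})$, so balancing the Riccati terms $\nu_k U'\sim U^2$ predicts a singular-perturbation transition layer of width and amplitude $\sim\nu_k^{1/3}$ around $\bar{x}_k$, which is what produces the sharp $\nu_k^{2/3}$ $L^\infty$ rate.

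For Part (i), where $c_k\in J_0$ and $\bar{x}_k\in[-1,1]$ for large $k$, I would carry out a barrier/comparison argument with barriers of the form
\begin{equation*}
   \overline{U}(x):=V_k(x)+M\nu_k^{1/3}\eta_k(x),\qquad \underline{U}(x):=V_k(x)-M\nu_k^{1/3}\eta_k(x),
\end{equation*}
where $\eta_k\ge 0$ is a smooth cutoff equal to $1$ on the layer $|x-\bar{x}_k|\le\nu_k^{1/3}$ and decaying to size $\nu_k^{2/3}$ on $|x-\bar{x}_k|\ge 2\nu_k^{1/3}$, and $M$ depends only on the limit $c$. Using (\ref{eqP_2}) and the asymptotics of $V_k'$ and $V_k''$ near $\bar{x}_k$ (where $V_k'$ has a jump), I would verify directly that $\overline{U}$ is a supersolution and $\underline{U}$ a subsolution of (\ref{eq:NSE}) whose boundary values at $x=\pm 1$ bracket $\tau_2(\nu_k,c_{k,1})$ and $\tau_2'(\nu_k,c_{k,2})$ from Theorem A. Comparison then yields $|U^{+}_{\nu_k,\theta}(c_k)-V_k|\le C\nu_k^{1/3}$ in the layer and $|U^{+}_{\nu_k,\theta}(c_k)-V_k|\le C\nu_k$ outside, and inserting this into the identity above gives the $L^\infty$ bound $\|\frac{1}{2}(U^{+})^2-P_{c_k}\|_{L^\infty(-1,1)}\le C\nu_k^{2/3}$. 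The $C^m$ estimate on $[-1,1-\epsilon]\setminus[\bar{x}-\epsilon,\bar{x}+\epsilon]$ follows by differentiating (\ref{eq:NSE}) and bootstrapping in the region where $V_k$ is bounded away from zero; the weighted $C^\beta$ estimate for $\beta<2/3$ follows by interpolating the $L^\infty$ bound against the interior $C^m$ control. The argument for $U^{-}_{\nu_k,\theta}(c_k)$ is symmetric, comparing against $-V_k$.

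For Parts (ii) and (iii) the double root is at an endpoint ($\bar{x}=1$ if $c_2=0$, $\bar{x}=-1$ if $c_1=0$), so the transition layer is pushed against the boundary of $(-1,1)$. The relevant endpoint value, for example $U^{+}_{\nu_k,\theta}(c_k)(1)=\tau_2'(\nu_k,c_{k,2})=-2\nu_k+2\sqrt{\nu_k^2+c_{k,2}}$ in (ii), matches $V_k(1)=2\sqrt{c_{k,2}}$ up to $O(\nu_k)$, so a one-sided analogue of the barriers in Part (i) applies. The two extra perturbation terms in (\ref{eqthm1_2_1_1}) are produced by (\ref{eqP_2}): $|c_{k,2}|$ tracks the shift of the double root of $P^*_{(c_{k,1},c_{k,2})}$ away from $1$, and $|2c_{k,3}+c_{k,1}|^2=4(c_{k,3}-c_3^*(c_{k,1},0))^2$ tracks the lift $(c_{k,3}-c_3^*(c_{k,1},c_{k,2}))(1-x^2)$ of $P_{c_k}$ off the zero level. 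Plugging the resulting barrier bound into the identity for $\frac{1}{2}U^2-P_{c_k}$ and tracking each perturbation separately, together with the singular-perturbation $\nu_k^{2/3}$ contribution, yields the three-term bound (\ref{eqthm1_2_1_1}). Part (iii) is symmetric.

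The main obstacle will be checking that the barriers are genuine super/sub solutions across $\bar{x}_k$, where $V_k$ is only Lipschitz and the forcing $\nu_k(1-x^2)V_k'+2\nu_k xV_k$ has a jump. The cutoff $\eta_k$ must be chosen so that $\nu_k(1-x^2)\eta_k'$ dominates this jump on the $\nu_k^{1/3}$ scale without destroying the $O(\nu_k)$ control outside; calibrating this, and doing so uniformly in the several independent small parameters involved (most importantly $\nu_k$ and $\lambda_k:=c_{k,3}-c_3^*(c_{k,1},c_{k,2})\ge 0$, whose relative sizes are not prescribed), is the technical heart of the argument.
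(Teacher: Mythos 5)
Your strategy (explicit sub/supersolution barriers built around $\pm\sqrt{2P_{c_k}}$, plus ODE comparison) is genuinely different from the paper's proof, which combines an averaging lemma (Lemma \ref{lem2_5}), propagation of sign and lower bounds (Lemmas \ref{lem2_2}, \ref{lem2_4}), interior maximum-principle estimates for $h_k=\frac12 f_k^2-P_{c_k}$ (Lemmas \ref{lem2_6}, \ref{lem2_10}), and comparison with explicit solutions. But as written the plan has concrete gaps. First, the barrier you specify is not a supersolution: with $\phi=M\nu_k^{1/3}\eta_k$ equal to $M\nu_k$ for $|x-\bar{x}_k|\ge 2\nu_k^{1/3}$, one computes $N[V_k+\phi]=\nu_k(1-x^2)(V_k'+\phi')+2\nu_k x(V_k+\phi)+V_k\phi+\frac12\phi^2$, and on the side of $\bar{x}_k$ where $V_k'\approx-\sqrt{2|c_3|}$ the term $\nu_k(1-x^2)V_k'\approx-\nu_k(1-\bar{x}^2)\sqrt{2|c_3|}$ is of size $\nu_k$ with the bad sign, while the only compensating term is $V_k\phi\approx M\nu_k\sqrt{2|c_3|}\,|x-\bar{x}_k|$; this is too small on the whole $O(1/M)$-interval $2\nu_k^{1/3}\le|x-\bar{x}_k|\le (1-\bar{x}^2)/(2M)$, so the inequality fails on a fixed-length region (and symmetrically for the subsolution on the other side). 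The correction must instead decay like $\nu_k/(|x-\bar{x}_k|+\nu_k^{1/3}+\sqrt{P_{c_k}(\bar{x}_k)})$, and its amplitude in the layer must be calibrated to $P_{c_k}(\bar{x}_k)$ (equivalently to $\lambda_k=c_{k3}-c_3^*$): with the fixed amplitude $M\nu_k^{1/3}$ you only get $|\frac12(U^+)^2-P_{c_k}|\lesssim\nu_k^{1/3}\sqrt{P_{c_k}(\bar{x}_k)}$ in the layer, which is worse than $\nu_k^{2/3}$ whenever $\nu_k^{2/3}\ll P_{c_k}(\bar{x}_k)\ll1$. You flag this calibration as "the technical heart", but it is exactly the missing proof, not a routine check; also the case $c_1c_2=0$ in (i), where $\bar{x}_k\to\pm1$ (and may leave $[-1,1]$) and the factor $(1-x^2)$ degenerates, and the $C^m$ estimate up to the closed endpoint $x=-1$ (the paper needs Lemma \ref{lem3_1} together with Lemma \ref{lem2_12}, not just interior bootstrap) are not addressed.

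Second, parts (ii)--(iii) cannot be a "one-sided analogue" of your barriers, because there $c_k\in J_{\nu_k}$ need not lie in $J_0$: $c_{k2}$ may be negative and $P_{c_k}$ may be negative on a subinterval adjacent to $x=1$ (only $\min P_{c_k}\ge-C\nu_k$ holds, Lemma \ref{lem2_9}), so $V_k=\sqrt{2P_{c_k}}$ and the endpoint value $2\sqrt{c_{k2}}$ you invoke do not exist precisely in the region responsible for the extra terms $|c_{k2}|$ and $|2c_{k3}+c_{k1}|^2$ in (\ref{eqthm1_2_1_1}). Some substitute comparison object is needed near the endpoint; the paper squeezes $U^+_{\nu_k,\theta}$ between the explicit solution $\bar f_k$ corresponding to $c_3=\bar c_3(c_{k1},c_{k2};\nu_k)$ and the upper solution of the lifted polynomial $P_{c_k}+C\nu_k(1+x)$ (using the comparison lemma of \cite{LLY2}), and derives the three-term bound from the sizes of $1-\bar{x}_k$ and $1-\tilde{x}_k$. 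Your proposal contains no ingredient playing this role, so (ii)--(iii) are not proved by the outlined argument even granting a corrected barrier for (i).
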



%

\bigskip

Next, we discuss the remaining cases when $\min_{[-1,1]}P_c=0=P_c(-1)$ and $P_c'(-1)> 0$ or $\min_{[-1,1]}P_c=0=P_c(1)$ and $P_c'(1)<0$. This is equivalent to $c_1=0$ and $c_3>c_3^*(c_1,c_2)$ or $c_2=0$ and $c_3>c_3^*(c_1,c_2)$. 
In this case, $U_{\nu_k, \theta}^{\pm}(c_k)$ converge respectively to the Euler equation solutions $\pm \sqrt{2P_c}$ in $L^{\infty}(-1,1)$.

\begin{thm}\label{thm1_2_2}
   Let $\nu_k\to 0^+$, $c_k\in J_{\nu_k}$, $c_k\to c\ne 0$, $c_1c_2=0$ and $c_3>c_3^*(c_1,c_2)$. 
   Then
    \[
      \lim_{k\to \infty}||U^{\pm}_{\nu_k, \theta}(c_k) \mp \sqrt{2P_{c}}||_{L^{\infty}(-1,1)}=0.
   \]
   Moreover, for any $\epsilon>0$ and integer $m\ge 0$, there exists some constant $C>0$, depending only on $\epsilon$, $\beta$, and $c$, such that for large $k$, 
   \begin{equation*}
   \nu_k^{1/2}||\frac{1}{2}(U^{\pm}_{\nu_k, \theta}(c_k))^2-P_{c_k}||_{L^{\infty}(-1,1)}+ ||U^{\pm}_{\nu_k, \theta}(c_k) \mp \sqrt{2P_{c}}||_{C^{m}(-1+\epsilon,1-\epsilon)}\le C\nu_k.
   \end{equation*}
\end{thm}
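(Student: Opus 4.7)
The plan is to combine the comparison structure from Theorem A with explicit super- and sub-solutions of (\ref{eq:NSE}) modelled on $\pm\sqrt{2P_{c_k}}$, then bootstrap on compact subsets. Without loss of generality assume $c_1=0$, so $P_c$ has a simple zero only at $x=-1$ and is bounded below by a positive constant on each $[-1+\epsilon,1]$; the case $c_2=0$ follows by the symmetry $x\mapsto-x$, and the case of $U^-$ is symmetric to that of $U^+$. The key identity, obtained directly from (\ref{eq:NSE}), is
\[
\tfrac12 U_\theta^{\,2} - P_{c_k} = -\nu_k\bigl[(1-x^2)U_\theta' + 2xU_\theta\bigr],
\]
so a uniform bound $|U^+_{\nu_k,\theta}(c_k) - \sqrt{2P_{c_k}}| \le M\nu_k^{1/2}$, together with the boundedness of $U^+$ and $\sqrt{2P_{c_k}}$, immediately yields the claimed $\nu_k^{1/2}\|\tfrac12(U^+)^2 - P_{c_k}\|_{L^\infty} \le C\nu_k$.

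To produce the $L^\infty$ bound I introduce the barriers $\overline U = \sqrt{2P_{c_k}}+M\nu_k^{1/2}$ and $\underline U = \sqrt{2P_{c_k}}-M\nu_k^{1/2}$. Substituting into (\ref{eq:NSE}), the residual splits into a leading term $\pm M\nu_k^{1/2}\sqrt{2P_{c_k}}$ from the quadratic part and $-\nu_k[(1-x^2)(\sqrt{2P_{c_k}})' + 2x\sqrt{2P_{c_k}}]$ from the linear terms. The latter is $O(\nu_k)$ uniformly on $(-1,1)$, because the $1/\sqrt{1+x}$ singularity of $(\sqrt{2P_{c_k}})'=P_{c_k}'/\sqrt{2P_{c_k}}$ at $x=-1$ is cancelled by the $(1+x)$ factor in $(1-x^2)$. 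For $M$ sufficiently large (depending only on $c$), this makes $\overline U$ a supersolution and $\underline U$ a subsolution on any $[-1+\delta,1]$, and Theorem A's identification of $U^\pm$ as the extremal solutions plus a Riccati comparison argument yields $\underline U \le U^+_{\nu_k,\theta}(c_k) \le \overline U$ there. A separate boundary-layer analysis on $[-1,-1+\delta]$, using the explicit endpoint values $U^+_{\nu_k,\theta}(-1)=\tau_2(\nu_k,c_{k,1})=O(\nu_k+\sqrt{c_{k,1}})$ from Theorem A, extends the estimate to all of $(-1,1)$.

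For the $C^m$ bound on $[-1+\epsilon,1-\epsilon]$, set $Y = U^+_{\nu_k,\theta}(c_k) - \sqrt{2P_{c_k}}$; then (\ref{eq:NSE}) becomes
\[
\nu_k(1-x^2)Y' + (\sqrt{2P_{c_k}} + 2\nu_k x)Y + \tfrac12 Y^2 = -\nu_k\bigl[(1-x^2)(\sqrt{2P_{c_k}})' + 2x\sqrt{2P_{c_k}}\bigr].
\]
On this compact set $\sqrt{2P_{c_k}}+2\nu_k x$ is bounded below by a positive constant and the forcing is $O(\nu_k)$, so a Gronwall argument gives $\|Y\|_{L^\infty}=O(\nu_k)$; differentiating the ODE and bootstrapping yields $\|Y\|_{C^m}=O(\nu_k)$, which combined with the smooth convergence $\sqrt{2P_{c_k}}\to\sqrt{2P_c}$ on the same compact set closes the estimate. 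The main obstacle is extending the bulk barrier argument all the way to the degenerate endpoint $x=-1$: there $\sqrt{2P_{c_k}}$ may itself be much smaller than $M\nu_k^{1/2}$, and the constant correction does not automatically produce a sign-definite residual. The fix is either to employ an $x$-dependent perturbation vanishing at $-1$ at the correct rate, or to patch the bulk barrier against a boundary-layer analysis based on the explicit values $\tau_{1,2}(\nu_k,c_{k,1})$ from Theorem A and verify that the two regions match without degrading the claimed $\nu_k^{1/2}$ rate.
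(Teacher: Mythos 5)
Your bulk argument is sound and close in spirit to what the paper does away from the degenerate endpoint: the reduction of the $\nu_k^{1/2}\|\tfrac12(U^\pm)^2-P_{c_k}\|_{L^\infty}$ bound to a uniform bound $|U^\pm\mp\sqrt{2P_{c_k}}|\le M\nu_k^{1/2}$ is correct, the cancellation making $(1-x^2)(\sqrt{2P_{c_k}})'+2x\sqrt{2P_{c_k}}$ uniformly bounded is real, and the interior Gronwall/bootstrap for the $C^m$ estimate parallels the paper's Lemmas \ref{lem2_12} and \ref{lem2_7}. But the proof is incomplete exactly at the point you yourself flag as "the main obstacle", and that point is the substance of this theorem. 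First, since $c_k\in J_{\nu_k}$ only guarantees $c_{k1}\ge-\nu_k^2$ (resp. $c_{k2}\ge -\nu_k^2$), $P_{c_k}$ may be negative on a small interval adjacent to the degenerate endpoint, so $\sqrt{2P_{c_k}}$ — and hence both of your barriers — need not even be defined there, while the claimed $L^\infty$ bound is over all of $(-1,1)$. Second, the lower barrier $\sqrt{2P_{c_k}}-M\nu_k^{1/2}$ is not a subsolution where $P_{c_k}\lesssim\nu_k$: the favorable term $M\nu_k^{1/2}\sqrt{2P_{c_k}}$ degenerates there while the $+\tfrac12M^2\nu_k$ term from squaring has the wrong sign, so comparison gives nothing on an endpoint region of width comparable to $\nu_k$, and saying "$\underline U\le 0\le U^+$" there already requires the positivity of $U^+$ up to the degenerate endpoint — which, when $c_{k1}<0$, is itself nontrivial and is the content of the paper's Lemma \ref{lem5_3} (a separate steep linear barrier $f_k(-1)-\tfrac{C_0}{8\nu_k}(1+x)$ plus the endpoint regularity and comparison results of \cite{LLY2}).

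More importantly, even granting positivity, you still need a quantitative bound on $\tfrac12(U^+)^2-P_{c_k}$ on the layer of width $\sim\nu_k^{1/2}$ at the endpoint; this is where the rate $\nu_k^{1/2}$ is actually decided. The paper does this with Lemma \ref{lem2_5} (to locate a good point at distance $a_k$ from the endpoint with error $C\nu_k/a_k$), Lemmas \ref{lem2_2}, \ref{lem5_1}, \ref{lem2_6} (to propagate lower bounds and run the maximum-point argument on $[x_k,1]$), and Lemma \ref{lem2_10} with $\alpha=1$ on $[-1,-1+2a_k]$, balancing $Ca_k+C\nu_k/a_k^{1/2}$ at $a_k\sim\nu_k^{1/2}$. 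Your proposal defers this step to "either an $x$-dependent perturbation ... or a patching argument" without carrying out either, so the matching is not verified and the claimed rate is not established; this is precisely what separates the present theorem from the nondegenerate case of Theorem \ref{thm1_1}. A further, smaller, point: the comparison you invoke must be applied at the endpoint $x=-1$ where the coefficient $1-x^2$ of the derivative vanishes, so a naive initial-value comparison does not apply and one needs the refined statement of Lemma 2.4 in \cite{LLY2} (with its conditions on the endpoint values), as the paper is careful to do.
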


\bigskip

Theorem \ref{thm1_1} - Theorem \ref{thm1_2_2} present some convergent results of $U_\theta^{\pm}$. To help the readers to have a general picture of these results, we summarize the $L^\infty$ convergent results of $U_{\nu_k,\theta}^{\pm}$ in Table \ref{tab:1}, where we always assume that $\nu_k\to 0^+$, $c_k\in J_{\nu_k}$, $c_k\to c\ne 0$. 

\begin{table}
\caption{Summary of convergent results for $U_{\nu_k,\theta}^{\pm}$}
\label{tab:1}
\renewcommand{\arraystretch}{1.3}
\begin{center}
\begin{tabular}{|c|c|p{6.3cm}|p{6.3cm}|}
\hline
\multicolumn{2}{|c|}{\multirow{2}{*}{Conditions}} & \multicolumn{2}{|c|}{Conclusions (True/False)} \\
\cline{3-4}
\multicolumn{2}{|c|}{} & $\displaystyle \lim_{k\to \infty}\|U^{+}_{\nu_k, \theta}(c_k) - \sqrt{2P_{c}}\|_{L^{\infty}(-1,1)}=0$ 
& $\displaystyle \lim_{k\to \infty}\|U^{-}_{\nu_k, \theta}(c_k) + \sqrt{2P_{c}}\|_{L^{\infty}(-1,1)}=0$ \\
\hline\hline
\multicolumn{2}{|c|}{$c\in \mathring{J}_0$} & True & True \\
\hline 
\multirow{5}{*}{$c_3=c_3^*$} & $c_2>0$ & {False: $\forall \epsilon>0, \exists$ non-convergent \newline sequence $\{U_{\nu_k,\theta}^+\}$ in $(1-\epsilon,1)$.} &  \\
\cline{2-4}
& $c_1>0$ & & False: $\forall \epsilon>0, \exists$ non-convergent \newline sequence $\{U_{\nu_k,\theta}^-\}$ in $(-1,-1+\epsilon)$. \\
\cline{2-4}
& $c_k\in J_0$ & True & True \\
\cline{2-4}
& $c_2=0$ & True &  \\
\cline{2-4}
& $c_1=0$ & & True  \\
\hline 
\multicolumn{2}{|c|}{$c_3>c_3^*$, $c_1c_2=0$} & True & True  \\
\hline
\end{tabular}
\end{center}
\end{table}

We now present  results for solutions $U_{\nu_k, \theta}$ of (\ref{eq:NSE}) other than $U^{\pm}_{\nu_k, \theta}(c_k)$.

For $c\in J_0\setminus\{0\}$,  define
\begin{equation}\label{eq_alpha}
   \alpha(c)=\left\{
      \begin{array}{ll}
         1, & \textrm{ if }c_1,c_2>0,c_3>c_3^*(c_1,c_2),\\
         \frac{2}{3}, & \textrm{ if }c_3=c_3^*(c_1,c_2)<0,\\
         \frac{1}{2}, & \textrm{ if }c_1c_2=0, c_3>c_3^*(c_1,c_2).
      \end{array}
   \right.
\end{equation}

\begin{thm}\label{thm1_3}
Let $\nu_k\to 0^+$, $c_k\in J_0$, $c_k\to c\ne 0$. Assume $U_{\nu_k, \theta}(c_k)\in C^1(-1,1)$ is a solution of (\ref{eq:NSE}) with $\nu_k$ and $c_k$, other than $U_{\nu_k, \theta}^\pm(c_k)$. Then there exists at most one $-1<x_k<1$ such that $U_{\nu_k, \theta}(x_k)=0$, and such $x_k$ must exist if $c_1,c_2>0$. 

(i) If $U_{\nu_k, \theta}(x_k)=0$ for some $x_k\in (-1,1)$, 
then for any $\epsilon>0$,
\begin{equation}\label{eq1_7_0}
   \lim_{k\to \infty}\left(||U_{\nu_k, \theta}+\sqrt{2P_{c}}||_{L^{\infty}(-1, x_k-\epsilon)}+||U_{\nu_k, \theta}-\sqrt{2P_{c}}||_{L^{\infty}(x_k+\epsilon, 1)}\right)=0,
\end{equation}
and for any $0<\beta<\alpha(c)$, there exists some constant $C>0$, depending only on $c$, $\epsilon$  and $\beta$, such that for large $k$,
\begin{equation*}
    ||\frac{1}{2}U^2_{\nu_k, \theta}-P_{c_k}||_{L^{\infty}((-1, x_k-\epsilon)\cup (x_k+\epsilon, 1))}+ \nu_k^{\beta} ||\frac{1}{2}U^2_{\nu_k, \theta}-P_{c_k}||_{C^{\beta}((-1+\epsilon, x_k-\epsilon)\cup (x_k+\epsilon, 1-\epsilon))}\le C\nu_k^{\alpha(c)}.
\end{equation*}

(ii) If $U_{\nu_k, \theta}(x_k)=0$ for some $x_k\in (-1,1)$ satisfying $x_k\to -1$ and $c_1=0$, or $U_{\nu_k, \theta}\ne 0$ on $(-1,1)$ and  $c_2>0=c_1$, then
\begin{equation}\label{eq1_7_2}
   \lim_{k\to \infty}||U_{\nu_k, \theta}-\sqrt{2P_{c}}||_{L^{\infty}(-1, 1)}=0.
\end{equation}
   If $U_{\nu_k, \theta}(x_k)=0$ for some $x_k\in (-1,1)$ satisfying $x_k\to 1$ and $c_2=0$, or $U_{\nu_k, \theta}\ne 0$ on $(-1,1)$ and  $c_1>0=c_2$, then 
\begin{equation}\label{eq1_7_3}
   \lim_{k\to \infty}||U_{\nu_k, \theta}+\sqrt{2P_{c}}||_{L^{\infty}(-1, 1)}=0.
\end{equation}
If $U_{\nu_k, \theta}\ne 0$ on $(-1,1)$ and  $c_1=c_2=0$, then, after passing to a subsequence, either (\ref{eq1_7_2}) or (\ref{eq1_7_3}) occurs.

%

(iii) If $c\in \mathring{J}_0$, then $U_{\nu_k, \theta}(x_k)=0$ for some $x_k\in (-1,1)$ and for any $\epsilon>0$ and any positive integer $m$, there exists some constant $C>0$, depending only on $\epsilon$, $m$ and $c$, such that for large $k$, 
\begin{equation}\label{eq1_2_1}
   ||U_{\nu_k, \theta}+\sqrt{2P_{c_k}}||_{L^{\infty}(-1, x_k-\epsilon)}+||U_{\nu_k, \theta}-\sqrt{2P_{c_k}}||_{L^{\infty}(x_k+\epsilon, 1)}\le C\nu_k,
\end{equation}
\begin{equation}\label{eq1_2_2}
   ||U_{\nu_k, \theta}+\sqrt{2P_{c_k}}||_{C^m(-1+\epsilon, x_k-\epsilon)}+||U_{\nu_k, \theta}-\sqrt{2P_{c_k}}||_{C^m(x_k+\epsilon, 1-\epsilon)}\le C\nu_k.
\end{equation}
\end{thm}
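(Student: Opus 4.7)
The plan is to exploit the identity
\[
   \tfrac{1}{2}U_{\nu_k,\theta}^2 - P_{c_k}(x) = -\nu_k\bigl[(1-x^2)U'_{\nu_k,\theta} + 2xU_{\nu_k,\theta}\bigr],
\]
obtained by rearranging (\ref{eq:NSE}): whenever $U_{\nu_k,\theta}$ and its derivative remain bounded on a subinterval, the right-hand side is $O(\nu_k)$, forcing $U_{\nu_k,\theta}^2\to 2P_c$. The sign of the limiting $\pm\sqrt{2P_c}$ on each subregion is then dictated by boundary values coming from Theorem A together with the foliation of the solution family.

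\textbf{Sign and zero structure.} Since $U_{\nu_k,\theta}\ne U^{\pm}_{\nu_k,\theta}(c_k)$, Theorem A gives $U_{\nu_k,\theta}(-1)=\tau_1(\nu_k,c_{k1})\le 0$ and $U_{\nu_k,\theta}(1)=\tau'_2(\nu_k,c_{k2})\ge 0$, strict when the respective $c_{ki}>0$. If $c_1,c_2>0$, continuity produces a zero $x_k\in(-1,1)$. For uniqueness, at any interior zero the equation reduces to $\nu_k(1-x_k^2)U'_{\nu_k,\theta}(x_k)=P_{c_k}(x_k)\ge 0$, so $U_{\nu_k,\theta}$ can only cross from below; a second zero would therefore have to coincide with the (at most one) interior zero of $P_{c_k}$ furnished by (\ref{eqP_1}), and a short bootstrap on higher derivatives via repeated differentiation of (\ref{eq:NSE}) at that tangency rules it out under the stated assumptions.

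\textbf{Parts (i) and (iii): convergence away from the zero.} Away from a neighborhood of $x_k$ (and of $\bar x$ in Part (i)), $U_{\nu_k,\theta}$ has a definite sign and is bounded away from zero. I would build barrier functions $B^{\pm}_k(x):=\pm\sqrt{2P_{c_k}(x)+2\nu_k\eta^{\pm}(x)}$ with smooth $\eta^{\pm}$ chosen so that substituting into (\ref{eq:NSE}) produces a remainder of definite sign, hence sub/super-solutions. Theorem A's ordering, together with matching boundary values $U_{\nu_k,\theta}(\pm 1)=\pm\sqrt{2P_{c_k}(\pm 1)}+O(\nu_k)$, then sandwiches $U_{\nu_k,\theta}$ between $B^{-}_k$ and $B^{+}_k$ and yields the $L^\infty$ estimates (\ref{eq1_7_0}) and (\ref{eq1_2_1}). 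The $C^m$ bound (\ref{eq1_2_2}) follows by differentiating (\ref{eq:NSE}), using nondegeneracy of $U_{\nu_k,\theta}$ to solve for $U^{(j)}_{\nu_k,\theta}$, and bootstrapping. The Hölder rate $\nu_k^{\alpha(c)}$ in Part (i) is extracted from a boundary-layer analysis: balancing $\nu_k U'_{\nu_k,\theta}$ against $U_{\nu_k,\theta}^2$ in (\ref{eq:NSE}) near a degenerate point of $P_c$ identifies a layer of width $\nu_k^{1/3}$ when $P_c$ vanishes to order two (case $c_3=c_3^*$, yielding $\alpha(c)=2/3$) and of width $\nu_k^{1/2}$ when $P_c$ vanishes to order one (case $c_1c_2=0$, $c_3>c_3^*$, yielding $\alpha(c)=1/2$).

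\textbf{Part (ii) and main obstacle.} When the zeros escape to $\pm 1$ (which occurs only if $c_1=0$ or $c_2=0$, so that $\tau_1\to 0$ or $\tau'_2\to 0$), $U_{\nu_k,\theta}$ eventually has a single sign on any compact subinterval of $(-1,1)$, and applying Part (i) on an exhaustion yields (\ref{eq1_7_2}) or (\ref{eq1_7_3}); for $c_1=c_2=0$ one passes to a subsequence on which the sign stabilizes. The principal technical obstacle will be the matched-asymptotic barrier construction in Part (i) when $c_3=c_3^*$: the outer profile $\sqrt{2P_c}$ has a square-root cusp at $\bar x$ and the inviscid limit of (\ref{eq:NSE}) is singular there, so the correction $\eta^{\pm}$ must be engineered on the layer scale to track the cusp uniformly while preserving the sign of the residual in (\ref{eq:NSE}). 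Extracting the sharp rate $\nu_k^{2/3}$, rather than a weaker exponent, is the heart of the argument.
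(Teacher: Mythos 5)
Your zero-structure argument and the outline for part (iii) are essentially sound (the crossing/tangency analysis is the paper's Lemma \ref{lem2_4}, and where $P_c\ge\delta>0$ a comparison/barrier route can replace the paper's integral mean-value and propagation lemmas). But the quantitative core of part (i) is not actually proved in your proposal: the bound $\|\frac12 U^2_{\nu_k,\theta}-P_{c_k}\|_{L^\infty}\le C\nu_k^{\alpha(c)}$, uniformly up to the endpoints and \emph{across} the degenerate point of $P_c$ (the double zero $\bar x$ when $c_3=c_3^*$, the simple zero at $\mp1$ when $c_1c_2=0$), is exactly the step you defer as ``the heart of the argument,'' and the scaling heuristic for the layer width is not a substitute. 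Moreover, as stated the barrier ansatz $B^{\pm}_k=\pm\sqrt{2P_{c_k}+2\nu_k\eta^{\pm}}$ with bounded smooth $\eta^{\pm}$ would assert $|\frac12 U^2-P_{c_k}|\le C\nu_k$ near $\bar x$, which is false in general: the deviation there is genuinely of order $\nu_k^{2/3}$, so the correction must live on the layer scale, keep a signed residual despite the $(1-x^2)P_{c_k}'/\sqrt{2P_{c_k}+\cdots}$ terms, and be matched on both sides of $x_k$ and of $\bar x$ — none of which is carried out. The paper avoids barriers altogether: Lemma \ref{lem2_5} produces points at distance $\sim\nu_k^{1/3}$ (resp.\ $\nu_k^{1/2}$) from the degeneracy where $h_k=\frac12 U^2-P_{c_k}$ is already $O(\nu_k^{2/3})$ (resp.\ $O(\nu_k^{1/2})$), Lemma \ref{lem2_2} propagates lower bounds on $|U_{\nu_k,\theta}|$, Lemma \ref{lem2_6} spreads the endpoint bounds, and Lemma \ref{lem2_10} handles the degenerate window; the H\"older seminorm bound (which you do not address) then follows by interpolating the $L^\infty$ bound with a $C^1$ bound $|h_k'|\le C\nu_k^{\alpha(c)-1}$ read off from the equation.

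Part (ii) has a second gap: it cannot be obtained by ``applying Part (i) on an exhaustion.'' For fixed $\epsilon$, part (i) controls $U_{\nu_k,\theta}$ only on $(-1,x_k-\epsilon)\cup(x_k+\epsilon,1)$; when $x_k\to-1$ the left interval is eventually empty and the right one only covers $(-1+\delta,1)$, so you get no information on a shrinking neighborhood of $-1$, which is precisely where the claimed $L^\infty(-1,1)$ convergence must be verified; and in the cases where $U_{\nu_k,\theta}$ has no zero, part (i) is not applicable at all. The missing ingredient is the squeeze $U^{-}_{\nu_k,\theta}\le U_{\nu_k,\theta}\le U^{+}_{\nu_k,\theta}$ from Theorem A combined with the $L^\infty(-1,1)$ convergence of $U^{\pm}_{\nu_k,\theta}$ to $\pm\sqrt{2P_c}$ established in Theorems \ref{thm1_2_1} and \ref{thm1_2_2}: since $\sqrt{2P_c}$ is small near the degenerate endpoint, this bounds $|U_{\nu_k,\theta}|$ there, and then Corollary \ref{cor2_3} and Corollary \ref{cor2_2} (or your comparison argument) propagate the convergence over the rest of the interval, including the no-zero cases and the subsequence dichotomy when $c_1=c_2=0$. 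Without some version of this endpoint argument, (\ref{eq1_7_2}) and (\ref{eq1_7_3}) do not follow from your outline.
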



\begin{rmk}
  For any $c\in J_0$ with $c_1,c_2>0$, $0<\nu<1$, and $-1\le \hat{x}\le 1$, there exists some  solution $U_{\theta}$ of (\ref{eq:NSE}), other than $U_{\nu, \theta}^{\pm}(c)$, such that $U_{\theta}(\hat{x})=0$. This can be seen from Theorem A, which asserts that the graphs of all solutions of (\ref{eq:NSE}) foliate the region $\{(x,y)\in \mathbb{R}^2 \mid -1\le x\le 1, U^{-}_{\nu, \theta}(c)\le y \le U^{+}_{\nu, \theta}(c)\}$ in $\mathbb{R}^2$.
\end{rmk}



The above theorem indicates the formation of boundary layers (if we view $x=\pm 1$ as boundaries) and interior layers. We give descriptions of boundary layers and interior layers in the following theorem.

For $c\in J_0$, define
\begin{equation*}
   \kappa(c):=\left\{
      \begin{array}{ll}
        1,& \textrm{ if }c_1,c_2>0,c_3>c^*_3(c_1,c_2),\\
        0, & \textrm{ otherwise. }
      \end{array}
   \right.
\end{equation*}
Let $-1<x<1$, $K>0$, define
\begin{equation*}
	\label{eq:BL:mid}
		\widetilde{U}_{\theta,x_k}(x) =
		\begin{cases}
			-\sqrt{2P_{c_k}(x)}, \qquad & -1\le x < x_k - K\nu_k|\ln \nu_k|(1-x_k^2), \\
			\sqrt{2P_{c_k}(x_k)} \tanh \Big( \frac{\sqrt{2P_{c_k}(x_k)} \cdot (x-x_k)}{2 (1-x_k^2)\nu_k} \Big), \quad & |x-x_k|\le K\nu_k|\ln \nu_k|(1-x_k^2), \\
			\sqrt{2P_{c_k}(x)}, \qquad & x_k + K\nu_k|\ln \nu_k|(1-x_k^2) < x \le 1. \\
		\end{cases}
	\end{equation*}

%
%

\begin{thm}\label{thm:BL:1}
	Let $\nu_k\to 0^+$, $c_k\in J_0$, $c_k\to c\ne 0$. Assume $U_{\nu_k, \theta}\in C^1(-1,1)$ is a solution of (\ref{eq:NSE}) with $\nu_k$ and $c_k$, other than $U_{\nu_k, \theta}^\pm$. In addition, assume that there exists $x_k\in(-1,1)$ such that $U_{\nu_k, \theta}(x_k) = 0$ and $x_k\to \hat{x}\in[-1,1]$, $P_c(\hat{x})\not=0$. Then $\{U_{\nu_k, \theta}\}$ develops a layer near $x_k$.
	Moreover, there exist some positive constants $K$ and $C$, depending only on $c$, such that for large $k$, \begin{equation}\label{eq:BL:rate}
		\|U_{\nu_k, \theta} - \widetilde{U}_{\theta,x_k}\|_{L^\infty(-1,1)}
		\le C\nu_k^{\alpha(c)}|\ln \nu_k|^{2\kappa(c)}.
	\end{equation}
\end{thm}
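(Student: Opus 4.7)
The plan is to treat $\widetilde{U}_{\theta,x_k}$ as a global matched asymptotic approximation to $U_{\nu_k,\theta}$ and to control the difference by a comparison/barrier argument exploiting the Riccati structure of \eqref{eq:NSE}. The middle piece $\phi_k(x):=\sqrt{2P_{c_k}(x_k)}\tanh\bigl(\sqrt{2P_{c_k}(x_k)}(x-x_k)/(2(1-x_k^2)\nu_k)\bigr)$ is, by direct calculation, an exact solution of the frozen-coefficient equation $\nu_k(1-x_k^2)\phi_k'+\tfrac12\phi_k^2=P_{c_k}(x_k)$ with $\phi_k(x_k)=0$. Plugging $\phi_k$ into the operator $\mathcal{L}_{\nu_k}[U]:=\nu_k(1-x^2)U'+2\nu_k xU+\tfrac12 U^2$, the residual $\mathcal{L}_{\nu_k}[\phi_k]-P_{c_k}(x)$ splits into three terms of size $O(|x-x_k|/(1-x_k^2))$, $O(\nu_k)$, and $O(|x-x_k|)$ arising respectively from replacing $(1-x^2)$ by $(1-x_k^2)$, from the drift $2\nu_k x\phi_k$, and from freezing $P_{c_k}$ at $x_k$. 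On the inner interval $|x-x_k|\le K\nu_k|\ln\nu_k|(1-x_k^2)$, since $P_c(\hat x)\ne 0$, this residual is $O(\nu_k|\ln\nu_k|)$. Moreover, at the two endpoints of the inner interval, the exponential saturation of $\tanh$ gives $|\phi_k\mp\sqrt{2P_{c_k}(x_k)}|=O(\nu_k^{K})$, which is reconciled with the outer value $\pm\sqrt{2P_{c_k}(x)}$ up to $O(\nu_k|\ln\nu_k|)$ by smoothness of $P_{c_k}$.

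Next I would handle the outer region. Since $U_{\nu_k,\theta}$ has at most one zero (by Theorem \ref{thm1_3}), it has fixed sign on each component of $(-1,1)\setminus\{x_k\}$, and Theorem \ref{thm1_3}\,(i) already yields $\|U_{\nu_k,\theta}\pm\sqrt{2P_{c_k}}\|_{L^\infty}=O(\nu_k^{\alpha(c)})$ on any sub-interval bounded away from $x_k$ by a fixed $\epsilon>0$. To push this control down to the layer thickness $K\nu_k|\ln\nu_k|(1-x_k^2)$, I would rewrite \eqref{eq:NSE} as $\tfrac12 U_{\nu_k,\theta}^2-P_{c_k}=-\nu_k(1-x^2)U_{\nu_k,\theta}'-2\nu_k xU_{\nu_k,\theta}$ and run a Gronwall/integration-by-parts argument along the characteristic direction dictated by the sign of $U_{\nu_k,\theta}\pm\sqrt{2P_{c_k}}$, using $|U_{\nu_k,\theta}|\le|U_{\nu_k,\theta}^+|+|U_{\nu_k,\theta}^-|$ together with the pointwise estimates on $U_{\nu_k,\theta}^{\pm}$ from Theorems \ref{thm1_1}--\ref{thm1_2_2}. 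The factor $|\ln\nu_k|^{2\kappa(c)}$ enters precisely when $c_1,c_2>0$ and $c_3>c_3^*$, because then boundary layers also form near $x=\pm 1$ and their contribution propagates logarithmically through the Gronwall coefficient.

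Finally, to convert the residual bounds into the global $L^\infty$ bound \eqref{eq:BL:rate}, I would invoke a comparison principle for \eqref{eq:NSE}: since the equation has the form $\mathcal{L}_{\nu_k}[U]=P_{c_k}$ with $\partial_U\mathcal{L}_{\nu_k}=U$ of definite sign on each side of the zero, if $\overline U=\widetilde{U}_{\theta,x_k}+\delta_k$ and $\underline U=\widetilde{U}_{\theta,x_k}-\delta_k$ satisfy $\mathcal{L}_{\nu_k}[\overline U]\ge P_{c_k}\ge \mathcal{L}_{\nu_k}[\underline U]$ and sandwich $U_{\nu_k,\theta}$ at the boundary of each sub-interval, then they sandwich it throughout. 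Choosing $\delta_k\asymp \nu_k^{\alpha(c)}|\ln\nu_k|^{2\kappa(c)}$ and checking that the $U\delta_k$ term in $\mathcal{L}_{\nu_k}[\widetilde{U}_{\theta,x_k}\pm\delta_k]-\mathcal{L}_{\nu_k}[\widetilde{U}_{\theta,x_k}]$ dominates the residual in each of the three regions closes the argument. The main obstacle is obtaining the sharp $\nu_k^{\alpha(c)}$ outer rate \emph{uniformly} up to the layer boundary (at distance $\nu_k|\ln\nu_k|(1-x_k^2)$ rather than at fixed distance $\epsilon$); this is genuinely delicate when $\hat x=\pm1$ because the Gronwall coefficient degenerates where $P_{c_k}$ vanishes, and it is precisely this degeneration that forces the $|\ln\nu_k|^{2\kappa(c)}$ correction and dictates a careful splitting of the outer region according to distance from the poles.
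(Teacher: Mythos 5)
Your outline reproduces the skeleton of the paper's argument (the $\tanh$ profile as the exact solution of the frozen equation vanishing at $x_k$, matching at distance $K\nu_k|\ln\nu_k|(1-x_k^2)$ via the exponential saturation of $\tanh$, Theorem \ref{thm1_3} for the region at fixed distance $\epsilon$ from $x_k$, and a bridging estimate in between), but the step that actually produces the rate is not closed by your scheme. A constant-shift barrier $\widetilde{U}_{\theta,x_k}\pm\delta_k$ cannot dominate the residual at the core of the layer: there $\widetilde{U}_{\theta,x_k}$ vanishes, so the gain from the quadratic term is only $\tfrac12\delta_k^2+O(\nu_k\delta_k)$, while the residual of $\widetilde{U}_{\theta,x_k}$ in (\ref{eq:NSE}) is genuinely of size $\nu_k$ near $x_k$ (from $P_{c_k}(x_k)-P_{c_k}(x)$ and the unfrozen coefficients); this forces $\delta_k\gtrsim\sqrt{\nu_k}$, which misses the claimed bound whenever $\alpha(c)\in\{2/3,1\}$. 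The paper's Step 1 avoids exactly this degeneracy by using that $U_{\nu_k,\theta}$ and the $\tanh$ profile vanish at the \emph{same} point: in the stretched variable $y=(x-x_k)/(\nu_k(1-x_k^2))$ their difference $g_k$ solves a linear equation $g_k'+h_kg_k=H_k$ with $g_k(0)=0$, $|H_k|\le C\nu_k|\ln\nu_k|$, and $h_k=\tfrac12(\tilde f_k+\tilde w_k)$ of favorable sign on each side of $y=0$, so the Duhamel integral over $|y|\le K|\ln\nu_k|$ gives $C\nu_k|\ln\nu_k|^2$. If you insist on barriers, they must themselves vanish at $x_k$ (e.g.\ $\tanh$ profiles with perturbed amplitude), not be constant shifts. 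Relatedly, your plan to ``push'' the outer control from distance $\epsilon$ down to the layer edge by Gronwall goes in the amplifying direction: linearizing around $\pm\sqrt{2P_{c_k}}$, the stable direction of propagation is \emph{outward} from the layer edge on both sides, so the intermediate region must be fed by the value of $U_{\nu_k,\theta}$ at the layer edge — which only the inner estimate supplies — and is then handled by Lemma \ref{lem2_2'} and Corollary \ref{cor2_2'}; without the inner estimate your argument is circular at this point.

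Your explanation of the factor $|\ln\nu_k|^{2\kappa(c)}$ is also incorrect. You attribute it to boundary layers near $x=\pm1$ when $c_1,c_2>0$, $c_3>c_3^*(c_1,c_2)$; but in that case $c\in\mathring{J}_0$ and Theorem \ref{thm1_3}(iii) gives the clean rate $C\nu_k$ up to $x=\pm1$ (indeed $U_{\nu_k,\theta}(\mp1)=\tau_1,\tau_2'$ differ from $\mp\sqrt{2P_{c_k}(\mp1)}$ by $O(\nu_k)$, so no loss occurs at the poles). The logarithm is precisely the inner matching error $C\nu_k|\ln\nu_k|^2$ from the layer analysis described above, and $\kappa(c)=1$ exactly when $\alpha(c)=1$ because only then does this inner error exceed the outer error $\nu_k^{\alpha(c)}$; for $\alpha(c)<1$ it is absorbed. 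Likewise the ``main obstacle'' you flag at $\hat x=\pm1$ is not where the difficulty lies: the hypothesis $P_c(\hat x)\ne0$ guarantees $P_{c_k}\ge1/C$ on a fixed neighborhood of $x_k$, so the intermediate interval is uniformly nondegenerate, while the possible vanishing of $P_{c_k}$ occurs away from $x_k$ and is already encoded in the exponent $\alpha(c)$ of Theorem \ref{thm1_3}(i).
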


\begin{rmk}
	The solutions $U_{\nu_k, \theta}$ of (\ref{eq:NSE}) with $\nu_k$ asymptotically behave like $\widetilde{U}_{\theta,x_k}$ as $\nu_k\to 0^+$. Hence an interior layer appears when $\hat{x}\in(-1,1)$, and a boundary layer appears when $\hat{x}=\pm 1$ if we view $x=\pm 1$ as boundaries.
\end{rmk}
\begin{rmk}\label{rmk:scale}
	The length scale of the transition layers is $\nu_k$ for interior layers, and is $o(\nu_k)$ for boundary layers. Moreover, for any $\epsilon_k=o(\nu_k)$, there exists $\{U_{\nu_k, \theta}\}$ having boundary layer length scale as $\epsilon_k$.
\end{rmk}

The organization of the paper is as follows. Theorem \ref{thm1_0_0} is proved at the beginning of Section \ref{sec2}. In the remaining part of Section \ref{sec2} we present some preliminary results and prove Theorem \ref{thm1_0_1} at the end of Section \ref{sec2}.  In Section \ref{sec3} we prove Theorem \ref{thm1_1} and Theorem \ref{thm1_3} (iii). In Section \ref{sec4} we prove Theorem \ref{thm1_2} and Theorem \ref{thm1_2_1}. In Section \ref{sec5} we prove Theorem \ref{thm1_2_2}. Theorem \ref{thm1_3} (i) and (ii) are proved at the end of Section \ref{sec4} and the end of Section \ref{sec5}. Theorem \ref{thm1_0} is proved at the end of  Section \ref{sec5}. In Section \ref{sec6} we prove Theorem \ref{thm:BL:1}. In Section \ref{sec:illu} we give illustrations on transition layer behaviors. In the appendix, we present some elementary properties of second order polynomials which we have used.

\noindent
{\bf Acknowledgment}. 
The work of the first named author is partially supported by NSFC grants No. 11871177. The work of the second named author is partially supported by NSF grants DMS-1501004.   The work of the third named author is partially supported by AMS-Simons Travel Grant and AWM-NSF Travel Grant.

\section{Preliminary}\label{sec2}


We first prove Theorem \ref{thm1_0_0}.

\begin{lem}\label{lem2_0}
  For $0<\nu\le 1$, let $U_{\nu, \theta}$ satisfies (\ref{eq:NSE}) in $(-1,1)$ for some $c\in J_{\nu}$. Then there exists some universal constant $C>0$, such that for any $-1<r<s<1$, 
  \begin{equation}\label{eq2_0_0}
     ((s-r)^4|c|-(s-r)\nu^2)/C\le \int_{r}^{s}U^2_{\nu, \theta}\le C(|c|+\nu^2/\min\{1-s,1+r\}).
  \end{equation}
\end{lem}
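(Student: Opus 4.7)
My plan is to extract both inequalities from weak-form integration of the ODE
$\nu(1-x^2)U_{\nu,\theta}' + 2\nu xU_{\nu,\theta} + \tfrac12 U_{\nu,\theta}^2 = P_c$
against carefully chosen weights. Using the algebraic identity $(1-x^2)U_{\nu,\theta}' = [(1-x^2)U_{\nu,\theta}]' + 2xU_{\nu,\theta}$ and integrating against $\varphi\equiv 1$ on $[r,s]$ produces the master identity
\begin{equation*}
\tfrac12\int_r^s U_{\nu,\theta}^2\,dx = \int_r^s P_c\,dx - \nu\bigl[(1-x^2)U_{\nu,\theta}\bigr]_r^s - 4\nu\int_r^s x\,U_{\nu,\theta}\,dx.
\end{equation*}
Since $P_c$ is a quadratic in $x$ with coefficient vector of norm comparable to $|c|$, the upper bound $|P_c|\le C|c|$ on $[-1,1]$ handles the leading term directly.

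For the upper bound I will first establish the uniform pointwise estimate $|U_{\nu,\theta}(x)|\le C(\nu+\sqrt{|c|})$ on $[-1,1]$: the boundary values $U_{\nu,\theta}(\pm 1)\in\{\tau_1,\tau_2,\tau_1',\tau_2'\}$ from Theorem~A are of this size; at any interior extremum the ODE collapses to the algebraic equation $\tfrac12U^2+2\nu xU=P_c$ whose roots $-2\nu x\pm\sqrt{4\nu^2 x^2+2P_c}$ obey the same bound; and the sandwich $U^-_{\nu,\theta}\le U_{\nu,\theta}\le U^+_{\nu,\theta}$ from Theorem~A extends the estimate from the extremal solutions to an arbitrary $U_{\nu,\theta}$. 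Substituting into the master identity and applying Young's inequality $|4\nu\int xU_{\nu,\theta}|\le\tfrac14\int U_{\nu,\theta}^2+16\nu^2(s-r)$ yields $\int_r^s U_{\nu,\theta}^2\le C(|c|+\nu^2)$, which implies the stated upper bound since $\nu^2\le 2\nu^2/\min\{1-s,1+r\}$.

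For the lower bound I will repeat the integration by parts against the weights $\psi_i(x)=(x-r)(s-x)\bigl((x-r)/(s-r)\bigr)^{i-1}$, $i=1,2,3$, which annihilate the boundary contributions and yield
\begin{equation*}
\tfrac12\int_r^s\psi_iU_{\nu,\theta}^2 = \int_r^s\psi_iP_c + \nu\int_r^s(1-x^2)\psi_i'U_{\nu,\theta} - 4\nu\int_r^s x\psi_iU_{\nu,\theta}.
\end{equation*}
The two interior $\nu$-errors are controlled by Cauchy--Schwarz and Young's inequality with parameter $\epsilon=C(s-r)$, giving $|\int_r^s\psi_iP_c|\le C(s-r)\int_r^sU_{\nu,\theta}^2+C\nu^2(s-r)^2$. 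The key polynomial estimate is a rescaling argument: the substitution $y=(x-r)/(s-r)$ turns $P_c$ into a quadratic $\widetilde P$ in $y$ whose coefficient vector $\widetilde a$ has Euclidean norm $\ge C(s-r)^2|c|$ (the upper-triangular change-of-coefficient matrix has determinant $(s-r)^3$ and inverse operator norm $O((s-r)^{-2})$), and since the three functionals $L_i(\widetilde P)=\int_0^1y^i(1-y)\widetilde P\,dy$ are linearly independent on the space of degree-$2$ polynomials, $\max_i|L_i(\widetilde P)|\ge C|\widetilde a|$; by the change of variable this yields $\max_i|\int_r^s\psi_iP_c|\ge C(s-r)^5|c|$. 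Dividing through then gives $\int_r^sU_{\nu,\theta}^2\ge C^{-1}(s-r)^4|c|-C\nu^2(s-r)$, as desired.

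The main technical hurdle is the uniform pointwise estimate on $U_{\nu,\theta}$ throughout $(-1,1)$, which the extremum-principle argument combined with the sandwich property of Theorem~A handles; without the sandwich one could not rule out wild oscillations between the two known endpoint limits. A secondary subtlety is the specific choice $\epsilon=C(s-r)$ in the Young-inequality step for the lower bound, without which one obtains only the weaker correction $\nu^2/(s-r)$ in place of the stated $(s-r)\nu^2$.
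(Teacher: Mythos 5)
Your argument is correct and reaches the same quantitative conclusion as the paper, but both halves are implemented differently. For the upper bound the paper uses no pointwise estimate at this stage: it selects points $a\in[-s,-r]$, $b\in[r,s]$ where $|U_{\nu,\theta}|$ is controlled by its $L^2$ average, integrates the equation between them, absorbs $\frac{1}{4}\int U_{\nu,\theta}^2$, and then removes the remaining $\frac{1}{2}\int_{-s}^{s}U_{\nu,\theta}^2$ by the iteration (hole-filling) lemma of \cite{Giaquinta}, which is where the $\nu^2/\min\{1-s,1+r\}$ loss comes from. You instead first prove the sup bound $|U_{\nu,\theta}|\le C(\nu+\sqrt{|c|})$ from Theorem A together with the interior-extremum identity $\frac{1}{2}U_{\nu,\theta}^2+2\nu x U_{\nu,\theta}=P_c$ at critical points; this is essentially Lemma \ref{lem2_1} of the paper, which is proved independently of the present lemma, so there is no circularity, and it makes the $L^2$ upper bound immediate and in fact slightly stronger (the bound $C(|c|+\nu^2)$ has no loss near $x=\pm1$; your master identity is not even needed for this half). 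For the lower bound the paper picks four points, integrates the equation between them, and inverts the explicit $3\times3$ moment matrix $A$ with $\|A^{-1}\|\le C\delta^{-3}$; you instead test against the weights $\psi_i=(x-r)(s-x)\bigl((x-r)/(s-r)\bigr)^{i-1}$, which annihilate the boundary terms, and obtain the nondegeneracy after rescaling to $[0,1]$ from the fact that the three functionals $\widetilde P\mapsto\int_0^1y^i(1-y)\widetilde P\,dy$ form an isomorphism on quadratics (with a universal constant, e.g. because $\int_0^1y(1-y)\widetilde P^2=0$ forces $\widetilde P\equiv0$). The two routes are quantitatively equivalent: both recover the coefficients of $P_c$ from three integral tests on $[r,s]$ with inverse bounds scaling like the ones you state, and both deliver the conclusion in the form $\int_r^sU_{\nu,\theta}^2\ge (s-r)^4|c|/C-C(s-r)\nu^2$, which is how the constant in (\ref{eq2_0_0}) should be read in the paper as well. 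One small point: the inverse operator-norm bound $O((s-r)^{-2})$ for your coefficient change does not follow from the determinant being $(s-r)^3$ alone; it should be checked from the explicit inverse entries, which is immediate since $|r|,|s|\le1$, so the claim itself is fine.
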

\begin{proof}
   Throughout the proof, $C$ denotes some universal constant which may change value from line to line.
   
   For all $0<r<s<1$, there exist $a\in [-s,-r]$ and $b\in[r,s]$ such that 
   \[
      |U_{\nu, \theta}(a)|\le \frac{1}{\sqrt{s-r}}\left(\int_{-s}^{-r}U_{\nu, \theta}^2\right)^{1/2}, \quad  |U_{\nu, \theta}(b)|\le \frac{1}{\sqrt{s-r}}\left(\int_{r}^{s}U_{\nu, \theta}^2\right)^{1/2}.
   \]
   By (\ref{eq:NSE}) and the above,
   \[
      \begin{split}
       \int_{-r}^{r}U_{\nu, \theta}^2 & \le \int_{a}^{b}U_{\nu, \theta}^2 =2\int_{a}^{b}\left(P_c-\nu(1-x^2)U'_{\nu, \theta}-2\nu xU_{\nu, \theta}\right)dx\\
                                      & \le C|c|+C\nu(|U_{\nu, \theta}(a)|+|U_{\nu, \theta}(b)|)+C\nu^2+\frac{1}{4}\int_{a}^{b} U^2_{\nu, \theta}(x)dx\\
                                      & \le C(|c|+\nu^2/(s-r))+\frac{1}{2}\int_{-s}^{s}U_{\nu, \theta}^2 .
       \end{split}
   \]
   By Lemma 1 in \cite{Giaquinta}, 
   \begin{equation*}
      \int_{-r}^{r}U_{\nu, \theta}^2\le C(|c|+\nu^2/(s-r)), \quad \forall 0<r<s<1.
   \end{equation*}
   The second inequality in (\ref{eq2_0_0}) follows from the above.
   
   Next, we prove the first inequality in (\ref{eq2_0_0}).  Rewrite $P_c=\hat{c}_1+\hat{c}_2x+\hat{c}_3x^2$. Then $|c|\le C|\hat{c}|$ where $\hat{c}=(\hat{c}_1,\hat{c}_2,\hat{c}_3)$.    For $-1<r<s< 1$, let $\delta =(s-r)/9$. Then there exist $a\in [r, r+\delta]$ and $b_i\in [r+2i\delta, r+(2i+1)\delta]$, $i=1,2,3$, such that
   \begin{equation}\label{eq2_0_1}
      |U_{\nu, \theta}(a)|\le \frac{1}{\sqrt{\delta}}\left(\int_{r}^{r+\delta}U_{\nu, \theta}^2\right)^{1/2}, \quad  |U_{\nu, \theta}(b_i)|\le \frac{1}{\sqrt{\delta}}\left(\int_{r+2i\delta}^{r+(2i+1)\delta}U_{\nu, \theta}^2\right)^{1/2}.
   \end{equation}
  For each $i=1,2,3$, we have 
   \[
        \int_{a}^{b_i}P_c(x)dx  =\int_{a}^{b_i}\left(\nu(1-x^2)U'_{\theta}+2\nu xU_{\theta}+\frac{1}{2}U^2_{\theta}\right)=:\beta_i.
   \]
   Let $\beta=(\beta_1,\beta_2,\beta_3)$, write the above  as $A\hat{c}^t=\beta^t$, where $\hat{c}^t$ and $\beta^t$ denote the transpose of $\hat{c}$ and $\beta$ respectively, and
   \[
      A= \left(
	\begin{matrix}
		b_1-a & (b_1^2-a^2)/2 & (b_1^3-a^3)/3\\
		b_2-a & (b_2^2-a^2)/2 & (b_2^3-a^3)/3\\
		b_3-a & (b_3^2-a^2)/2 & (b_3^3-a^3)/3
	\end{matrix} \right).
   \]
  By (\ref{eq2_0_1}),  we have, after an integration by parts,
   \begin{equation}\label{eq2_0_3}
      |\beta_i|   \le C\nu \left( |U_{\nu, \theta}(a)|+ |U_{\nu, \theta}(b_i)|\right)+C\nu^2+C\int_{a}^{b_i}U_{\nu, \theta}^2dx
                    \le C\nu^2+\frac{C}{\delta}\int_{r}^{s}U_{\nu, \theta}^2.
   \end{equation}
   By computation, we have that $A$ is invertible and 
   \[
      A^{-1}=\left(
	\begin{matrix}
		-\frac{2a^2+ab_3+ab_2-b_2b_3}{(b_1-a)(b_2-b_1)(b_3-b_1)} & \frac{2a^2+ab_1+ab_3-b_1b_3}{(b_2-a)(b_2-b_1)(b_3-b_2)} & -\frac{2a^2+ab_1+ab_2-b_1b_2}{(b_3-a)(b_3-b_1)(b_3-b_2)}\\
		-\frac{2(b_2+b_3+a)}{(b_1-a)(b_2-b_1)(b_3-b_1)} & \frac{2(b_1+b_3+a)}{(b_2-a)(b_2-b_1)(b_3-b_2)} & -\frac{2(b_1+b_2+a)}{(b_3-a)(b_3-b_1)(b_3-b_2)}\\
		\frac{3}{(b_1-a)(b_2-b_1)(b_3-b_1)} & -\frac{3}{(b_2-a)(b_2-b_1)(b_3-b_2)} & \frac{3}{(b_3-a)(b_3-b_1)(b_3-b_2)}
	\end{matrix} \right).
   \]
   Clearly, $\delta\le b_i-a, b_j-b_i\le 9\delta$ for every $i<j$. So we have $||A^{-1}||\le C\delta^{-3}$. 
   Then, using 
   (\ref{eq2_0_3}), we have 
   \[
      |c|\le C|\hat{c}|=C|A^{-1}\beta|\le ||A^{-1}|||\beta|\le C\delta^{-3}\left(\nu^2+\frac{1}{\delta}\int_{r}^{s}U_{\nu, \theta}^2\right).
   \]
   The first inequality of (\ref{eq2_0_0}) follows from the above. The lemma is proved.
   \end{proof}

\noindent{\emph{Proof of Theorem \ref{thm1_0_0}}}:
   (i) We use  $C$ to denote a positive constant depending only on  $\{\theta_i\}$, which may vary from line to line. Let $r_i=\cos\theta_i$, $1\le i\le 4$, $x=\cos\theta$, $U_{\nu, \theta}=u_{\nu, \theta}\sin\theta$. Then $U_{\nu, \theta}$ satisfies (\ref{eq:NSE}) on $(r_4,r_1)$ for some $c\in J_{\nu}$. By Lemma \ref{lem2_0}, 
 we have
   \begin{equation}\label{eq1_0_0_5}
     \begin{split}
       & \int_{\mathbb{S}^2\cap \{\theta_1<\theta<\theta_4\}}|u_{\nu, \theta}|^2\le C\int_{r_4}^{r_1}U^2_{\nu, \theta}(x)dx \le C(|c|+\nu^2)\\
       & \le C\left(\int_{r_3}^{r_2}U^2_{\nu, \theta}(x)dx+\nu^2\right)\le C\left(\int_{\mathbb{S}^2\cap\{\theta_2<\theta<\theta_3\}}|u_{\nu, \theta}|^2+\nu^2\right).
      \end{split}
   \end{equation}
   Part (i) is proved.

   (ii) Let $U_{\nu_k, \theta}=u_{\nu_k, \theta}\sin\theta$, $r=\cos(\pi-\epsilon), s=\cos\epsilon$. Since $(u_{\nu_k}, p_{\nu_k})$ are $(-1)$-homogeneous axisymmetric no-swirl solutions of (\ref{NS}) on $\mathbb{S}^2\setminus\{S,N\}$, there exists $c_k\in J_{\nu_k}$, such that $U_{\nu_k, \theta}$ satisfies (\ref{eq:NSE}) with the right hand side to be $P_{c_k}$. By Lemma \ref{lem2_0},  using the boundedness of $\nu_k^{-2}\int_{\mathbb{S}^2\cap\{a<\theta<b\}}|u_{\nu_k, \theta}|^2$, $\{\nu_k^{-2}|c_k|\}$ is bounded for some $a,b\in (-1,1)$. 
      Notice that  $\tilde{U}_{\theta, k}:=U_{\nu_k, \theta}(c_k)/\nu_k$ is a solution to (\ref{eqNSE_1}) with $P_{c_k\nu_k^{-2}}$, and after passing to a subsequence, $\tilde{c}_k:=c_k\nu_k^{-2}\to \tilde{c}$ for some $\tilde{c}$. By Lemma 2.2 in \cite{LLY2}, $\{||\tilde{U}_{\theta, k}||_{L^{\infty}(-1,1)}\}$ is bounded. It follows from standard ODE theories that there exists some smooth solution $\tilde{U}_{\theta}$ of (\ref{eqNSE_1}) with $c\nu^{-2}=\tilde{c}$ that $\tilde{U}_{\theta, k}\to \tilde{U}_{\theta}$ in $C^m([-1+\epsilon, 1-\epsilon])$ for any $\epsilon>0$ and any positive integer $m$. Part (ii) is proved with $\tilde{u}_{\theta}=\tilde{U}_{\theta}/\sin\theta$ together with (\ref{eqNS_1}).
      \qed

Let $\nu>0$, $c\in \mathbb{R}^3$,  
  and $f_{\nu}$ be a solution of the equation
\begin{equation}\label{eq_1}
 \nu (1-x^2)f'_{\nu}+2\nu x f_{\nu}+\frac{1}{2}f^2_{\nu}=P_c(x). 
\end{equation}

\begin{lem}\label{lem2_1}
	For $0<\nu\le 1$ and $c\in \mathbb{R}^3$, let $f_{\nu}$ be a solution of (\ref{eq_1}) in $C^1(-1,1)$. Then 
 $|f_{\nu}|\le 5\sqrt{1+|c|}$ in $(-1,1)$.
\end{lem}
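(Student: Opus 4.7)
The plan is to argue by contradiction and show the sharper bound $|f_\nu|\le M_0:= 2\nu + 2\sqrt{\nu^2+2|c|}$ on $(-1,1)$. Since $|P_c(x)|\le |c_1|(1-x)+|c_2|(1+x)+|c_3|(1-x^2) \le 4|c|$ on $[-1,1]$, and $\nu\le 1$, one checks $M_0 \le 2+2\sqrt{2}\sqrt{1+|c|}\le (2+2\sqrt{2})\sqrt{1+|c|}<5\sqrt{1+|c|}$ (using $\sqrt{1+|c|}\ge 1$), which would yield the lemma. The key monotonicity observation is that $|f_\nu(x)|>M_0$ implies $f'_\nu(x)<0$: indeed, rewriting (\ref{eq_1}) as $\nu(1-x^2)f'_\nu = P_c - 2\nu x f_\nu - \tfrac12 f_\nu^2$ and bounding $|P_c|+2\nu|f_\nu|-\tfrac12 f_\nu^2$, the defining quadratic inequality for $M_0$ makes this right-hand side strictly negative.

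Suppose for contradiction $f_\nu(x^*)>M_0$ for some $x^*\in(-1,1)$; the case $f_\nu(x^*)<-M_0$ will be symmetric, using the endpoint $x=1$ instead of $x=-1$. Let $(a,b)$ be the component of $\{f_\nu>M_0\}$ containing $x^*$. By the observation, $f_\nu$ is strictly decreasing on $(a,b)$. If $a>-1$, continuity and maximality force $f_\nu(a)=M_0$, yet strict decrease gives $\lim_{x\to a^+}f_\nu(x)\ge f_\nu(x^*)>M_0$, a contradiction. Hence $a=-1$, so $L:=\lim_{x\to -1^+}f_\nu(x)$ exists in $(M_0,+\infty]$.

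I rule out $L=+\infty$ by a singular-ODE integration at the endpoint. For $f_\nu$ sufficiently large (which is automatic near $x=-1$ if $L=+\infty$), the estimate on $-f'_\nu$ above gives
\[
\frac{d}{dx}\!\left(\frac1{f_\nu}\right) = -\frac{f'_\nu}{f_\nu^2} \ge \frac{1}{4\nu(1-x^2)}.
\]
Integrating from $x\in(-1,x^*)$ to $x^*$ produces a right-hand side $\sim \frac{1}{8\nu}\log\frac{1-x}{1+x}$, which diverges as $x\to -1^+$, while the left-hand side is bounded by $1/f_\nu(x^*)$; contradiction. So $L$ is finite, and then integrability of $f'_\nu = (P_c-2\nu x f_\nu-\tfrac12 f_\nu^2)/(\nu(1-x^2))$ near $x=-1$ (which must hold since $\int_{-1+\epsilon}^{x^*}f'_\nu$ has a finite limit as $\epsilon\to 0^+$) forces the numerator to vanish at $x=-1$, i.e.\ $L^2-4\nu L-4c_1=0$. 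This gives $L=2\nu\pm 2\sqrt{\nu^2+c_1}$, hence $L\le 2\nu+2\sqrt{\nu^2+|c|}\le M_0$, contradicting $L>M_0$.

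The main obstacle is the careful analysis at the singular endpoints $x=\pm 1$: one must simultaneously exclude blow-up of $f_\nu$ (handled by the $1/f_\nu$ integration) and rule out any finite boundary limit that does not satisfy the algebraic consistency condition obtained from the ODE (handled by the integrability argument, which also shows that $c_1<-\nu^2$ is impossible in this scenario). The $f_\nu<-M_0$ case runs verbatim with $a=-1$ replaced by $b=1$ and with the algebraic condition $L^2+4\nu L-4c_2=0$ at $x=1$ giving the same $|L|\le M_0$ contradiction.
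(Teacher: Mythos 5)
Your argument is correct, but it follows a genuinely different route from the paper's. The paper's proof is short because it imports the boundary information from the classification results of \cite{LLY2} (Theorems 1.2 and 1.3): any $C^1(-1,1)$ solution extends to $[-1,1]$ with $f_{\nu}(\pm1)$ equal to one of the explicit values $\tau_1,\tau_2,\tau_1',\tau_2'$, so $|f_{\nu}(\pm1)|<5\sqrt{1+|c|}$, and then the same derivative-sign observation you use (if $f_{\nu}$ exceeds the bound at an interior point then $f_{\nu}'<0$ there, so the excess propagates to the endpoint) yields a contradiction with the known endpoint value. You keep the derivative-sign mechanism but replace the citation by a direct analysis at the singular endpoints: monotonicity on the component $\{|f_{\nu}|>M_0\}$ forces it to reach $x=\mp1$, the $\frac{d}{dx}(1/f_{\nu})$ comparison excludes blow-up there, and the non-integrability of $1/(1-x^2)$ forces the finite limit $L$ to satisfy $L^2\mp4\nu L-4c_{1,2}=0$, hence $|L|\le 2\nu+2\sqrt{\nu^2+|c|}\le M_0$, a contradiction. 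In effect you re-prove, under the contradiction hypothesis, exactly the piece of the endpoint behavior that the paper borrows from \cite{LLY2}; what this buys is a self-contained and more elementary proof (which, consistently with the statement, needs no information that $c\in J_{\nu}$), at the cost of the longer endpoint analysis. One small repair: when ruling out $L=+\infty$, the differential inequality $\frac{d}{dx}(1/f_{\nu})\ge\frac{1}{4\nu(1-x^2)}$ only holds where $f_{\nu}$ exceeds a threshold strictly larger than $M_0$ (at $f_{\nu}=M_0$ one only has $\frac14 f_{\nu}^2=\nu M_0+2|c|<2\nu M_0+4|c|$), so you should integrate up to a point $x_1$ near $-1$ where $f_{\nu}$ stays above that threshold, not up to $x^*$; since $f_{\nu}$ is monotone on the component and tends to $+\infty$ at $-1$, such an $x_1$ exists and the contradiction goes through verbatim.
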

\begin{proof}
 By Theorem 1.2 and Theorem 1.3 in \cite{LLY2}, we have $c\in J_{\nu}$, $f_{\nu}(-1)=\tau_1$ or $\tau_2$, and $f_{\nu}(1)=\tau_1'$ or $\tau_2'$, where $\tau_1,\tau_2,\tau_1'$ and $\tau_2'$ are defined as in (\ref{eq_2}). Thus $|f(\pm 1)|<5\sqrt{1+|c|}$.

%
	Suppose that there exists a point $x_0\in(-1,1)$ such that $f_\nu(x_0)>5\sqrt{1+|c|}$, then by (\ref{eq_1}),
	\[
		\nu (1-x_0^2) f_{\nu}'(x_0) \le 6|c|-\frac{1}{2}f_{\nu}^2(x_0)+2\nu f_{\nu}(x_0)\le 6|c|+4\nu^2-\frac{1}{4}f^2_{\nu}(x_0)<0.
	\]
	So $f_{\nu}'(x_0)<0$.  It follows that $f_{\nu}(x)>5\sqrt{1+|c|}$ for any $-1<x<x_0$. This contradicts the fact that $f_{\nu}(-1)<5\sqrt{1+|c|}$.
	 We have proved that $f_{\nu}\le 5\sqrt{1+|c|}$ on $(-1,1)$. Similarly, we can prove that $f_{\nu}\ge -5\sqrt{1+|c|}$ on $(-1,1)$.
\end{proof}



\begin{lem}\label{lem2_5}
	Let $0< \nu \le 1$, $-1\le a<b\le 1$, $f_{\nu}\in C^1(a,b)$ be a solution of (\ref{eq_1}) in $(a,b)$ and $|f_{\nu}| \le M$ on $(a,b)$ for some constant $M>0$ . Then 
	\begin{equation}\label{eq2_5_0}
		\inf_{(a,b)}\left|\frac{1}{2}f^2_{\nu}-P_{c}\right|<10M\nu/(b-a).
	\end{equation}
	Moreover, if $0 \le a <b\le 1$, we have
	\begin{equation}\label{eq:lem2_5:1}
		\inf_{(a,b)}\left|\frac{1}{2}f^2_{\nu}-P_{c}\right|\le 8M\nu(1-a)/(b-a),
	\end{equation}
	and if $-1 \le a <b\le 0$, we have 
	\begin{equation}\label{eq:lem2_5:2}
		\inf_{(a,b)}\left|\frac{1}{2}f^2_{\nu}-P_{c}\right|\le 8M\nu(b+1)/(b-a).
	\end{equation}
\end{lem}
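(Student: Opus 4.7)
\noindent\textbf{Proof proposal for Lemma \ref{lem2_5}.}

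The key identity is that equation (\ref{eq_1}) rewrites as
\[
\tfrac{1}{2}f_\nu^2 - P_c = -\nu\bigl[(1-x^2)f_\nu' + 2xf_\nu\bigr],
\]
so we can control $\inf_{(a,b)}|\tfrac{1}{2}f_\nu^2-P_c|$ by controlling the integral of the right-hand side. The plan is first to integrate over $(a,b)$, integrate by parts to remove the derivative $f_\nu'$, and then invoke the mean value theorem for integrals to transfer the integral bound into a pointwise bound on $\inf_{(a,b)}|\tfrac{1}{2}f_\nu^2-P_c|$.

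More concretely, for small $\epsilon>0$ (to handle the fact that $f_\nu$ is only assumed $C^1$ on the open interval), I would integrate the identity on $[a+\epsilon,b-\epsilon]$. Integration by parts gives
\[
\int_{a+\epsilon}^{b-\epsilon}(1-x^2)f_\nu'\,dx = \bigl[(1-x^2)f_\nu\bigr]_{a+\epsilon}^{b-\epsilon} + 2\int_{a+\epsilon}^{b-\epsilon} x f_\nu\,dx,
\]
so combining with the identity yields
\[
\int_{a+\epsilon}^{b-\epsilon}\!\Bigl(\tfrac{1}{2}f_\nu^2-P_c\Bigr)dx
= -\nu\bigl[(1-x^2)f_\nu\bigr]_{a+\epsilon}^{b-\epsilon} - 4\nu\int_{a+\epsilon}^{b-\epsilon} x f_\nu\,dx.
\]
Using $|f_\nu|\le M$ and letting $\epsilon\to 0^+$ via dominated convergence gives
\[
\Bigl|\int_a^b\!\Bigl(\tfrac{1}{2}f_\nu^2-P_c\Bigr)dx\Bigr|
\le \nu M\bigl[(1-a^2)+(1-b^2)\bigr] + 4\nu M\int_a^b |x|\,dx.
\]
Since the integrand $\tfrac{1}{2}f_\nu^2-P_c$ is continuous on $(a,b)$ and bounded, the mean value theorem for integrals (applied on $[a+\epsilon,b-\epsilon]$ and passing to a limit) produces $x_0\in(a,b)$ with $(\tfrac{1}{2}f_\nu^2-P_c)(x_0)=\tfrac{1}{b-a}\int_a^b(\tfrac{1}{2}f_\nu^2-P_c)\,dx$, hence
\[
\inf_{(a,b)}\Bigl|\tfrac{1}{2}f_\nu^2-P_c\Bigr| \le \frac{\nu M\bigl[(1-a^2)+(1-b^2)\bigr] + 4\nu M\int_a^b|x|\,dx}{b-a}.
\]

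The three stated estimates then follow from elementary bounds on the numerator in the three cases. For the general case $-1\le a<b\le 1$, one uses $(1-a^2)+(1-b^2)\le 2$ and $\int_a^b|x|\,dx\le 1$, giving numerator $\le 6\nu M<10\nu M$. For $0\le a<b\le 1$, both $1-a^2$ and $1-b^2$ are at most $2(1-a)$, and $4\int_a^b x\,dx=2(b^2-a^2)\le 2(1-a^2)\le 4(1-a)$, so the numerator is $\le 8\nu M(1-a)$. The case $-1\le a<b\le 0$ is symmetric: $(1-a^2)+(1-b^2)\le 4(1+b)$ and $4\int_a^b|x|\,dx=2(a^2-b^2)\le 4(1+b)$.

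No step is really hard here; the only mild subtlety is the rigorous justification of integration by parts and the mean value theorem given that $f_\nu$ is only $C^1$ on the \emph{open} interval $(a,b)$ and could in principle oscillate near the endpoints. The $\epsilon$-shrinking followed by a limit passage, together with the uniform bound $|f_\nu|\le M$, handles this cleanly, and the rest is the algebraic bookkeeping described above.
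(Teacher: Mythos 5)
Your proposal is correct and follows essentially the same route as the paper: integrate the equation over the interval, integrate by parts to remove $f_\nu'$, bound the boundary and integral terms using $|f_\nu|\le M$, and convert the integral estimate into a bound on the infimum (the paper does this via the trivial sign-change case plus $\inf|h_\nu|(b-a)\le\int|h_\nu|$, you via the mean value theorem for integrals — an immaterial difference), with the same case-by-case bookkeeping giving the constants $10$ and $8(1-a)$, $8(b+1)$.
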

\begin{proof}
Shrinking $(a,b)$ slightly, we may assume without loss of generality that $f_{\nu}$ is also in $C^0[a,b]$.
	For convenience we write $h_{\nu}=\frac{1}{2}f^2_{\nu}-P_{c}$, and we only need to consider that $h_{\nu}$ does not change sign on $(a,b)$. Integrating (\ref{eq_1}) over $(a,b)$, we have
	\[
	\begin{split}
		\int_{a}^{b}|h_{\nu}(x)|dx & =\left|\int_{a}^{b}(\nu(1-x^2)f'_{\nu}(x)+2\nu xf_{\nu}(x))dx\right|\\
		& =\nu\left|(1-b^2)f_{\nu}(b)-(1-a^2)f_{\nu}(a)+\int_{a}^{b}4x f_{\nu}\right|\le 10M\nu.
	\end{split}
	\]
This implies (\ref{eq2_5_0}).

	If $0 \le a <b\le 1$, we have
	\[
	\begin{split}
		\int_{a}^{b}|h_{\nu}(x)|dx & =\nu\left|(1-b^2)f_{\nu}(b)-(1-a^2)f_{\nu}(a)+\int_{a}^{b}4x f_{\nu}\right|\\
		& \le M \nu \big( 2(1-b)+2(1-a)+4(1-a) \big) 
		 \le 8M\nu(1-a).
	\end{split}
	\]
	This gives (\ref{eq:lem2_5:1}). Estimate (\ref{eq:lem2_5:2}) can be proved similarly.
\end{proof}

\begin{lem}\label{lem2_2}
   For $0< \nu \le 1$, $c\in \mathbb{R}^3$, $-1\le a< b \le 1$, let $f_{\nu}\in C^1(a,b)\cap C^0[a,b]$ be a solution of (\ref{eq_1}) in $(a,b)$ satisfying, for some positive constants $\mu$ and $\delta$,  that $f_{\nu}(a)\ge \mu$ and $P_c\ge \delta$ in $(a,b)$. Then  for all $0<\nu\le 1$,
   \[
       f_{\nu}(x)\ge \min\{\mu, \sqrt{\delta}, \delta/(4\nu)\}, \quad a\le x\le b.
   \]
\end{lem}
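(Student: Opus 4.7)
The plan is a barrier/sub-solution argument driven by the ODE (\ref{eq_1}). Set $m := \min\{\mu, \sqrt{\delta}, \delta/(4\nu)\}$, so that $m>0$. The key numerical input is that $m\le \sqrt{\delta}$ and $m\le \delta/(4\nu)$ together yield
\[
    \tfrac{1}{2}m^2 + 2\nu m \ \le\ \tfrac{1}{2}\delta + \tfrac{\delta}{2} \ =\ \delta,
\]
which will be the engine of the contradiction. For each small $\epsilon>0$, I will show that $f_\nu(x)>m-\epsilon$ on $[a,b]$ and then let $\epsilon\to 0^+$.

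Suppose for contradiction that the set $S_\epsilon:=\{x\in[a,b]\,:\, f_\nu(x)\le m-\epsilon\}$ is non-empty. Since $f_\nu(a)\ge \mu \ge m > m-\epsilon$, continuity of $f_\nu$ on $[a,b]$ gives $x_1:=\inf S_\epsilon \in (a,b]$ with $f_\nu(x_1)=m-\epsilon$ and $f_\nu(x)>m-\epsilon$ for $a\le x<x_1$. From the one-sided difference quotient with $h<0$, the left derivative (which equals $f_\nu'(x_1)$ when $x_1<b$, or the left derivative at the endpoint when $x_1=b$) satisfies $f_\nu'(x_1)\le 0$.

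Now evaluate (\ref{eq_1}) at $x_1$. Assume first $x_1\in(a,1)$, so $\nu(1-x_1^2)>0$. Using $|x_1|\le 1$ so that $-2\nu x_1(m-\epsilon)\ge -2\nu(m-\epsilon)$, together with $P_c(x_1)\ge\delta$, I obtain
\[
\nu(1-x_1^2)f_\nu'(x_1)\ =\ P_c(x_1)-2\nu x_1(m-\epsilon)-\tfrac{1}{2}(m-\epsilon)^2
\ \ge\ \delta-2\nu m-\tfrac{1}{2}m^2+\epsilon(2\nu+m)-\tfrac{1}{2}\epsilon^2.
\]
The inequality $\delta-2\nu m-\tfrac12 m^2\ge 0$ displayed above then forces the right-hand side to be $\ge \epsilon(2\nu+m-\tfrac12\epsilon)>0$ for all sufficiently small $\epsilon>0$, contradicting $f_\nu'(x_1)\le 0$.

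The only remaining case is $x_1=b=1$, where the coefficient $\nu(1-x^2)$ degenerates. In that case (\ref{eq_1}) evaluated at $x=1$ degenerates to the algebraic identity $2\nu f_\nu(1)+\tfrac12 f_\nu(1)^2=P_c(1)\ge\delta$, so with $f_\nu(1)=m-\epsilon$ we get $2\nu(m-\epsilon)+\tfrac12(m-\epsilon)^2\ge\delta$. Subtracting the displayed inequality $2\nu m+\tfrac12 m^2\le\delta$ yields $\epsilon(-2\nu-m+\tfrac12\epsilon)\ge 0$, again impossible for small $\epsilon$. Hence $S_\epsilon=\emptyset$ for every small $\epsilon>0$, and letting $\epsilon\to 0^+$ gives $f_\nu\ge m$ on $[a,b]$, as claimed. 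The main delicate point is simply to set up $x_1$ so that the one-sided derivative sign is correct and to separate out the degenerate endpoint $x_1=1$; the rest is the elementary algebra that the three competing upper bounds on $m$ are chosen precisely to make the ODE right-hand side non-negative at the hypothetical minimum.
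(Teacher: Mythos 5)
Your proof is correct and takes essentially the same route as the paper: at the first point where $f_\nu$ drops to the critical level the (one-sided) derivative is nonpositive, and plugging into (\ref{eq_1}) with $P_c\ge\delta$ together with the algebra $2\nu m+\tfrac{1}{2}m^2\le\tfrac{\delta}{2}+\tfrac{\delta}{2}=\delta$ yields the contradiction. The paper's version is just slightly leaner: it works with an arbitrary level $\lambda<\mu$ and simply discards the nonpositive term $\nu(1-x^2)f_\nu'\le 0$, so no separate discussion of the degenerate endpoint $x=1$ (your $\epsilon$-regularization and case split) is needed.
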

\begin{proof}
    If for some $0<\lambda<\mu$, there exists some $x\in (a,b]$ such that $f_{\nu}(x)\le \lambda$, then let $x_{\nu}$ be the first point greater than $a$ such that $f_{\nu}(x_{\nu})=\lambda$. Then $f'_{\nu}(x_{\nu})\le 0$. By equation (\ref{eq_1}) we have that
   \[
       2\nu\lambda+\lambda^2/2\ge 2\nu x_{\nu}\lambda+\lambda^2/2\ge P_c(x_\nu)\ge \delta.
   \]
   So either $4\nu\lambda\ge \delta$ or $\lambda^2\ge \delta$. 
\end{proof}

\addtocounter{lem}{-1}
\renewcommand{\thelem}{\thesection.\arabic{lem}'}%
\begin{lem}\label{lem2_2'}
	For $0< \nu \le 1$, $c\in\mathbb{R}^3$, $-1\le a< b \le 1$, let $f_{\nu}\in C^1(a,b)\cap C^0[a,b]$ be a solution of (\ref{eq_1}) in $(a,b)$, satisfying, for some positive constants $\mu$ and $\delta$, that $f_{\nu}(b)\le -\mu$ and $P_c\ge \delta$ in $(a,b)$. Then  for all $0<\nu\le 1$,
   \[
       f_{\nu}(x)\le -\min\{\mu, \sqrt{\delta}, \delta/(4\nu)\}, \quad a\le x\le b.
   \]
\end{lem}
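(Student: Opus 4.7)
The plan is to mirror the proof of Lemma \ref{lem2_2} with the orientation reversed, arguing by contradiction from the right endpoint $b$ rather than the left endpoint $a$. First, I would suppose that there exist $\lambda\in(0,\mu)$ and some $x\in[a,b)$ with $f_\nu(x)\ge -\lambda$. Since $f_\nu(b)\le -\mu<-\lambda$ and $f_\nu$ is continuous on $[a,b]$, I would set $x_\nu:=\sup\{x\in[a,b): f_\nu(x)=-\lambda\}$, so that $f_\nu(x_\nu)=-\lambda$ and $f_\nu(x)<-\lambda$ on $(x_\nu,b]$.

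The key step is then to observe that this forces $f_\nu'(x_\nu)\le 0$, since $f_\nu$ drops from the value $-\lambda$ to strictly smaller values immediately to the right of $x_\nu$. Substituting $x=x_\nu$ and $f_\nu(x_\nu)=-\lambda$ into equation (\ref{eq_1}) gives
$$
\nu(1-x_\nu^2)f_\nu'(x_\nu)-2\nu x_\nu\lambda+\tfrac{1}{2}\lambda^2=P_c(x_\nu)\ge\delta.
$$
Dropping the nonpositive first term on the left, and using $|x_\nu|\le 1$ to bound $-2\nu x_\nu\lambda\le 2\nu\lambda$, I obtain $2\nu\lambda+\tfrac{1}{2}\lambda^2\ge\delta$, so either $\lambda\ge \delta/(4\nu)$ or $\lambda\ge\sqrt{\delta}$. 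Hence any $\lambda\in(0,\mu)$ with the property that $-f_\nu$ attains $\lambda$ somewhere on $[a,b]$ must satisfy $\lambda\ge\min\{\sqrt{\delta},\delta/(4\nu)\}$, which yields exactly the claimed upper bound for $f_\nu$.

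I expect no serious obstacle beyond the routine bookkeeping of the sign of the one-sided derivative at $x_\nu$. As a cleaner alternative that avoids this point entirely, one may apply the substitution $g_\nu(x):=-f_\nu(-x)$, which is a $C^1$ solution of the same equation (\ref{eq_1}) on $(-b,-a)$ with $P_c(x)$ replaced by $P_c(-x)$, and transforms the hypothesis $f_\nu(b)\le-\mu$ into $g_\nu(-b)\ge\mu$; this reduces the statement directly to Lemma \ref{lem2_2} applied to $g_\nu$ on $(-b,-a)$.
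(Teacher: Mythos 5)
Your proposal is correct. The reduction you sketch at the end is essentially the paper's own proof: the paper disposes of this lemma in one line by setting $g_{\nu}(x):=-f_{\nu}(a+b-x)$ and invoking Lemma \ref{lem2_2}; your variant $g_{\nu}(x):=-f_{\nu}(-x)$ on $(-b,-a)$ is the same idea, and is in fact the cleaner choice, since $x\mapsto -x$ is the reflection under which (\ref{eq_1}) is exactly form-invariant, with $P_c(-x)=P_{(c_2,c_1,c_3)}(x)$ still of the admissible form and still bounded below by $\delta$, whereas the reflection about $(a+b)/2$ does not literally preserve the coefficients $1-x^2$ and $2\nu x$ unless $a+b=0$. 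Your primary argument is instead a direct rerun of the proof of Lemma \ref{lem2_2} from the right endpoint, and it goes through: the only bookkeeping point, which you correctly flag, is to ensure the touching point $x_{\nu}$ lies in the open interval $(a,b)$, where $f_{\nu}$ is $C^1$ and $P_c\ge\delta$ is available; this is arranged by starting from a point $x^*$ with $f_{\nu}(x^*)>-\min\{\mu,\sqrt{\delta},\delta/(4\nu)\}$ and choosing $\lambda$ strictly between $-f_{\nu}(x^*)$ and $\min\{\mu,\sqrt{\delta},\delta/(4\nu)\}$, so that the last crossing of the level $-\lambda$ occurs in $(x^*,b)\subset(a,b)$, where $f_{\nu}'(x_{\nu})\le 0$ is legitimate and the inequality $2\nu\lambda+\tfrac12\lambda^2\ge\delta$ follows as you wrote. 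The direct route costs a few extra lines but avoids any appeal to symmetry of the equation; the reduction buys brevity at the price of checking (as you do) how $P_c$ transforms.
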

\renewcommand{\thelem}{\thesection.\arabic{lem}}%
\begin{proof}
   Let $g_{\nu}(x):=-f_{\nu}(a+b-x)$ for $x\in [a,b]$. Then $g_{\nu}$ is a solution of (\ref{eq_1}) with the same $P_c$ and $g_{\nu}(a)\ge \mu$. The lemma follows from Lemma \ref{lem2_2}, applied to $g_{\nu}$. 
\end{proof}

\begin{cor}\label{cor2_3}
	Let $0< \nu \le 1$, $c\in \mathbb{R}^3$, $-1\le a<b\le 1$, $f_{\nu}\in C^1(a,b)$ be a solution of (\ref{eq_1}) in $(a,b)$ satisfying $0\le f_{\nu} \le M$ on the interval for some positive constant $M$. If $P_c\ge \delta>0$ in $(a,b)$ for some constant $\delta$, then
	\begin{equation}\label{eq:lem2_3}
		f_{\nu}(x)\ge \min\{\sqrt{\delta},\delta/(4\nu)\}, \qquad x\in(a+\epsilon, b)
	\end{equation}
	holds for any $\epsilon$ satisfying $20M\nu/\delta<\epsilon<b-a$. 
	If we further assume that $-1<a<-1/2$, then (\ref{eq:lem2_3}) holds for any $32M\nu(a+1)/\delta<\epsilon<\min\{a+1,b-a\}$. 
\end{cor}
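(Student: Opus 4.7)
My plan is to chain together Lemma~\ref{lem2_5} and Lemma~\ref{lem2_2}. The role of Lemma~\ref{lem2_5} will be to locate a point $x_0\in(a,a+\epsilon)$ at which $f_\nu(x_0)>\sqrt{\delta}$; then Lemma~\ref{lem2_2} applied on $(x_0,b)$ with initial value $\mu=\sqrt{\delta}$ propagates the lower bound $\min\{\sqrt{\delta},\delta/(4\nu)\}$ all the way to $b$, and since $x_0<a+\epsilon$ the claimed estimate on $(a+\epsilon,b)$ follows.

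For the first statement, I would apply (\ref{eq2_5_0}) of Lemma~\ref{lem2_5} to the restriction of $f_\nu$ to $(a,a+\epsilon)$, obtaining $\inf_{(a,a+\epsilon)}|\tfrac{1}{2}f_\nu^2-P_c|<10M\nu/\epsilon$. The assumption $\epsilon>20M\nu/\delta$ is exactly what is needed to make this infimum strictly less than $\delta/2$, so one can pick some $x_0\in(a,a+\epsilon)$ with $\tfrac{1}{2}f_\nu^2(x_0)>P_c(x_0)-\delta/2\ge \delta/2$; combined with the nonnegativity hypothesis $f_\nu\ge 0$ this forces $f_\nu(x_0)>\sqrt{\delta}$, and then Lemma~\ref{lem2_2} finishes the job.

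The sharpened version under $-1<a<-1/2$ proceeds identically, except that I would invoke the one-sided refinement (\ref{eq:lem2_5:2}) of Lemma~\ref{lem2_5} in place of (\ref{eq2_5_0}). To apply (\ref{eq:lem2_5:2}) the subinterval $(a,a+\epsilon)$ must sit inside $[-1,0]$; this is guaranteed by the hypothesis $\epsilon<a+1$, which combined with $a<-1/2$ gives $a+\epsilon<2a+1<0$. The resulting bound is $\inf_{(a,a+\epsilon)}|\tfrac{1}{2}f_\nu^2-P_c|\le 8M\nu(a+\epsilon+1)/\epsilon\le 16M\nu(a+1)/\epsilon$, and the hypothesis $\epsilon>32M\nu(a+1)/\delta$ again drives this below $\delta/2$. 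No serious obstacle is expected: the proof is essentially a two-line chaining of the preceding lemmas, with the only mild care point being the verification $a+\epsilon<0$ that justifies using the refined inequality.
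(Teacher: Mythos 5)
Your proposal is correct and follows essentially the same route as the paper: locate a point in $(a,a+\epsilon)$ via Lemma \ref{lem2_5} (using (\ref{eq:lem2_5:2}) with the check $a+\epsilon<2a+1<0$ in the refined case) where $f_\nu\ge\sqrt{\delta}$, then propagate with Lemma \ref{lem2_2}; the constants work out exactly as in the paper. The only omitted detail is the paper's one-line remark that one may shrink $(a,b)$ slightly to ensure the $C^0[a,b]$ hypothesis of Lemma \ref{lem2_2}, which is harmless since the conclusion is stated on an open interval.
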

\begin{proof}
Shrinking $(a,b)$ slightly we may assume without loss of generality that $f_{\nu}$ is also in $C^0[a,b]$.
	By Lemma \ref{lem2_5}, there is some $x_{\nu}\in [a,a+\epsilon]$ such that
	\[
		\left|\frac{1}{2}f^2_\nu(x_{\nu})-P_c(x_{\nu})\right|\le 10M\nu/\epsilon.
	\]
	For $20M\nu/\delta<\epsilon<b-a$, we have 
		$f^2_\nu(x_{\nu})\ge 2P_c(x_{\nu})-20M\nu/\epsilon \ge \delta$.
	So $f_{\nu}(x_{\nu})\ge \sqrt{\delta}$. Then applying Lemma \ref{lem2_2}, we have (\ref{eq:lem2_3}) for any $20M\nu/\delta<\epsilon<b-a$, $0<\nu \le 1$.

	If $-1<a<-1/2$, then $2a+1<0$. By Lemma \ref{lem2_5} and (\ref{eq:lem2_5:2}), for any $0<\epsilon<\min(a+1,b-a)$, there exists $x_\nu\in (a,a+\epsilon)$ such that
	\[
		\left| \frac{1}{2} f^2_\nu (x_{\nu}) - P_c(x_{\nu}) \right| \le 8M\nu (a+\epsilon+1)/\epsilon < 16M\nu(a+1)/\epsilon.
	\]
	For $32M\nu(a+1)/\delta<\epsilon<\min(a+1,b-a)$, we have
		$f^2_\nu(x_{\nu})\ge 2P_c(x_{\nu})-32M\nu(a+1)/\epsilon\ge \delta$. 
	So $f_{\nu}(x_{\nu})\ge \sqrt{\delta}$. By Lemma \ref{lem2_2}, (\ref{eq:lem2_3}) holds for any $32M\nu(a+1)/\delta<\epsilon<\min(a+1,b-a)$, $0<\nu \le 1$. 
\end{proof}

\addtocounter{cor}{-1}
\renewcommand{\thecor}{\thesection.\arabic{cor}'}%
\begin{cor}\label{cor2_3'}
	Let $0< \nu \le 1$, $c\in \mathbb{R}^3$, $-1\le a<b\le 1$, $f_{\nu}\in C^1(a,b)$ be a solution of (\ref{eq_1}) in $(a,b)$ and $-M\le f_{\nu} \le 0$ for some positive constant $M$ on $(a,b)$. If $P_c\ge \delta>0$ in $(a,b)$, then
	\begin{equation}\label{eq:lem2_3'}
		f_{\nu}\le -\min\{\sqrt{\delta},\delta/(4\nu)\}, \qquad x\in(a,b-\epsilon)
	\end{equation}
	holds for any $20M\nu/\delta<\epsilon<b-a$.
	If we further assume that $1/2<b<1$, then (\ref{eq:lem2_3'}) holds for any $32M\nu(1-b)/\delta<\epsilon<\min(1-b,b-a)$.
\end{cor}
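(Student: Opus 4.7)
The plan is to reduce to the non-primed Corollary 2.3 by the reflection $g_\nu(x) := -f_\nu(-x)$ on the reflected interval $(-b, -a)$, in the spirit of how Lemma 2.2' was obtained from Lemma 2.2. A short computation using the explicit form $P_c(x) = c_1(1-x) + c_2(1+x) + c_3(1-x^2)$ shows that $g_\nu$ satisfies (\ref{eq_1}) on $(-b, -a)$ with $P_c$ replaced by $\tilde P(x) := P_c(-x) = P_{\tilde c}(x)$, where $\tilde c := (c_2, c_1, c_3)$. Indeed, with $y = -x$ the terms $(1-x^2)$, $2\nu x g_\nu$, and $\tfrac12 g_\nu^2$ combine to reproduce the left-hand side of (\ref{eq_1}) evaluated at $y$. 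Under the hypotheses, $-M \le f_\nu \le 0$ and $P_c \ge \delta$ on $(a,b)$ translate respectively to $0 \le g_\nu \le M$ and $\tilde P \ge \delta$ on $(-b, -a)$, so $g_\nu$ is covered by Corollary 2.3.

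For the first assertion, I apply the first part of Corollary 2.3 to $g_\nu$ on $(-b, -a)$. For any $\epsilon$ with $20M\nu/\delta < \epsilon < b-a$, this yields $g_\nu(x) \ge \min\{\sqrt{\delta}, \delta/(4\nu)\}$ on $(-b+\epsilon, -a)$. Substituting $y = -x$ converts this into $f_\nu(y) \le -\min\{\sqrt{\delta}, \delta/(4\nu)\}$ for $y \in (a, b-\epsilon)$, which is exactly (\ref{eq:lem2_3'}).

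For the second assertion, the hypothesis $1/2 < b < 1$ translates to $-1 < -b < -1/2$ on the reflected side, so the second part of Corollary 2.3 applies with $a' = -b$, giving $a' + 1 = 1 - b$ and $b' - a' = b - a$. The admissible range $32M\nu(a'+1)/\delta < \epsilon < \min(a'+1, b'-a')$ becomes $32M\nu(1-b)/\delta < \epsilon < \min(1-b, b-a)$, and reflecting the conclusion once more yields (\ref{eq:lem2_3'}) on $(a, b-\epsilon)$. The only step that requires any verification is the algebraic identity confirming that the reflection preserves the form of (\ref{eq_1}) up to the swap $c_1 \leftrightarrow c_2$; once that identity is in hand, the corollary reduces to an already-proved statement, so I do not expect any real analytic obstacle.
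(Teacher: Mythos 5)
Your proof is correct and is essentially the proof the paper intends: Corollary \ref{cor2_3'} is stated without argument precisely because it follows from Corollary \ref{cor2_3} by the symmetry used for Lemma \ref{lem2_2'}. Your reflection $g_\nu(x)=-f_\nu(-x)$ with the swap $c_1\leftrightarrow c_2$ (so $P_c(-x)=P_{(c_2,c_1,c_3)}(x)$, still bounded below by $\delta$) is the right identity, and the bookkeeping of the admissible $\epsilon$-ranges ($a'+1=1-b$, $b'-a'=b-a$) and of the reflected conclusion on $(a,b-\epsilon)$ checks out.
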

\renewcommand{\thecor}{\thesection.\arabic{cor}}%

\begin{rmk}
	Under the conditions of Corollary \ref{cor2_3} (or Corollary \ref{cor2_3'}), for any small $\epsilon>0$ fixed, there exists $\nu_0>0$, depending only on $\epsilon, M$ and $\delta$, such that (\ref{eq:lem2_3}) (or (\ref{eq:lem2_3'})) holds for all $0<\nu<\nu_0$.
\end{rmk}


\begin{lem}\label{lem2_4}
    Let $0< \nu \le 1$, $-1\le a< b \le 1$, $c\in \mathbb{R}^3\setminus\{0\}$, suppose $f_{\nu}\in C^1(a,b)\cap C^0[a,b]$ is a solution of (\ref{eq_1}) in $(a,b)$, and $P_c\ge 0$ in $(a,b)$. Then there exists at most one $x_{\nu}\in (a,b)$ such that $f_{\nu}(x_{\nu})=0$. Moreover, if $f_{\nu}(a)>0$, then $f_{\nu}>0$ on $(a,b)$, and if $f_{\nu}(b)<0$, then $f_{\nu}<0$ on $(a,b)$.
\end{lem}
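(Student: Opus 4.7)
The plan is to establish a single transition property at any interior zero of $f_\nu$, and then deduce both assertions of the lemma from it. Specifically, I will show that if $x_\nu\in(a,b)$ is a zero of $f_\nu$, then $f_\nu(x)<0$ for $x$ just to the left of $x_\nu$ and $f_\nu(x)>0$ for $x$ just to the right.

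Evaluating (\ref{eq_1}) at such a zero gives
\[
   \nu(1-x_\nu^2)f'_\nu(x_\nu)=P_c(x_\nu)\ge 0,
\]
and since $-1<x_\nu<1$, we have $f'_\nu(x_\nu)\ge 0$. In the nondegenerate case $f'_\nu(x_\nu)>0$, the transition property is immediate. The subtle case is $f'_\nu(x_\nu)=0$, which forces $P_c(x_\nu)=0$; since $P_c$ is a polynomial of degree at most two, $P_c\ge 0$ on $(a,b)$, and $c\ne 0$ precludes $P_c\equiv 0$, the zero $x_\nu$ must be a double root of $P_c$, so $P_c(x)=-c_3(x-x_\nu)^2$ with $c_3<0$. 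Since the ODE has analytic coefficients, $f_\nu$ is real analytic on $(a,b)$ and may be written $f_\nu(x)=\sum_{k\ge 2}a_k(x-x_\nu)^k$. Substituting into (\ref{eq_1}), matching the $(x-x_\nu)^1$ coefficient forces $a_2=0$, after which matching the $(x-x_\nu)^2$ coefficient gives $3\nu(1-x_\nu^2)a_3=-c_3$, i.e., $a_3=-c_3/[3\nu(1-x_\nu^2)]>0$. Hence $f_\nu(x)=a_3(x-x_\nu)^3+O((x-x_\nu)^4)$ with $a_3>0$, and the transition property holds in the degenerate case as well.

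With the transition property in hand, the first assertion follows quickly: if $x_1<x_2$ were two zeros in $(a,b)$, I may take them to be consecutive (zeros of the nontrivial real-analytic function $f_\nu$ are isolated). The transition at $x_1$ then yields $f_\nu>0$ just to the right of $x_1$, which extends to all of $(x_1,x_2)$ by constancy of sign, while the transition at $x_2$ yields $f_\nu<0$ just to the left of $x_2$, a contradiction. For the second assertion, if $f_\nu(a)>0$ and there were a zero $x_\nu\in(a,b)$, the transition at $x_\nu$ would give $f_\nu<0$ just to the left of $x_\nu$, so the intermediate value theorem would produce a further zero of $f_\nu$ in $(a,x_\nu)$, contradicting the uniqueness just established; the case $f_\nu(b)<0$ is symmetric.

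The only genuinely nontrivial step is the local power-series expansion in the degenerate case $f'_\nu(x_\nu)=P_c(x_\nu)=0$, where one must verify that the $a_2$-contribution is killed at first order and the $(x-x_\nu)^2$ coefficient of the left-hand side reduces cleanly to $3\nu(1-x_\nu^2)a_3$ (the $f_\nu^2/2$ and $2\nu x f_\nu$ terms contribute only at higher order once $a_2=0$). The remaining steps are soft consequences of the one-sided sign of $f'_\nu$ at zeros.
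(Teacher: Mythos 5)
Your proof is correct and follows essentially the same route as the paper: the heart of both arguments is that at a zero $x_\nu$ where $f_\nu'$ also vanishes, $P_c$ must equal a positive multiple of $(x-x_\nu)^2$ (using $c\ne 0$ and $P_c\ge 0$), and $f_\nu$ then vanishes to exactly third order with positive leading coefficient, forcing a strict sign change from negative to positive; the paper obtains this by differentiating (\ref{eq_1}) at $x_\nu$ (getting $f_\nu''(x_\nu)=0$ and $f_\nu'''(x_\nu)>0$) rather than by your power-series matching, but the computation is the same. Your deduction of the uniqueness of the zero and of the sign-preservation statements from this local transition behavior mirrors the paper's first-zero/intermediate-value argument.
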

\begin{proof}
We first prove that there does not exist $\bar{x}\in (a,b)$ and $\epsilon>0$ such that $f_{\nu}(\bar{x})=0$, $f'_{\nu}(\bar{x})\le 0$ and $f_{\nu}>0$ in $(\bar{x}-\epsilon, \bar{x})$. If such $\bar{x}$ and $\epsilon$ exist ,then
\[
      0\ge \nu(1-\bar{x}^2)f'_{\nu}(\bar{x})+2\nu\bar{x}f_{\nu}(\bar{x})+\frac{1}{2}f^2_{\nu}(\bar{x})=P_c(\bar{x})\ge 0.
   \]
   So $P_c(\bar{x})=0$,  and $f'_{\nu}(\bar{x})=0$. Since $c\ne 0$ and $P_c\ge 0$ in $(a,b)$, $P_c\equiv\lambda(x-\bar{x})^2$ for some $\lambda>0$. So $P'_c(\bar{x})=0$ and $P''_c(\bar{x})>0$.
 It is easy to see that $f\in C^3(a,b)$. Take a derivative of equation (\ref{eq_1}) at $\bar{x}$, using the fact $f_{\nu}(\bar{x})=f'_{\nu}(\bar{x})=0$, we have
      $\nu(1-\bar{x}^2)f''_{\nu}(\bar{x})=P'_c(\bar{x})=0$.
   So $f''_{\nu}(\bar{x})=0$. 
   Now we have $f_{\nu}(\bar{x})=f'_{\nu}(\bar{x})=f''_{\nu}(\bar{x})=0$ and $f'''_{\nu}(\bar{x})>0$ which imply that $f_{\nu}(x)<0$ for $x<\bar{x}$ and close to $x_{\nu}$, violating  $f_{\nu}>0$ in $(\bar{x}-\epsilon, \bar{x})$, a contradiction. 
   Similarly,  there does not exist $\bar{x}\in (a,b)$ and $\epsilon>0$ such that $f_{\nu}(\bar{x})=0$, $f'_{\nu}(\bar{x})\le 0$ and $f_{\nu}<0$ in $(\bar{x}, \bar{x}+\epsilon)$.

%

   Now we prove that there exists at most one $x_{\nu}\in (a,b)$ such that $f_{\nu}(x_{\nu})=0$. Clearly $f_{\nu}$ is not identically equal to zero on $(a,b)$. If $f_{\nu}$ has more than one zero point in $(a,b)$, then there exist some $x_\nu<y_{\nu}$ in $(-1,1)$ such that $f_{\nu}(x_{\nu})=f_\nu(y_\nu)=0$, and either $f_\nu<0$ in $(x_\nu,y_\nu)$ or $f_\nu>0$  in $(x_\nu,y_\nu)$. If $f_\nu<0$ in $(x_\nu,y_\nu)$, then $f_{\nu}(x_{\nu})=0$ and $f'_{\nu}(x_{\nu})\le 0$. If $f_\nu>0$ in $(x_\nu,y_\nu)$, then $f_{\nu}(y_{\nu})=0$ and $f'_{\nu}(y_{\nu})\le 0$. We have proved in the above that neither could occur, a contradiction.

Next, we prove that if $f_\nu(a)>0$, then $f_{\nu}>0$ on $(a,b)$. If $f_{\nu}$ is not positive on the whole interval $(a,b)$, then let $\bar{x}\in (a,b)$ be the first point greater than $a$ such that $f_{\nu}(\bar{x})=0$. 
    We have $f'_{\nu}(\bar{x})\le 0$, and $f_{\nu}>0$ in $(a,\bar{x})$, a contradiction. 
   Similarly, we have that if $f_{\nu}(b)<0$, then $f_{\nu}<0$ on $(a,b)$.
\end{proof}

\begin{cor}\label{cor2_1}
   Let $0< \nu \le 1$, $c\in\mathbb{R}^3\setminus\{0\}$. Assume that $P_c\ge 0$ in $(-1,1)$, then
   \[
      U_{\nu, \theta}^+(x)>0, \quad U_{\nu, \theta}^-(x)<0, \quad -1< x< 1.
   \]
\end{cor}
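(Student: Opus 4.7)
The plan is to read off the endpoint values of $U^\pm_{\nu,\theta}(c)$ from Theorem A and then invoke the sign-propagation part of Lemma \ref{lem2_4}. Since $c\in J_\nu$ we have $c_1,c_2\ge -\nu^2$, so the quantities $\sqrt{\nu^2+c_1}$ and $\sqrt{\nu^2+c_2}$ are well defined, and by Theorem A
\[
   U^+_{\nu,\theta}(c)(-1)=\tau_2(\nu,c_1)=2\nu+2\sqrt{\nu^2+c_1}>0,
   \qquad
   U^-_{\nu,\theta}(c)(1)=\tau_1'(\nu,c_2)=-2\nu-2\sqrt{\nu^2+c_2}<0,
\]
where strict inequalities come from $\nu>0$. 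Note also that Theorem A provides $U^+_{\nu,\theta}(c)\in C^0([-1,1))$ and $U^-_{\nu,\theta}(c)\in C^0((-1,1])$, and both functions are smooth inside $(-1,1)$ and satisfy the ODE (\ref{eq_1}) with the given $c$, and by hypothesis $P_c\ge 0$ on $(-1,1)$ with $c\neq 0$.

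Next, I would handle $U^+$ by applying Lemma \ref{lem2_4} on intervals of the form $(a,b)=(-1,1-\epsilon)$ for arbitrary small $\epsilon>0$. On such an interval $U^+_{\nu,\theta}(c)\in C^1(a,b)\cap C^0[a,b]$ (continuity at $-1$ from Theorem A, smoothness at $1-\epsilon$ interior), $P_c\ge 0$, $c\neq 0$, and $U^+_{\nu,\theta}(c)(a)>0$. The second clause of Lemma \ref{lem2_4} then yields $U^+_{\nu,\theta}(c)>0$ on $(-1,1-\epsilon)$, and since $\epsilon>0$ is arbitrary we conclude $U^+_{\nu,\theta}(c)>0$ on $(-1,1)$.

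The argument for $U^-$ is symmetric. Apply Lemma \ref{lem2_4} on $(a,b)=(-1+\epsilon,1)$: by Theorem A the function $U^-_{\nu,\theta}(c)$ is continuous up to $1$ and smooth at $-1+\epsilon$, $P_c\ge 0$, $c\neq 0$, and $U^-_{\nu,\theta}(c)(b)<0$. Lemma \ref{lem2_4} then gives $U^-_{\nu,\theta}(c)<0$ on $(-1+\epsilon,1)$, and letting $\epsilon\to 0^+$ yields $U^-_{\nu,\theta}(c)<0$ on $(-1,1)$.

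There is no real obstacle here: all the hard analysis (ruling out sign changes when $P_c\ge 0$, $c\neq 0$) has already been done in Lemma \ref{lem2_4} via the order-three vanishing/Taylor argument, and Theorem A supplies the strictly positive (resp.\ negative) boundary value at $-1$ (resp.\ $1$). The only minor point to be careful about is that Theorem A only asserts $C^0$-continuity at one endpoint for each of $U^\pm$, which is why one invokes the lemma on a slightly shrunken interval and then passes to the limit.
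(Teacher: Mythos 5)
Your proposal is correct and is essentially the paper's own argument: the paper likewise reads off $U^+_{\nu,\theta}(-1)=2\nu+2\sqrt{\nu^2+c_1}>0$ and $U^-_{\nu,\theta}(1)=-2\nu-2\sqrt{\nu^2+c_2}<0$ and then invokes the sign-propagation clause of Lemma \ref{lem2_4}. Your extra care about applying the lemma on slightly shrunken intervals and letting $\epsilon\to 0^+$ is a harmless refinement of the same proof.
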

\begin{proof}
   Since $U_{\nu, \theta}^+(-1)=2\nu+2\sqrt{\nu^2+c_1}>0$ and $U_{\nu, \theta}^-(1)=-2\nu-2\sqrt{\nu^2+c_2}<0$, the corollary follows from Lemma \ref{lem2_4}.
\end{proof}

\begin{lem}\label{lem2_6}
   Let $0< \nu \le 1$, $c\in \mathbb{R}^3$, $-1\le a<b\le 1$, $f_{\nu}\in C^1(a,b)\cap C^0[a,b]$ be a solution of (\ref{eq_1}) in $(a,b)$. If there exist some $m, M>0$ such that
   \begin{equation*}
        m\le |f_{\nu}(x)|\le M, \quad \forall a\le x\le b,
   \end{equation*}
   then 
   \begin{equation}\label{eq2_6_2}
       ||\frac{1}{2}f^2_{\nu}-P_c||_{L^{\infty}(a, b)}\le \max\left\{\left(2M+\sqrt{6}|c|/m\right)\nu, \left|\frac{1}{2}f^2_{\nu}(a)-P_c(a)\right|, \left|\frac{1}{2}f^2_{\nu}(b)-P_c(b)\right| \right\}.
   \end{equation}
 Moreover, for any $0<\epsilon<(b-a)/2$, 
   \begin{equation}\label{eq2_6_3}
       ||\frac{1}{2}f^2_{\nu}-P_c||_{L^{\infty}(a+\epsilon,b-\epsilon)}\le \nu \cdot \max\left\{2M+\sqrt{6}|c|/m, 10M/\epsilon\right\}.
   \end{equation}
   \end{lem}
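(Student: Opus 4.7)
Set $h_\nu := \tfrac{1}{2}f_\nu^2 - P_c$. The central idea is to view $h_\nu$ as a solution of a linear first-order ODE coming from (\ref{eq_1}) and then apply a maximum-principle argument. Rewriting (\ref{eq_1}) as $\nu(1-x^2)f_\nu' = -h_\nu - 2\nu x f_\nu$, differentiating $h_\nu$, and substituting for $f_\nu'$, I obtain on $(a,b)$ the identity
\[
h_\nu'(x) + \frac{f_\nu(x)}{\nu(1-x^2)}\, h_\nu(x) \;=\; -\frac{2x f_\nu(x)^2}{1-x^2} - P_c'(x).
\]
Since $|f_\nu| \ge m > 0$ on $[a,b]$, $f_\nu$ never vanishes, so at any interior critical point $x_0 \in (a,b)$ of $h_\nu$ the relation $h_\nu'(x_0) = 0$ can be solved algebraically for $h_\nu(x_0)$:
\[
h_\nu(x_0) \;=\; -2\nu x_0 f_\nu(x_0) - \frac{\nu(1-x_0^2)\, P_c'(x_0)}{f_\nu(x_0)}.
\]

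I next estimate the right-hand side. Using $|x_0|\le 1$, $1-x_0^2 \le 1$, the hypothesis $m\le |f_\nu|\le M$, and the Cauchy--Schwarz bound $|P_c'(x)| = |-c_1 + c_2 - 2c_3 x| \le \sqrt{2 + 4x^2}\,|c| \le \sqrt{6}\,|c|$ on $[-1,1]$, I obtain
\[
|h_\nu(x_0)| \;\le\; 2M\nu + \sqrt{6}\,|c|\nu/m \;=\; (2M + \sqrt{6}\,|c|/m)\nu.
\]
Because $h_\nu \in C^1(a,b)\cap C^0[a,b]$, the maximum of $|h_\nu|$ on $[a,b]$ is attained either at one of the endpoints $a, b$ or at an interior critical point of $h_\nu$ (by separately examining the extrema of $h_\nu$ and $-h_\nu$). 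Combining the interior bound with the endpoint values $|h_\nu(a)|$ and $|h_\nu(b)|$ yields (\ref{eq2_6_2}).

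For the interior estimate (\ref{eq2_6_3}), the plan is to replace the endpoint values in (\ref{eq2_6_2}) by nearby interior values where $h_\nu$ is already known to be small. Applying Lemma \ref{lem2_5} on $(a, a+\epsilon)$ and on $(b-\epsilon, b)$ (each of length $\epsilon \le (b-a)/2$) produces points $a^* \in (a, a+\epsilon)$ and $b^* \in (b-\epsilon, b)$ with $|h_\nu(a^*)|, |h_\nu(b^*)| < 10M\nu/\epsilon$. Since $[a+\epsilon, b-\epsilon] \subset [a^*, b^*]$, applying (\ref{eq2_6_2}) on the subinterval $[a^*, b^*]$ gives
\[
\|h_\nu\|_{L^\infty(a+\epsilon, b-\epsilon)} \;\le\; \|h_\nu\|_{L^\infty(a^*, b^*)} \;\le\; \nu\cdot \max\{2M + \sqrt{6}\,|c|/m,\ 10M/\epsilon\},
\]
which is (\ref{eq2_6_3}).

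The only mildly delicate step is the algebraic manipulation producing the ODE for $h_\nu$ and the critical-point identity; once these are in hand the argument is routine maximum-principle reasoning supplemented by the mean-value bound from Lemma \ref{lem2_5}. I do not expect any real obstacle beyond bookkeeping.
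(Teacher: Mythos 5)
Your proof is correct and follows essentially the same route as the paper: the paper likewise evaluates $h_\nu=\tfrac12 f_\nu^2-P_c$ at an interior point where $|h_\nu|$ is maximized, uses $h_\nu'=0$ together with $|f_\nu|\ge m$ to bound $|f_\nu'|\le\sqrt{6}|c|/m$ there, and reads off the bound from (\ref{eq_1}); your critical-point identity is just this computation written as a linear ODE for $h_\nu$. The second estimate is obtained in the paper exactly as you do, by producing good points near the endpoints via Lemma \ref{lem2_5} and applying the first estimate on the subinterval they bound.
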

\begin{proof}

 We first prove (\ref{eq2_6_2}).  
  Since $\frac{1}{2}f^2_{\nu}-P_c$ is continuous on $[a,b]$, there exists some $z_{\nu}\in [a,b]$, such that
   \[
       \left|\frac{1}{2}f^2_{\nu}(z_\nu)-P_c(z_\nu)\right|=\max_{[a,b]}\left|\frac{1}{2}f^2_{\nu}-P_c\right|.
   \]
   If $z_{\nu}=a$ or $b$, then  (\ref{eq2_6_2}) is proved. Otherwise we have
   \[
      f_\nu(z_\nu)f'_\nu(z_\nu)-P'_c(z_\nu)=0.
   \]
   Since $|f_{\nu}(z_{\nu})|\ge m$, we have
   \[
        |f'_\nu(z_\nu)|=|P'_c(z_\nu)|/|f_\nu(z_\nu)|\le (|c_1|+|c_2|+2|c_3|)/m\le \sqrt{6}|c|/m.
   \]
   Then by (\ref{eq_1}), we have
   \[
      \left|\frac{1}{2}f^2_{\nu}(z_\nu)-P_c(z_\nu)\right|=\nu|(1-z^2_\nu)f'_\nu(z_\nu)+2z_\nu f_\nu(z_\nu)|\le \left(2M+\sqrt{6}|c|/m\right)\nu. 
   \]
So  (\ref{eq2_6_2}) is proved.

Next, we prove (\ref{eq2_6_3}).
%
%
%
  By Lemma \ref{lem2_5}, for any $\epsilon>0$, there exist some $x_\nu \in [a, a+\epsilon]$, and $y_{\nu}\in [b-\epsilon, b]$, satisfying
   \[
       \left|\frac{1}{2}f^2_{\nu}(x_\nu)-P_c(x_\nu)\right|\le 10 M\nu/\epsilon, \quad \left|\frac{1}{2}f^2_{\nu}(y_\nu)-P_c(y_\nu)\right|\le 10M\nu/\epsilon.
   \]
Apply (\ref{eq2_6_2}) on $(x_{\nu}, y_{\nu})$, we have
   \[
       ||\frac{1}{2}f^2_{\nu}-P_c||_{L^{\infty}(x_{\nu},y_{\nu})}\le \max\left\{\left(2M+\sqrt{6}|c|/m\right)\nu, 10M\nu/\epsilon\right\}. 
   \]
Notice $(a+\epsilon,b-\epsilon)\subset (x_{\nu},y_{\nu})$,  the lemma is proved.
\end{proof}

\begin{cor}\label{cor2_2}
   Let $0< \nu \le 1$, $c\in \mathbb{R}^3$, $-1\le a<b\le 1$, $P_c\ge 0$ in $(a,b)$, $f_{\nu}\in C^1(a,b)\cap C^0[a,b]$ be a solution of (\ref{eq_1}) in $(a,b)$. If there exist some $m, M>0$ such that
   \begin{equation*}
        m\le f_{\nu}(x)\le M, \quad \forall a<x<b, \quad 0<\nu\le 1,
   \end{equation*}
   then there exists a universal constant $C>0$,  such that
   \begin{equation*}
   \begin{split}
       ||f_{\nu}-\sqrt{2P_c}||_{L^{\infty}(a, b)}\le  & \frac{C}{m}\max\left\{\left(M+|c|/m\right)\nu,(M+\sqrt{|c|})|f_{\nu}(a)-\sqrt{2P_c(a)}|,\right. \\
       & \left.(M+\sqrt{|c|})|f_{\nu}(b)-\sqrt{2P_c(b)}| \right\}.
       \end{split}
   \end{equation*}
 Moreover, for any $0<\epsilon<(b-a)/2$, 
   \begin{equation*}
       ||f_{\nu}-\sqrt{2P_c}||_{L^{\infty}(a+\epsilon,b-\epsilon)}\le \frac{C\nu}{m}\max\left\{M+|c|/m,M/\epsilon\right\}.
   \end{equation*}
   \end{cor}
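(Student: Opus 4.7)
The plan is to reduce the corollary to Lemma \ref{lem2_6} by factoring the difference of squares. Writing
\[
   f_{\nu}-\sqrt{2P_c}=\frac{f_{\nu}^{2}-2P_c}{f_{\nu}+\sqrt{2P_c}}=\frac{2\bigl(\tfrac12 f_{\nu}^{2}-P_c\bigr)}{f_{\nu}+\sqrt{2P_c}},
\]
and using $f_{\nu}\ge m$ together with $P_c\ge 0$ in $(a,b)$, the denominator is bounded below by $m$. Hence on all of $(a,b)$,
\[
   |f_{\nu}-\sqrt{2P_c}|\le \frac{2}{m}\,\Bigl|\tfrac12 f_{\nu}^{2}-P_c\Bigr|,
\]
which reduces everything to estimating $\tfrac12 f_{\nu}^{2}-P_c$.

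For the first estimate I would apply inequality (\ref{eq2_6_2}) of Lemma \ref{lem2_6}, which bounds $\|\tfrac12 f_\nu^2-P_c\|_{L^\infty(a,b)}$ by the maximum of $(2M+\sqrt{6}\,|c|/m)\nu$ and the two endpoint quantities $|\tfrac12 f_\nu^2(a)-P_c(a)|$ and $|\tfrac12 f_\nu^2(b)-P_c(b)|$. To convert these endpoint quantities into the form appearing in the corollary, factor again: at $x=a$,
\[
   \Bigl|\tfrac12 f_\nu^2(a)-P_c(a)\Bigr|=\tfrac12\bigl|f_\nu(a)-\sqrt{2P_c(a)}\bigr|\cdot\bigl|f_\nu(a)+\sqrt{2P_c(a)}\bigr|.
\]
Since $|P_c(a)|\le 2(|c_1|+|c_2|+2|c_3|)\le C|c|$ (and likewise at $x=b$), we have $\sqrt{2P_c(a)}\le C\sqrt{|c|}$, so the second factor is at most $M+C\sqrt{|c|}$. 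An identical bound holds at $b$. Combining with the denominator estimate $f_\nu+\sqrt{2P_c}\ge m$ yields the first displayed inequality of the corollary.

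For the interior estimate on $(a+\epsilon,b-\epsilon)$, I would apply inequality (\ref{eq2_6_3}) of Lemma \ref{lem2_6} instead; this version absorbs the boundary contributions through the additional term $10M/\epsilon$, so the resulting estimate
\[
   \|f_\nu-\sqrt{2P_c}\|_{L^\infty(a+\epsilon,b-\epsilon)}\le \frac{2}{m}\,\Bigl\|\tfrac12 f_\nu^2-P_c\Bigr\|_{L^\infty(a+\epsilon,b-\epsilon)}\le \frac{C\nu}{m}\max\bigl\{M+|c|/m,\, M/\epsilon\bigr\}
\]
follows immediately. There is essentially no obstacle here beyond the bookkeeping of the factor $M+\sqrt{|c|}$ in the boundary terms; the positive lower bound $f_\nu\ge m$ is exactly what is needed to turn the quadratic control provided by Lemma \ref{lem2_6} into linear control of $f_\nu-\sqrt{2P_c}$.
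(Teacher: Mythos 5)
Your proposal is correct and follows essentially the same route as the paper: the paper's own proof consists of observing $m\le f_{\nu}+\sqrt{2P_c}\le M+\sqrt{10|c|}$ on $(a,b)$ and then invoking Lemma \ref{lem2_6}, which is exactly the difference-of-squares reduction you carry out, with (\ref{eq2_6_2}) giving the endpoint version and (\ref{eq2_6_3}) the interior version. Your write-up simply makes explicit the bookkeeping (dividing by the lower bound $m$ and bounding the endpoint factors by $M+C\sqrt{|c|}$) that the paper leaves implicit.
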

   \begin{proof}
      Since $f_{\nu}\ge m>0$  and $P_c\ge 0$ in $(a,b)$, we have $m\le f_\nu+\sqrt{2P_c}\le M+\sqrt{10|c|}$ in $(a,b)$. So the corollary follows from Lemma \ref{lem2_6}.
   \end{proof}

\addtocounter{cor}{-1}
\renewcommand{\thecor}{\thesection.\arabic{cor}'}%
\begin{cor}\label{cor2_2'}
	Let $0< \nu \le 1$, $-1\le a<b\le 1$, $P_c\ge 0$ in $(a,b)$, $f_{\nu}\in C^1(a,b)\cap C^0[a,b]$ be a solution of (\ref{eq_1}) in $(a,b)$. If there exist some $m, M>0$ such that
   \begin{equation*}
        -M\le f_{\nu}(x)\le -m, \quad \forall a<x<b, \quad 0<\nu\le 1,
   \end{equation*}
   then there exists a universal constant $C>0$, such that
   \begin{equation*}
   \begin{split}
       ||f_{\nu}+\sqrt{2P_c}||_{L^{\infty}(a, b)} \le & \frac{C}{m}\max\left\{\left(M+|c|/m\right)\nu, (M+\sqrt{|c|})|f_{\nu}(a)+\sqrt{2P_c(a)}|,\right.\\
       & \left.(M+\sqrt{|c|})|f_{\nu}(b)+\sqrt{2P_c(b)}| \right\}.
       \end{split}
   \end{equation*}
 Moreover, for any $0<\epsilon<(b-a)/2$, there exists a  universal constant $C>0$, such that 
   \begin{equation*}
       ||f_{\nu}+\sqrt{2P_c}||_{L^{\infty}(a+\epsilon,b-\epsilon)}\le \frac{C\nu}{m}\max\left\{M+|c|/m, M/\epsilon\right\}.
   \end{equation*}
\end{cor}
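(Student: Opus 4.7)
The plan is to mirror the proof of Corollary \ref{cor2_2} by exploiting the factorization $\frac{1}{2}f_\nu^2 - P_c = \frac{1}{2}(f_\nu + \sqrt{2P_c})(f_\nu - \sqrt{2P_c})$ and then invoking Lemma \ref{lem2_6} to control the quadratic quantity on the left.

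First I would set up uniform bounds on the second factor. Since $-M \le f_\nu \le -m$ on $(a,b)$ and $\sqrt{2P_c}\ge 0$, we have $f_\nu - \sqrt{2P_c} \le -m$, and moreover $|f_\nu - \sqrt{2P_c}| \le M + \sqrt{2P_c}$. Because $P_c$ is a quadratic polynomial with coefficients bounded by $|c|$, we get $\sqrt{2P_c}\le C\sqrt{|c|}$ on $[-1,1]$ for a universal $C$. This gives a two-sided control
\[
    m \le |f_\nu - \sqrt{2P_c}| \le C(M+\sqrt{|c|}) \quad \text{on } (a,b).
\]
Combined with $|f_\nu|\ge m$, these are exactly the hypotheses that allow us to apply Lemma \ref{lem2_6}, which supplies a bound on $\|\frac12 f_\nu^2 - P_c\|_{L^\infty}$ of the form $(2M + \sqrt 6|c|/m)\nu$ plus boundary contributions.

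Next I would convert the quadratic estimate into a linear one. From the factorization,
\[
    |f_\nu + \sqrt{2P_c}| \;=\; \frac{2\,|\tfrac{1}{2}f_\nu^2 - P_c|}{|f_\nu - \sqrt{2P_c}|} \;\le\; \frac{2}{m}\left|\tfrac{1}{2}f_\nu^2 - P_c\right|.
\]
Plugging in the estimate (\ref{eq2_6_2}) from Lemma \ref{lem2_6} yields the first claim, once one observes that each boundary term $|\tfrac12 f_\nu(a)^2 - P_c(a)|$ factors as $\tfrac12|f_\nu(a)+\sqrt{2P_c(a)}|\cdot|f_\nu(a)-\sqrt{2P_c(a)}|$ and the second factor is at most $C(M+\sqrt{|c|})$ by the two-sided control above; analogously at $b$. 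For the interior estimate on $(a+\epsilon,b-\epsilon)$, I would instead feed (\ref{eq2_6_3}) into the same division by $|f_\nu-\sqrt{2P_c}|\ge m$, which directly gives the advertised bound.

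There is no real obstacle here: the only delicate point is ensuring that the universal constant absorbs the factor $\sqrt{2P_c}\le C\sqrt{|c|}$ when bounding the boundary contributions and when reducing from the quadratic to the linear deviation. The rest is a sign-flipped repetition of the argument in Corollary \ref{cor2_2}, with the roles of upper and lower bounds on $f_\nu$ interchanged and the opposite sign of the square root used in the factorization.
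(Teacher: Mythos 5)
Your argument is correct and is essentially the paper's own approach: the paper proves Corollary \ref{cor2_2} by bounding the complementary factor of the factorization $\frac12 f_\nu^2-P_c=\frac12(f_\nu+\sqrt{2P_c})(f_\nu-\sqrt{2P_c})$ between $m$ and $M+\sqrt{10|c|}$ and then invoking Lemma \ref{lem2_6}, and Corollary \ref{cor2_2'} is exactly the sign-flipped repetition you carry out, with the boundary terms handled by the same factorization. No gaps.
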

\renewcommand{\thecor}{\thesection.\arabic{cor}}%


 \begin{lem}\label{lem2_10}
      Let $0< \nu \le 1$, $c\in \mathbb{R}^3$, $-1\le a<b\le 1$, $\alpha\ge 0$, $f_{\nu}\in C^1(a,b)\cap C^0[a,b]$ be a solution of (\ref{eq_1}) in $(a,b)$, satisfying $f_{\nu}(a)>0$ or $f_{\nu}(b)<0$. Suppose there exists some $\bar{x}\in \mathbb{R}$ such that $P_c(\bar{x})=\min_{[a,b]}P_c\ge -C_1(b-a)^{\alpha}$, $\mathrm{dist}(\bar{x}, [a,b])\le C_1(b-a)$, and $P_c(x)\le P_c(\bar{x})+C_1|x-\bar{x}|^{\alpha}$ for $a\le x\le b$, and 
      \[
          |\frac{1}{2}f^2_{\nu}(a)-P_c(a)|+|\frac{1}{2}f^2_{\nu}(b)-P_c(b)|\le C_1(b-a)^{\alpha},
      \]
      for some positive constants $C_1$. Then there exists some constant $C$, depending only on $C_1$ and an upper bound of $|c|$, such that for $\nu<\sqrt{2C_1}/4(b-a)^{\alpha/2}$,
      \[
         ||\frac{1}{2}f^2_{\nu}-P_c||_{L^{\infty}(a,b)}\le C(b-a)^{\alpha}+C\nu(b-a)^{-\alpha/2}.
      \]
   \end{lem}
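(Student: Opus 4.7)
My plan is to control $h_\nu := \tfrac{1}{2}f_\nu^2 - P_c$ in $L^\infty(a,b)$ by analyzing its value at the point $x_0\in[a,b]$ where $|h_\nu|$ attains its maximum $H$. As preliminaries, Lemma \ref{lem2_1} gives $|f_\nu|\le M(|c|)$ on $(a,b)$, and the hypotheses $\mathrm{dist}(\bar x,[a,b])\le C_1(b-a)$ together with the H\"older-type upper bound $P_c(x)\le P_c(\bar x)+C_1|x-\bar x|^\alpha$ yield $P_c(\bar x)\le P_c(x)\le P_c(\bar x)+C(b-a)^\alpha$ on $[a,b]$ with $C=C(C_1)$.

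If $x_0\in\{a,b\}$, the last hypothesis gives $H\le C_1(b-a)^\alpha$ immediately. Otherwise $x_0\in(a,b)$ is an interior extremum of $|h_\nu|$, so either $h_\nu(x_0)=0$ (and $H=0$) or $h_\nu'(x_0)=0$, giving the critical-point identity $f_\nu(x_0)f_\nu'(x_0)=P_c'(x_0)$. Multiplying the ODE (\ref{eq_1}) at $x_0$ by $f_\nu(x_0)$ and substituting this identity produces the key relation
\[
f_\nu(x_0)\, h_\nu(x_0) \;=\; -2\nu x_0 f_\nu^2(x_0) - \nu(1-x_0^2)P_c'(x_0),
\]
so that $|f_\nu(x_0)|\,H \le 2\nu f_\nu^2(x_0) + \nu|P_c'(x_0)| \le K\nu$ with $K=K(|c|)$.

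I then split on the size of $|f_\nu(x_0)|$. If $|f_\nu(x_0)|\ge(b-a)^{\alpha/2}$, dividing the bound above gives $H\le K\nu(b-a)^{-\alpha/2}$. If $|f_\nu(x_0)|<(b-a)^{\alpha/2}$, then $f_\nu^2(x_0)<(b-a)^\alpha$, whence directly $H\le\tfrac{1}{2}(b-a)^\alpha+|P_c(x_0)|$; provided $P_c(\bar x)\le C(b-a)^\alpha$ (which in particular covers the case $P_c(\bar x)\le 0$) the preliminary step yields $|P_c(x_0)|\le C(b-a)^\alpha$ and hence $H\le C(b-a)^\alpha$.

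The main obstacle is the remaining regime $P_c(\bar x)>C(b-a)^\alpha$, in which $P_c$ is strictly positive on $[a,b]$ but need not be small. Here the sign hypothesis $f_\nu(a)>0$ or $f_\nu(b)<0$, combined with Lemma \ref{lem2_4} applied to $P_c>0$ on $(a,b)$, forces $f_\nu$ to keep a definite sign on $(a,b)$, and Lemma \ref{lem2_2} (respectively Lemma \ref{lem2_2'}) then delivers the lower bound $|f_\nu|\ge\min\{\mu,\sqrt{\delta},\delta/(4\nu)\}$ with $\delta=P_c(\bar x)$ and $\mu=|f_\nu(a)|$ or $|f_\nu(b)|$. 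The smallness condition $\nu<\sqrt{2C_1}\,(b-a)^{\alpha/2}/4$ is calibrated precisely so that $\sqrt{\delta}\le\delta/(4\nu)$ in the only delicate range $\delta\lesssim(b-a)^\alpha$, yielding $|f_\nu|\gtrsim(b-a)^{\alpha/2}$ throughout $[a,b]$ and returning us to the favorable sub-case $|f_\nu(x_0)|\ge(b-a)^{\alpha/2}$ with its bound $K\nu(b-a)^{-\alpha/2}$. Collecting all the estimates gives $H\le C[(b-a)^\alpha+\nu(b-a)^{-\alpha/2}]$ with $C=C(C_1,|c|)$, which is the stated inequality.
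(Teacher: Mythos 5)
Your strategy is essentially the paper's: locate the maximum point of $|h_\nu|$, dispose of the endpoint case by hypothesis, use the critical-point identity $f_\nu f_\nu'=P_c'$ in the ODE at the interior maximum, and then split according to whether $P_c(\bar x)$ (equivalently $|f_\nu|$ at the maximum point) is of order $(b-a)^{\alpha}$ or larger, invoking Lemma \ref{lem2_2} together with the smallness condition on $\nu$ in the latter regime. Your ``key relation'' obtained by multiplying the equation by $f_\nu(x_0)$ is just a repackaging of the paper's substitution $|f_\nu'(\tilde z)|=|P_c'(\tilde z)|/|f_\nu(\tilde z)|$, so there is no genuinely different route here.

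There is, however, one real gap in the remaining regime $P_c(\bar x)>C(b-a)^{\alpha}$. Lemma \ref{lem2_2} gives $f_\nu\ge\min\{\mu,\sqrt{\delta},\delta/(4\nu)\}$ with $\mu=f_\nu(a)$, and you verify that $\sqrt{\delta}$ and $\delta/(4\nu)$ are $\gtrsim(b-a)^{\alpha/2}$, but you then assert $|f_\nu|\gtrsim(b-a)^{\alpha/2}$ throughout $[a,b]$ without any lower bound on $\mu$: the hypothesis only says $f_\nu(a)>0$, and if $f_\nu(a)$ were tiny the minimum would be $\mu$ and the conclusion would fail. The missing step is to use the endpoint hypothesis $|\tfrac12 f_\nu^2(a)-P_c(a)|\le C_1(b-a)^{\alpha}$ together with $P_c(a)\ge P_c(\bar x)>2C_1(b-a)^{\alpha}$ and the sign condition $f_\nu(a)>0$ to get $f_\nu(a)\ge\sqrt{2C_1}\,(b-a)^{\alpha/2}$ (and symmetrically at $b$ via Lemma \ref{lem2_2'} when $f_\nu(b)<0$); this is precisely how the paper opens its Case 1, and it is the only place where the endpoint bound enters beyond the trivial case $x_0\in\{a,b\}$. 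With that one line added (and choosing the threshold constant $\ge 2C_1$ in your case split), your argument closes and coincides with the paper's proof; the appeal to Lemma \ref{lem2_4} is then superfluous, since Lemma \ref{lem2_2} alone propagates the lower bound.
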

   \begin{proof}
   We only prove for the case $f_{\nu}(a)>0$, the case $f_{\nu}(b)<0$ can be proved similarly.
   For convenience denote $h:=\frac{1}{2}f_{\nu}^2-P_c$ and $\delta=b-a$. Let $C$ be a positive constant, depending only on $C_1$ and 
   an upper bound of $|c|$, which may vary from line to line.

     Suppose $\max_{[a,b]}|h|=|h(\tilde{z})|$ for some $\tilde{z}\in [a,b]$. If $\tilde{z}=a$ or $b$, then we are done. Suppose $\tilde{z}\in (a,b)$. Then
        $0=h'(\tilde{z})=f_{\nu}(\tilde{z})f'_{\nu}(\tilde{z})-P'_c(\tilde{z})$. 
     So
       \begin{equation}\label{eq2_10_1}
          |f'_{\nu}(\tilde{z})|=|P_c'(\tilde{z})|/|f_{\nu}(\tilde{z})|.
       \end{equation}
    If  $P_{c}(\bar{x})> 2C_1\delta^{\alpha}$, then since $|h(a)|\le C_1\delta^{\alpha}$,  we have
       \[
          f_{\nu}^2(a)\ge 2P_{c}(a)-2C_1\delta^{\alpha}\ge 2P_c(\bar{x})-2C_1\delta^{\alpha}\ge  2C_1\delta^{\alpha}.
       \]
       Since $f_{\nu}(a)>0$, we have $f_{\nu}(a)\ge \sqrt{2C_1}\delta^{\alpha/2}$. Then by Lemma \ref{lem2_2}, we have that for $\nu<\sqrt{2C_1}/4\delta^{\alpha/2}$,
       \[
           f_{\nu}(x)\ge \min\{\sqrt{2C_1}\delta^{\alpha/2},C_1\delta^{\alpha}/(2\nu)\}\ge \sqrt{2C_1}\delta^{\alpha/2}, \quad a<x<b.
       \]
       With this, we deduce from (\ref{eq2_10_1}) that
       \begin{equation}\label{eq2_10_2}
          |f'_{\nu}(\tilde{z})|\le C\delta^{-\alpha/2}.
       \end{equation}
       By (\ref{eq_1}) and Lemma \ref{lem2_1} we have the desired estimate
       \begin{equation}\label{eq2_10_3}
          |h(\tilde{z})|\le C\nu |f'_{\nu}(\tilde{z})|+C\nu|f_{\nu}(\tilde{z})|\le C\nu\delta^{-\alpha/2}.
          \end{equation}
       If $P_{c}(\bar{x})\le 2C_1\delta^{\alpha}$, then using the hypothesis $P_c(\bar{x})\ge -C_1(b-a)^{\alpha}$, $\mathrm{dist}(\bar{x}, [a,b])\le C_1(b-a)$, and $P_c(x)\le P_c(\bar{x})+C_1|x-\bar{x}|^{\alpha}$, we have
       \[
           -C_1\delta^{\alpha}\le P_c(\bar{x})\le P_c(\tilde{z})\le P_c(\bar{x})+C|\tilde{z}-\bar{x}|^{\alpha}\le C\delta^{\alpha}.
           \]
            So
       \[
           \frac{1}{2}f^2_{\nu}(\tilde{z})\ge |h(\tilde{z})|-|P_{c}(\tilde{z})| \ge |h(\tilde{z})|-C\delta^{\alpha}.
       \]
        If $|h(\tilde{z})|\le 2C\delta^{\alpha}$, then we are done. Otherwise we have $|f_{\nu}(\tilde{z})|\ge \sqrt{2C}\delta^{\alpha/2}$. With this,  we deduce (\ref{eq2_10_2}) using (\ref{eq2_10_1}), and obtain (\ref{eq2_10_3}) as above. The lemma is proved.
   \end{proof}

\begin{lem}\label{lem2_12}
   Let $0< \nu \le 1$, $-1\le a<b\le 1$, $c\in \mathbb{R}^3$, $k\ge 0$ be an integer, assume $P_c\ge \delta>0$ on $(a,b)$, $f_\nu\in C^k[a,b]$ is a solution to (\ref{eq_1}). Suppose there exists some $ M>0$ such that $f_\nu\le M$ on $(a,b)$, and
    \begin{equation}\label{eq2_12_2}
      \left|\frac{d^i}{dx^i}(f_{\nu}-\sqrt{2P_c})(a)\right|+
      \left|\frac{d^i}{dx^i}(f_{\nu}-\sqrt{2P_c})(b)\right|\le M\nu, \quad \forall 0\le i\le k.
   \end{equation}
     Then
     there exists some $C>0$, depending only on  $\delta$, $k$, $M$ and an upper bound of $|c|$, such that
   \begin{equation}\label{eq2_12_1}
       ||f_{\nu}-\sqrt{2P_c}||_{C^k(a,b)}\le C\nu.
   \end{equation}
\end{lem}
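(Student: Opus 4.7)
The approach is to set $v:=\sqrt{2P_c}$ and $w:=f_\nu-v$, and derive an ODE for $w$ by substituting $f_\nu=v+w$ into (\ref{eq_1}) and using $\tfrac12 v^2=P_c$. Since $P_c\ge\delta>0$ on $[a,b]$, $v\in C^\infty[a,b]$ with $v\ge\sqrt{2\delta}$ and $\|v\|_{C^{k+1}[a,b]}$ controlled by $\delta$ and $|c|$. The resulting equation is
\begin{equation*}
\nu(1-x^2)\,w' + (v+2\nu x)\,w + \tfrac12 w^2 \;=\; R,\qquad R \;:=\; -\nu(1-x^2)v'-2\nu xv,
\end{equation*}
with $R=\nu\widetilde R$ and $\widetilde R$ smooth. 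I would prove $\|w^{(j)}\|_{L^\infty(a,b)}\le C_j\nu$ for $0\le j\le k$ by induction on $j$, which gives (\ref{eq2_12_1}). Throughout the argument I restrict attention to $0<\nu<\nu_0$ for a small $\nu_0=\nu_0(\delta,|c|,M,k)$; on $[\nu_0,1]$ the claim is trivial from standard interior ODE regularity applied to the bounded solution $f_\nu$ of (\ref{eq_1}).

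\textbf{Base case and inductive step.} For $j=0$, the endpoint hypothesis $|w(a)|\le M\nu$ gives $f_\nu(a)\ge v(a)-M\nu\ge\sqrt{2\delta}/2$ for $\nu_0$ small; Lemma \ref{lem2_2} with $\mu=\sqrt{2\delta}/2$ then forces $f_\nu\ge m:=\sqrt{2\delta}/2$ on $[a,b]$, so Corollary \ref{cor2_2} with this $m$ and $|w(a)|+|w(b)|\le 2M\nu$ yields $\|w\|_{L^\infty(a,b)}\le C\nu$. For the inductive step, assume $\|w^{(i)}\|_{L^\infty(a,b)}\le C_i\nu$ for $0\le i\le j-1$ with $1\le j\le k$. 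Differentiating the transformed equation $j$ times via Leibniz (using $(1-x^2)^{(i)}=0$ for $i\ge 3$ and $(v+2\nu x)^{(i)}=v^{(i)}$ for $i\ge 2$) would produce
\begin{equation*}
\nu(1-x^2)\,u' + B_j(x)\,u \;=\; G_j(x), \qquad u := w^{(j)},
\end{equation*}
where $B_j(x) := v(x) + w(x) + 2\nu x(1-j)$, and $G_j$ collects $R^{(j)}$ together with cross terms of the forms $v^{(\alpha)}w^{(\beta)}$ and $w^{(\alpha)}w^{(\beta)}$ with $\alpha+\beta\le j$ and every $w^{(\beta)}$ factor having $\beta\le j-1$. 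By the inductive hypothesis and the smoothness of $v$, $\|G_j\|_{L^\infty(a,b)}\le C_j\nu$; by the base case and the choice of $\nu_0$, $B_j\ge\sqrt{2\delta}-C_0\nu-2\nu(j-1)\ge\sqrt{2\delta}/2$ on $[a,b]$. A maximum-principle argument then finishes: the continuous $u$ attains its maximum on $[a,b]$ at some $x_0$; if $x_0\in(a,b)\cap(-1,1)$, ODE regularity gives $u\in C^1$ near $x_0$, so $u'(x_0)=0$ and the ODE at $x_0$ gives $|u(x_0)|=|G_j(x_0)|/B_j(x_0)\le 2C_j\nu/\sqrt{2\delta}$, whereas at a boundary maximum $|u(x_0)|\le M\nu$ by hypothesis. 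Handling the minimum identically closes the induction.

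\textbf{Main obstacle.} The main technical point will be the combinatorial verification that differentiating the quadratic nonlinearity $\tfrac12 w^2$ exactly $j$ times produces $w\cdot w^{(j)}$ (from the two extreme Leibniz indices $i=0,j$) and otherwise only products of strictly lower-order derivatives of $w$. Combined with the clean leading contribution $(v+2\nu x)w^{(j)}$ from $(v+2\nu x)w$, this structural fact is exactly what places the equation for $u=w^{(j)}$ into the singular-perturbation form $\nu(1-x^2)u'+B_j u=G_j$ with coefficient $B_j$ uniformly bounded below by a constant depending only on $\delta$ and with forcing $G_j=O(\nu)$. Once this is secured, the elementary maximum-principle step transports the matched boundary data $|u(a)|,|u(b)|\le M\nu$ to the full interval at the same $O(\nu)$ rate.
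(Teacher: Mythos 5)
Your proposal is correct and follows essentially the same route as the paper's proof: a base case via Lemma \ref{lem2_2} and Corollary \ref{cor2_2}, then induction on the derivative order, differentiating the equation and evaluating at an interior critical point of $w^{(j)}$, where the coefficient $v+w+2\nu x(1-j)=\sqrt{2P_c}+O(\nu)\ge \sqrt{2\delta}/2$ and the remaining terms are $O(\nu)$, with the endpoint hypothesis (\ref{eq2_12_2}) handling boundary maxima. The only difference is cosmetic: the paper works with the rescaled quantity $h_0=(f_\nu-\sqrt{2P_c})/\nu$ and a recursion for its derivatives, while you keep $w=f_\nu-\sqrt{2P_c}$ and track the $O(\nu)$ forcing directly, and like the paper you dismiss the regime $\nu\ge\nu_0$ as routine.
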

\begin{proof}


Throughout the proof, $C$ and $\nu_0$ denote  various positive constants, depending only on  $\delta$, $k$, $M$ and an upper bound of $|c|$. $C$ will be chosen first and will be large, and $\nu_0$ will be small, and its choice may depend on the largeness of $C$. We will only need to prove (\ref{eq2_12_1}) for $\nu<\nu_0$, since it is obvious for $\nu\ge \nu_0$.


   For convenience, write $Q=\sqrt{2P_c}$. Denote
   \begin{equation}\label{eq2_12_9}
      h_0(x):=\frac{1}{\nu}(f_{\nu}(x)-Q(x)),\quad  h_i(x):=\frac{d^i}{d x^i}h_0(x),\quad  \forall i\ge 1.
      \end{equation}
   Rewrite (\ref{eq_1}) as
   \begin{equation}\label{eq2_12_7}
     \nu h'_0(x)=\frac{1}{1-x^2}F(x,h_0(x)), 
   \end{equation}
   where
   \[
      F(x,h_0):=-\{2x\nu h_0+\frac{1}{2}\nu h^2_0+(1-x^2)Q'(x)+2xQ(x)+h_0 Q(x)\}.
      \]

   \noindent\textbf{Claim}: For all $n\ge 2$, and for $x\in [a,b]$,
   \begin{equation}\label{eq2_12_8}
       \nu h_n(x)=\frac{1}{1-x^2}[2(n-1)\nu x+F_{h_0}(x,h_0)]h_{n-1}+\frac{1}{1-x^2}F_n(x,h_0,...,h_{n-2}),
   \end{equation}
   where $F_n(x,h_0, h_1,...,h_{n-2})$ satisfies that for any compact subset $K\subset [a,b]\times \mathbb{R}^{n-1}$ and for any integer $m\ge 0$
   \[
      ||F_n||_{C^m(K)}\le C',
   \]
   for some $C'$ depending only on $n,m$ and $K$.

   \noindent\emph{Proof of the Claim}: We prove it by induction. Differentiating (\ref{eq2_12_7}) leads to (\ref{eq2_12_8}) for $n=2$, with $F_2(x,h_0)=F_x(x,h_0)$.
 Now suppose that (\ref{eq2_12_8}) is true for some $n\ge 2$, and we will prove (\ref{eq2_12_8}) for $n+1$. Differentiating (\ref{eq2_12_8}), we have
\[
   \nu h_{n+1}=\frac{1}{1-x^2}(2n\nu x+F_{h_0})h_{n}+\frac{1}{1-x^2}F_{n+1}(x,h_0,...,h_{n-1}),
\]
where
\[
  \begin{split}
   F_{n+1}(x,h_0,...,h_{n-1})& :=2(n-1)\nu+F_{h_0h_0}(x,h_0)+F_{xh_0}(x,h_0)+\partial_xF_n(x,h_0,...,h_{n-2})\\
   & +\sum_{i=0}^{n-2}\partial_{h_i}F_n(x_0,h_0,...,h_{n-2})h_{i+1}.
   \end{split}
\]
The claim is proved.

   We prove the lemma by induction on $k$. By (\ref{eq2_12_2}) with $i=0$,  $f_{\nu}(a)\ge \sqrt{2P_c(a)}-M\nu\ge \sqrt{2\delta}-M\nu\ge \sqrt{\delta}$ for $\nu\le \nu_0$. 
     By Lemma \ref{lem2_2},  $f_{\nu} \ge \sqrt{\delta}$ in $(a,b)$ for $\nu\le \nu_0$ on $[a,b]$. Then by Corollary \ref{cor2_2},  we have $|h_0|\le C$ in $[a,b]$.
  Let $z_\nu \in [a,b]$ such that $|h_1(z_{\nu})|=\max_{[a,b]}|h_1|$. By (\ref{eq2_12_2}), $|h_1(a)|, |h_1(b)|\le M$. If $z_{\nu}=a$ or $b$, the lemma holds for $k=1$. If $z_\nu\ne a$ or $b$, then by (\ref{eq2_12_8}),
   \[
      0=\nu  h'_1(z_\nu)=\nu h_2(z_{\nu})=\frac{1}{1-z_{\nu}^2}\left\{[2\nu z_{\nu}+F_{h_0}(z_\nu, h_0(z_\nu)) ] h_1(z_\nu)+F_2(z_\nu,h_0(z_\nu))\right\}=0,
   \]
   and, by the boundedness of $h_0$ and the property of $F_2$, $|F_2(x,h_0(x))|\le C$ on $[a,b]$.
  Since $|h_0|\le C$ and $Q=\sqrt{2P_c}\ge \sqrt{2\delta}$, we have, for $\nu \le \nu_0$, that
   \begin{equation*}
       |2\nu x+F_{h_0}(x,h_0(x))|=|\nu h_0(x)+Q(x)+4\nu x|\ge Q-C\nu\ge  \sqrt{\delta}>0, \quad a<x<b.
   \end{equation*}
   So  we have
   \[
       \max_{[a,b]}|h_1|=|h_1(z_\nu)|=\frac{|F_2(z_\nu, h_0(z_\nu))|}{|2\nu z_{\nu}+F_{h_0}(z_\nu, h_0(z_\nu))|}\le C,
   \]
   and the lemma holds for $k=1$.

   Next, assume the lemma holds for all $1\le k \le n$ for some $n$, and we prove it for $k=n+1$.
  By the induction hypothesis, $|h_k|\le C$ in $[a,b]$, for all $1\le k \le n$.
  Let $z_{\nu}\in [a,b]$ such that $|h_{n+1}(z_{\nu})|=\max_{[a,b]}|h_{n+1}|$. By (\ref{eq2_12_2}), $|h_{n+1}(a)|, |h_{n+1}(b)|\le M$. If $z_{\nu}=a$ or $b$, the lemma holds for $k=n+1$. Otherwise by (\ref{eq2_12_8}),
   \begin{equation*}
   \begin{split}
      & 0=\nu  h'_{n+1}(z_\nu) =\nu h_{n+2}(z_{\nu,n})\\
      & =\frac{1}{1-z_{\nu,n}^2}\{[2(n+1)\nu z_{\nu,n}+F_{h_0}(z_{\nu},h_0(z_{\nu}))]h_{n+1}(z_{\nu})+F_{n+2}(z_{\nu},h_0(z_{\nu}),...,h_{n}(z_{\nu}))\},
     \end{split}
   \end{equation*}
   and, by the induction hypothesis and the property of $F_{n+2}$, $|F_{n+2}(x,h_0,...,h_{n})|\le C$ on $[a,b]$. 
   As above, for $\nu\le \nu_0$,
   \[
       |2(n+1)\nu z_{\nu}+F_{h_0}(z_{\nu},h_0(z_{\nu}))|>\sqrt{\delta}, \quad a<x<b,
        \]
 and therefore
   \[
      \max_{[a,b]}|h_{n+1}|=|h_{n+1}(z_{\nu})|=\frac{|F_{n+2}(z_{\nu})|}{|2(n+1)\nu z_{\nu}+F_{h_0}(z_{\nu})|}\le C.
   \]
So the lemma holds for $k=n+1$. The lemma is proved.
  \end{proof}

\addtocounter{lem}{-1}
\renewcommand{\thelem}{\thesection.\arabic{lem}'}%
\begin{lem}\label{lem2_12'}
	Let $0< \nu \le 1$, $-1\le a<b\le 1$, $c\in \mathbb{R}^3$, $k\ge 0$ be an integer, assume $P_c\ge \delta>0$ on $(a,b)$, $f_\nu\in C^k[a,b]$ is a solution to (\ref{eq_1}). Suppose there exists some $ M>0$ such that $f_\nu>- M$ on $(a,b)$, and 
	    \begin{equation*}
     \left|\frac{d^i}{dx^i}(f_{\nu}+\sqrt{2P_c})(a)\right|+
      \left|\frac{d^i}{dx^i}(f_{\nu}+\sqrt{2P_c})(b)\right|\le M\nu, \quad \forall 0\le i \le k.
   \end{equation*}
     Then
     there exist some $C>0$ and $\nu_0>0$, depending only on  $\delta$, $k$, $M$ and an upper bound of $|c|$, such that
   \begin{equation*}
       ||f_{\nu}+\sqrt{2P_c}||_{C^k(a,b)}\le C\nu, \quad \forall 0<\nu<\nu_0.
   \end{equation*}

\end{lem}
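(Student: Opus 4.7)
The plan is to reduce Lemma~\ref{lem2_12'} to Lemma~\ref{lem2_12} via the natural reflection $\tilde f_\nu(x) := -f_\nu(-x)$, which is the symmetry of (\ref{eq_1}) exchanging the roles of the endpoints $\pm 1$. A short substitution $y = -x$ in (\ref{eq_1}) for $f_\nu$, followed by rearrangement, shows that $\tilde f_\nu$ satisfies (\ref{eq_1}) on $(-b,-a)$ with polynomial $P_{\tilde c}(x) = P_c(-x)$ corresponding to parameters $\tilde c = (c_2,c_1,c_3)$. In particular $|\tilde c| = |c|$, and $P_{\tilde c} \ge \delta$ on $(-b,-a)$.

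Next, I would verify that $\tilde f_\nu$ satisfies every hypothesis of Lemma~\ref{lem2_12} on the interval $(-b,-a)$. The one-sided bound $f_\nu > -M$ gives $\tilde f_\nu < M$ directly. For the boundary data, the identity
\[
\tilde f_\nu(x) - \sqrt{2P_{\tilde c}(x)} = -\bigl(f_\nu(-x) + \sqrt{2P_c(-x)}\bigr)
\]
implies, upon differentiating $i$ times and evaluating at $x = -a$ and $x = -b$, that
\[
\left|\tfrac{d^i}{dx^i}(\tilde f_\nu - \sqrt{2P_{\tilde c}})(-a)\right| = \left|\tfrac{d^i}{dy^i}(f_\nu + \sqrt{2P_c})(a)\right| \le M\nu,
\]
and similarly at $-b$, for all $0 \le i \le k$, by the hypothesis of Lemma~\ref{lem2_12'}.

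I would then apply Lemma~\ref{lem2_12} to $\tilde f_\nu$ on $(-b,-a)$, obtaining constants $C>0$ and $\nu_0>0$, depending only on $\delta, k, M$ and an upper bound of $|c|$, such that $\|\tilde f_\nu - \sqrt{2P_{\tilde c}}\|_{C^k(-b,-a)} \le C\nu$ for all $0 < \nu < \nu_0$. Undoing the reflection, this is identical to $\|f_\nu + \sqrt{2P_c}\|_{C^k(a,b)} \le C\nu$, which is the desired conclusion. I do not anticipate any genuine obstacle: the only care needed is tracking signs when pulling back the boundary data and noting that the constants produced by Lemma~\ref{lem2_12} depend only on quantities invariant under the reflection $c \leftrightarrow \tilde c$. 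This is essentially a pure symmetry reduction, parallel to how Lemma~\ref{lem2_2'} and Corollary~\ref{cor2_2'} are deduced from their unprimed counterparts.
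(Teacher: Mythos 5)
Your proposal is correct and matches the paper's intent: Lemma \ref{lem2_12'} is stated there without proof precisely as the reflected analogue of Lemma \ref{lem2_12}, to be obtained by the symmetry you use. Your reduction via $\tilde f_\nu(x)=-f_\nu(-x)$ on $(-b,-a)$, with $P_c$ replaced by $P_{(c_2,c_1,c_3)}$ (so $|\tilde c|=|c|$ and all constant dependencies are preserved), is exactly the right form of that symmetry, and the verification of the hypotheses of Lemma \ref{lem2_12} (one-sided bound, lower bound on the reflected polynomial, and the endpoint derivative bounds) is complete.
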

\renewcommand{\thelem}{\thesection.\arabic{lem}}%

\begin{lem}\label{lem2_7}
   Let $0< \nu \le 1$, $-1\le a<b\le 1$, $c\in \mathbb{R}^3$, $k\ge 0$ be an integer, assume $P_c\ge \delta>0$ on $[a,b]$, $f_\nu\in C^k[a,b]$ is a solution to (\ref{eq_1}), $f_\nu(a)>0$, and there exists some $M>0$ such that $f_{\nu}\le M$ in $[a,b]$. 
      Then $f_{\nu}>0$ in $[a,b]$. Moreover,  for any $0<\epsilon<(b-a)/2$, there exists some $C>0$, depending only on $\epsilon$, $\delta$, $M$, $k$ and an upper bound of $|c|$, such that
   \begin{equation}\label{eq2_7_1}
       ||f_{\nu}-\sqrt{2P_c}||_{C^k(a+\epsilon,b-\epsilon)}\le C\nu.
   \end{equation}
\end{lem}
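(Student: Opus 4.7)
The plan is to combine positivity (Lemma~\ref{lem2_4}), the lower bound (Corollary~\ref{cor2_3}), and the $L^\infty$-estimate (Corollary~\ref{cor2_2}) to handle the case $k=0$, and then upgrade to a $C^k$-estimate by an interior adaptation of the bootstrap in Lemma~\ref{lem2_12}.

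First I would note that $P_c\ge\delta>0$ on $[a,b]$ gives $c\ne 0$ and $P_c\ge 0$, so Lemma~\ref{lem2_4} combined with $f_\nu(a)>0$ yields $f_\nu>0$ throughout $[a,b]$. Since $0<f_\nu\le M$ and $P_c\ge\delta$ on $(a,b)$, Corollary~\ref{cor2_3} gives $f_\nu\ge\min\{\sqrt{\delta},\delta/(4\nu)\}$ on $(a+\epsilon',b)$ for any $\epsilon'>20M\nu/\delta$. Taking $\epsilon'=\epsilon/8$ and restricting to $\nu$ smaller than some $\nu_0=\nu_0(\epsilon,\delta,M)$ (the opposite range being trivial), I obtain $f_\nu\ge\sqrt{\delta}$ on $(a+\epsilon/8,b)$; Corollary~\ref{cor2_2} applied on $(a+\epsilon/8,b)$ with margin $\epsilon/8$ then gives
\[
\|f_\nu-\sqrt{2P_c}\|_{L^\infty(a+\epsilon/4,\,b-\epsilon/8)}\le C\nu,
\]
which is the case $k=0$.

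For $k\ge 1$, my plan is to apply Lemma~\ref{lem2_12} on a sub-interval $[\alpha,\beta]\supset[a+\epsilon,b-\epsilon]$ contained in $(a+\epsilon/4,b-\epsilon/8)$. Its hypothesis requires $|(f_\nu-\sqrt{2P_c})^{(i)}(\alpha)|+|(f_\nu-\sqrt{2P_c})^{(i)}(\beta)|\le M\nu$ for $0\le i\le k$, which encodes that $f_\nu$ already matches the asymptotic profile $\sqrt{2P_c}-\nu A/Q+\cdots$ (with $Q=\sqrt{2P_c}$ and $A=(1-x^2)Q'+2xQ$) to order $k$ at the endpoints, and this is \emph{not} implied by the $L^\infty$-bound alone. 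My approach is to mimic the inductive max-principle argument in the proof of Lemma~\ref{lem2_12} but localized to the interior. Setting $h_j=(f_\nu-\sqrt{2P_c})^{(j)}/\nu$ (so that $\|h_0\|_{L^\infty(a+\epsilon/4,\,b-\epsilon/8)}\le C$ by the previous step), I would introduce a nested family of smooth cut-offs $\phi_j\in C_c^\infty((a+\epsilon/4,b-\epsilon/8))$ equal to $1$ on shrinking intervals that close down to $[a+\epsilon,b-\epsilon]$, and inductively apply the max-principle to $\phi_j^{2j}h_j^2$. At its interior maximum $z_\nu$, the first-order condition reads $h_{j+1}(z_\nu)=-j(\phi_j'/\phi_j)(z_\nu)\,h_j(z_\nu)$; substituting this into the recurrence (\ref{eq2_12_8}) with $n=j+1$ produces an algebraic identity whose coefficient of $h_j(z_\nu)$ is of size $Q(z_\nu)+O(\nu)+O(\nu\phi_j'(z_\nu)/\phi_j(z_\nu))$, bounded below by $\sqrt{\delta}$ for $\nu$ small, while $F_{j+1}$ on the right is bounded by the induction hypothesis on $h_0,\ldots,h_{j-1}$. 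This would yield $|h_j|\le C_j$ on $[a+\epsilon,b-\epsilon]$.

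The hard part will be exactly this interior adaptation: without boundary data from the hypotheses of Lemma~\ref{lem2_7}, the cut-off introduces the spurious term $\nu\phi'/\phi$ at the critical point, which must be dominated by $Q\ge\sqrt{2\delta}$. Because $\phi'/\phi$ is generically unbounded near the edge of $\mathrm{supp}(\phi)$, the nested cut-offs have to be arranged so that the maximum of $\phi_j^{2j}h_j^2$ is forced into the region where $\phi_j$ is bounded below; a more naive attempt to extract good pointwise data at $\alpha$ and $\beta$ via iterated mean-value arguments appears to fail because such arguments yield different ``good points'' for different derivative orders, and the ODE (\ref{eq_1}) does not readily propagate pointwise control of lower derivatives to higher derivatives at the same point.
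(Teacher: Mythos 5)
Your reduction to the case $k=0$ is fine and essentially coincides with the paper's: positivity from Lemma~\ref{lem2_4} (or Lemma~\ref{lem2_2}), the lower bound $f_\nu\ge\sqrt{\delta}$ on an interior subinterval via Corollary~\ref{cor2_3} (equivalently Lemma~\ref{lem2_5} plus Lemma~\ref{lem2_2}), and then Corollary~\ref{cor2_2}. The problem is the $C^k$ part, which is where the real content of the lemma lies. Your cut-off/maximum-principle scheme is only a sketch, and the difficulty you yourself flag is not resolved and is genuine: at an interior maximum of $\phi_j^{2j}h_j^2$ the substitution into (\ref{eq2_12_8}) gives a coefficient of $h_j$ of size $Q/(1-x^2)+O(\nu)+O(\nu\,\phi_j'/\phi_j)$, and this is only useful where $\nu|\phi_j'/\phi_j|\ll\sqrt{\delta}$, i.e.\ where $\phi_j\gtrsim\nu$. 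In the complementary region the only a priori information available at that stage is the crude bound $|f_\nu^{(j)}|\lesssim\nu^{-j}$ obtained by differentiating (\ref{eq_1}), i.e.\ $|h_j|\lesssim\nu^{-(j+1)}$, which is far too weak to show that the weighted quantity cannot attain its maximum where $\phi_j$ is small; no mechanism is given to ``force'' the maximum into the good region, and standard choices such as $|\phi_j'|\le C\phi_j^{1-\sigma}$ do not close the argument with these bounds. As written, the induction step for $k\ge 1$ is therefore not a proof.

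Moreover, the route you discard is exactly the one the paper uses, and your stated reason for discarding it is mistaken. The point is the order of quantifiers in the induction: assuming the lemma for orders $\le n$ (with $\epsilon/2$ in place of $\epsilon$), you already know $|h_i|\le C$ for all $0\le i\le n$ \emph{uniformly} on $(a+\epsilon/2,b-\epsilon/2)$; you then need a good point only for the single top-order quantity $h_{n+1}$ near each end, and such points $x_\nu\in[a+\epsilon/2,a+\epsilon]$, $y_\nu\in[b-\epsilon,b-\epsilon/2]$ exist because $\int h_{n+1}=h_n(\cdot)\big|$ over an interval of length $\epsilon/2$ is bounded. At these particular points the hypothesis (\ref{eq2_12_2}) of Lemma~\ref{lem2_12} then holds for \emph{all} orders $0\le i\le n+1$ (the lower orders from the induction hypothesis, the top order from the mean-value point), so Lemma~\ref{lem2_12} applies on $[x_\nu,y_\nu]\supset[a+\epsilon,b-\epsilon]$ and closes the induction. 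Thus there is no need to propagate pointwise control from low to high derivatives at one point, and the ``different good points for different orders'' issue never arises. I recommend replacing your cut-off scheme by this induction; as it stands, your proposal has a gap precisely at the step it labels as hard.
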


\begin{proof}
Let $C$ be a constant, depending only on $\epsilon$, $\delta$, $k$, $M$ and an upper bound of $|c|$, which may vary from line to line.

Since $P_c\ge \delta>0$ on $[a,b]$ and $f_{\nu}(a)>0$, we have 
 $f_{\nu}(x)\ge \min\{f_{\nu}(a), \sqrt{\delta}, \delta/(4\nu)\}>0$ in $[a,b]$ by Lemma \ref{lem2_2}. The positivity of $f_{\nu}$ on $[a,b]$ can also be deduced from Lemma \ref{lem2_4}. Applying Lemma \ref{lem2_5} on $[a,a+\epsilon/2]$ and $[b-\epsilon/2,b]$ respectively, there exist some $x_{\nu}\in [a,a+\epsilon/2]$ and $y_{\nu}\in [b-\epsilon/2,b]$, such that
    $\left|\frac{1}{2}f^2_{\nu}(x_{\nu})-P_c(x_{\nu})\right|+\left|\frac{1}{2}f^2_{\nu}(y_{\nu})-P_c(y_{\nu})\right|\le C\nu$. 
Using $f_{\nu}>0$ and $P_c\ge \delta$, we have
\begin{equation*}
  \left|f_{\nu}(x_{\nu})-\sqrt{2P_c(x_{\nu})}\right|+\left|f_{\nu}(y_{\nu})-\sqrt{2P_c(y_{\nu})}\right|\le C\nu.
\end{equation*}
Since $P_c\ge \delta$, there exists $\nu_0>0$, depending only on $\epsilon, \delta, M$ and $c$, such that $f_{\nu}(x_{\nu})\ge \sqrt{2P_c(x_{\nu})}-C\nu\ge \sqrt{2\delta}-C\nu\ge \sqrt{\delta}$ for $\nu\le \nu_0$. Note that for $\nu\ge \nu_0$, (\ref{eq2_7_1}) is obvious. So we only need to consider $\nu\le \nu_0$. Applying Lemma \ref{lem2_2} on $[x_{\nu}, y_{\nu}]$, we have $f_{\nu}\ge 1/C$ on $[x_{\nu}, y_{\nu}]$. Since we also have $f_{\nu}\le M$ on $[x_{\nu}, y_{\nu}]$, by Corollary \ref{cor2_2}, we have that
\[
    ||f_{\nu}-\sqrt{2P_c}||_{L^{\infty}(a+\epsilon/2, b-\epsilon/2)}\le ||f_{\nu}-\sqrt{2P_c}||_{L^{\infty}(x_{\nu},y_{\nu})}\le C\nu.
\]

For convenience, denote $h_i$, $i\ge 0$, as in (\ref{eq2_12_9}). We have proved that $|h_0|\le C$ in $[a+\epsilon/2, b-\epsilon/2]$.
    So for any $0<\epsilon<(b-a)/2$,
   \begin{equation*}
      \big|\int_{a+\epsilon/2}^{a+\epsilon}h_1(x) dx\big|\le C, 
      \quad \big|\int_{b-\epsilon}^{b-\epsilon/2}h_1(x) dx\big|\le C. 
   \end{equation*}
   Thus there exist some $x_\nu\in [a+\epsilon/2,a+\epsilon]$, and  $y_\nu\in [b-\epsilon, b-\epsilon/2]$, such that $|h_1(x_\nu)|\le C$, $|h_1(y_{\nu})|\le C$. Apply Lemma \ref{lem2_12} on $[x_\nu, y_{\nu}]$, we have $|h_1(x)|\le C, \quad x_{\nu}<x<y_{\nu}$.
      So the lemma holds for $k=1$.

   Next, assume the lemma holds for all $1\le k \le n$ for some $n$, and we prove it for $k=n+1$.
  By the induction hypothesis, for any $\epsilon>0$,  $|h_k|\le C$  in $(a+\epsilon/2, b-\epsilon/2)$ for all $1\le k \le n$. It follows that
 \[
    \big|\int_{a+\epsilon/2}^{a+\epsilon}h_{n+1}(x) dx\big|\le C,  \quad \big|\int_{b-\epsilon}^{b-\epsilon/2}h_{n+1}(x) dx\big|\le C.
 \]
 So there exist some $x_\nu\in [a+\epsilon/2,a+\epsilon]$, and  $y_\nu\in [b-\epsilon, b-\epsilon/2]$, such that $|h_{n+1}(x_\nu)|\le C, \quad |h_{n+1}(y_{\nu})|\le C$.  Apply Lemma \ref{lem2_12} on $[x_\nu, y_{\nu}]$, we have $|h_{n+1}(x)|\le C, \quad x_{\nu}<x<y_{\nu}$.
      The lemma holds for $k=n+1$. The lemma is proved.
\end{proof}

 \addtocounter{lem}{-1}
\renewcommand{\thelem}{\thesection.\arabic{lem}'}%
\begin{lem}\label{lem2_7'}

Let $0< \nu \le 1$, $-1\le a<b\le 1$, $c\in \mathbb{R}^3$, $k\ge 0$ be an integer, assume $P_c\ge \delta>0$ on $[a,b]$, $f_\nu\in C^k[a,b]$ is a solution to (\ref{eq_1}), $f_\nu(b)<0$, and there exists some $M>0$ such that $f_{\nu}\ge -M$ in $[a,b]$. 
      Then $f_{\nu}<0$ in $[a,b]$. Moreover,  for any $0<\epsilon<\frac{b-a}{2}$, there exists some $C>0$, depending only on $\epsilon$, $\delta$, $k$ and an upper bound of $|c|$, such that
   \begin{equation*}
       ||f_{\nu}+\sqrt{2P_c}||_{C^k(a+\epsilon,b-\epsilon)}\le C\nu.
   \end{equation*}
   \end{lem}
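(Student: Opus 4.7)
The plan is to reduce Lemma~\ref{lem2_7'} to Lemma~\ref{lem2_7} by the reflection $g_{\nu}(x) := -f_{\nu}(-x)$ defined for $x \in [-b, -a]$. A direct chain-rule computation gives $g'_{\nu}(x) = f'_{\nu}(-x)$ and $g_{\nu}^2(x) = f_{\nu}^2(-x)$, so substituting $y = -x$ into equation (\ref{eq_1}) and using $f_{\nu}(y) = -g_{\nu}(x)$ yields
\[
   \nu(1-x^2)g'_{\nu}(x) + 2\nu x\, g_{\nu}(x) + \tfrac{1}{2}g^2_{\nu}(x) = P_{c}(-x) = P_{\tilde c}(x),
\]
where $\tilde c := (c_2, c_1, c_3)$. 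Thus $g_{\nu}$ is a $C^k$ solution of (\ref{eq_1}) on $(-b,-a)$ with coefficient vector $\tilde c$, and one has $|\tilde c| = |c|$ and $P_{\tilde c} \ge \delta$ on $[-b,-a]$.

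Next I would verify the remaining hypotheses of Lemma~\ref{lem2_7} for $g_{\nu}$ on $[-b,-a]$: from $f_{\nu}(b) < 0$ one obtains $g_{\nu}(-b) = -f_{\nu}(b) > 0$, and from $f_{\nu} \ge -M$ on $[a,b]$ one gets $g_{\nu} \le M$ on $[-b,-a]$. Lemma~\ref{lem2_7} then gives $g_{\nu} > 0$ throughout $[-b,-a]$, which immediately translates to $f_{\nu} < 0$ on $[a,b]$, and produces
\[
   \|g_{\nu} - \sqrt{2P_{\tilde c}}\|_{C^k(-b+\epsilon,\, -a-\epsilon)} \le C\nu
\]
for any $0 < \epsilon < (b-a)/2$, with $C$ depending only on $\epsilon, \delta, M, k$ and an upper bound of $|\tilde c| = |c|$.

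Finally I would translate the estimate back to $f_{\nu}$. Since $\sqrt{2P_{\tilde c}(x)} = \sqrt{2P_c(-x)}$, one has $(g_{\nu} - \sqrt{2P_{\tilde c}})(x) = -(f_{\nu} + \sqrt{2P_c})(-x)$, and inductively
\[
   (g_{\nu} - \sqrt{2P_{\tilde c}})^{(j)}(x) = (-1)^{j+1}(f_{\nu} + \sqrt{2P_c})^{(j)}(-x), \qquad 0 \le j \le k.
\]
Hence $\|g_{\nu} - \sqrt{2P_{\tilde c}}\|_{C^k(-b+\epsilon,\, -a-\epsilon)} = \|f_{\nu} + \sqrt{2P_c}\|_{C^k(a+\epsilon,\, b-\epsilon)}$, and the claimed estimate follows. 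The whole argument is a clean symmetry reduction, so there is no substantial obstacle; the only points requiring care are checking that the reflection preserves equation (\ref{eq_1}) up to the relabeling $c \leftrightarrow \tilde c$ and that both the positivity and the $C^k$ comparison with $\sqrt{2P_c}$ transfer correctly under the change of variable.
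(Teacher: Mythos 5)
Your proof is correct and follows essentially the same route the paper intends for its primed lemmas, namely the reflection reduction it sketches for Lemma \ref{lem2_2'} (the paper leaves Lemma \ref{lem2_7'} without proof as the mirror image of Lemma \ref{lem2_7}). Your version is in fact slightly more careful than the paper's sketch: the reflection $x\mapsto -x$ with $g_{\nu}(x)=-f_{\nu}(-x)$ is exactly the symmetry preserving (\ref{eq_1}) up to swapping $c_1\leftrightarrow c_2$ (so $|\tilde c|=|c|$ and $P_{\tilde c}\ge\delta$ transfer), and the endpoint condition, the bound $g_\nu\le M$, and the $C^k$ estimate all carry over as you state.
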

\renewcommand{\thelem}{\thesection.\arabic{lem}}%

\noindent\emph{Proof of Theorem \ref{thm1_0_1}}:

Writing $U_{\nu_k, \theta}=u_{\nu_k, \theta}\sin\theta$, $V_{\theta}=v_{\theta}\sin\theta$ and $x=\cos\theta$ as usual.  Then $U_{\nu_k, \theta}$ satisfies the equation 
\begin{equation}\label{eq1_0_1_0}
   \nu_k (1-x^2)U_{\nu_k, \theta}'+2\nu_k x U_{\nu_k, \theta}+\frac{1}{2}U_{\nu_k, \theta}^2=P_{c_k}(x), \textrm{ in }(r,s), 
\end{equation}
and $U_{\nu_k, \theta} \rightharpoonup V_{\theta}$ in $L^2(r,s)$, where $r=\cos\theta_2$ and $s=\cos\theta_1$. We know that $-1<r<s<1$. 
For any $\epsilon>0$ and any positive integer $m$, let $C$ denote some positive constant depending only on $\theta_1,\theta_2, \epsilon$, $m$ and $\sup_{\nu_k}||u_{\nu_k, \theta}||_{L^2(\mathbb{S}^2\cap\{\theta_1<\theta<\theta_2\})}$ whose value may vary from line to line.

As in the proof of Lemma \ref{lem2_0}, with $\delta:=(s-r)/9$, for each $k$ there exist $a_k\in [r,r+\delta]$, and $b_{ki}\in [r+2i\delta, r+(2i+1)\delta]$, $i=1,2,3$, such that 
\[
   |U_{\nu_k, \theta}(a_k)|+\sum_{i=1}^3|U_{\nu_k, \theta}(b_{ki})|\le C.
\]
Arguing as in the proof of Lemma \ref{lem2_0}, we have, for $i=1,2,3$, that $\left|\int_{a_k}^{b_{ki}}P_{c_k}\right|\le C$, and in turn
\begin{equation}\label{eq1_0_1_1}
    |c_k|\le C.
\end{equation}
Passing to a subsequence,  $c_k\to c$, we have 
   $\frac{1}{2}V_{\theta}^2=P_c \textrm{ on }(r,s)$. 
Since $c\in \mathring{J}_0$, we have $P_c>0$ on $(r,s)$. So there exists some $\delta>0$, such that for large $k$, 
\begin{equation}\label{eq1_0_1_2}
   P_{c_k}\ge 1/C \textrm{ on }[r+\delta/8, s-\delta/8].
   \end{equation}
Next, since $\int_{r}^{s}|U_{\nu_k, \theta}|^2\le C$, there exists some $a_k\in [r+\epsilon/16, r+\epsilon/4]$ and $b_k\in  [s-\epsilon/4, s-\epsilon/8]$, such that  $|U_{\nu_k, \theta}(a_k)|+|U_{\nu_k, \theta}(b_k)|\le C$. If $|U_{\nu_k, \theta}(\alpha_k)|=\max_{[a_k,b_k]}|U_{\nu_k, \theta}|>\max\{|U_{\nu_k, \theta}(a_k)|, |U_{\nu_k, \theta}(b_k)|\}$ for some $\alpha_k\in (a_k,b_k)$, then $U_{\nu_k, \theta}'(\alpha_k)=0$ and, by (\ref{eq1_0_1_0}) and (\ref{eq1_0_1_1}), we have 
\[
   \frac{1}{2}U^2_{\nu_k, \theta}(\alpha_k)\le |P_{c_k}(\alpha_k)|+|2\nu_k\alpha_k U_{\nu_k, \theta}(\alpha_k)|\le C+C|U_{\nu_k, \theta}(\alpha_k)|.
\]
It follows that $|U_{\nu_k, \theta}(\alpha_k)|\le C$. Hence 
\begin{equation}\label{eq1_0_1_3}
    |U_{\nu_k, \theta}|\le C \textrm{ on }[r+\delta/4, s-\delta/4].
\end{equation}
We know that either $V_{\theta}=\sqrt{2P_c}$ on $(r,s)$ or $V_{\theta}=-\sqrt{2P_c}$ on $(r,s)$. 

\noindent \textbf{Claim}: If $V_{\theta}=\sqrt{2P_c}$, then $U_{\nu_k, \theta}\ge 1/C$ on $[r+\epsilon/4, s-\epsilon/4]$. If $V_{\theta}=-\sqrt{2P_c}$, then $U_{\nu_k, \theta}\le -1/C$ on $[r+\epsilon/4, s-\epsilon/4]$.

\noindent\emph{Proof of the Claim}: We only treat the case when $V_{\theta}=\sqrt{2P_c}$. The other case can be treated similarly. 
 By (\ref{eq1_0_1_1}),  (\ref{eq1_0_1_2}),  and the weak convergence of $U_{\nu_k, \theta}$ to $V_{\theta}$, we have 
\[
   \int_{r+\epsilon/8}^{r+\epsilon/4} U_{\nu_k, \theta}V_{\theta}\to \int_{r+\epsilon/8}^{r+\epsilon/4}V_{\theta}^2=\int_{r+\epsilon/8}^{r+\epsilon/4}2P_c\ge 1/C.
\]
So there exists some $a_k\in [r+\epsilon/8, r+\epsilon/4]$, such that $U_{\nu_k, \theta}(a_k)\ge 1/C$. Applying Lemma \ref{lem2_2} on $[a_k, s-\epsilon/8]$, we have $U_{\nu_k, \theta}\ge 1/C$ on $[a_k, s-\epsilon/8]$. Thus $U_{\nu_k, \theta}\ge 1/C$ on $[r+\epsilon/4, s-\epsilon/4]$. Note that if $V_{\theta}=-\sqrt{2P_c}$, we will argue similarly and use Lemma \ref{lem2_2'} instead of Lemma \ref{lem2_2}. The claim is proved.

By the claim and (\ref{eq1_0_1_3}), we either have $1/C\le U_{\nu_k, \theta}\le C$ on $[r+\epsilon/4, s-\epsilon/4]$, or $-C\le U_{\nu_k, \theta}\le -1/C$ on $[r+\epsilon/4, s-\epsilon/4]$. We can, in view of (\ref{eq1_0_1_2}), apply Lemma \ref{lem2_7} and Lemma \ref{lem2_7'}, to obtain 
\[
   || U_{\nu_k, \theta}-V_{\theta}||_{C^m([r+\epsilon, s-\epsilon])}\le C\nu_k.
\]
Notice $x=\cos\theta$, $u_{\nu_k, \theta}=U_{\nu_k, \theta}/\sin\theta$ and $v_{\theta}=V_{\theta}/\sin\theta$, we have proved that 
\[
   || u_{\nu_k, \theta}-v_{\theta}||_{C^m(\mathbb{S}^2\cap \{\theta_1+\epsilon<\theta<\theta_2-\epsilon\})}\le C\nu_k.
\]
The conclusion of the theorem then follows from the above, in view of formulas (\ref{eqNS_1}) and (\ref{eqEuler_1}).
The theorem is proved.
\qed


\bigskip

\section{$c\in \mathring{J}_0$}\label{sec3}

\noindent{\textit{Proof of Theorem \ref{thm1_1}}}:

We only need to prove the theorem in the special case that $c_k\to c\ne 0$ and $\nu_k\to 0$, where $c_1,c_2>0,c_3>c_3^*(c_1,c_2)$, which is equivalent  to $\min_{[-1,1]}P_c>0$. Indeed, let  $\hat{\nu}_k=nu_k/\sqrt{|c_k|}$. By the assumption $\hat{\nu}_k\to 0$, $\hat{c}_k\to \hat{c}\ne 0$. It is easy to see that $U^{+}_{\theta, \hat{\nu}_k}(\hat{c}_k)=U^{+}_{\nu_k, \theta}(c_k)/\sqrt{|c_k|}$.
   The desired estimate (\ref{eq1_1}) for $U^{+}_{\nu_k, \theta}(c_k)$ can be easily deduced from the estimate of $U^{+}_{\theta, \hat{\nu}_k}(\hat{c}_k)$.

We prove the estimates in (\ref{eq1_1}) for $\{U_{\nu_k, \theta}^+\}$, the proof for $\{U_{\nu_k, \theta}^-\}$ is similar. 
In the following, $C$ denotes various constant depending only on $c$. 
By Lemma \ref{lem2_1}, $||U_{\nu_k, \theta}^+||_{L^{\infty}(-1,1)}\le C$ for all $k$. By Theorem A, the convergence of $\{c_k\}$ to $c$ and the fact that $\min_{[-1,1]}P_c>0$ and $c_1,c_2>0$, we have, for large $k$,
   $\min_{[-1,1]}P_{c_k}\ge \frac{1}{2}\min_{[-1,1]}P_c>0$,
   $U^+_{\nu_k, \theta}(-1)=\tau_2(\nu_k, (c_k)_1)\ge \sqrt{2(c_k)_1}\ge \sqrt{c_1}>0$, and
   $\left|U^+_{\nu_k, \theta}(\pm 1)-\sqrt{2P_{c_k}(\pm 1)}\right|\le C\nu_k$.
An application of Lemma \ref{lem2_2} gives, for large $k$, that
   $U^+_{\nu_k, \theta}\ge 1/C \textrm{ on }[-1,1]$.
 The first estimate (\ref{eq1_1}) then follows from Corollary \ref{cor2_2}.

To prove the second estimate in (\ref{eq1_1}),  we first prove the following lemma.

\begin{lem}\label{lem3_1}
   Let $0<\nu\le 1$, $c\in \mathbb{R}^3$, $U_{\nu, \theta}^+$ be the upper solution of (\ref{eq_1}). If $P_c(-1)=2c_1\ge \delta>0$,
then for each non-negative integer $m$, there exists some constant $C$ depending only on $\delta$, $m$ and an upper bound of $|c|$, such that
\[
   \left|\frac{d^m}{dx^m}(U^+_{\nu, \theta}-\sqrt{2P_c})(-1)\right|\le C\nu.
\]
\end{lem}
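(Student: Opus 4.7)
The plan is to exploit the fact that the coefficient $(1-x^2)$ in equation (\ref{eq_1}) vanishes at $x=-1$, so the ODE degenerates into an algebraic relation there. Writing $f := U^+_{\nu,\theta}$ and $g := \sqrt{2P_c}$, I will produce a recursion that determines $f^{(m)}(-1)$ from the lower-order derivatives $f^{(0)}(-1),\ldots,f^{(m-1)}(-1)$, an analogous recursion for $g^{(m)}(-1)$, and then induct on $m$ to compare them.

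For the base case $m=0$, Theorem A gives $f(-1) = 2\nu + 2\sqrt{\nu^2+c_1}$ while $g(-1) = 2\sqrt{c_1}$. Since $2c_1 = P_c(-1)\ge\delta$, one computes directly
\[
  f(-1) - g(-1) = 2\nu + \frac{2\nu^2}{\sqrt{\nu^2+c_1}+\sqrt{c_1}} \le C\nu.
\]
For $m\ge 1$, I differentiate equation (\ref{eq_1}) $m$ times using the Leibniz rule; only the first three derivatives of $(1-x^2)$ and the first two of $x$ are nonzero, so evaluation at $x=-1$ kills the $f^{(m+1)}$ term and yields
\[
  [f(-1) + 2(m-1)\nu]\, f^{(m)}(-1) = P_c^{(m)}(-1) - \frac{1}{2}\sum_{k=1}^{m-1}\binom{m}{k} f^{(k)}(-1) f^{(m-k)}(-1) - m(3-m)\nu\, f^{(m-1)}(-1).
\]
Differentiating $\tfrac{1}{2}g^2 = P_c$ and evaluating at $-1$ gives the parallel recursion
\[
  g(-1)\, g^{(m)}(-1) = P_c^{(m)}(-1) - \frac{1}{2}\sum_{k=1}^{m-1}\binom{m}{k} g^{(k)}(-1) g^{(m-k)}(-1),
\]
with $P_c^{(m)}(-1)=0$ for $m\ge 3$ since $P_c$ is quadratic.

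The induction step is straightforward: the coefficient $f(-1)+2(m-1)\nu$ is bounded below uniformly by $g(-1) = 2\sqrt{c_1}\ge\sqrt{2\delta}$ for every $m\ge 1$, and differs from $g(-1)$ by at most $C\nu$. Given the inductive hypothesis $|f^{(j)}(-1)-g^{(j)}(-1)|\le C\nu$ for $j<m$, together with uniform bounds on $g^{(j)}(-1)$ coming from $|c|\le M$, subtracting the two recursions produces $|f^{(m)}(-1)-g^{(m)}(-1)|\le C\nu$, completing the step.

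The main obstacle is the pre-analytic issue of justifying that $f$ is actually $C^m$ at the endpoint $x=-1$, so that $f^{(m)}(-1)$ is an honest derivative rather than merely a formal Taylor coefficient. Since the degenerate ODE at $x=-1$ admits the positive root $f(-1)>0$, and the recursion above has a nonvanishing leading coefficient at each step, one can either invoke standard regularity theory for first-order ODEs with a regular singular point (where the indicial exponent is positive) or verify directly that the formal Taylor series extracted from the recursion is matched by $U^+_{\nu,\theta}$ through repeated quotient differencing using equation (\ref{eq_1}) rewritten as $f' = [P_c - 2\nu x f - \tfrac{1}{2}f^2]/[\nu(1-x^2)]$.
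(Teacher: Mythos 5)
Your core argument --- Leibniz-differentiating (\ref{eq_1}) $m$ times, evaluating at $x=-1$ where $(1-x^2)$ kills the top-order term, using that the upper root makes the coefficient $U^+_{\nu,\theta}(-1)+2(m-1)\nu\ge\sqrt{2\delta}$ nondegenerate, and inducting on $m$ --- is correct and is essentially the paper's proof; the paper only packages the comparison differently, first showing $\big|\frac{d^m}{dx^m}\big(\tfrac12(U^+_{\nu,\theta})^2-P_c\big)(-1)\big|\le C\nu$ and then dividing by $U^+_{\nu,\theta}+\sqrt{2P_c}$, rather than subtracting a parallel recursion for $\sqrt{2P_c}$ as you do. The endpoint smoothness you flag as the main obstacle is not re-derived in the paper either: it is imported from Lemma 2.3 of \cite{LLY2} (via the scaling relating (\ref{eq_1}) to (\ref{eqNSE_1})), which gives $U^+_{\nu,\theta}\in C^{\infty}[-1,0]$, so your sketched regularity step can simply be replaced by that citation.
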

\begin{proof}
Denote $C$ to be a constant, depending only on $\delta$, $m$ and an upper bound of $|c|$, which may vary from line to line. 
 We first prove that for every $m\ge 0$, 
\begin{equation}\label{eq3_1_1}
   \left|\frac{d^m}{dx^m}U^+_{\nu, \theta}(-1)\right|\le C. 
\end{equation}
It can be checked that $U^+_{\nu, \theta}$ is a solution of (\ref{eq_1}) if and only if $\nu U^+_{\nu, \theta}$ is a solution of (\ref{eqNSE_1}). Then by Lemma 2.3 in \cite{LLY2}, we have that $U^+_{\nu, \theta}\in C^{\infty}[-1,0]$. For $m\ge 0$, differentiating (\ref{eq_1}) $(m+1)$ times and sending $x$ to $-1$ lead to
\[
  \begin{split}
   & \nu\sum_{i=1}^{m+1}{m+1 \choose i}\frac{d^i}{dx^i}(1-x^2)\frac{d^{m+2-i}}{dx^{m+2-i}}U^+_{\nu, \theta}(x)
+2\nu \sum_{i=0}^{1}{m+1 \choose i}\frac{d^i}{dx^i}(x)\frac{d^{m+1-i}}{dx^{m+1-i}}U^+_{\nu, \theta}(x)\\
   &   +\frac{1}{2} \sum_{i=0}^{m+1}{m+1 \choose i}\frac{d^i}{dx^i}U^+_{\nu, \theta}(x)\frac{d^{m+1-i}}{dx^{m+2-i}}U^+_{\nu, \theta}(x)=\frac{d^{m+1}}{dx^{m+1}}P_c(x),\quad \textrm{ at }x=-1.
\end{split}
\]
Notice that the $i=1$ term in the first sum and the $i=0$ term in the second sum cancel out, we rewrite the above equation as
\[
  \begin{split}
     & U^+_{\nu, \theta}(x)\frac{d^{m+1}}{dx^{m+1}}U^+_{\nu, \theta}(x)  =\frac{d^{m+1}}{dx^{m+1}}P_c(x)-\frac{1}{2} \sum_{i=1}^{m}{m+1 \choose i}\frac{d^i}{dx^i}U^+_{\nu, \theta}(x)\frac{d^{m+1-i}}{dx^{m+2-i}}U^+_{\nu, \theta}(x)\\
   & -\nu\sum_{i=2}^{m+1}{m+1 \choose i}\frac{d^i}{dx^i}(1-x^2)\frac{d^{m+2-i}}{dx^{m+2-i}}U^+_{\nu, \theta}(x)
  -2\nu (m+1)\frac{d^{m}}{dx^{m}}U^+_{\nu, \theta}(x), \quad \textrm{ at }x=-1.
\end{split}
\]
Since $2c_1=P_c(-1)\ge \delta>0$ and $U^+_{\nu, \theta}(-1)=2\nu+2\sqrt{\nu^2+c_1}$, we have $1/C\le U^+_{\nu, \theta}(-1)\le C$. 
 Using this and the fact that the right hand side of the above equation involves only $\left\{\frac{d^i}{dx^i}U^+_{\nu, \theta}(-1)\right\}_{0\le i \le m}$, we can easily prove (\ref{eq3_1_1}) by induction. 
By (\ref{eq3_1_1}) and the fact that $1/C\le U^+_{\nu, \theta}(-1)\le C$, take $m-$th derivatives of (\ref{eq_1}), we have that
\begin{equation}\label{eq3_1_2}
   |\frac{d^{m}}{dx^{m}}(\frac{1}{2}(U^+_{\nu, \theta})^2-P_c)(-1)|=\nu|\frac{d^{m}}{dx^{m}}((1-x^2)(U^+_{\nu, \theta})'+2xU^+_{\nu, \theta})|\big|_{x=-1}\le C\nu.
\end{equation}
Since $U^+_{\nu, \theta}-\sqrt{2P_c}=\frac{2}{U^+_{\nu, \theta}+\sqrt{2P_c}}\left[\frac{1}{2}(U^+_{\nu, \theta})^2-P_c\right]$, $1/C\le U^+_{\nu, \theta}(-1)\le C$, and $P_c(-1)\ge 1/C$, the lemma follows from (\ref{eq3_1_1}) and (\ref{eq3_1_2}).
\end{proof}
Now we continue to prove Theorem \ref{thm1_1}. Apply Lemma \ref{lem2_7} with $a=-3/4$ and $b=1$, we have, for all $m\ge 0$,
\[
   ||\frac{d^m}{dx^m}(U_{\nu_k, \theta}^+-\sqrt{2P_{c_k}})||_{L^{\infty}(-\frac{1}{2},1-\epsilon)}\le C\nu_k.
\]
By Lemma \ref{lem3_1} and Lemma \ref{lem2_12} with $a=-1$, $b=-1/2$, we have
\[
   ||\frac{d^m}{dx^m}(U_{\nu_k, \theta}^+-\sqrt{2P_{c_k}})||_{L^{\infty}(-1,-\frac{1}{2})}\le C\nu_k,
\]
for some $C$ depending only on $\delta$, $m$ and an upper bound of $|c|$. Theorem \ref{thm1_1} is proved.
\qed


\medskip

\noindent{\textit{Proof of Theorem \ref{thm1_3} Started}}:
In this part, we prove the first paragraph of Theorem \ref{thm1_3} and part (iii).
Let $C$ denote a positive constant, having the same dependence as specified in the theorem, which may vary from line to line. 
 By Lemma \ref{lem2_1},
\begin{equation}\label{eqthm3_3_1}
|U_{\nu_k, \theta}|\le C.
\end{equation}
Since $U_{\nu_k, \theta}$ is not $U_{\nu_k, \theta}^\pm$, we know from Theorem A that $U_{\nu_k, \theta}(-1)=\tau_1(\nu_k, c_{k1})$ and $U_{\nu_k, \theta}(1)=\tau_2'(\nu_k, c_{k2})$. 
Since $c_k\in J_0\setminus\{0\}$, we have $P_c\ge 0$ on $[-1,1]$. By Lemma \ref{lem2_4}, there exists at most one $x_k$ in $(-1,1)$ such that $U_{\nu_k, \theta}(x_k)=0$.

Now we prove part (iii). Since $c\in \mathring{J}_0$, we have $c_1,c_2>0$ and $\min_{[-1,1]}P_c>0$.  By the convergence of $\{c_k\}$ to $c$, we deduce, using (\ref{eq_2}), that
\begin{equation}\label{eqthm3_3_2}
  U_{\nu_k, \theta}(-1)\le -1/C, \quad U_{\nu_k, \theta}(1)\ge 1/C, \quad \min_{[-1,1]}P_{c_k}\ge 1/C,
\end{equation}
and
\begin{equation}\label{eqthm3_3_3}
   |U_{\nu_k, \theta}(-1)+\sqrt{2P_{c_k}(-1)}|+|U_{\nu_k, \theta}(1)-\sqrt{2P_{c_k}(1)}|\le C\nu_k.
\end{equation}
Clearly there exists $x_k\in (-1,1)$ such that $U_{\nu_k, \theta}(x_k)=0$. 
By Lemma \ref{lem2_4} and (\ref{eqthm3_3_2}),
\begin{equation}\label{eqthm3_3_4}
   U_{\nu_k, \theta}(x)<0, -1\le x< x_k,  \textrm{ and }U_{\nu_k, \theta}(x)>0,  x_k<x\le 1.
\end{equation}
By Corollary \ref{cor2_3} and Corollary \ref{cor2_3'}, using (\ref{eqthm3_3_1}) and (\ref{eqthm3_3_4}), we have that
\begin{equation}\label{eqthm3_3_5}
   -C\le U_{\nu_k, \theta}(x)\le -1/C, x\in (-1,x_k-\epsilon/2),  \textrm{ and }1/C\le U_{\nu_k, \theta}\le C, x\in (x_k+\epsilon/2, 1).
\end{equation}
With (\ref{eqthm3_3_5}) we deduce (\ref{eq1_2_2}) by applying Lemma \ref{lem2_7} and Lemma \ref{lem2_7'}. With (\ref{eqthm3_3_3}), (\ref{eqthm3_3_5}) and (\ref{eq1_2_2}), we deduce (\ref{eq1_2_1}) by applying Corollary \ref{cor2_2} and Corollary \ref{cor2_2'}.
\qed

\section{$c\in \partial J_0\setminus\{0\}$ and $c_3=c_3^*(c_1,c_2)$}\label{sec4}

In this section, we study a sequence of solutions $U_{\nu_k, \theta}$ of (\ref{eq:NSE}) with $c_k\to c$ and $\nu_k\to 0$, where $c\in \partial J_0\setminus\{0\}$ and $c_3=c_3^*(c_1,c_2)$. We first study the behaviors of $U_{\nu_k, \theta}^{\pm}$.

\noindent{\emph{Proof of Theorem \ref{thm1_2}}}:

Let $C$ denote a constant depending only on $c$ which may vary from line to line. We only prove the result for the case $c_3=c_3^*(c_1,c_2)$ and $c_2>0$. The result for the case $c_3=c_3^*(c_1,c_2)$ and $c_1>0$ can be proved similarly. 

Since $c_3=c_3^*(c_1,c_2)$ and $c_2>0$, we have $c_3<0$ and $P_c=-c_3(x-\bar{x})^2$ with $\bar{x}:=\frac{\sqrt{c_1}-\sqrt{c_2}}{\sqrt{c_1}+\sqrt{c_2}}\in [-1,1)$. Then for any $\epsilon<(1-\bar{x})/8$,  we have 
\begin{equation}\label{eqlem4_1_1}
   P_{c}(x)\ge2P_c(1)/3=4c_2/3>0, \quad 1-2\epsilon\le x\le 1. 
   \end{equation}
Choose sequences  $c_{k1}\to c_1$, $c_{k2}\to c_2$,  let, as in (\ref{eqdef_1}), 
\[
   \bar{c}_{k3}:=\bar{c}_{3}(c_{k1},c_{k2};\nu_k)=-\frac{1}{2}(\sqrt{\nu_k^2+c_{k1}}+\sqrt{\nu_k^2+c_{k2}})(\sqrt{\nu_k^2+c_{k1}}+\sqrt{\nu_k^2+c_{k2}}+2\nu_k).
\]
Let $c_k=(c_{k1},c_{k2},c_{k3})$, where $c_{k3}>\bar{c}_{k3}$ will be chosen later. It is easy to see that $c_k\in J_{\nu_k}$. Let $U_{\nu_k, \theta}^+$ be the solution of (\ref{eq_1}) with the right hand side  $P_{c_k}$. For convenience, write $f_k=U_{\nu_k, \theta}^+$, $P_k=P_{c_k}$, and $h_k:=\frac{1}{2}f^2_k-P_k$. 
Let $\bar{P}_{k}:=P_{(c_{k1},c_{k2},\bar{c}_{k3})}$.  
By Theorem A, there exists a unique solution $\bar{f}_k$  of (\ref{eq_1}) with the right hand side $\bar{P}_k$, and
\begin{equation}\label{eqlem4_1_2}
   \bar{f}_k(x)=(\nu_k+\sqrt{\nu_k^2+c_{k1}})(1-x)-(\nu_k+\sqrt{\nu_k^2+c_{k2}})(1+x).
\end{equation}
By Theorem A again, for any integer $i>0$, 
there exists $\delta_{ik}>0$, $\delta_{ik}\to 0$, such that for  $|c_{k3}-\bar{c}_{k3}|\le \delta_{ik}$, we have
  $||f_k-\bar{f}_k||_{L^{\infty}(-1,1-1/i)}<1/i$.
Choose $c_{k3}=\bar{c}_{k3}+\delta_{kk}$. Then  
   $||f_k-\bar{f}_k||_{L^{\infty}(-1,1-1/k)}<1/k$.
By computation, for any $\epsilon>0$,
  $\bar{f}_k(1-\epsilon)=\epsilon \sqrt{c_1}-(2-\epsilon)\sqrt{c_2}+o(1)$,
where $o(1)\to 0$ as $k\to \infty$.
Since $c_2>0$,  we have that for $k>1/\epsilon$,  
  $f_k(1-\epsilon)<0$.

On the other hand, by Theorem A, using $c_{k3}>\bar{c}_{k3}$, we have $f_k(1)=-2\nu_k+2\sqrt{\nu_k^2+c_{k2}}\ge \sqrt{c_2}>0$ for sufficiently large $k$. So there is some $x_k\in (1-\epsilon, 1)$ such that $f_k(x_k)=0$.
By (\ref{eqlem4_1_1}), $P_k(x_k)\ge P_c(x_k)+o(1)\ge c_2>0$ for large $k$. 
 So we have $|\frac{1}{2}f^2_k(x_k)-P_k(x_k)|\ge c_2$ for large $k$.  
 Theorem \ref{thm1_2} is proved.
\qed

%

\begin{rmk}
  If $c_k\in J_0$, i.e. $P_{c_k}\ge 0$ on $[-1,1]$, then we have, by Theorem \ref{thm1_2_1},  that $||\frac{1}{2}(U_{\nu_k, \theta}^{\pm}(c_k))^2-P_{c_k}||_{L^{\infty}[-1,1]}\to 0$ as $k\to 0$. So the $\{P_{c_k}\}$ constructed in Theorem \ref{thm1_2} has the property that $\min_{[-1,1]}P_{c_k}<0$ for large $k$. 
\end{rmk}

\noindent{\textit{Proof of Theorem \ref{thm1_2_1}}}:

Let $C$ denote a positive constant depending only on $c$ which may vary from line to line. For convenience, write $f_k=U_{\nu_k, \theta}^+(c_k)$, $P_k=P_{c_k}$, and $h_k:=\frac{1}{2}f^2_k-P_k$. In the following we always assume that $k$ is large.

(i) We only prove the results for $U_{\nu_k,\theta}^+$, the proof for $U_{\nu_k,\theta}^-$ is similar. 
Since $P_k\ge 0$ in $[-1,1]$ and $f_k(-1)=\tau_2(\nu_k,c_{k1})>0$, we have,  by Lemma \ref{lem2_1} and Lemma \ref{lem2_4}, 
\begin{equation}\label{eq4_2_9}
     0<f_k(x)\le C, \quad -1\le x<1.
\end{equation}
By (\ref{eqP_1}), $P_c(x)=-c_3(x-\bar{x})^2$ with $\bar{x}=\frac{\sqrt{c_1}-\sqrt{c_2}}{\sqrt{c_1}+\sqrt{c_2}}\in [-1,1]$. 
Since $c_k\to c$ and $c_3<0$, we know $c_{k3}<\frac{1}{2}c_3<0$ for large $k$. Let $\bar{x}_k$ be the unique minimum point of $P_k$, then $\bar{x}_k\to \bar{x}$, 
\begin{equation*}
   P_k(x)=P_k(\bar{x}_k)-c_{k3}(x-\bar{x}_k)^2,
\end{equation*}
and for large $k$, that
\begin{equation}\label{eq4_2_3}
   \frac{|c_3|}{2}(x-\bar{x}_k)^2\le P_k(x)-P_k(\bar{x}_k)\le 2|c_3|(x-\bar{x}_k)^2, \quad -1\le x\le 1.
\end{equation}

We first prove 
\begin{equation}\label{eq4_1_1}
   ||\frac{1}{2}(U^{+}_{\nu_k, \theta})^2-P_{c_k}||_{L^{\infty}(-1,1)}\le C\nu_k^{2/3} .
\end{equation}

   \textbf{Case 1}: $c_1,c_2>0$. 

   In this case $\bar{x}\in (-1,1)$. Let $a_k=\nu_k^{1/3}/\alpha$ for some positive $k$-independent constant $\alpha$ to be determined. 
 By Lemma \ref{lem2_5}, there exists some $x_{k}\in (\bar{x}_k+a_k,\bar{x}_k+2a_k)$, such that $|h_k(x_{k})|\le C\nu_k/a_k=C\alpha^3 a_k^2$. It follows from (\ref{eq4_2_3}), using the fact that $P_k(\bar{x}_k)\ge 0$,  that
 \begin{equation}\label{eq4_2_4}
    P_c(x)\ge |c_3|a_k^2/2, \quad \forall \bar{x}_k+a_k\le x\le 1.
 \end{equation}
 Thus $f^2_k(x_k)/2\ge P_{k}(x_k)-|h_k(x_k)|\ge \left(|c_3|/2-C\alpha^3\right)a_k^2$. 
 Fix $\alpha^3=|c_3|/(4C)$.  By (\ref{eq4_2_9}) we have $f_{k}(x_{k})\ge a_k/C$.
 Applying  Lemma \ref{lem2_2} on $[x_k,1]$, using (\ref{eq4_2_4}), we have $f_k\ge  a_k/C$ on $[x_{k}, 1]$.
   Since $|h_{k}(x_{k})|\le C\nu_k^{2/3}$ and $|h_k(1)|=|\frac{1}{2}f^2_k(1)-P_k(1)|=|\frac{1}{2}(\tau_2'(\nu,c_{k2}))^2-2c_2|\le C\nu_k$, we have, by applying Lemma \ref{lem2_6} on $[x_k,1]$, that
       \[
          \max_{[\bar{x}_k+2a_k,1]}|h_k|\le \max_{[x_k,1]}|h_k|\le C\nu_k^{2/3}.
       \]
       Similarly, by Lemma \ref{lem2_5}, there exists some $x'_k\in [\bar{x}_k-2a_k, \bar{x}_k-a_k]$, such that $|h_k(x'_k)|\le C\nu_k^{2/3}$.
       We also have $|h_k(-1)|=|\frac{1}{2}f^2_k(-1)-P_k(-1)|=|\frac{1}{2}(\tau_2(\nu,c_{k1}))^2-2c_1|\le C\nu_k$. Similar to (\ref{eq4_2_4}), we have $P_k(x)\ge a_k/C^2$ for $x\in [-1,x_k']$. Recall that $f_k(-1)\ge \sqrt{c_1}$. Using Lemma \ref{lem2_2} we have $f_k\ge a_k/C$ on $[-1,x_k']$. 
       Then by similar arguments as on $[\bar{x}_k+2a_k, 1]$, we have
       \[
           \max_{[-1, \bar{x}_k-2a_k]}|h_k|\le C\nu_k^{2/3}.
       \]

       Now we have that $|h_k(\bar{x}_k-2a_k)|\le Ca_k^2$ and $|h_k(\bar{x}_k+2a_k)|\le Ca_k^2$. Notice that $f_k>0$ on $(-1,1)$, $P_k\ge 0$ in $(-1,1)$, 
   using (\ref{eq4_2_3}) we have $P_k(x)\le P_k(\bar{x}_k)+ 2|c_3|(x-\bar{x}_k)^2$ on $[-1,1]$. Applying Lemma \ref{lem2_10} on $[\bar{x}_k-2a_k, \bar{x}_k+2a_k]$ with $\alpha=2$ and $\bar{x}=\bar{x}_k$, there we have that
       \[
          \max_{[\bar{x}_k-2a_k,\bar{x}_k+2a_k]}|h_k|\le Ca_k^2+C\nu_k/a_k\le C\nu_k^{2/3}.
       \]
       Estimate (\ref{eq4_1_1}) is proved in this case.

      \textbf{ Case 2}: $c_1=0$ and $c_2>0$. 

       In this case $P_c(x)=\frac{1}{2}c_2(x+1)^2$, $\bar{x}_k\to -1$. Let $a_k=\nu_k^{1/3}/\alpha$ for some $\alpha>0$ to be determined.  Let $b_k=\max\{-1,\bar{x}_k-2a_k\}$ and $d_k=\max\{-1+2a_k, \bar{x}_k+2a_k\}$. It is clear that $-1\le b_k<d_k<1$. We prove estimate (\ref{eq4_1_1}) separately on $[d_k,1]$, $[-1,b_k]$ and $[b_k,d_k]$.

        We first prove the estimate on $[d_k,1]$. Since $d_k-2a_k\ge \bar{x}_k$, we have $x-\bar{x}_k\ge a_k$ for $x$ in $[d_k-a_k, d_k]$. By (\ref{eq4_2_3}), we have $P_k\ge \frac{1}{2}|c_3|a_k^2$ in $[d_k-a_k,d_k]$. We also have $[d_k-a_k,d_k]\subset [-1,1]$. Applying Lemma \ref{lem2_5} on $[d_k-a_k,d_k]$, using (\ref{eq4_2_9}), there exists some $x_k\in [d_k-a_k,d_k]$, such that  $|h_k(x_{k})|\le C\nu_k/a_k=C\alpha^3 a_k^2$.
 Thus
           $\frac{1}{2}f^2_k(x_k)\ge P_{k}(x_k)-|h_k(x_k)|\ge \left(\frac{1}{2}|c_3|-C\alpha^3\right)a_k^2$.
 Fix $\alpha^3=|c_3|/(4C)$. By (\ref{eq4_2_9}) we have $f_{k}(x_{k})\ge a_k/C$.  Applying Lemma \ref{lem2_2} on $[x_{k}, 1]$, and using $P_k\ge \frac{1}{2}|c_3|a_k^2$ on the interval, we have $f_k(x)\ge a_k/C$ on $[x_{k}, 1]$. 
Since $|h_{k}(x_{k})|\le C a_k^2$ and $|h_k(1)|=|\frac{1}{2}f^2_k(1)-P_k(1)|=|\frac{1}{2}(\tau_2'(\nu,c_{k2}))^2-2c_2|\le C\nu_k$, notice $x_k\le d_k$, we have, by applying Lemma  \ref{lem2_6} on $[x_{k}, 1]$, that
       \begin{equation}\label{eq4_2_5}
           \max_{[d_k, 1]}|h_k|\le \max_{[x_k,1]}|h_k|\le C\nu_k^{2/3}.
       \end{equation}

       Next, we prove the estimate on $[-1,b_k]$. If $\bar{x}_k-2a_k>-1$, by (\ref{eq4_2_3}) we have $P_k\ge \frac{1}{2}|c_3|a_k^2$ on $[-1,\bar{x}_k-a_k]$. In particular, $2c_{k1}=P_k(-1)\ge \frac{1}{2}|c_3|a_k^2$. So $c_{k1}\ge \frac{1}{4}|c_3|a_k^2$, and consequently $f_k(-1)=\tau_2(\nu_k,c_{k1})\ge \sqrt{c_{k1}}\ge \frac{1}{2}\sqrt{|c_3|}a_k$. Applying Lemma \ref{lem2_2} on $[-1,\bar{x}_k-a_k]$, and using $P_k\ge \frac{1}{2}|c_3|a_k^2$ on the interval, we have $f_k(x)\ge a_k/C$ on $[-1,\bar{x}_k-a_k]$. 
       Applying Lemma \ref{lem2_5} on $[\bar{x}_k-2a_k, \bar{x}_k-a_k]$, using (\ref{eq4_2_9}), there exists some $y_k\in [\bar{x}_k-2a_k, \bar{x}_k-a_k]$, such that  $|h_k(y_{k})|\le C\nu_k/a_k=C\alpha^3 a_k^2$.
       We also have $|h_k(-1)|\le  C\nu_k$. Notice $b_k=\bar{x}_k-2a_k\le y_k\le \bar{x}_k-a_k$, applying Lemma  \ref{lem2_6} on $[-1,y_k]$, we have that
       \begin{equation}\label{eq4_2_7}
          \max_{[-1,b_k]}|h_k|\le \max_{[-1,y_k]}|h_k|\le C\nu_k^{2/3}.
       \end{equation}
     If $\bar{x}_k-2a_k\le -1$, $b_k=-1$, $\max_{[-1,b_k]}|h_k|=|h_k(-1)|\le C\nu_k\le C\nu_k^{\frac{2}{3}}$.

      Now we prove the estimate on $[b_k,d_k]$. We have proved in the above that $|h_{k}(b_k)|\le Ca_k^2$ and $|h_{k}(d_k)|\le Ca_k^2$. 
      If $\bar{x}_k<-1-a_k$, then $[b_k,d_k]=[-1,-1+2a_k]$, and for any $x\in [-1,-1+2a_k]$, $x-\bar{x}_k\ge -1-\bar{x}_k>a_k$. By (\ref{eq4_2_3}), we have $P_k\ge a^2_k/C$ on $[-1,-1+2a_k]$. In particular, $2c_{k1}=P_k(-1)\ge \frac{1}{2}|c_3|a_k^2$. So $c_{k1}\ge \frac{1}{4}|c_3|a_k^2$, and consequently $f_k(-1)=\tau_2(\nu_k,c_{k1})\ge \sqrt{c_{k1}}\ge \frac{1}{2}\sqrt{|c_3|}a_k$.
        Applying Lemma \ref{lem2_2} on $[-1,-1+2a_k]$, we have $f_k\ge a_k/C$ on $[-1,-1+2a_k]$. Notice we also know $|h_{k}(-1)|\le Ca_k^2$ and $|h_{k}(-1+2a_k)|\le Ca_k^2$. Applying Lemma \ref{lem2_6} on $[b_k,d_k]=[-1,-1+2a_k]$, we have that in this case
         \begin{equation}\label{eq4_2_12}
            \max_{[b_k,d_k]}|h_k|\le Ca_k^2. 
         \end{equation}

      Next, we consider the case $\bar{x}_k\ge -1-a_k$. 
  If $\bar{x}_k-2a_k\ge -1$, then $[b_k,d_k]=[\bar{x}_k-2a_k,\bar{x}_k+2a_k]$. If $\bar{x}_k-2a_k<-1$, then $[b_k,d_k]=[-1,-1+2a_k]$ when $\bar{x}_k<-1$, and $[b_k,d_k]=[-1,\bar{x}_k+2a_k]$ when $\bar{x}_k\ge -1$. So we have $\mathrm{dist}(\bar{x}_k, [b_k,d_k])\le Ca_k$, and $2a_k\le d_k-b_k\le 4a_k$.
     Notice that $|h_{k}(b_k)|\le Ca_k^2$, $|h_{k}(d_k)|\le Ca_k^2$,
     $f_k>0$ on $(-1,1)$ and $P_k\ge 0$ in $(-1,1)$,  and using (\ref{eq4_2_3}), $P_k(x)\le P_k(\bar{x}_k)+ 2|c_3|(x-\bar{x}_k)^2$ on $[-1,1]$. Applying Lemma \ref{lem2_10} on $[b_k,d_k]$ with $\alpha=2$, we have that
       \begin{equation}\label{eq4_2_8}
          \max_{[b_k,d_k]}|h_k|\le Ca_k^2+C\nu_k/a_k\le C\nu_k^{2/3}.
       \end{equation}
       By (\ref{eq4_2_5}), (\ref{eq4_2_7}), (\ref{eq4_2_12}) and (\ref{eq4_2_8}), we have $\max_{[-1,1]}|h_k|\le C\nu_k^{2/3}$. So estimate (\ref{eq4_1_1}) is proved in Case 2.


       \textbf{Case 3}: $c_2=0,c_1>0$. The proof of (\ref{eq4_1_1}) is similar to that of Case 2.

       We have by now proved (\ref{eq4_1_1}). By (\ref{eq4_1_1}), we have $\lim_{k\to \infty}|||f_k|-\sqrt{2P_{c}}||_{L^{\infty}(-1,1)}=0$. Using this and (\ref{eq4_2_9}), we have $\lim_{k\to \infty}||U^{+}_{\nu_k, \theta}-\sqrt{2P_{c}}||_{L^{\infty}(-1,1)}=0$.
       Next, we prove 
       \begin{equation}\label{eq4_1_3}
 ||U^{+}_{\nu_k, \theta}-\sqrt{2P_{c_k}}||_{C^{m}([-1,1-\epsilon]\setminus[\bar{x}-\epsilon, \bar{x}+\epsilon])}\le C\nu_k,
  \end{equation}
       If $\bar{x}=-1$, then  by (\ref{eq4_2_3}) and the fact that $\bar{x}_k\to \bar{x}$, we have $P_k\ge \epsilon^2/C$ on $[-1+\epsilon/2, 1-\epsilon/2]$ for large $k$. Applying Lemma \ref{lem2_7} on $[-1+\epsilon/2, 1-\epsilon/2]$, using (\ref{eq4_2_9}), we have (\ref{eq4_1_3}) in this case.
       
       If $\bar{x}>-1$, without loss of generality we assume $\epsilon$ is small such that $\bar{x}-2\epsilon>-1$. In this case, by (\ref{eq4_2_3}) and the fact that $\bar{x}_k\to \bar{x}$, we have $P_k\ge \epsilon^2/C$ on $[-1, 1-\epsilon/2]\setminus [\bar{x}-\epsilon/2, \bar{x}+\epsilon/2]$ for large $k$. Applying Lemma \ref{lem2_7} on $[-1+\epsilon/2, \bar{x}-\epsilon/2]$ and $[\bar{x}+\epsilon/2, 1-\epsilon/2]$ separately, we have
       \begin{equation}\label{eq4_2_10}
          ||f_k-\sqrt{2P_k}||_{C^{m}([-1+\epsilon,\bar{x}-\epsilon]\cup [\bar{x}+\epsilon, 1-\epsilon])}\le C\nu_k,
       \end{equation}
       for some constant $C$ depending only on $\epsilon$, $m$ and an upper bound of $|c|$.     
 By Lemma \ref{lem3_1} and (\ref{eq4_2_10}), we have
   $ \left|\frac{d^i}{dx^i}(f_k-\sqrt{2P_k})(-1)\right|\le C\nu_k$ and $\left|\frac{d^i}{dx^i}(f_k-\sqrt{2P_k})(\bar{x}-\epsilon)\right|\le C\nu_k$, $0\le i\le m$,
       where $C$ depending only on $m$ and an upper bound of $|c|$.
       Applying Lemma \ref{lem2_12} on $[-1,\bar{x}-\epsilon]$, using (\ref{eq4_2_9}), we have
       \begin{equation}\label{eq4_2_11}
           ||f_k-\sqrt{2P_k}||_{C^{m}([-1,\bar{x}-\epsilon])}\le C\nu_k.
       \end{equation}
Estimate (\ref{eq4_1_3}) in this case follows from  (\ref{eq4_2_10}) and (\ref{eq4_2_11}).

      Next, for any $\epsilon>0$, there exists some constant $C>0$, depending only on $\epsilon$ and an upper bound of $|c|$, such that $|f'_k|\le C\nu_k^{-1/3}$ on $[-1+\epsilon, 1-\epsilon]$, so $|h'_k|=|f_kf'_k-P'_k|\le C\nu_k^{-1/3}$. 
 By interpolation 
        for any $x, y\in (-1+\epsilon, 1-\epsilon)$ and $0<\beta<1$,
       \[
          \frac{|h_k(x)-h_k(y)|}{|x-y|^{\beta}}\le 2||h_||^{1-\beta}_{L^{\infty}(-1+\epsilon, 1-\epsilon)}||h_k'||^{\beta}_{L^{\infty}(-1+\epsilon, 1-\epsilon)}\le C\nu_k^{2(1-\beta)/3}\nu_k^{-\beta/3}\le C\nu_k^{\frac{2}{3}-\beta}.
       \]
        So we have 
\begin{equation}\label{eq4_1_2}
  ||\frac{1}{2}(U^{+}_{\nu_k, \theta})^2-P_{c_k}||_{C^{\beta}(-1+\epsilon,1-\epsilon)}\le C\nu_k^{\frac{2}{3}-\beta},
   \end{equation}
       Part (i)  follows from (\ref{eq4_1_1}), (\ref{eq4_1_3}) and (\ref{eq4_1_2}). 
              
       \bigskip
       
       (ii) If $P_k\ge 0$ on $[-1,1]$, then the conclusion of the lemma follows from part(i). So below we assume that $\min_{[-1,1]}P_k(x)<0$. 
Let $\min_{[-1,1]}P_k(x)=P_k(\bar{x}_k)$. Since $P_k(x)=c_{k1}(1-x)+c_{k2}(1+x)+c_{k3}(1-x^2)$, we have $\bar{x}_k=\frac{c_{k2}-c_{k1}}{2c_{k3}}$, and
\begin{equation}\label{eqlem4_2_5}
   P_k(x)=P_k(\bar{x}_k)-c_{k3}(x-\bar{x}_k)^2.
\end{equation}
Then
\begin{equation}\label{eq5_2_2}
   1-\bar{x}_k=\frac{-c_{k2}+2c_{k3}+c_{k1}}{2c_{k3}}\le C(|c_{k2}|+|2c_{k3}+c_{k1}|).
\end{equation}
By Lemma \ref{lem2_9} and the assumption that $\min_{[-1,1]}P_k(x)<0$, we have
\begin{equation}\label{eqlem4_2_4}
   -C\nu_k\le P_k(\bar{x}_k)<0.
\end{equation} 
Let $\bar{P}_{k}:=P_{(c_{k1},c_{k2},\bar{c}_{k3})}$
 and $\bar{f}_k$ be the same as in (\ref{eqlem4_1_2}). 
Denote 
\[
    \tilde{x}_k=\frac{\sqrt{\nu_k^2+c_{k1}}-\sqrt{\nu_k^2+c_{k2}}}{2\nu_k+\sqrt{\nu_k^2+c_{k1}}+\sqrt{\nu_k^2+c_{k2}}}\in (-1,1).
\]
By (\ref{eqlem4_1_2}) we have that
\begin{equation}\label{eqlem4_2_2}
   \bar{f}_k(\tilde{x}_k)=0, \quad \bar{f}_k>0 , -1 \le x< \tilde{x}_k,   \textrm{ and } \bar{f}_k<0,  \tilde{x}_k<x\le  1.
\end{equation}
By computation
\begin{equation}\label{eq5_2_1}
   1-\tilde{x}_k=\frac{2\nu_k+2\sqrt{\nu_k^2+c_{k2}}}{2\nu_k+\sqrt{\nu_k^2+c_{k1}}+\sqrt{\nu_k^2+c_{k2}}}\le C(\nu_k+\sqrt{|c_{k2}|}).
\end{equation}
Since $2c_3=-c_1$, by (\ref{eq5_2_2}) and (\ref{eq5_2_1}) we see that $\bar{x}_k\to 1$ and $\tilde{x}_k\to 1$. 
Notice that $P_k\ge \bar{P}_k$ and $f_k(-1)=\bar{f}_k(-1)>2\nu_k$. By Lemma 2.4 in \cite{LLY2}, we have
\begin{equation}\label{eqlem4_2_3}
    f_k\ge \bar{f}_k, \quad -1<x<1.
\end{equation}
Let $y_k=\min\{\bar{x}_k, \tilde{x}_k\}\to 1$. By (\ref{eqlem4_2_2}) and (\ref{eqlem4_2_3}) we have $f_k>0$ for $-1\le x<y_k$.
As in Case 3 in the proof of part (i), we have
\begin{equation}\label{eq5_2_5}
   \lim_{k\to \infty}||f_k-\sqrt{2P_c}||_{L^{\infty}(-1,y_k-2a_k)}=0,
      \end{equation}
    and that there is some $a_k=\nu_k^{1/3}/\alpha$ with $\alpha>0$ fixed, such that
    \begin{equation}\label{eq5_2_6}
       \max_{[-1,y_k-2a_k]}|h_k|\le C\nu_k^{2/3}.
       \end{equation} 
       
By (\ref{eq5_2_1}) and (\ref{eq5_2_2}) and the fact $a_k=\nu_k^{1/3}/\alpha$ we have
\begin{equation}\label{eq5_2_2_1}
   |y_k-2a_k-1|\le C(\sqrt{|c_{k2}|}+|2c_{k3}+c_{k1}|+\nu_k^{1/3}).
\end{equation}
On the interval $[y_k-2a_k, 1]$, by (\ref{eq5_2_2_1}), (\ref{eqlem4_2_5}), (\ref{eqlem4_2_4}) and the fact $\bar{x}_k\in [y_k-2a_k,1]$, we have that for large $k$,
\[
   |P_k(x)|\le 
   C(|c_{k2}|+|2c_{k3}+c_{k1}|^2+\nu_k^{2/3}), \quad y_k-2a_k\le x\le 1, 
\]
and
\[
   P_k(x)\ge -C\nu_k+|c_3|a_k^2/2>0, \quad -1\le x<y_k-2a_k.
\]
 Let $\hat{P}_k(x)=P_{\hat{c}_k}(x):=P_k(x)+C\nu_k(1+x)$. It can be seen that the corresponding $\hat{c}_k$ belongs to $J_{\nu_k}$. We have $\hat{P}_k\ge P_k>0$ for $-1\le x<y_k-2a_k$. By (\ref{eqlem4_2_4}), $\hat{P}_k>0$ for $y_k-2a_k\le x\le 1$. So $\hat{P}_k>0$ on $[-1,1]$. Let $\hat{f}_k$ be the upper solution of (\ref{eq_1}) with the right hand side to be $\hat{P}_k$. Then by part (i), we have $||\frac{1}{2}\hat{f}_k^2-\hat{P}_k||_{L^{\infty}(-1,1)}\le C\nu_k^{2/3}$. Notice that
\[
   |\hat{P}_k|\le C(|c_{k2}|+|2c_{k3}+c_{k1}|^2+\nu_k^{2/3}), \quad y_k-2a_k<x< 1 .
   \]
   So
   \begin{equation}\label{eq5_2_3}
      \hat{f}_k^2<C(|c_{k2}|+|2c_{k3}+c_{k1}|^2+\nu_k^{2/3}), \quad y_k-2a_k<x< 1 .
   \end{equation}
Since $\hat{c}_{k1}=c_{k1}$, we have $f_{k}(-1)=\hat{f}_k(-1)>2\nu_k$. Using this and the fact $\hat{P_k}\ge P_k$, by Lemma 2.4 in \cite{LLY2}, we have $f_k\le \hat{f}_k$ on $(-1,1)$. 
So on the interval $[y_k-2a_k, 1]$, we have $\bar{f}_k\le f_k\le \hat{f}_k$. Using the expression of $\bar{f}_k$, (\ref{eqlem4_1_2}) and (\ref{eq5_2_2_1}), we have
\[
   |\bar{f}_k|\le C(\sqrt{|c_{k2}|}+|2c_{k3}+c_{k1}|+a_k), \quad y_k-2a_k<x< 1 .
\]
By this estimate and (\ref{eq5_2_3}), we have
\begin{equation}\label{eq5_2_4}
    |f_k|\le C(\sqrt{|c_{k2}|}+|2c_{k3}+c_{k1}|+\nu_k^{1/3}), \quad y_k-2a_k<x< 1 .
\end{equation}
So we have
\[
   |\frac{1}{2}f_k^2-P_k|\le C(|c_{k2}|+|2c_{k3}+c_{k1}|^2+\nu_k^{2/3}), \quad y_k-2a_k<x< 1 .
\]
By this and (\ref{eq5_2_6}) we have (\ref{eqthm1_2_1_1}). 
Moreover, by (\ref{eq5_2_2_1}) and (\ref{eq5_2_4}), we have
\[
  |f_k(x)-\sqrt{2P_c(x)}|\le |f_k|+|c_3||y_k-2a_k-1|\le C(\sqrt{|c_{k2}|}+|2c_{k3}+c_{k1}|+\nu_k^{1/3}), \quad y_k-2a_k<x< 1.
  \]
  By this and (\ref{eq5_2_5}), we have $\lim_{k\to \infty}||U^{+}_{\nu_k, \theta}-\sqrt{2P_{c}}||_{L^{\infty}(-1,1)}=0$.  Part (ii) is proved. 
  
  \bigskip
  
  (iii) The proof is similar as that of part (ii).
  Theorem \ref{thm1_2_1} is proved.
\qed

\bigskip

Now we study sequence of solutions $U_{\nu_k, \theta}$ of (\ref{eq:NSE}) with $\nu_k$ and $c_k$ other than $U_{\nu_k, \theta}^\pm$.

\noindent{\emph{Proof of Theorem \ref{thm1_3} continued}}:

We will prove  Theorem \ref{thm1_3} (i) and (ii) in the case $c_3=c_3^*(c_1,c_2)$. Let $C$ denote a positive constant, having the same dependence as specified in the theorem, which may vary from line to line. For convenience write $f_k=U_{\nu_k, \theta}$, $P_k=P_{c_k}$ and $h_k=\frac{1}{2}f_k^2-P_k$. Throughout the proof $k$ is large. Let $\bar{x}=\frac{\sqrt{c_1}-\sqrt{c_2}}{\sqrt{c_1}+\sqrt{c_2}}$.  
By the assumption, $c_1,c_2\ge 0$, 
 $c_3=c_3^*(c_1,c_2)=-\frac{1}{2}(c_1+2\sqrt{c_1c_2}+c_2)<0$,  $-1\le \bar{x}\le 1$, and $P_c(x)=-c_3(x-\bar{x})^2$.

Since $c_k\in J_0$, we have $P_k\ge 0$ on $[-1,1]$. 
By Lemma \ref{lem2_4}, there exists at most one $x_k\in (-1,1)$ such that $f_k(x_k)=0$, and if such $x_k$ exists we have
\begin{equation}\label{eq4_3_8}
   f_k(x)<0, -1< x<x_k, \textrm{ and }f_k(x)>0, x_k<x< 1.
\end{equation}
Since $c_k\to c$ and $c_3<0$, we know $c_{k3}<\frac{1}{2}c_3<0$ for large $k$. Let $\bar{x}_k$ be the unique minimum point of $P_k$, then $\bar{x}_k\to \bar{x}$,
   $P_k(x)=P_k(\bar{x}_k)-c_{k3}(x-\bar{x}_k)^2$, 
and for large $k$, 
\begin{equation}\label{eq4_3_0}
   \frac{1}{2}|c_3|(x-\bar{x}_k)^2\le P_k(x)-P_k(\bar{x}_k)\le 2|c_3|(x-\bar{x}_k)^2, \quad -1\le x\le 1.
\end{equation}
By Lemma \ref{lem2_1},
\begin{equation}\label{eq4_3_5}
|f_k|\le C.
\end{equation}
Since $P_c(x)=-c_3(x-\bar{x})^2$, we have, for every $\epsilon>0$, $\min_{[-1,1]\setminus (\bar{x}-\epsilon/2, \bar{x}+\epsilon/2)}P_c>0$. By the convergence of $\{c_k\}$ to $c$, we deduce that
\begin{equation}\label{eq4_3_6}
 \min_{[-1,1]\setminus (\bar{x}-\epsilon/2, \bar{x}+\epsilon/2)}P_k\ge 1/C. 
\end{equation}
Using (\ref{eq4_3_8}) and (\ref{eq4_3_6}), by applying Lemma \ref{lem2_7} and Lemma \ref{lem2_7'} on each interval of $[-1, x_k-\epsilon/2]\setminus(\bar{x}-\epsilon/2, \bar{x}+\epsilon/2)$ and $[x_k+\epsilon/2, 1]\setminus(\bar{x}-\epsilon/2, \bar{x}+\epsilon/2)$ separately, we have 
\begin{equation*}
   ||U_{\nu_k, \theta}+\sqrt{2P_{c_k}}||_{C^m([-1+\epsilon, x_k-\epsilon]\setminus[\bar{x}-\epsilon, \bar{x}+\epsilon])}+||U_{\nu_k, \theta}-\sqrt{2P_{c_k}}||_{C^m([x_k+\epsilon, 1-\epsilon]\setminus[\bar{x}-\epsilon, \bar{x}+\epsilon])}\le C\nu_k,
\end{equation*}




\medskip

  Next, we prove 
  \begin{equation}\label{eq4_3_1}
    ||\frac{1}{2}U^2_{\nu_k, \theta}-P_{c_k}||_{L^{\infty}((-1, x_k-\epsilon)\cup (x_k+\epsilon, 1))}\le C\nu_k^{2/3}.
\end{equation}
 Suppose $x_k\to \hat{x}\in [-1,1]$ as $k\to \infty$.  Since $f_k$ is not $U_{\nu_k, \theta}^\pm$, we know from Theorem A that $f_k(-1)=\tau_1(\nu_k, c_{k1})$, $f_k(1)=\tau_2'(\nu_k, c_{k2})$. 
  and therefore,  in view of (\ref{eq_2}), we have
\begin{equation}\label{eq4_3_7}
   |f_{k}(-1)+\sqrt{2P_k(-1)}|+|f_{k}(1)-\sqrt{2P_k(1)}|\le C\nu_k.
\end{equation}
   We have $\min_{[-1,1]}P_c=P_c(\bar{x})$. Assume $\min_{[-1,1]}P_k=P_k(\bar{x}_k)$. Then $\bar{x}\in [-1,1]$ and $\bar{x}_k\to \bar{x}$.  We also have $P_k$ satisfy (\ref{eq4_2_3}) for large $k$.

\medskip

   \textbf{Case 1:} $c_1>0$, $c_2>0$, $c_3=c_3^*(c_1,c_2)<0$.

   In this case $\bar{x}\in (-1,1)$. We discuss the cases when $|x_k-\bar{x}_k|\ge \epsilon/4$ and $|x_k-\bar{x}_k|<\epsilon/4$ separately. 

   If $|x_k-\bar{x}_k|\ge  \epsilon/4$, we prove the case when $x_k\ge \bar{x}_k+\epsilon/4$, the other case can be proved similarly. In view of (\ref{eq_2}), we have $f_k(-1)\le -1/C$ and $f_k(1)\ge 1/C$. We first estimate $|h_k|$ on $[x_k+\epsilon,1]$. We have $P_k\ge 1/C$ on $[x_k+\epsilon/2,1]$ for large $k$. By Corollary \ref{cor2_3} and Corollary \ref{cor2_3'}, using (\ref{eq4_3_5}) and (\ref{eq4_3_8}), we have that
   \begin{equation}\label{eq4_3_9}
    1/C\le f_k\le C, x\in \left(x_k+\epsilon/2, 1\right).
   \end{equation}
Using (\ref{eq4_3_7}) and (\ref{eq4_3_9}), applying Lemma \ref{lem2_6} on $(x_k+\epsilon/2, 1)$, we have
   \begin{equation}\label{eq4_3_10}
      \max_{[x_k+\epsilon, 1]}|h_k|\le C\nu_k.
      \end{equation}
   Next, we prove  estimate (\ref{eq4_3_1}) on $[-1,x_k-\epsilon]$. The proof is similar to Case 1 in the proof of Theorem \ref{thm1_2_1} (i). 
 Let $a_k=\nu_k^{1/3}/\alpha$ for some positive constant $\alpha$ to be determined. Since $P_k(\bar{x}_k)\ge 0$, it follows from (\ref{eq4_3_0}) that
 \begin{equation}\label{eq4_3_14}
    P_k(x)\ge |c_3|a_k^2/2, \quad \forall x\in [-1,\bar{x}_k-a_k]\cup [\bar{x}_k+a_k,1].
 \end{equation}
 By Lemma \ref{lem2_5}, there exists some $y_{k}\in (\bar{x}_k-2a_k,\bar{x}_k-a_k)$, $s_k\in (\bar{x}_k+a_k, \bar{x}_k+2a_k)$ and $t_k\in (x_k-\epsilon/8, x_k-\epsilon/16)$, such that $|h_k(y_{k})|+|h_k(s_k)|\le C\nu_k/a_k=C\alpha^3 a_k^2$ and $|h_k(t_k)|\le C\nu_k$. It follows from (\ref{eq4_3_14}) that
       \[
           f^2_k(y_k)/2\ge P_{c_k}(y_k)-|h_k(y_k)|\ge \left(|c_3|/2-C\alpha^3\right)a_k^2.
       \]
 Fix $\alpha^3=\frac{|c_3|}{4C}$.  We have $f_{k}(y_{k})<- a_k/\sqrt{C}$. Similarly we have $f_k(t_k)<-a_k/\sqrt{C}$.   Using (\ref{eq4_3_8}), applying Lemma \ref{lem2_2'} on $[-1,y_k]$ and $[s_k,t_k]$ separately, we have $f_k(x)\le -a_k/C$ on $[-1, y_{k}]$ and $[s_k,t_k]$. Since $|h_{k}(y_{k})|\le C\nu_k^{2/3}$, $|h_k(-1)|\le C\nu_k$, $|h_k(s_k)|\le C\nu_k^{2/3}$ and $|h_k(t_k)|\le C\nu_k$,  applying Lemma \ref{lem2_6} on $[-1, y_k]$ and $[s_k, t_k]$ separately, we have
       \begin{equation}\label{eq4_3_11}
          \max_{[-1, \bar{x}_k-2a_k]}|h_k|\le \max_{[-1, y_k]}|h_k|\le C\nu_k^{2/3},
       \end{equation}
       and
      \begin{equation}\label{eq4_3_12}
         \max_{[\bar{x}_k+2a_k, x_k-\epsilon]}|h_k|\le \max_{[s_k,t_k]}|h_k| \le  C\nu_k^{2/3}.
      \end{equation}
      Now we have that $h_k(\bar{x}_k-2a_k)\le Ca_k^2$ and $h_k(\bar{x}_k+2a_k)\le Ca_k^2$. Notice that $f_k<0$ on $[\bar{x}_k-2a_k,\bar{x}_k+2a_k]$, $P_k(\bar{x}_k)\ge 0$ and $P_k(x)=P_k(\bar{x}_k)-c_{k3}(x-\bar{x}_k)^2$. 
     Applying Lemma \ref{lem2_10} on $[\bar{x}_k-2a_k,\bar{x}_k+2a_k]$ with $\alpha=2$, we have that
       \begin{equation}\label{eq4_3_13}
          \max_{[\bar{x}_k-2a_k,\bar{x}_k+2a_k]}|h_k|\le Ca_k^2+C\nu_k/a_k\le C\nu_k^{2/3}.
       \end{equation}
       By (\ref{eq4_3_10}), (\ref{eq4_3_11}), (\ref{eq4_3_12}) and (\ref{eq4_3_13}), we have proved (\ref{eq4_3_1}) when $x_k\ge \bar{x}_k+\epsilon/4$.

    Next, 
     if $|x_k-\bar{x}_k|<\epsilon/4$,  similar as (\ref{eq4_3_9}) we have
    \[
    -C\le f_k\le -1/C, x\in \left(-1,x_k-\epsilon/2\right), \textrm{ and } 1/C\le f_k\le C, x\in \left(x_k+\epsilon/2, 1\right).
   \]
    Using this and (\ref{eq4_3_7}), applying Lemma \ref{lem2_6} on $(-1,x_k-\epsilon/2)$ and $(x_k+\epsilon/2, 1)$, (\ref{eq4_3_1}) is proved.

     \medskip

     \textbf{Case 2}: $c_1=0$, $c_2>0$, $c_3=c_3^*(c_1,c_2)=-c_2/2<0$.

     In this case $P_c(x)=\frac{1}{2}c_2(x+1)^2$, $\bar{x}_k\to -1$. we have the estimate (\ref{eq4_3_14}). We discuss the cases when $x_k-\bar{x}_k\ge \epsilon/4$ and $x_k-\bar{x}_k<\epsilon/4$ separately. 

   If $x_k-\bar{x}_k\ge \epsilon/4$, in view of (\ref{eq_2}), we have  $f_k(1)\ge 1/C$. We first estimate $|h_k|$ on $[x_k+\epsilon,1]$. We have $P_k\ge 1/C$ on $[x_k+\epsilon/2,1]$ for large $k$. By Corollary \ref{cor2_3} and Corollary \ref{cor2_3'}, using (\ref{eq4_3_5}) and (\ref{eq4_3_8}), we have (\ref{eq4_3_9}).
 Using (\ref{eq4_3_7}) and (\ref{eq4_3_9}), applying Lemma \ref{lem2_6} on $(x_k+\epsilon/2, 1)$, we have
   \begin{equation*}
      \max_{[x_k+\epsilon, 1]}|h_k|\le C\nu_k.
      \end{equation*}
      
Next, we prove the estimate (\ref{eq4_3_1}) on $[-1,x_k-\epsilon]$. The proof is similar to Case 2 in the proof of Theorem \ref{thm1_2_1}(ii).
     Let $a_k=\nu_k^{1/3}/\alpha$ for some $\alpha>0$ to be determined, $b_k=\max\{-1,\bar{x}_k-2a_k\}$ and $d_k=\max\{-1+2a_k, \bar{x}_k+2a_k\}$. It is clear that $-1\le b_k<d_k<1$. We prove estimate (\ref{eq4_3_1}) separately on $[d_k,x_k-\epsilon]$, $[-1,b_k]$ and $[b_k,d_k]$.

        We first prove the estimate on  $[d_k,x_k-\epsilon]$. Since $d_k-2a_k\ge \bar{x}_k$, we have $x-\bar{x}_k\ge a_k$ for $x$ in $[d_k-a_k, d_k]$. 
        Applying Lemma \ref{lem2_5} on $[d_k-a_k,d_k]$, using (\ref{eq4_3_5}), there exists some 
         $s_k\in (d_k-a_k,d_k)$ and $t_k\in (x_k-\epsilon/8, x_k-\epsilon/16)$, such that  $|h_k(s_{k})|\le C\nu_k/a_k=C\alpha^3 a_k^2$ and $|h_k(t_k)|\le C\nu_k$.
 Thus
       \[
           \frac{1}{2}f^2_k(t_k)\ge P_{k}(t_k)-|h_k(t_k)|\ge \frac{1}{2}|c_3|a_k^2-C\nu_k .
       \]
 By (\ref{eq4_3_8}) and (\ref{eq4_3_14}), we have $f_{k}(t_{k})\le -a_k/C$.  Applying Lemma \ref{lem2_2'} on $[s_{k}, t_k]$, and using $P_k\ge \frac{1}{2}|c_3|a_k^2$ on the interval, we have
 \begin{equation}\label{eq4_3_17}
   f_k(x)\le -a_k/C, \quad s_k\le x\le t_k.
   \end{equation}
 Notice $s_k\le d_k$ and $|h_k(s_{k})|+|h_k(t_k)|\le C\alpha^3 a_k^2$, applying Lemma  \ref{lem2_6} on $[s_{k}, t_k]$, we have
       \begin{equation}\label{eq4_3_16}
           \max_{[d_k, x_k-\epsilon]}|h_k|\le \max_{[s_k,t_k]}|h_k|\le C\nu_k^{2/3}.
       \end{equation}

       Next, we prove the estimate on $[-1,b_k]$.  If $\bar{x}_k-2a_k>-1$, applying Lemma \ref{lem2_5} on $[b_k,b_k+a_k]$, using (\ref{eq4_3_5}), there exists some $y_k\in (b_k, b_k+a_k)$,
         such that  $|h_k(y_{k})|\le C\nu_k/a_k=C\alpha^3 a_k^2$.
 Thus
       \[
           \frac{1}{2}f^2_k(y_k)\ge P_{k}(y_k)-|h_k(y_k)|\ge\left(\frac{1}{2}|c_3|-C\alpha^3\right)a_k^2.
       \]
 Fix $\alpha^3=|c_3|/(4C)$. 
 By (\ref{eq4_3_8}) and (\ref{eq4_3_14}), we have $f_{k}(y_{k})\le -a_k/C$.  Applying Lemma \ref{lem2_2'} on $[-1,y_k]$, we have $f_k(x)\le -a_k/C$ on $(-1, y_{k})$.  Using $|h_{k}(y_{k})|\le C\nu_k^{2/3}$ and $|h_k(-1)|\le C\nu_k$,  applying Lemma \ref{lem2_6} on $[-1, y_k]$, we have
       \begin{equation}\label{eq4_3_20}
          \max_{[-1, b_k]}|h_k|\le \max_{[-1, y_k]}|h_k|\le C\nu_k^{2/3}. 
       \end{equation}
%
%
     If $\bar{x}_k-2a_k\le -1$, $b_k=-1$, $\max_{[-1,b_k]}|h_k|=|h_k(-1)|\le C\nu_k\le C\nu_k^{2/3}$.

      Now we prove the estimate on $[b_k,d_k]$. We have proved in the above that $|h_{k}(b_k)|\le Ca_k^2$ and $|h_{k}(d_k)|\le Ca_k^2$.
  If $\bar{x}_k<-1-a_k$, then $[b_k,d_k]=[-1,-1+2a_k]$, and for any $x\in [-1,-1+2a_k]$, $x-\bar{x}_k\ge -1-\bar{x}_k>a_k$. By (\ref{eq4_3_14}), we have $P_k\ge a_k^2/C$ on $[-1,-1+2a_k]$. By (\ref{eq4_3_17}), $f_k(d_k)\le -a_k/C$. 
        Applying Lemma \ref{lem2_2'} on $[-1,-1+2a_k]$, we have $f_k\le -a_k/C$ on $[-1,-1+2a_k]$. Notice we also know $|h_{k}(-1)|\le Ca_k^2$ and $|h_{k}(-1+2a_k)|\le Ca_k^2$. Applying Lemma \ref{lem2_6} on $[b_k,d_k]=[-1,-1+2a_k]$, we have that in this case
         \begin{equation}\label{eq4_3_18}
            \max_{[b_k,d_k]}|h_k|\le Ca_k^2. 
         \end{equation}

      Next, we consider the case $\bar{x}_k\ge -1-a_k$.
      If $\bar{x}_k-2a_k\ge -1$, then $[b_k,d_k]=[\bar{x}_k-2a_k,\bar{x}_k+2a_k]$. If $\bar{x}_k-2a_k<-1$, then $[b_k,d_k]=[-1,-1+2a_k]$ when $\bar{x}_k<-1$, and $[b_k,d_k]=[-1,\bar{x}_k+2a_k]$ when $\bar{x}_k\ge -1$. So we have $\mathrm{dist}(\bar{x}_k, [b_k,d_k])\le Ca_k$, and $2a_k\le d_k-b_k\le 4a_k$.
     Notice that $|h_{k}(b_k)|\le Ca_k^2$, $|h_{k}(d_k)|\le Ca_k^2$,
     $f_k<0$ on $[b_k,d_k]$ and $P_k\ge 0$ in $(-1,1)$,  and using (\ref{eq4_3_0}), $P_k(x)\le P_k(\bar{x}_k)+ 2|c_3|(x-\bar{x}_k)^2$ on $[-1,1]$. Applying Lemma \ref{lem2_10} on $[b_k,d_k]$ with $\alpha=2$, we have that
       \begin{equation}\label{eq4_3_19}
          \max_{[b_k,d_k]}|h_k|\le Ca_k^2+C\nu_k/a_k\le C\nu_k^{2/3}.
       \end{equation}
       By (\ref{eq4_3_16}), (\ref{eq4_3_20}), (\ref{eq4_3_18}) and (\ref{eq4_3_19}), we have $\max_{[-1,1]}|h_k|\le C\nu_k^{2/3}$. So estimate (\ref{eq4_3_1}) is proved when $x_k-\bar{x}_k\ge \epsilon/4$.

        Next, if $x_k-\bar{x}_k<\epsilon/4$. Since $x_k>-1$ and $\bar{x}_k\to -1$, we have $x_k+\epsilon/2>\bar{x}_k+\epsilon/4$, and therefore  we have $P_k\ge 1/C$ on $[x_k+\epsilon/2, 1]$. 
         similar as (\ref{eq4_3_9}) we have

    \[
   1/C\le f_k\le C \textrm{ on }\left(x_k+\epsilon/2, 1\right).
   \]
    Using this and (\ref{eq4_3_7}), applying Lemma \ref{lem2_6} on $(x_k+\epsilon/2, 1)$, (\ref{eq4_3_1}) is proved.

    \textbf{Case 3}: $c_2=0,c_1>0, c_3=c_3^*(c_1,c_2)$. The proof of (\ref{eq4_3_1}) is similar to that of Case 2. 
    We have by now proved (\ref{eq4_3_1}).

  By (\ref{eq4_3_1}) and (\ref{eq_1}),  for any $\epsilon>0$, there exists some constant $C>0$, depending only on $\epsilon$ and an upper bound of $|c|$, such that $|f'_k|\le C\nu_k^{-1/3}$ on  $[-1+\epsilon, x_k-\epsilon]\cup [x_k+\epsilon, 1-\epsilon])$, 
  so $|h'_k|=|f_kf'_k-P'_k|\le C\nu_k^{-1/3}$. So we have 
  \begin{equation*}
       ||\frac{1}{2}U^2_{\nu_k, \theta}-P_{c_k}||_{C^1(([-1+\epsilon, x_k-\epsilon]\cup [x_k+\epsilon, 1-\epsilon])\cap [\bar{x}-\epsilon, \bar{x}+\epsilon]}\le C\nu_k^{-1/3},
\end{equation*}
        By interpolation 
        for any $x, y\in (-1+\epsilon, 1-\epsilon)$ and $0<\beta<1$,
       \[
          \frac{|h_k(x)-h_k(y)|}{|x-y|^{\beta}}\le 2||h_k||^{1-\beta}_{L^{\infty}(-1+\epsilon, 1-\epsilon)}||h_k'||^{\beta}_{L^{\infty}(-1+\epsilon, 1-\epsilon)}\le C\nu_k^{2(1-\beta)/3}\nu_k^{-\beta/3}\le C\nu_k^{2/3-\beta}.
       \]
        We have 
    \begin{equation}\label{eq4_3_4}
   ||\frac{1}{2}U^2_{\nu_k, \theta}-P_{c_k}||_{C^{\beta}([-1+\epsilon, x_k-\epsilon]\cup [x_k+\epsilon, 1-\epsilon])}\le C\nu_k^{2/3-\beta}.
   \end{equation}
    Using (\ref{eq4_3_8}), (\ref{eq4_3_6}) and (\ref{eq4_3_1}), we have (\ref{eq1_7_0}) in this case.
 Part (i) in this case follows from (\ref{eq1_7_0}), (\ref{eq4_3_1}) and (\ref{eq4_3_4}).
   


   Next, we prove part (ii) in this case. Notice that in this case $c_3=c_3^*(c_1,c_2)$, $c_1,c_2$ cannot both be zero. We first prove that if such $x_k$ exists and $x_k\to -1$ with $c_1=0$ or such $x_k$ does not exist with $c_2>0=c_1$, then
    \begin{equation}\label{eq4_3_1_1}
      \lim_{k\to \infty}||f_k-\sqrt{2P_{k}}||_{L^{\infty}(-1, 1)}=0.
   \end{equation}
 In this case $P_c(x)=-\frac{1}{2}c_2(x+1)^2$ where $c_2>0$. 
 By Theorem \ref{thm1_2_1}(i), we have
\[
   \limsup_{k\to \infty}||U^{\pm}_{\nu_k, \theta}\mp \sqrt{2P_{k}}||_{L^{\infty}(-1, 1)}=0.
\]
So for any $\epsilon_0>0$, there exists some $\epsilon>0$, such that $||P_k||_{L^{\infty}L^{\infty}(-1,-1+2\epsilon)}<\epsilon_0$, and 
$||U^{\pm}_{\nu_k, \theta}||_{L^{\infty}(-1,-1+2\epsilon)}<\epsilon_0$ for large $k$. Notice  $U_{\nu_k, \theta}^-\le f_k\le U^+_{\nu_k, \theta}$, we then have $||f_k-\sqrt{2P_{k}}||_{L^{\infty}(-1,-1+2\epsilon)}<2\epsilon_0$. 
Since $P_c(x)=-\frac{1}{2}c_2(x+1)^2$ we also have that $P_c\ge 1/C$ on $[-1+\epsilon, 1]$. Moreover, if such $x_k$ does not exist, then since $f_k(1)=\tau'_2(\nu_k, c_{k2})>0$, we have $f_k>0$ on $(-1,1]$. If such $x_k$ exists and $x_k\to -1$, then for $k$ large we have $-1<x_k<-1+\epsilon$. By (\ref{eq4_3_8}), we also have $f_k>0$ on $[-1+\epsilon, 1]$. 
Then by Corollary \ref{cor2_3}, we have $f_k\ge 1/C$ on $[-1+2\epsilon, 1]$. Notice $|f_k(1)-\sqrt{2P_c(-1)}|\le C\nu_k$ and $|f_k(-1+2\epsilon)-\sqrt{2P_c(-1+2\epsilon)}|\le 2\epsilon_0$,  by Corollary \ref{cor2_2} we have  $||f_k-\sqrt{2P_c}||_{L^{\infty}(-1+2\epsilon,1)}<C\epsilon_0$. So (\ref{eq4_3_1_1}) is proved. 

Similarly, if such $x_k$ exists and $x_k\to 1$ with $c_2=0$ or such $x_k$ does not exist with $c_1>0=c_2$, , we have 
\[
   \lim_{k\to \infty}||U_{\nu_k, \theta}+\sqrt{2P_{c_k}}||_{L^{\infty}(-1, 1)}=0.
\]
\qed

\section{$c\in \partial J_0$ and $c_3>c_3^*(c_1,c_2)$}\label{sec5}


\begin{lem}\label{lem5_1}
Let $-1\le a<b\le 1$,  $\nu_k\to 0^+$, $c_k\in J_{\nu_k}$, $c_1c_2=0, c_3>c_3^*(c_1,c_2)$, and $c_k\to c$ as $k\to \infty$. Then
\[
   \min_{[a,b]}P_{c_k}=\min\{P_{c_k}(a), P_{c_k}(b)\},
\]
for sufficiently large $k$.
\end{lem}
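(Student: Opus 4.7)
The plan is to regard $P_{c_k}$ as a quadratic polynomial in $x$ and invoke the elementary fact that the minimum of a quadratic on a closed interval $[a,b]$ is attained at an endpoint unless the quadratic is strictly convex with its vertex lying in the open interval $(a,b)$. Since $P_{c_k}''(x)=-2c_{k3}$, the only scenario I need to rule out is that, for infinitely many $k$, one has $c_{k3}<0$ together with the vertex
\[
   x_k^\ast := (c_{k2}-c_{k1})/(2c_{k3})
\]
lying in $(a,b)$.

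By the reflection $x\mapsto -x$, which swaps the roles of $c_1$ and $c_2$, preserves $c_3$, and maps $[a,b]$ to $[-b,-a]$, I may assume without loss of generality that $c_2=0$. The hypothesis $c_3>c_3^\ast(c_1,c_2)=-\tfrac{1}{2}(c_1+c_2)$ then reads $c_1+2c_3>0$, which in particular forces $c_1>0$. I then split into three cases according to the sign of the limit $c_3$. If $c_3>0$ then $c_{k3}>0$ for large $k$, so $P_{c_k}$ is strictly concave and the claim is trivial. If $c_3<0$ then $c_{k3}<0$ for large $k$, and
\[
   x_k^\ast \to -\frac{c_1}{2c_3} = \frac{c_1}{2|c_3|} > 1,
\]
the strict inequality coming precisely from $c_1+2c_3>0$; hence $x_k^\ast>b$ for large $k$. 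In the borderline case $c_3=0$ (so $c_1>0$), for those $k$ with $c_{k3}<0$ one has $c_{k2}-c_{k1}\to -c_1<0$ while $c_{k3}\to 0^-$, so $x_k^\ast\to +\infty$ and again $x_k^\ast>b$ eventually. In every case the vertex lies outside $(a,b)$ for all large $k$, so the minimum of $P_{c_k}$ on $[a,b]$ is attained at one of the endpoints.

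The main obstacle is purely one of bookkeeping: the borderline case $c_3=0$ must be handled by examining the limit of $x_k^\ast$ through negative values of $c_{k3}$, and the assumption $c_3>c_3^\ast(c_1,c_2)$ is exactly the quantitative input needed to guarantee, in the strictly convex regime $c_3<0$, that the vertex of the limiting parabola lies strictly outside $[-1,1]$.
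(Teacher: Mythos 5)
Your proof is correct and takes essentially the same elementary route as the paper: for $c_3>0$ both use concavity of $P_{c_k}$, and for $c_3\le 0$ both exploit the strict inequality $c_1+2c_3>0$ (resp. $c_2+2c_3>0$) to show $P_{c_k}$ has no interior minimum on the interval --- the paper phrases this as a strict sign of $P_c'$ on all of $[-1,1]$ (hence monotonicity of $P_{c_k}$ for large $k$), while you phrase it as the vertex $(c_{k2}-c_{k1})/(2c_{k3})$ escaping $(a,b)$, after a reflection reducing to $c_2=0$ and with a separate limit computation for the borderline case $c_3=0$. One small misstatement: after the reduction to $c_2=0$, the inequality $c_1+2c_3>0$ does not by itself ``force $c_1>0$'' (e.g.\ $c_1=c_2=0$, $c_3>0$ is admissible); this is harmless, since you only use $c_1>0$ in the subcases $c_3<0$ and $c_3=0$, where it does follow.
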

\begin{proof}
   When $c_3>0$, we have $P_{c_k}''(x)=-2c_{k3}<0$ for large $k$. Thus $P_{c_k}$ is concave down, $\min_{[a,b]}P_{c_k}=\min\{P_{c_k}(a), P_{c_k}(b)\}$. 
   When $c_3\le 0$, we distinguish to two cases.
   Case $1$ is $c_1=0$ and Case $2$ is $c_2=0$. If $c_1=0$, then $P_c'(x)=c_2-2c_3x\ge c_2+2c_3>0$ in $[-1,1]$. So $P_k'>0$ in $[-1,1]$ for large $k$. Thus $\min_{[a,b]}P_{c_k}=P_{c_k}(a)$. If $c_2=0$, then $P_c'(x)=-c_1-2c_3x\le -c_1-2c_3<0$ in $[-1,1]$. So $P_{c_k}'<0$ in $[-1,1]$ for large $k$. Thus $\min_{[a,b]}P_{c_k}=P_{c_k}(b)$.
%
\end{proof}

\begin{lem}\label{lem5_3}
   Let $\nu_k\to 0^+$, $c_k\in J_{\nu_k}$, $-1<b\le 1$, $c_1c_2=0, c_3>c_3^*(c_1,c_2)$, and $c_k\to c$ as $k\to \infty$. If $P_c(b)> 0$, then $U^+_{\nu_k, \theta}>0$ on $[-1,b]$ for sufficiently large $k$.
\end{lem}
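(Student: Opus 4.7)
The plan is to combine Lemma \ref{lem5_1} with Lemma \ref{lem2_4}. By Lemma \ref{lem5_1}, for $k$ large one has $\min_{[-1,b]} P_{c_k} = \min\{2c_{k1}, P_{c_k}(b)\}$, and $P_{c_k}(b) \to P_c(b) > 0$ gives $P_{c_k}(b) > 0$ eventually; Theorem A gives $U^+_{\nu_k, \theta}(c_k)(-1) = 2\nu_k + 2\sqrt{\nu_k^2 + c_{k1}} \ge 2\nu_k > 0$. When $c_{k1} \ge 0$ (which is automatic when $c_1 > 0$), $P_{c_k} \ge 0$ on $[-1,b]$, so Lemma \ref{lem2_4} yields $U^+_{\nu_k, \theta}(c_k) > 0$ on $(-1,b)$; to promote this to the endpoint $b$ I will enlarge $b$ to some $b' \in (b,1)$ with $P_c(b') > 0$ when $b<1$, or use the boundary value $U^+_{\nu_k, \theta}(c_k)(1) = -2\nu_k + 2\sqrt{\nu_k^2 + c_{k2}} \to 2\sqrt{c_2} > 0$ when $b=1$ (which forces $c_2 > 0$).

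The delicate case is $c_{k1} < 0$, which can only occur when $c_1 = 0$; the hypothesis $c_3 > c_3^*(c_1,c_2) = -c_2/2$ then gives $A := c_2 + 2c_3 > 0$. Since $-\nu_k^2 \le c_{k1} < 0$ and $P_{c_k}'(-1) = -c_{k1} + c_{k2} + 2c_{k3} \to A$, the quadratic $P_{c_k}$ has a unique zero $x_k^* \in (-1,1)$ with $x_k^* + 1 = O(\nu_k^2)$, and $P_{c_k} \ge 0$ on $[x_k^*, b]$ by Lemma \ref{lem5_1}. So after reducing to showing $U^+_{\nu_k, \theta}(c_k) > 0$ on the short interval $[-1, x_k^*]$, the remaining piece $[x_k^*, b]$ is handled exactly as in the easy case.

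To treat $[-1, x_k^*]$---the main obstacle---I will set $V := U^+_{\nu_k, \theta}(c_k) - 2\nu_k$, so that $V(-1) = 2\sqrt{\nu_k^2 + c_{k1}} \ge 0$, and direct substitution shows
\begin{equation*}
\nu_k(1-x^2) V' + 2\nu_k(x+1) V + \tfrac{1}{2} V^2 = \tilde{P}_k(x) := P_{c_k}(x) - 2\nu_k^2(2x+1),
\end{equation*}
with
\begin{equation*}
\tilde{P}_k(x) = (2c_{k1} + 2\nu_k^2) + (A_k - 4\nu_k^2)(x+1) - c_{k3}(x+1)^2, \qquad A_k := -c_{k1} + c_{k2} + 2c_{k3} \to A.
\end{equation*}
Since $2c_{k1} + 2\nu_k^2 \ge 0$, $A_k - 4\nu_k^2 > 0$ for $k$ large, and $(x+1) \le x_k^* + 1 = O(\nu_k^2)$ on $[-1, x_k^*]$, direct inspection gives $\tilde{P}_k \ge 0$ on $[-1, x_k^*]$ with strict positivity on $(-1, x_k^*]$. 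A first-zero argument for $V$ then yields a contradiction: at a first zero $x_0 \in (-1, x_k^*]$ one would have $V'(x_0) \le 0$, but the equation forces $\nu_k(1-x_0^2) V'(x_0) = \tilde{P}_k(x_0) > 0$. Hence $V \ge 0$, i.e., $U^+_{\nu_k, \theta}(c_k) \ge 2\nu_k > 0$, on $[-1, x_k^*]$. The one remaining wrinkle is the borderline case $c_{k1} = -\nu_k^2$, in which $V(-1) = 0$ and the first-zero argument cannot begin at $-1$; I would resolve it either by perturbing $c_{k1}$ slightly upward inside $J_{\nu_k}$ and invoking the $C^0$-continuity of $U^+_{\nu_k, \theta}(c)$ in $c$ from Theorem A, or by matching leading orders in $(x+1)$ to obtain $V'(-1) = (A_k - 4\nu_k^2)/(2\nu_k) > 0$ and running the first-zero argument on $[-1+\delta, x_k^*]$ for sufficiently small $\delta > 0$.
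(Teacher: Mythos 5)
Your argument is correct, and its overall structure (splitting on the sign of $c_{k1}$, reducing the hard case to an interval of length $O(\nu_k^2)$ at $x=-1$, then using $P_{c_k}\ge 0$ beyond it together with Lemma \ref{lem2_4}, Lemma \ref{lem5_1} and the endpoint values from Theorem A) is the same as the paper's; but your treatment of the hard case $c_{k1}<0$ near $x=-1$ is genuinely different. The paper keeps $f_k=U^+_{\nu_k,\theta}$, builds the explicit steep barrier $g_k(x)=f_k(-1)-\frac{C_0}{8\nu_k}(1+x)$, checks that the associated quantity satisfies $Q_k\le 2c_{k1}<P_{c_k}$ on $[-1,-1+2\nu_k^2/C_0]$, and invokes the comparison principle of Lemma 2.4 in \cite{LLY2} (together with the boundary control $\limsup_{x\to-1^+}|x+1|^{-1}|f_k(x)-f_k(-1)|<\infty$ needed to apply it) to get $f_k\ge g_k>0$ on that short interval. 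You instead shift the unknown, $V=U^+_{\nu_k,\theta}-2\nu_k$, observe that the shifted forcing $\tilde P_k=P_{c_k}-2\nu_k^2(2x+1)$ has constant term $2c_{k1}+2\nu_k^2\ge 0$ precisely because $c_k\in J_{\nu_k}$, hence $\tilde P_k>0$ on $(-1,x_k^*]$ with $x_k^*+1=O(\nu_k^2)$, and run the same first-zero device used in Lemmas \ref{lem2_2} and \ref{lem2_4}; this is more self-contained (no barrier computation and no appeal to the comparison lemma of \cite{LLY2} or to the boundary behavior required for it), at the price of the borderline case $c_{k1}=-\nu_k^2$. There your perturbation fix is the right one and does work: raising $c_{k1}$ keeps $c_k$ in $J_{\nu_k}$ since $\bar c_3(c_1,c_2;\nu)$ is decreasing in $c_1$, it only increases $\tilde P_k$ on $(-1,x_k^*]$, and the resulting bound $U^+\ge 2\nu_k$ there is uniform in the perturbation, so it survives the limit by the $C^0$-dependence in Theorem A. Your alternative fix, however, presumes $V$ is differentiable at $x=-1$ with $V'(-1)=(A_k-4\nu_k^2)/(2\nu_k)$; that regularity is not available from what is quoted in this paper when $P_{c_k}(-1)<0$ (only the Lipschitz-type bound above), so you should rely on the perturbation argument. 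A minor point: uniqueness of $x_k^*$ in $(-1,1)$ is neither needed nor used --- what you actually use, correctly, is that Lemma \ref{lem5_1} gives $P_{c_k}\ge 0$ on $[x_k^*,b]$, and your endpoint promotion at $x=b$ is consistent with Lemma \ref{lem2_4} giving positivity only on the open interval.
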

\begin{proof}
   For convenience denote $f_k=U_{\nu_k, \theta}^+$, and $P_k=P_{c_k}$.
   If $c_{k1}\ge 0$, we have $P_k(-1)=2c_{k1}\ge 0$ for large $k$. Since $P_c(b)>0$, we also have $P_k(b)>0$. By Lemma \ref{lem5_1} we have $P_k(x)\ge \min\{P_k(-1), P_k(b)\}>0$. Using this and the fact that $f_k(-1)=\tau_2(c_{k1}, \nu_k)>0$, by Lemma \ref{lem2_4} we have $f_k>0$ on $[-1,b]$.


    If $c_{k1}<0$, since $c_{k1}\to c_1\ge 0$, we must have $c_1=0$, and then $c_3>-c_2/2$ and $P_c(x)=c_2(1+x)+c_3(1-x^2)$. So $P_c(-1)=0$ and $P_c'(-1)=c_2+2c_3>0$. Since $c_k\to c$ as $k\to \infty$, there exists some $C_0>0$ and $\delta>0$, such that
   \begin{equation}\label{eq5_3_1}
       C_0(1+x)\le P_k(x)-P_k(-1)\le 2C_0(1+x), \quad -1<x<-1+\delta
   \end{equation}
   for $k$ sufficiently large.
      Notice $P_k(-1)=2c_{k1}\ge -2\nu_k^2$. 
   Since $c_{k1}<0$, then since $P_k(-1)=2c_{k1}\ge -2\nu_k^2$, by (\ref{eq5_3_1}), we have $P_k(-1+2\nu^2_k/C_0)\ge 0$. So by Lemma \ref{lem5_1}, we have $P_k\ge \min\{P_k(-1+2\nu^2_k/C_0), P_k(b)\}\ge 0$ on $[-1+2\nu^2_k/C_0, b]$.

   Next, let
   \[
      g_k(x):=f_k(-1)-\frac{C_0}{8\nu_k}(1+x)
   \]
   Since $f_k(-1)\ge 2\nu_k$, it can be checked that $g_k>0$ on $[-1,-1+2\nu^2_k/C_0]$. 
   By computation, using the facts that $\frac{1}{2}f^2_k(-1)-2\nu_kf_k(-1)=2c_{k1}$ and $f_k(-1)\ge 2\nu_k$, we have that for $-1\le x\le -1+2\nu^2_k/C_0$ and $k$ sufficiently large,
   \[
      \begin{split}
        & Q_k:=\nu_k(1-x^2)g_k'+2\nu_k xg_k+\frac{1}{2}g_k^2 \\
        & =\left([\frac{C^2_0}{128\nu^2_k}-\frac{C_0}{8}](1+x)+(2\nu_k-\frac{C_0}{8\nu_k})f_k(-1)\right)(1+x)+\frac{1}{2}f^2_k(-1)-2\nu_kf_k(-1)\\
         & \le \left([\frac{C^2_0}{128\nu^2_k}-\frac{C_0}{8}]\frac{2\nu_k^2}{C_0}+4\nu_k^2-\frac{C_0}{4}\right)(1+x)+2c_{k1}\\
         & \le 2c_{k1}<P_k(x).
      \end{split}
   \]
   By Lemma 2.3 in \cite{LLY2}, we have $\limsup_{x\to -1^+}|x+1|^{-1}|f_k(x)-f_k(-1)|<\infty$. So we have $g_k(-1)=f_k(-1)> 2\nu_k$ or $f_k(-1)=g_k(-1)=2\nu_k$ with
   $\limsup_{x\to -1^+}\int_{-1+2\nu_k^2/C_0}^{x}(1-s^2)^{-1}(-2\nu_k+f_k(s))ds<\infty$.
   It can be checked that $f_k$ is a solution of (\ref{eq_1}) if and only if $\nu_k f_k$ is a solution of (\ref{eqNSE_1}). Similarly, $\nu_k g_k$ is a solution of (\ref{eqNSE_1}) with the right hand side to be $Q_k/\nu_k^2$.  Notice that $Q_k< P_k$, applying Lemma 2.4 in \cite{LLY2}, we have $f_k\ge g_k>0$ on $(-1,-1+2\nu_k^2/C_0]$.
     Since $P_k\ge 0$ on $[-1+2\nu_k^2/C_0, b)$ and $f_k(-1+2\nu_k^2/C_0)>0$, we have, by Lemma \ref{lem2_4}, that $f_k>0$ on $[-1+2\nu_k^2/C_0, b]$. So $f_k>0$ in $[-1,b]$, the lemma is proved.
\end{proof}

\noindent{\textit{Proof of Theorem \ref{thm1_2_2}}}:

We only prove the results for $U_{\nu_k, \theta}^+$, the proof of the results for $U_{\nu_k, \theta}^-$ is similar. 
Let $C$ be a positive constant depending only on $c$
which may vary from line to line. For convenience, write $f_k=U_{\nu_k, \theta}^+$, $P_k=P_{c_k}$, and let $h_k:=\frac{1}{2}f^2_k-P_k$. In the following we always assume that $k$ is large.

Since $c_k\to c$ as $k\to \infty$, by Lemma \ref{lem2_1}, we have $f_k\le C$ in $[-1,1]$. 
We first prove 
\begin{equation}\label{eq5_1_1}
    ||\frac{1}{2}(U^{+}_{\nu_k, \theta})^2-P_{c_k}||_{L^{\infty}(-1,1)}\le C\nu_k^{1/2} .
\end{equation}

  \textbf{Case} 1: $c_1=0$, $c_2>0$, $c_3>-c_2/2$.

  In this case, $P_c(x)=c_2(1+x)+c_3(1-x^2)$ in $(-1,1)$. So $P_c(-1)=2c_1=0$, and $P_c'(-1)=c_2+2c_3>0$. Since $c_k\to c$ as $k\to \infty$,  there exists some $\delta>0$, such that for large $k$,
   \begin{equation}\label{eq5_3_5}
        \frac{1}{2}P_{c}'(-1)(1+x)\le P_{k}(x)-P_{k}(-1)\le 2P_{c}'(-1)(1+x), \quad -1<x<-1+\delta.
   \end{equation}
Let $a_k=\nu_k^{1/2}/\alpha$ for some positive constant $\alpha$ to be determined.
 Then by Lemma \ref{lem2_5}, there exists some $x_{k}\in (-1+a_k,-1+2a_k)$, such that $|h_k(x_{k})|\le \frac{C\nu_k}{a_k}=C\alpha^2 a_k$. It follows from (\ref{eq5_3_5}) and the fact that $P_k(-1)=2c_{k1}\ge -\nu_k^2$ and $P_c'(-1)>0$, that
       \[
         \begin{split}
           \frac{1}{2}f^2_k(x_k) & \ge P_k(x_k)-|h_k(x_k)|\ge P_k(-1)+\frac{1}{2}P_c'(-1)(x_k+1)-|h_k(x_k)|\\
           & \ge -2\nu_k^2+\left(\frac{1}{2}P'_c(-1)-C\alpha^2\right)a_k
           \ge \left(\frac{1}{4}P'_c(-1)-C\alpha^2\right)a_k.
           \end{split}
       \]
 Fix $\alpha^2=P'_c(-1)/(8C)$. By (\ref{eq5_3_5}) $P_c(x_k)-2\nu_k^2+\frac{1}{2}P'_c(-1)a_k>0$, by Lemma \ref{lem5_3}, we have $f_k>0$ on $[-1,x_k]$.  So $f_{k}(x_{k})\ge \sqrt{a_k/C}$. 
 Since $P_k(1)>\frac{1}{2}P_c(1)>0$ for large $k$, by (\ref{eq5_3_5}) we have
 \[
     P_k(-1+a_k)\ge 2c_{k1}+a_k/C\ge -2\nu_k^2+a_k/C\ge a_k/C.
      \]
       Then by Lemma \ref{lem5_1}, we have $P_k(x)\ge a_k/C$ in $[-1+a_k, 1]$ for $k$  large. Applying Lemma \ref{lem2_2} on $[x_k,1]$, we have $f_k(x)\ge \sqrt{a_k/C}$ on $[x_{k}, 1]$.
           We also have $|h_{k}(x_{k})|\le C\nu_k^{1/2}$, $|h_k(1)|=|\frac{1}{2}f^2_k(1)-P_k(1)|=|\frac{1}{2}(\tau_2'(\nu,c_{k2}))^2-2c_{k2}|\le C\nu_k$. So by applying Lemma \ref{lem2_6} on $[x_k,1]$,
       \[
          \max_{[-1+2a_k,1]}|h_k|\le \max_{[x_k,1]}|h_k|\le C\nu_k^{1/2}.
                 \]
Now we have $h_k(-1+2a_k)\le Ca_k$ and $|h_k(-1)|=|\frac{1}{2}f^2_k(-1)-P_k(-1)|=|\frac{1}{2}(\tau_2(\nu,c_{k1}))^2-2c_{k1}|\le C\nu_k$. By (\ref{eq5_3_5}) we have $P_k(x)\ge P_k(-1)=2c_{k1}\ge -2\nu_k^2\ge -Ca_k$ in $[-1,-1+2a_k]$ for $k$ large.
By (\ref{eq5_3_5}) we also have that $P_k(x)\le P_k(-1)+C(1+x)$. Notice  $f_k(-1)=2\nu_k+2\sqrt{\nu_k^2+c_{k1}}>0$,  
applying Lemma \ref{lem2_10} on $[-1,-1+2a_k]$ with $\alpha=1$ and $\bar{x}_k=-1$ there, we have that
       \[
          \max_{(-1,-1+2a_k)}|h_k|\le Ca_k+C\nu_k/\sqrt{a_k}\le C\nu_k^{1/2}.
       \]
       Estimate (\ref{eq5_1_1}) is proved in this case.
%
%
%
%
%
%

  \textbf{Case} 2: $c_1>0$, $c_2=0$, $c_3>-\frac{1}{2}c_1$.

  In this case, $P_c(x)=c_1(1-x)+c_3(1-x^2)$. So $P_c(1)=0$ and $P_c'(1)=-c_1-2c_3<0$. Since $c_k\to c$ as $k\to \infty$, there exists some $\delta>0$, such that 
  \begin{equation}\label{eq5_1_2}
     -\frac{1}{2}P'_c(1)(1-x)<P_k(x)-P_k(1)<-2P'_c(1)(1-x), \quad 1-\delta<x<1,
  \end{equation}
  for  large $k$.
  Let $a_k=\nu_k^\frac{1}{2}$. 
  By Lemma \ref{lem2_5}, there exists some $x_{k}\in (1-2a_k, 1-a_k)$, such that $|h_k(x_{k})|\le C\nu_k/a_k=C a_k$. 
  Since $P_c(-1)=2c_1>0$, we have $P_k(-1)>\frac{1}{2}P_c(-1)>0$ for  large $k$. By (\ref{eq5_1_2}) we have
 \[
     P_k(1-a_k)\ge 2c_{k2}+a_k/C\ge -2\nu_k^2+a_k/C\ge a_k/C.
      \]
          Then by Lemma \ref{lem5_1}, we have $P_k(x)\ge a_k/C$ in $[-1, 1-a_k]$ for $k$  large. Notice $f_k(-1)=\tau_2(\nu_k,c_{k1})\ge \sqrt{c_1}>0$ for large $k$. Applying Lemma \ref{lem2_2} on $[-1, 1-a_k]$, we have $f_k(x)\ge \sqrt{a_k/C}$ on $[-1, 1-a_k]$.
        We also have $|h_{k}(x_{k})|\le C\nu_k^{1/2}$, $|h_k(-1)|=|\frac{1}{2}f^2_k(-1)-P_k(-1)|=|\frac{1}{2}(\tau_2(\nu,c_{k1}))^2-2c_{k1}|\le C\nu_k$. So by applying Lemma \ref{lem2_6} on $[-1,x_k]$,
       \[
          \max_{[-1, 1-2a_k]}|h_k|\le \max_{[-1, x_k]}|h_k|\le C\nu_k^{1/2}.
                 \]
Now we have $h_k(1-2a_k)\le Ca_k$ and $|h_k(1)|=|\frac{1}{2}f^2_k(1)-P_k(1)|=|\frac{1}{2}(\tau'_2(\nu,c_{k2}))^2-2c_{k2}|\le C\nu_k$. By (\ref{eq5_1_2}) we have $P_k(x)\ge P_k(1)=2c_{k1}\ge -2\nu_k^2\ge -Ca_k$ in $[1-2a_k, 1]$ for $k$ large. 
By (\ref{eq5_1_2}) we also have that $P_k(x)\le P_k(1)+C(1-x)$. Notice  $f_k(1-2a_k)\ge \sqrt{a_k/C}>0$,  
applying Lemma \ref{lem2_10} on $[1-2a_k, 1]$ with $\alpha=1$ and $\bar{x}_k=1$ there, we have that
       \[
          \max_{[1-2a_k, 1]}|h_k|\le Ca_k+C\nu_k/\sqrt{a_k}\le C\nu_k^{1/2}.
       \]
       Estimate (\ref{eq5_1_1}) is proved in this case.

%
%

%
%
%
%
%

       \textbf{Case} 3: $c_1=c_2=0$, $c_3>0$.

       In this case, $P_c(x)=c_3(1-x^2)$ in $(-1,1)$. So $P_c(\pm 1)=0$, $P_c'(-1)=2c_3>0$ and $P_c'(1)=-2c_3<0$. Since $c_k\to c$ as $k\to \infty$,  there exists some $\delta>0$, such that for large $k$, (\ref{eq5_3_5}) and (\ref{eq5_1_2}) are true.
 Let $a_k=\nu_k^{1/2}/\alpha$ for some positive constant $\alpha$ to be determined.
 Then by Lemma \ref{lem2_5}, there exists some $x_{k}\in (-1+a_k,-1+2a_k)$ and $y_k\in (1-2a_k, 1-a_k)$, such that $|h_k(x_{k})|+|h_k(y_{k})|\le C\nu_k/a_k=C\alpha^2 a_k$. Similar as Case 1, we have $f_{k}(x_{k})\ge \sqrt{a_k/C}$. 
 By (\ref{eq5_3_5}), (\ref{eq5_1_2}), and  Lemma \ref{lem5_1}, we have $P_k(x)\ge a_k/C$ in $[x_k, y_k]$ for $k$ large. 
      Applying Lemma \ref{lem2_2} on $[x_k,y_k]$, we have $f_k(x)\ge \sqrt{a_k/C}$ on $[x_{k}, y_k]$.
  We also have $|h_{k}(x_{k})|\le C\nu_k^{1/2}$, $|h_k(y_k)|\le C\nu_k^{1/2}$. So by applying Lemma \ref{lem2_6} on $[x_k,y_k]$,
       \begin{equation*}
          \max_{[-1+2a_k,1-2a_k]}|h_k|\le \max_{[x_k,y_k]}|h_k|\le C\nu_k^{1/2}.
                 \end{equation*}
As in Case 1 and Case 2, we have
       \begin{equation*}
          \max_{[-1,-1+2a_k]}|h_k|+\max_{[1-2a_k, 1]}|h_k|
          \le C\nu_k^{1/2}.
       \end{equation*}
      By the above, estimate (\ref{eq5_1_1}) is proved in this case.

      From (\ref{eq5_1_1}) we have $\lim_{k\to 0}|||f_k|-\sqrt{2P_c}||_{L^{\infty}(-1,1)}=0$. By Lemma \ref{lem5_3} we have $f_k>0$ on $[-1,1-2a_k]$. Using this and the fact $\max_{[1-2a_k,1]}|P_k|\le Ca_k$,  we have $ \lim_{k\to \infty}||U^{-}_{\nu_k, \theta}+\sqrt{2P_{c}}||_{L^{\infty}(-1,1)}=0$. 

       Next, let $\epsilon>0$ be any fixed positive small constant. If $c_1=0$, by (\ref{eq5_3_5}) we have that  $P_k(-1+\frac{1}{2}\epsilon)\ge epsi\epsilon$. If $c_1>0$, $P_k(-1)\ge \sqrt{c_1}$. Similarly, if $c_2=0$, by (\ref{eq5_1_2}), $P_k(1-\epsilon/2)\ge \epsilon/C$. If $c_2>0$, $P_k(1)\ge \sqrt{c_2}>0$. By Lemma \ref{lem5_1} we have $P_k\ge \epsilon/C$ on $[-1+\epsilon/2, 1-\epsilon/2]$. 
       As proved above, we also have $f_k>0$ on $[-1+\epsilon/2, 1-\epsilon/2]$ for large $k$. Applying Lemma \ref{lem2_10} on $[-1+\epsilon/2, 1-\epsilon/2]$, we obtain 
       \begin{equation*}
 ||U^{+}_{\nu_k, \theta}-\sqrt{2P_{c_k}}||_{C^{m}([-1+\epsilon,1-\epsilon])}\le C\nu_k.
   \end{equation*}
   The proof is finished.
   \qed
   
   \begin{rmk}
The assumption of $c$ in Theorem \ref{thm1_2_2} is equivalent to $P_{c}(1)P_c(-1)=0$,  $P^2_{c}(1)+(P'_c(1))^2\ne 0$ and $P^2_{c}(-1)+(P'_c(-1))^2\ne 0$.
\end{rmk}

%
%
%
%
%
%
%
%



Next, we study solutions of (\ref{eq:NSE}) which are not $U_{\nu_k, \theta}^\pm$.

%
%

\noindent{\emph{Proof of Theorem \ref{thm1_3} completed}}:

We will prove  Theorem \ref{thm1_3} (i) and (ii) in the case $c_1c_2=0$ and $c_3>c_3^*(c_1,c_2)$. 
Let $C$ be a positive constant, having the same dependence as specified in the theorem, which may vary from line to line. For convenience write $f_k=U_{\nu_k, \theta}$, $P_k=P_{c_k}$ and $h_k=\frac{1}{2}f_k^2-P_k$. Throughout the proof $k$ is large. 

We first prove part (i) in this case. 
  Since $P_k\in J_0$, we have $P_k\ge 0$ on $[-1,1]$. 
 By Lemma \ref{lem2_4}, there exists at most one $x_k\in (-1,1)$ such that $f_k(x_k)=0$. Moreover, if $x_k$ exists, then we have
\begin{equation}\label{eq5_4_3}
   f_k(x)<0 \textrm{ for }-1< x<x_k, \quad \textrm{ and }f_k(x)>0 \textrm{ for }x_k<x< 1.
\end{equation}
By Lemma \ref{lem2_1},
\begin{equation}\label{eq5_4_4}
|f_k|\le C.
\end{equation}
Since $c_1,c_2\ge 0$, $c_1c_2=0$, $c_3>c_3^*(c_1,c_2)$,  we have $\min_{[-1+\epsilon/2,1-\epsilon/2]}P_c>0$. By the convergence of $\{c_k\}$ to $c$,
\begin{equation}\label{eq5_4_5}
 \min_{[-1+\epsilon/2,1-\epsilon/2]}P_k\ge 1/C. 
\end{equation}
 Using (\ref{eq5_4_3}) and (\ref{eq5_4_5}), by applying Lemma \ref{lem2_7} and Lemma \ref{lem2_7'} on each interval of $[-1+\epsilon/2, x_k-\epsilon/2]$ and $[x_k+\epsilon/2, 1-\epsilon/2]$ separately, we deduce
\begin{equation*}
   ||U_{\nu_k, \theta}+\sqrt{2P_{c_k}}||_{C^m([-1+\epsilon, x_k-\epsilon])}+||U_{\nu_k, \theta}-\sqrt{2P_{c_k}}||_{C^m([x_k+\epsilon, 1-\epsilon])}\le C\nu_k.
\end{equation*}

\medskip

  Next, we prove 
  \begin{equation}\label{eq5_4_1}
    ||\frac{1}{2}U^2_{\nu_k, \theta}-P_{c_k}||_{L^{\infty}((-1, x_k-\epsilon)\cup (x_k+\epsilon, 1))}\le C\nu_k^{1/2}.
\end{equation}
  Since $f_k$ is not $U_{\nu_k, \theta}^\pm$, we know from Theorem A that $f_k(-1)=\tau_1(\nu_k, c_{k1})$ and $f_k(1)=\tau_2'(\nu_k, c_{k2})$. In view of (\ref{eq_2}), we have
\begin{equation}\label{eq5_4_6}
   |f_{k}(-1)+\sqrt{2P_k(-1)}|+|f_{k}(1)-\sqrt{2P_k(1)}|\le C\nu_k.
\end{equation}

\medskip

   \textbf{Case} 1: $c_1=0$, $c_2>0$, $c_3>-\frac{1}{2}c_2$.

   In this case $P_c(x)=c_2(1+x)+c_3(1-x^2)$ in $(-1,1)$. So $P_c(-1)=0$, $P_c(1)=2c_2>0$, and $P_c'(-1)=c_2+2c_3>0$. Since $c_k\to c$,  there exists some $\delta>0$, such that 
      \begin{equation}\label{eq5_4_10}
        \frac{1}{2}P_{c}'(-1)(1+x)\le P_{k}(x)-P_{k}(-1)\le 2P_{c}'(-1)(1+x), \quad -1<x<-1+\delta.
   \end{equation}
 So $P_k(-1+\epsilon/2)>1/C$ and $P_k(1)>1/C$. By Lemma \ref{lem5_1}, we have
   \begin{equation}\label{eq5_4_7}
      P_k(x)\ge 1/C, \quad -1+\epsilon/4\le x\le 1.
   \end{equation}
  We discuss the cases when $x_k+1\ge  \epsilon/4$ and $x_k+1<\epsilon/4$ separately.

   We first discuss the case when $x_k+1\ge  \epsilon/4$.
   We have $P_k\ge 1/C$ on $[x_k+\epsilon/4,1]$ for large $k$. Applying Corollary \ref{cor2_3} on $[x_k+\epsilon/4, 1]$, using (\ref{eq5_4_3}), (\ref{eq5_4_4}) and (\ref{eq5_4_7}), we have that
   \begin{equation}\label{eq5_4_8}
    1/C\le f_k\le C \textrm{ on }\left(x_k+\epsilon/2, 1\right).
   \end{equation}
  Using (\ref{eq5_4_8}), applying Lemma \ref{lem2_6} on $(x_k+\epsilon/2, 1)$, we have
   \begin{equation}\label{eq5_4_9}
      \max_{[x_k+\epsilon, 1]}|h_k|\le C\nu_k
      \end{equation}

   Next, 
 let $a_k=\nu_k^{1/2}/\alpha$ for some positive constant $\alpha$ to be determined. Since $P_k(-1)\ge 0$, it follows from (\ref{eq5_4_10}) and (\ref{eq5_4_7}) that
    $P_k(x)\ge a_k/C$ for $x$ in $[-1+a_k,1]$.
 By Lemma \ref{lem2_5}, there exists some  $s_k\in (-1+a_k, -1+2a_k)$ and $t_k\in (x_k-\epsilon, x_k-\epsilon/2)$, such that $|h_k(s_k)|\le C\nu_k/a_k=C\alpha^2 a_k$ and $|h_k(t_k)|\le C\nu_k$. It follows from (\ref{eq4_3_14}) that
       \[
           \frac{1}{2}f^2_k(t_k)\ge P_{c_k}(t_k)-|h_k(t_k)|\ge 1/C\alpha^2 a_k-C\nu_k
       \]
 By (\ref{eq5_4_3}), we have $f_{k}(t_{k})<- \sqrt{a_k}/C$.  Using (\ref{eq5_4_4}), applying Lemma \ref{lem2_2'} on $[s_k,t_k]$, we have $f_k(x)\le -\sqrt{a_k}/C$ on $[s_k,t_k]$.  Using  $|h_k(s_k)|\le C\nu_k^{1/2}$ and $|h_k(t_k)|\le C\nu_k$,  applying Lemma \ref{lem2_6} on  $[s_k, t_k]$, we have
      \begin{equation}\label{eq5_4_12}
         \max_{[-1+2a_k, x_k-\epsilon]}|h_k|\le \max_{[s_k,t_k]}|h_k| \le  C\nu_k^{1/2}.
      \end{equation}
      Now we have that $|h_k(-1+2a_k)|\le Ca_k$ and $|h_k(-1)|=|\frac{1}{2}\tau^2_1(\nu_k,c_{k1})-2c_{k1}|\le Ca_k$. Notice that $f_k<0$ on $[-1,-1+2a_k]$, $P_k(-1)\ge 0$.  
     Using (\ref{eq5_4_10}), applying Lemma \ref{lem2_10} on $[-1,-1+2a_k]$ with $\alpha=1$, we have that
       \begin{equation}\label{eq5_4_13}
          \max_{[-1,-1+2a_k]}|h_k|\le Ca_k+C\nu_k/a_k\le C\nu_k^{1/2}.
       \end{equation}
       By (\ref{eq5_4_9}), (\ref{eq5_4_12}) and (\ref{eq5_4_13}), we have proved (\ref{eq5_4_1}) when $\hat{x}>-1$.

    Next, if $x_k+1<\epsilon/4$,
    similar as (\ref{eq5_4_8}) we have
     $1/C\le f_k\le C \textrm{ on }\left(x_k+\epsilon/2, 1\right)$. 
    Using this and (\ref{eq5_4_6}), applying Lemma \ref{lem2_6} on  $(x_k+\epsilon/2, 1)$, (\ref{eq5_4_1}) is proved.


     \textbf{Case} 2: $c_1>0$, $c_2=0$, $c_3>-\frac{1}{2}c_1$. The proof is similar as Case 1.

     \textbf{Case} 3: $c_1=c_2=0$, $c_3>0$. Similar as Case 1 we have $|h_k|\le C\nu_k^{1/2}$ on $[-1,0]\setminus[x_k-\epsilon, x_k+\epsilon]$, and similar as Case 2 we have $|h_k|\le C\nu_k^{1/2}$ on $[0,1]\setminus[x_k-\epsilon, x_k+\epsilon]$. 
      We have by now proved (\ref{eq5_4_1}).


 By (\ref{eq5_4_1}) and (\ref{eq_1}),  for any $\epsilon>0$, there exists some constant $C>0$, depending only on $\epsilon$ and an upper bound of $|c|$, such that $|f'_k|\le C\nu_k^{-\frac{1}{2}}$ on $[-1+\epsilon, x_k-\epsilon]\cup [x_k+\epsilon, 1-\epsilon]$, so $|h'_k|=|f_kf'_k-P'_k|\le C\nu_k^{-\frac{1}{2}}$. So we have 
  \begin{equation*}
       ||\frac{1}{2}U^2_{\nu_k, \theta}-P_{c_k}||_{C^1(([-1+\epsilon, x_k-\epsilon]\cup [x_k+\epsilon, 1-\epsilon])}\le C\nu_k^{-\frac{1}{2}},
\end{equation*}
        By interpolation 
        for any $x, y\in (-1+\epsilon, 1-\epsilon)$ and $0<\beta<1$,
       \[
          \frac{|h_k(x)-h_k(y)|}{|x-y|^{\beta}}\le 2||h_||^{1-\beta}_{L^{\infty}(-1+\epsilon, 1-\epsilon)}||h_k'||^{\beta}_{L^{\infty}(-1+\epsilon, 1-\epsilon)}\le C\nu_k^{\frac{1}{2}(1-\beta)}\nu_k^{-\frac{1}{2}\beta}\le C\nu_k^{\frac{1}{2}-\beta}.
       \]
        We have 
    \begin{equation}\label{eq5_4_4_1}
   ||\frac{1}{2}U^2_{\nu_k, \theta}-P_{c_k}||_{C^{\beta}([-1+\epsilon, x_k-\epsilon]\cup [x_k+\epsilon, 1-\epsilon])}\le C\nu_k^{\frac{1}{2}-\beta}.
   \end{equation}
Next, using (\ref{eq5_4_3}), (\ref{eq5_4_5}) and (\ref{eq5_4_1}), we then have (\ref{eq1_7_0}).
 Part (i) in this case follows in view of (\ref{eq5_4_1}) and (\ref{eq5_4_4_1}).


 Now we prove part (ii) in this case. 
 If such $x_k$ exists and $x_k\to -1$ with  $c_1=0$, or such $x_k$ does not exist with $c_2>0=c_1$,  we can prove (\ref{eq1_7_2})
 using similar arguments as that for part (ii)  in   \emph{''Proof of Theorem \ref{thm1_3} continued"} in Section 4. If such $x_k$ exists and $x_k\to 1$ with $c_2=0$, or such $x_k$ does not exist with $c_1>0=c_2$, we can prove similarly (\ref{eq1_7_3}).  
 If such $x_k$ does not exist with $c_1=c_2=0$, we prove either  (\ref{eq1_7_2}) or (\ref{eq1_7_3}).

In this case, $f_k$ does not change sign on $(-1,1)$ and $P_c(-1)=P_c(1)=0$. If $f_k>0$ on $(-1,1)$ after passing to a subsequence,  we have, by Theorem \ref{thm1_2_2},
   $\limsup_{k\to \infty}||\frac{1}{2}(U^{\pm}_{\nu_k, \theta})^2-P_{k}||_{L^{\infty}(-1, 1)}=0$.
 So for any $\epsilon_0>0$, there exists some $\epsilon>0$, such that $||P_k||_{L^{\infty}(-1,-1+2\epsilon)}+||P_k||_{L^{\infty}[1-2\epsilon, 1]}<\epsilon_0$, and $||(U^{\pm}_{\nu_k, \theta})^2||_{L^{\infty}(-1,-1+2\epsilon)}+||(U^{\pm}_{\nu_k, \theta})^2||_{L^{\infty}[1-2\epsilon, 1]}<\epsilon_0$. Notice  $U_{\nu_k, \theta}^-\le f_k\le U^+_{\nu_k, \theta}$, we then have $||f_k-\sqrt{2P_k}||_{L^{\infty}(-1,-1+2\epsilon)}+||f_k-\sqrt{2P_k}||_{L^{\infty}[1-2\epsilon, 1]}<2\epsilon_0$.
  We also have $P_c\ge 1/C$ on $[-1+\epsilon, 1-\epsilon]$ and $f_k>0$  on $[-1+\epsilon, 1-\epsilon]$. 
By Corollary \ref{cor2_3}, we have $f_k\ge 1/C$ on $[-1+2\epsilon, 1-2\epsilon]$. Notice $|f_k(-1+2\epsilon)-\sqrt{2P_k(-1+2\epsilon)}|+|f_k(1-\epsilon)-\sqrt{2P_k(1-2\epsilon)}|\le 2\epsilon_0$,  by Corollary \ref{cor2_2} we have  $||f_k-\sqrt{2P_k}||_{L^{\infty}[-1+2\epsilon, 1-2\epsilon]}<C\epsilon_0$. 

If $f_k<0$ on $(-1,1)$ after passing to a subsequence, similar as the above we have (\ref{eq1_7_3}). 
   Part (ii) in this case is proved. The proof of Theorem \ref{thm1_3} is completed now. Part (iii) is proved in Section 3, part (i) and (ii) follows from (iii), \emph{"Proof of Theorem \ref{thm1_3} continued"} in Section 4, and the above.
\qed

\noindent{\emph{Proof of Theorem \ref{thm1_0}}}: 
For $0<\nu\le 1$ and $c\in \mathring{J}_{\nu}$, let 
\[
 u^{\pm}_{\nu,\theta}(c)=\frac{1}{\sin\theta}U_{\nu,\theta}^{\pm}(c),\quad
   u^{\pm}_{\nu,r}(c) =-\frac{d u_{\nu,\theta}^{\pm}}{d \theta} -  u_{\nu,\theta}^{\pm} \ctthe.
\]
and 
\[
  p^{\pm}_{\nu}(c)=-\frac{1}{2}\left(\frac{d^2 u^{\pm}_{\nu,r}(c)}{d\theta^2} + (\cot\theta - u^{\pm}_{\nu,\theta}(c)) \frac{d u^{\pm}_{\nu,r}(c)}{d\theta} + (u^{\pm}_{\nu,r}(c))^2 +(u^{\pm}_{\nu,\theta}(c))^2\right).
\]
By Theorem 1.1 of \cite{LLY2}, $\{(u_{\nu}^{\pm}(c), p_{\nu}^{\pm}(c))\}_{0<\nu\le 1}$ belong to $C^{0}(\mathring{J}_{\nu}\times (0,1], C^m(\mathbb{S}^2\setminus(B_{\epsilon}(S)\cup B_{\epsilon}(N))))$  for every integer $m\ge 0$.  
  By Theorem \ref{thm1_1}, there exists some constant $C$, which depends only on $K, \epsilon$ and $m$, such that 
\[
  ||U^{+}_{\nu, \theta}-\sqrt{2P_{c}}||_{L^{\infty}(-1,1)}+||U^-_{\nu, \theta}+\sqrt{2P_{c}}||_{L^{\infty}(-1,1)}\le C\nu,
  \]
  and 
  \[
  ||U^+_{\nu, \theta}-\sqrt{2P_{c}}||_{C^m(-1,1-\epsilon)}+||U^-_{\nu, \theta}+\sqrt{2P_{c}}||_{C^m(-1+\epsilon,1)}\le C\nu.
\]
Theorem \ref{thm1_0}(i) follows from the above.

Now we prove part (ii). By Theorem A, there exist a unique $U_{\theta}:=U_{\nu, \theta}(c,\theta_0)$ of (\ref{eq:NSE}) satisfying, with $x_0=\cos \theta_0$, that 
\[
   U_{\theta}(-1)=\tau_1(\nu,c_1)<0, \quad U_{\theta}(1)=\tau_2(\nu,c_2)>0, \quad U_{\theta}(x_0)=0.
\]
For every $\epsilon>0$, we have, by Theorem \ref{thm1_3}, that
 \[
  ||U_{\nu, \theta}-\sqrt{2P_{c}}||_{C^m(x_0+\epsilon,1-\epsilon)}+||U_{\nu, \theta}+\sqrt{2P_{c}}||_{C^m(-1+\epsilon,x_0-\epsilon)}\le C\nu.
\]
The estimate in part (ii) follows from the above.
\qed

\section{Proof of Theorem \ref{thm:BL:1}}\label{sec6}

In this section, we give the 

\noindent{\emph{Proof of Theorem \ref{thm:BL:1}}}:
Define
\begin{equation}\label{eq6_0}
			w_k(x):=\sqrt{2P_{c_k}(x_k)} \tanh \Big( \frac{\sqrt{2P_{c_k}(x_k)} \cdot (x-x_k)}{2 (1-x_k^2)\nu_k} \Big).
	\end{equation}
By computation, we know that $w_k(x_k)=0$ and
\begin{equation}\label{eq6_0_0}
   \nu_k(1-x_k^2)w_k'+\frac{1}{2}w_k^2=P_k(x_k).
\end{equation}

{\it Step 1.} We prove
\begin{equation}\label{eq6_2_1}
	|U_{\nu_k, \theta} - w_k| \le C\nu_k |\ln \nu_k|^{2}, \quad x_k-K\nu_k|\ln\nu_k|(1-x_k^2)<x< x_k+K\nu_k|\ln\nu_k|(1-x_k^2)).
\end{equation}
Let $C$ denote a constant depending only on $c$, $K$ and $\hat{x}$ which may vary from line to line. For convenience denote $f_k:=U_{\nu_k, \theta}$ and  $P_k:=P_{c_k}$.
  By Lemma \ref{lem2_1} and Lemma \ref{lem2_4}, we have that
\begin{equation}\label{eq6_2_2}
   0<f_k<C, \textrm{ in }(x_k, 1), \textrm{ and } -C<f_k<0, \textrm{ in }(-1,x_k).
\end{equation}
	Let
	\begin{equation}\label{eq6_2_4}
		y := \frac{x-x_k}{\nu_k(1-x_k^2)}, \qquad \tilde{f}_k(y):=f_k(x), \quad \tilde{w}_k(y):=w_k(x).
	\end{equation}
	Then for $x_k-K\nu_k|\ln\nu_k|(1-x_k^2)\le x\le x_k+K\nu_k|\ln\nu_k|(1-x_k^2)$, we have $-K|\ln\nu_k|\le y\le K|\ln\nu_k|$.
	By $f_k(x_k)=0$ and (\ref{eq_1}), we know that $\tilde{f}_k(0)=0$ and
	\begin{equation}\label{eq6_2_5}
	\begin{aligned}
		& (1-2x_k\nu_ky-\nu_k^2y^2(1-x_k^2)) \tilde{f}_k'(y) + 2 \nu_k (x_k + y \nu_k(1-x_k^2)) \tilde{f}_k(y) + \frac{1}{2} \tilde{f}_k^2(y) \\
		= & P_{c_k} = P_{c_k}(x_k) + P_{c_k}'(x_k)\nu_k y(1-x_k^2)+ \frac{1}{2} P_{c_k}''(x_k) \nu_k^2 y^2(1-x_k^2)^2.
	\end{aligned}
	\end{equation}
 By (\ref{eq6_0}) and (\ref{eq6_0_0}), we have $\tilde{w}(0)=0$ and
	\begin{equation}\label{eq6_2_6}
		\tilde{w}_k'(y) + \frac{1}{2}  \tilde{w}_k^2(y) = P_{c_k}(x_k).
	\end{equation}
	Set $g_k(y) := \tilde{f}_k(y) - \tilde{w}_k(y)$, then by (\ref{eq6_2_5}) and (\ref{eq6_2_6}), we have $g_k(0)=0$ and
	\begin{equation}\label{eq6_2_7}
		g_k'(y) + h_k(y) g_k(y) = H_k(y),
	\end{equation}
	where $h_k(y) = \frac{1}{2} (\tilde{f}_k(y) + \tilde{w}_k(y))$ and
	\[
	\begin{aligned}
		H_k(y) = & P_{c_k}'(x_k) \nu_ky + \frac{1}{2} P_{c_k}''(x_k) \nu^2_k y^2 - 2\nu_k (x_k + y \nu_k(1-x_k^2))\tilde{f}_k(y) \\
		& + \tilde{f}'_k(y)(2x_k\nu_ky + \nu^2_k y^2(1-x_k^2)).
	\end{aligned}
	\]
By (\ref{eq6_0})
and (\ref{eq6_2_2}), we have $|h_k(y)|\le C$ for $|y|\le K|\ln\nu_k|$. By (\ref{eq6_2_5}) and (\ref{eq6_2_2}), we have $|\tilde{f}_k'(y)|\le C$ for $|y|\le K|\ln\nu_k|$ and $k>>1$. So
 $|H_k|\le C \nu_k|\ln \nu_k|$ for $|y|\le K|\ln \nu_k|$ and $k>>1$.
	Hence, from the estimates of $h_k$, $H_k$, (\ref{eq6_2_7}) 
	and the fact that $g_k(0)=0$, we have
	\[
		|g_k(y)| = e^{- \int_{0}^{y} h(s)ds} \bigg| \int_0^y e^{\int_{0}^{s} h(t)dt} H(s) ds \bigg|
		\le C  \nu_k |\ln \nu_k|^2, \qquad |y|\le K|\ln \nu_k|.
	\]
	Therefore, the estimate (\ref{eq6_2_1}) is proved.
	
{\it Step 2. } We prove that there exists some $K>0$ and small $\epsilon>0$, independent of $k$, such that
\begin{equation}\label{eq6_1_01}
	|U_{\nu_k, \theta} + \sqrt{2P_{c_k}} | \le C\nu_k^{\alpha(c)} |\ln \nu_k|^{2\kappa(c)}, \quad b_k\le x\le x_k-K\nu_k|\ln\nu_k|(1-x_k^2),
\end{equation}
\begin{equation}\label{eq6_1_02}
	|U_{\nu_k, \theta} - \sqrt{2P_{c_k}} | \le C\nu_k^{\alpha(c)} |\ln \nu_k|^{2\kappa(c)}, \quad x_k+K\nu_k|\ln\nu_k|(1-x_k^2)\le x\le d_k,
\end{equation}
where $b_k=\max\{-1,x_k-\epsilon\}$ and $d_k=\min\{1,x_k+\epsilon\}$.

It is sufficient to prove (\ref{eq6_1_01}) since the other estimate can be obtained similarly. We first prove that (\ref{eq6_1_01}) holds at the endpoints $x=b_k$ and $x=b_k':=x_k-K\nu_k|\ln\nu_k|(1-x_k^2)$. For convenience denote $f_k:=U_{\nu_k, \theta}$, $P_k=P_{c_k}$ and $h_k=\frac{1}{2}f_k^2-P_k$.
  Since $x_k\to\hat{x}$, $P_c(\hat{x})>0$, we can chose $\epsilon>0$ small, such that
\begin{equation}\label{eq6_1_2}
   P_k\ge 1/C, \qquad x\in  (x_k-2\epsilon, x_k+2\epsilon).
\end{equation}
	By Theorem \ref{thm1_3} (i), we have
   $|h_k| \le C \nu_k^{\alpha(c)}$ for $-1\le x\le x_k-\epsilon$
where $\alpha(c)$ is given by (\ref{eq_alpha}).
 Using this, (\ref{eq6_2_2}) and (\ref{eq6_1_2}), we have that
	\begin{equation}\label{eq6_1_3_2}
   \Big| f_k (b_k)+ \sqrt{2P_k(b_k)} \Big| \le C \nu_k^{\alpha(c)}. 	\end{equation}
Let $K$ be a positive constant to be determined later.
It is easy to see that
\[
\begin{split}
	|w_k(b_k') + \sqrt{2P_{k}(x_k)}|
         \le & C e^{ -K |\ln \nu_k| \frac{\sqrt{2P_{k}(x_k)} }{2}}=C \nu_k^{\frac{K}{2}\sqrt{2P_{k}(x_k)} } \le C\nu_k,
\end{split}
\]
as long as $K\sqrt{2P_{k}(x)} \ge 2$ for any $x\in[b_k,d_k]$ and $k$ sufficiently large.
   Thus by Step 1, we have
 \begin{equation}\label{eq6_1_15}
 \begin{split}
 & \Big| f_k (b_k') + \sqrt{2P_{k}(b_k')} \Big|  \\
		\le & |f_k (b_k')- w_k(b_k')| + |w_k(b_k')+ \sqrt{2P_{k}(x_k)}|  + \Big| \sqrt{2P_{k}(x_k)} - \sqrt{2P_{k}(b_k')} \Big|\\
		\le & C\nu_k|\ln\nu_k|^2.
\end{split}
 \end{equation}
By (\ref{eq6_1_2}) and (\ref{eq6_1_15}), $f_k (b_k') \ge 1/C$. Then using this, (\ref{eq6_1_2}) and (\ref{eq6_2_2}), applying Lemma \ref{lem2_2'} on $[b_k, b_k']$, we have
\begin{equation}\label{eq6_1_16}
   -C\le f_k\le -1/C, \quad b_k \le x\le b_k'.  
\end{equation}
By (\ref{eq6_1_3_2}), (\ref{eq6_1_15}), (\ref{eq6_1_16}), applying Corollary \ref{cor2_2} on $[b_k,b_k']$, we know that (\ref{eq6_1_01}) holds on $[b_k,b_k']$. Similar argument implies (\ref{eq6_1_02}).
   Theorem \ref{thm:BL:1} follows from the above two steps and Theorem \ref{thm1_3}.
\qed


\FloatBarrier
\section{Illustrations on the interior transition layer}\label{sec:illu}

Similar to Jeffery-Hamel flows, in the two-dimensional plane flows between two non-parallel walls, a boundary layer occurs if we impose no-slip boundary conditions in a nozzle. 

For axisymmetric, $(-1)$-homogeneous solutions in three dimensional case, we consider a cone region $\Omega = \{(r,\theta,\phi)\in \mathbb{R}^+\times[0,\pi]\times[0,2\pi): 0\le\theta<\arccos x_0<\pi\}$ for some $x_0\in(-1,1)$, then $\partial \Omega$ corresponds to $x=x_0$. We consider solutions in $\Omega$ and impose no-slip boundary conditions on $\partial \Omega$. Since we are considering a first order differential equation of $U_\theta$, only one no-slip boundary condition can be imposed: $U_\theta|_{\partial \Omega}=0$. 

As shown in \cite{LLY2}, there exist solutions $U=(U_{\theta,i},0)$ of the Navier-Stokes equations, with viscosity $\nu_i\to 0$ satisfying
\begin{equation}\label{eq:ex:w:bc}
\left\{
\begin{aligned}
	\nu_i (1-x^2) U_\theta' + 2 \nu_i x U_\theta + \frac{1}{2} U_\theta^2 & = P_c(x) = \mu (x-\xi)^2, \\
	U_\theta (x_0) & = 0, 
\end{aligned}
\right.
\end{equation}
where $P_c(x)$ represents a quadratic polynomial, and $\xi, x_0\in(-1,1)$ are given constants such that $P_c(\xi)=P_c'(\xi)=0, \xi\not=x_0$. 

To illustrate the behavior of homogeneous solutions, we set $P_c(x)=2(x-2/3)^2$, $x_0=0$, $\xi=2/3$, and $\nu_i\to 0$, for example. Then $U_{\theta,i}$ are illustrated in Figure \ref{fig:1} for $\nu=1,1/8,1/20,1/50$, respectively. The boundary conditions in (\ref{eq:ex:w:bc}) and Theorem \ref{thm1_3} implies that an interior layer happens in a neighborhood of $x_0$. We can see in the figure that 
\begin{equation}\label{eq:limit}
	U_\theta\to -\sqrt{2P_c(x)}, \; x\in (-1,x_0-\epsilon),\qquad 
	U_\theta\to +\sqrt{2P_c(x)}, \; x\in (x_0+\epsilon,1). 
\end{equation}
Since $c_3=c_3^*(c_1,c_2)$ in the example, we know by Theorem 1.8 that in a neighborhood of $x=\xi$, where $P_c$ vanishes, the convergence rate $|U_{\theta,i}-\sqrt{2P_c}|\sim \nu_i^{2/3}$. 

The streamlines of $U_{\theta,i}$ 
are illustrated in Figure \ref{fig:2} for $\nu=1,1/8,1/20,1/50$, respectively. By (\ref{eq:limit}), $U_{\theta,i}\to \pm 2|x-2/3|=V_\theta$, whose streamlines are illustrated in Figure \ref{fig:3}. Notice that $\pm\sqrt{P_c(x)}$ are not smooth solutions of the Euler equations, since they are singular at $x=\xi$. On the other hand, $\pm 2(x-2/3)\in C^\infty(-1,1)$ are smooth solutions of the Euler equations, whose streamlines are illustrated in Figure \ref{fig:4}. 

\begin{figure}
\begin{center}
	\captionsetup{width=.87\linewidth}
	\includegraphics[width=12cm]{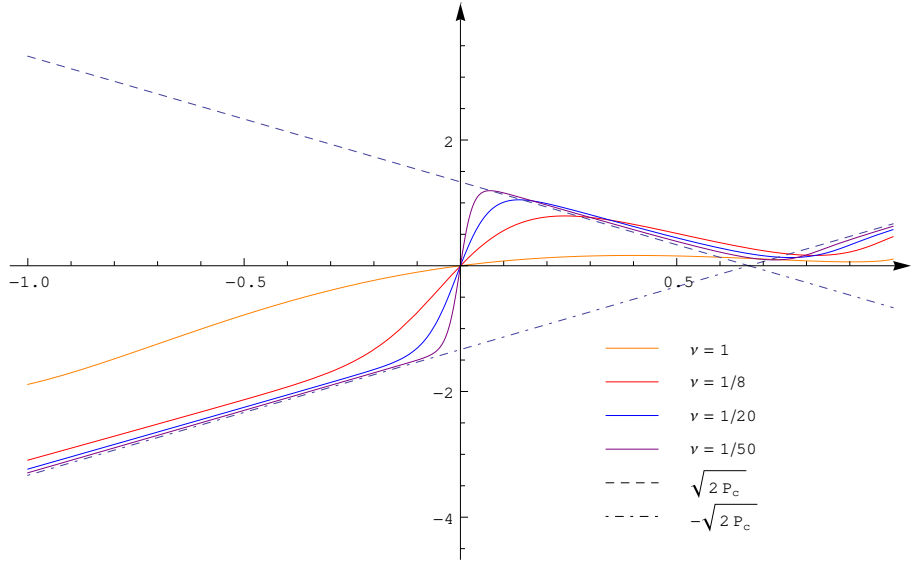}
	\caption{When $P_c=2(x-2/3)^2$, $U_\theta(0)=0$, the graph of $U_\theta$ for $\nu=1, 1/8, 1/20, 1/50$, respectively. }
	\label{fig:1}
\end{center}
\end{figure}

\begin{figure}
\begin{center}
	\begin{subfigure}[t]{0.5\textwidth}
		\centering
		\includegraphics[width=7cm]{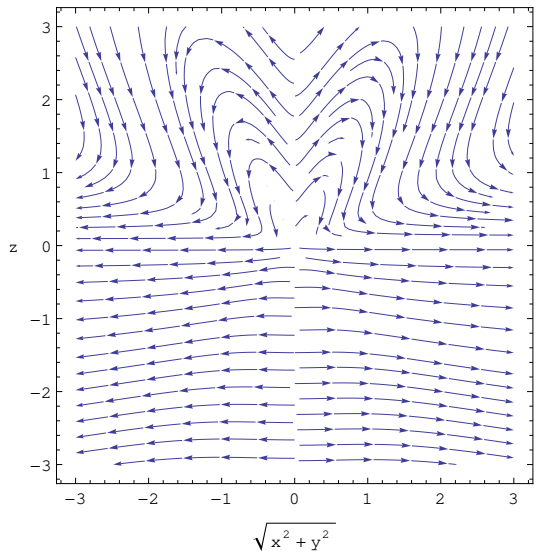}
		\caption{$\nu=1$}
	\end{subfigure}%
	~
	\begin{subfigure}[t]{0.5\textwidth}
		\centering
		\includegraphics[width=7cm]{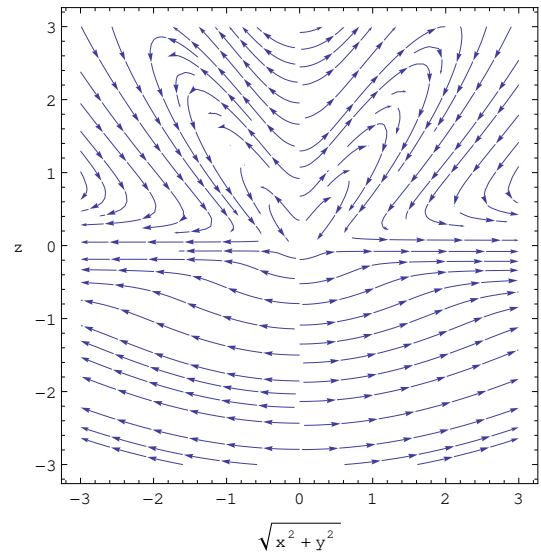}
		\caption{$\nu=1/8$}
	\end{subfigure}%
	\vspace{10pt}
	\newline
	\begin{subfigure}[t]{0.5\textwidth}
		\centering
		\includegraphics[width=7cm]{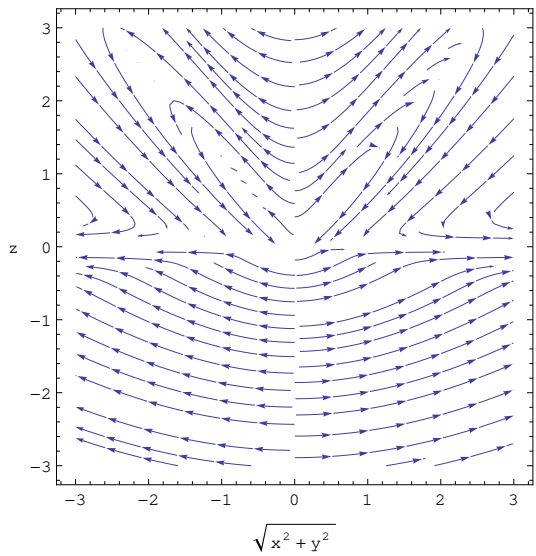}
		\caption{$\nu=1/20$}
	\end{subfigure}%
	~
	\begin{subfigure}[t]{0.5\textwidth}
		\centering
		\includegraphics[width=7cm]{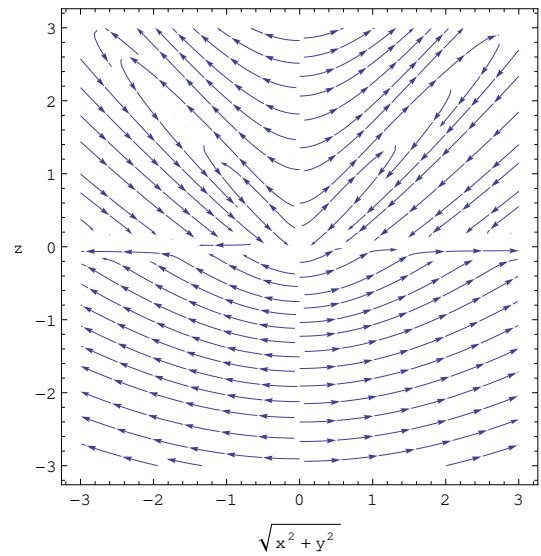}
		\caption{$\nu=1/50$}
	\end{subfigure}%
	\captionsetup{width=.87\linewidth}
	\caption{The streamlines of solutions of Navier-Stokes equation for $\nu=1, 1/8, 1/20, 1/50$, respectively. }
	\label{fig:2}
\end{center}
\end{figure}

\begin{figure}
\begin{center}
	\begin{subfigure}[t]{0.5\textwidth}
		\centering
		\includegraphics[width=7cm]{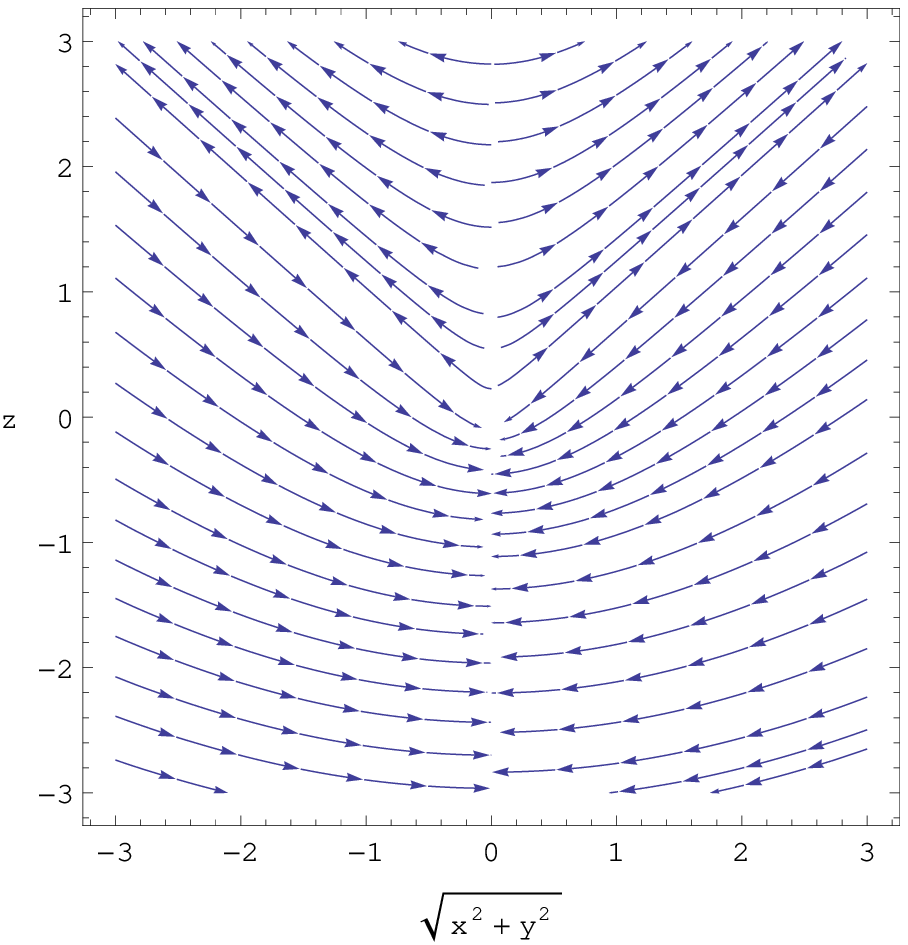}
		\caption{$V_\theta=\sqrt{P_c}$}
	\end{subfigure}%
	~
	\begin{subfigure}[t]{0.5\textwidth}
		\centering
		\includegraphics[width=7cm]{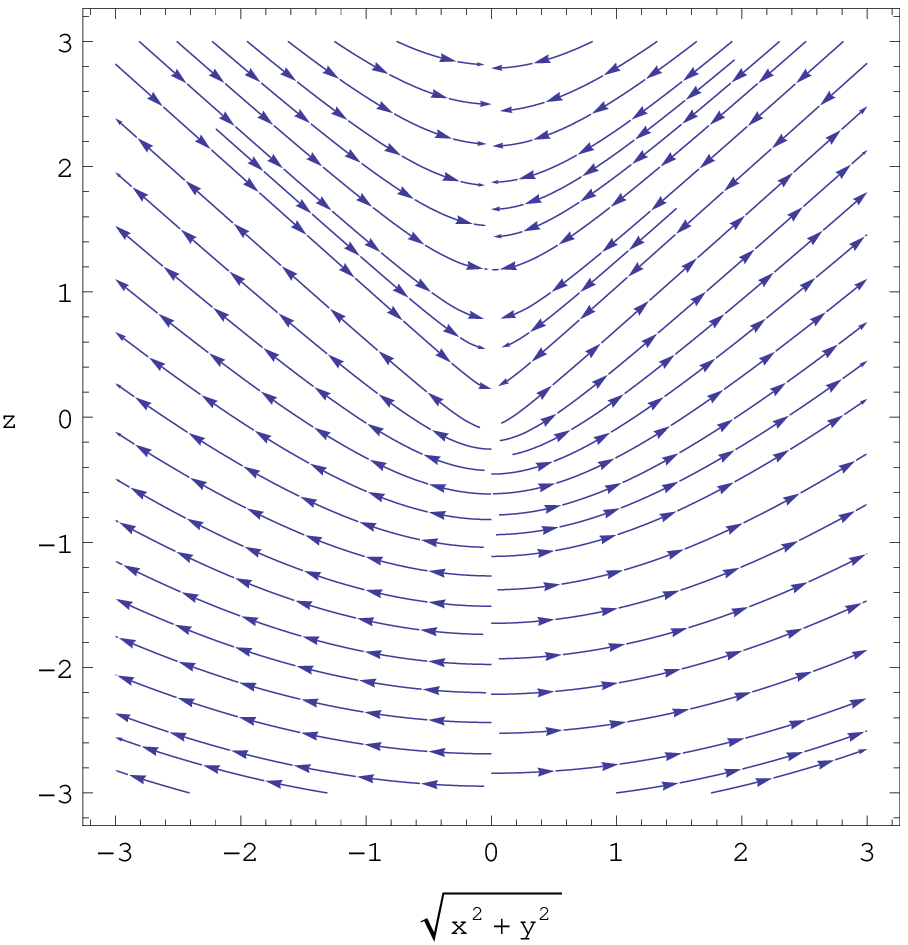}
		\caption{$V_\theta=-\sqrt{P_c}$}
	\end{subfigure}%
	\caption{The streamlines of $\pm\sqrt{P_c}$, the limit of $U_{\theta,i}$ as $\nu_i\to 0$. }
	\label{fig:3}
\end{center}
\end{figure}

\begin{figure}
\begin{center}
	\begin{subfigure}[t]{0.5\textwidth}
		\centering
		\includegraphics[width=7cm]{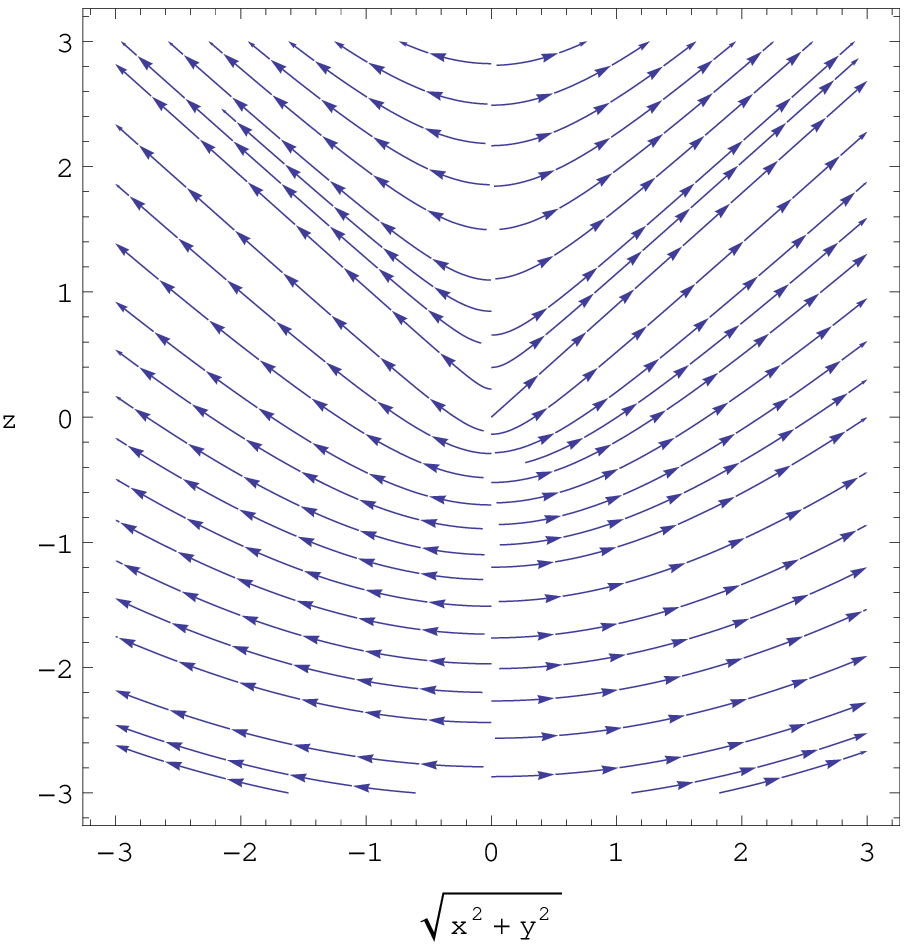}
		\caption{$V_\theta=2(x-2/3)$}
	\end{subfigure}%
	~
	\begin{subfigure}[t]{0.5\textwidth}
		\centering
		\includegraphics[width=7cm]{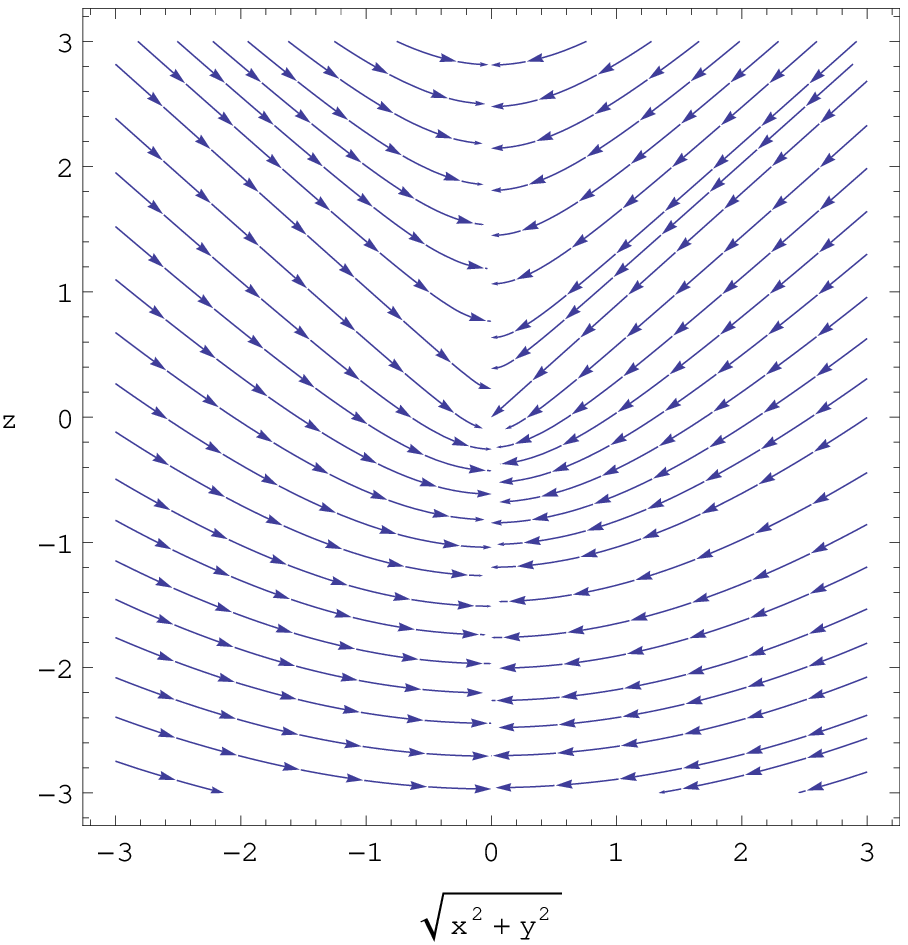}
		\caption{$V_\theta=-2(x-2/3)$}
	\end{subfigure}%
	\caption{The streamlines of the smooth solutions of Euler equation. }
	\label{fig:4}
\end{center}
\end{figure}

\FloatBarrier

\section{Appendix}\label{sec7}

     \begin{lem}\label{lem7_1}
        Let $c\in \mathbb{R}^3$, then 
        
        (i) $P_c\ge 0$ on $[-1,1]$ if and only if $c\in J_0$.
        
        (ii) $P_c>0$ on $[-1,1]$ if and only if $c\in \mathring{J}_0$.
        
        (iii) $\min_{[-1,1]}P_c=0$ if and only if $c\in \partial J_0$.
        
        (iv)$P_c>0$ in $(-1,1)$ if and only if $c\in \mathring{J}_0\cup \partial'J_0$.
     \end{lem}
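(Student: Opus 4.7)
The plan is to base everything on the algebraic decomposition (\ref{eqP_2}),
\[
   P_c(x)=P^*_{(c_1,c_2)}(x)+(c_3-c_3^*(c_1,c_2))(1-x^2),
\]
together with the explicit formula (\ref{eqP_1}) which shows that $P^*_{(c_1,c_2)}(x)=-c_3^*(c_1,c_2)(x-\bar x)^2\ge 0$ on all of $\mathbb{R}$, with $\bar x=(\sqrt{c_1}-\sqrt{c_2})/(\sqrt{c_1}+\sqrt{c_2})$. Since $1-x^2\ge 0$ on $[-1,1]$, the two summands in the decomposition have controlled signs, so the question reduces to tracking the signs of $c_1$, $c_2$ and $c_3-c_3^*(c_1,c_2)$.

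For (i), first I would evaluate at the endpoints: $P_c(-1)=2c_1$ and $P_c(1)=2c_2$, forcing $c_1,c_2\ge 0$ as a necessary condition. In the non-degenerate case $c_1,c_2>0$ one has $\bar x\in(-1,1)$, so plugging $x=\bar x$ into (\ref{eqP_2}) gives $P_c(\bar x)=(c_3-c_3^*)(1-\bar x^2)$; this forces $c_3\ge c_3^*(c_1,c_2)$, and conversely under this inequality both summands of (\ref{eqP_2}) are non-negative on $[-1,1]$. The degenerate cases $c_1c_2=0$ I would handle by direct factorization: for $c_1=0,c_2>0$ a straightforward expansion shows $P_c(x)=(1+x)[c_2+c_3(1-x)]$, so $P_c\ge 0$ on $[-1,1]$ is equivalent to $c_2+2c_3\ge 0$, i.e.\ $c_3\ge -c_2/2=c_3^*(0,c_2)$; the case $c_2=0,c_1>0$ is symmetric, and $c_1=c_2=0$ gives $P_c=c_3(1-x^2)$, requiring $c_3\ge 0=c_3^*(0,0)$. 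In every case the condition is exactly $c\in J_0$. Parts (ii) and (iii) then follow immediately: (ii) by requiring strict inequalities everywhere ($c_1,c_2>0$ at the endpoints and $c_3>c_3^*$ in the interior, which is precisely $\mathring J_0$), and (iii) by taking (i) minus (ii).

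For (iv), the issue is to identify, among $c\in J_0$, those for which the zero set of $P_c$ lies only at the endpoints $\pm 1$ and not inside $(-1,1)$. I would read the zeros off the same decomposition: when $c_1,c_2>0$ and $c_3=c_3^*$, $P_c$ coincides with $P^*$, which has a double root at the interior point $\bar x\in(-1,1)$, so $P_c$ fails to be positive on $(-1,1)$. In the remaining boundary cases the explicit factorizations from part (i) show that $P_c>0$ on $(-1,1)$ precisely when $(c_1,0,c_3)$ with $c_1>0,c_3\ge -c_1/2$, or $(0,c_2,c_3)$ with $c_2>0,c_3\ge -c_2/2$, or $(0,0,c_3)$ with $c_3>0$ — matching exactly $\partial'J_0$ (and ruling out the origin, where $P_c\equiv 0$). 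Combined with the interior case (ii), this gives $\{c:P_c>0\text{ on }(-1,1)\}=\mathring J_0\cup\partial'J_0$.

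The only real obstacle is bookkeeping for the degenerate cases $c_1c_2=0$: the formula for $\bar x$ collapses to $\pm 1$, and the decomposition (\ref{eqP_2}) becomes a less efficient tool than a direct linear factorization of $P_c$. I would therefore treat $c_1c_2>0$ via (\ref{eqP_2}) and $c_1c_2=0$ via direct factorization, then verify that both give the same characterization $c_3\ge c_3^*(c_1,c_2)$; no calculation in either branch is more than a one-line expansion.
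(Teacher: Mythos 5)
Your proposal is correct and follows essentially the same route as the paper's proof: endpoint evaluation $P_c(\mp 1)=2c_1,2c_2$, evaluation at $\bar x$ via (\ref{eqP_2}) in the non-degenerate case, and explicit handling of the degenerate cases, with (iii) obtained from (i) and (ii) and (iv) by checking $\partial' J_0$ directly and excluding $\partial J_0\setminus\partial' J_0=\{0\}\cup\{c_1,c_2>0,\,c_3=c_3^*\}$. The only cosmetic difference is that for $c_1c_2=0$ in part (i) you read the condition $c_3\ge c_3^*$ off the linear factor in $P_c=(1+x)[c_2+c_3(1-x)]$ (resp.\ its mirror), whereas the paper uses the sign of $P_c'(\mp 1)$ at the vanishing endpoint; these are equivalent one-line arguments.
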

     \begin{proof}
     
     (i) For $c\in J_0$, we have $c_1,c_2\ge 0$, $c_3\ge c_3^*(c_1,c_2)$ and therefore using (\ref{eqP_1}) and (\ref{eqP_2}) $P_c\ge P_{(c_1,c_2)}^*\ge 0$ on $[-1,1]$. On the other hand, if $P_c\ge 0$ on $[-1,1]$, we have $c_1=\frac{1}{2}P_c(-1)\ge 0$ and $c_2=\frac{1}{2}P_c(1)\ge 0$. If $c_1,c_2>0$, then $\bar{x}:=\frac{\sqrt{c_1}-\sqrt{c_2}}{\sqrt{c_1}+\sqrt{c_2}} \in (-1,1)$ and, using  (\ref{eqP_1}), $0\le P_c(\bar{x})=(c_3-c_3^*(c_1,c_2))(1-\bar{x}^2)$. Thus $c_3\ge c_3^*(c_1,c_2)$ and $c\in J_0$. If $c_1=0$, then $P_c(-1)=0$ and therefore $c_2+2c_3=P'_c(-1)\ge 0$. So $c\in J_0$. If $c_2=0$, then $P_c(1)=0$ and therefore $-c_1-2c_3=P'_c(1)\le 0$. So $c\in J_0$. Part (i) is proved.
     
     (ii) If $c\in \mathring{J}_0$, then $c_1,c_2>0$, $c_3>c_3^*(c_1,c_2)$, and $\bar{x} \in (-1,1)$. The positivity of $P_c$ on $[-1,1]$ then follows from the expression (\ref{eqP_2}). On the other hand, if $P_c>0$ on $[-1,1]$, then $c_1=\frac{1}{2}P_c(-1)> 0$,  $c_2=\frac{1}{2}P_c(1)> 0$, and $\bar{x}\in (-1,1)$. It follows, using (\ref{eqP_2}), that $0< P_c(\bar{x})=(c_3-c_3^*(c_1,c_2))(1-\bar{x}^2)$ and therefore $c_3-c_3^*(c_1,c_2)>0$. We have proved that $c\in \mathring{J}_0$. Part (ii) is proved. 
     
     Part (iii) is a consequence of (i) and (ii).
     
     (iv) If $c\in \mathring{J}_0$, then we know from (ii) that $P_c>0$ on $[-1,1]$. If $c_1=c_2=0$, and $c_3>0$, then $P_c(x)=c_3(1-x^2)>0$ in $(-1,1)$. If $c_1>0$, $c_2=0$, $c_3\ge -c_1/2$, then $P_c(x)\ge c_1(1-x)-\frac{1}{2}c_1(1-x^2)=\frac{c_1}{2}(1-x)^2>0$ in $(-1,1)$. If $c_1=0$, $c_2>0$, $c_3\ge -c_2/2$, then $P_c(x)\ge c_2(1+x)-\frac{1}{2}c_2(1-x^2)=\frac{c_2}{2}(1+x)^2>0$ in $(-1,1)$.  On the other hand, if $P_c>0$ in $(-1,1)$, then $c\in J_0$ by part (i). We only need to prove that $c$ does not belong to $\partial J_0\setminus\partial'J_0$. Indeed, if $c\in \partial J_0\setminus\partial'J_0$, then $c=0$ or $c_1,c_2>0$ and $c_3=c_3^*(c_1,c_2)$. Clearly $c$ cannot be $0$. For the latter, we know from (\ref{eqP_1}) that $P_c=P_{(c_1,c_2)}^*$ has a zero point at $\bar{x} \in (-1,1)$. We have proved (iv).
\end{proof}

We then have
\begin{lem}\label{lem2_9}
   For any $0<\nu\le 1$ and $c\in J_{\nu}$, there exists some constant $C$, depending only on an upper bound of $|c|$, such that
   \begin{equation*}
       P_c(x)\ge -C\nu, \quad \forall -1\le x \le 1.
   \end{equation*}
\end{lem}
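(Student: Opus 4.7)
The plan is to reduce to the boundary case $c_3 = \bar c_3(c_1,c_2;\nu)$ by a monotonicity argument, and then exploit the explicit ``degenerate'' solution $U^*_{\nu,\theta}(c_1,c_2)$ from Theorem~A to pin down $P_{\bar c}$ by a single substitution. Since $\partial_{c_3} P_c(x) = 1 - x^2 \ge 0$ on $[-1,1]$ and $c_3 \ge \bar c_3(c_1,c_2;\nu)$, we have $P_c \ge P_{\bar c}$ on $[-1,1]$ where $\bar c := (c_1, c_2, \bar c_3(c_1, c_2;\nu))$, so it suffices to prove the estimate for $P_{\bar c}$.

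For the reduced problem, set $A := \sqrt{\nu^2+c_1}$, $B := \sqrt{\nu^2+c_2}$, $\alpha := A - B$, and $\beta := 2\nu + A + B$. By Theorem~A the linear function $U^*(x) = \alpha - \beta x$ satisfies (\ref{eq:NSE}) with right-hand side $P_{\bar c}$. Define $x^* := \alpha/\beta$; then $U^*(x^*) = 0$, and since $U^{*\prime} \equiv -\beta$, evaluating (\ref{eq:NSE}) at $x = x^*$ causes the nonlinear term and the linear-in-$U^*$ term to vanish, yielding the key identity
\[
  P_{\bar c}(x^*) \;=\; -\,\nu\beta\bigl(1-(x^*)^2\bigr).
\]

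It remains to show that $x^*$ is actually the minimizer of $P_{\bar c}$ on $[-1,1]$. Because $\beta = 2\nu + A + B > A + B \ge |A - B| = |\alpha|$, we have $|x^*| < 1$. Moreover $\bar c_3 = -\tfrac12(A+B)\beta < 0$, so $P_{\bar c}$ is a strictly convex quadratic in $x$; a direct computation from $P_{\bar c}'(x) = -c_1 + c_2 - 2\bar c_3 x$ (using $c_2-c_1 = B^2-A^2 = -(A-B)(A+B)$ and $-2\bar c_3 = (A+B)\beta$) confirms the vertex is precisely $x^*$. Thus
\[
  \min_{[-1,1]} P_{\bar c} \;=\; P_{\bar c}(x^*) \;\ge\; -\nu\beta.
\]
For $|c|$ bounded by $M$ and $0 < \nu \le 1$, we have $A, B \le \sqrt{1+M}$, hence $\beta \le 2 + 2\sqrt{1+M}$, giving the claim with $C$ depending only on $M$.

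There is no serious obstacle: the whole argument is algebraic once one spots the right reduction. The only nontrivial observation is the ``magic'' that $U^*$ vanishes at its unique zero $x^* = \alpha/\beta$, which coincides with the vertex of $P_{\bar c}$, making the substitution in (\ref{eq:NSE}) collapse all nonlinear contributions and leaving a bound linear in $\nu$. No compactness or limiting argument is needed, and the constant $C$ clearly depends only on an upper bound of $|c|$.
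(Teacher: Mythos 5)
Your proof is correct, and it takes a somewhat different route from the paper. The paper's proof shifts the parameters to $\tilde c_1=c_1+\nu^2\ge 0$, $\tilde c_2=c_2+\nu^2\ge 0$, observes $\bar c_3(c_1,c_2;\nu)\ge c_3^*(\tilde c_1,\tilde c_2)-C\nu$, and then invokes the nonnegativity of $P^*_{(\tilde c_1,\tilde c_2)}$ from (\ref{eqP_1}), accumulating $O(\nu^2)+O(\nu)$ errors along the way. You instead reduce to the boundary polynomial $P_{\bar c}$ with $\bar c=(c_1,c_2,\bar c_3(c_1,c_2;\nu))$ via the monotonicity $\partial_{c_3}P_c=1-x^2\ge 0$, and then compute $\min_{[-1,1]}P_{\bar c}$ exactly: since $\bar c_3=-\tfrac12(A+B)\beta<0$ the quadratic is strictly convex, its vertex is $x^*=\alpha/\beta\in(-1,1)$, and evaluating the ODE satisfied by the degenerate solution $U^*=\alpha-\beta x$ at its zero $x^*$ gives the identity $P_{\bar c}(x^*)=-\nu\beta\bigl(1-(x^*)^2\bigr)$, hence $P_c\ge -\nu\beta\ge -C\nu$. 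This yields a sharper and more explicit bound (with constant $\beta=2\nu+\sqrt{\nu^2+c_1}+\sqrt{\nu^2+c_2}$), at the cost of a slightly longer algebraic verification; note also that the appeal to Theorem~A is a convenience rather than a necessity, since the same identity follows from completing the square for $P_{\bar c}$ directly. Both arguments are elementary and fully rigorous.
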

\begin{proof}
   For $c\in J_{\nu}$, we have $c_1\ge -\nu^2, c_2\ge -\nu^2$ and $c_3\ge \bar{c}_3:=\bar{c}_3(c_1,c_2;\nu)$. Let $\tilde{c}_1=c_1+\nu^2$, $\tilde{c}_2=c_2+\nu^2$, and $\tilde{c}_3^*=-\frac{1}{2}(\tilde{c}_1+2\sqrt{\tilde{c}_1\tilde{c}_2}+\tilde{c}_2)$. By (\ref{eqP_1}), $P_{(\tilde{c}_1, \tilde{c}_2, \tilde{c}_3^*)}\ge 0$ in $[-1,1]$. Since $c_3\ge \bar{c}_3\ge \tilde{c}_3^*-C\nu$, we have 
   \[
       P_c  \ge P_{(\tilde{c}_1,\tilde{c}_2,c_3)}-C\nu^2
                 \ge P_{(\tilde{c}_1,\tilde{c}_2, \tilde{c}_3^*)}-C\nu
                 \ge -C\nu, \quad \textrm{ in }[-1,1].
   \]
   \end{proof}

\FloatBarrier


\begin{thebibliography}{99}



   \bibitem{C}S. I. Chernyshenko,  Asymptotics of a
steady separated flow around a body with large Reynolds numbers.
(Russian) Prikl. Mat. Mekh. 52 (1988), no. 6, 958-966; translation in
J. Appl. Math. Mech. 52(1988), no. 6, 746-753 (1990).

\bibitem{CC}S. I. Chernyshenko, Ian P. Castro,
High-Reynolds-number weakly stratified flow past an obstacle. J. Fluid
Mech. 317 (1996), 155-178.

\bibitem{CK} M. Cannone and G. Karch, Smooth or singular solutions to 
the Navier-Stokes system, J. Differential Equations 197 (2004), 247-274.

     \bibitem{Constantin-Vicol} P. Constantin and V. Vicol,  Remarks on 
high Reynolds numbers hydrodynamics and the inviscid limit,
J. Nonlinear Sci. 28 (2018), 711-724.

\bibitem{DM} R. DiPerna and A. J. Majda, Oscillations and concentrations 
in weak solutions of the incompressible fluid equations, Comm. Math, 
Phys. 108 (1987), 667-689.


\bibitem{DN}T. Drivas and H. Nguyen, Remarks on the emergence of weak Euler solutions in the vanishing viscosity limit, arXiv: 1808.01014[math.AP] 12 Oct 2018.

\bibitem{F}B. F\''{o}rnberg, 
A numerical study of steady viscous flow past a circular cylinder
J. Fluid. Mech., 98 (4) (1980)

\bibitem{Giaquinta} M. Giaquinta and E. Giusti, On the regularity of the 
minima of variational integrals, Acta Math. 148 (1982), 285-298.


\bibitem{G} M. A. Goldshtik, A paradoxical solution of the Navier-Stokes 
equations, Prikl. Mat. Mekh. 24 (1960) 610-621. Transl., J. Appl. Math. 
Mech. (USSR) 24  (1960) 913-929.


\bibitem{Guo-Nguyen}
Y. Guo and T. Nguyen, Prandtl boundary layer expansions of steady 
Navier-Stokes flows over a moving plate, Ann. PDE 3 (2017), no. 1, Art. 
10, 58 pp.

\bibitem{Iyer1} S.  Iyer,  Steady Prandtl boundary layer expansions over 
a rotating
disk, Arch. Ration. Mech. Anal. 224 (2017), 421-469.

\bibitem{Iyer2}  S.  Iyer, Global steady Prantle expansion over a moving 
boundary,
arXiv:1609.05397v1 [math.AP] 17 Sep 2016.

\bibitem{KP} G. Karch and D. Pilarczyk, Asymptotic stability of Landau 
solutions to Navier-Stokes system, Arch. Ration. Mech.Anal. 202 (2011),  
115-131.

\bibitem{KPS}G. Karch, D. Pilarczyk and M.E. Schonbek, $L^2$-asymptotic 
stability of singular solutions to the Navier-Stokes system of equations 
in $\mathbb{R}^3$, J. Math. Pures Appl.  108 (2017), 14-40.

\bibitem{KS} A. Korolev and V. \v{S}ver\'{a}k, On the large-distance 
asymptotics of steady state solutions of the Navier-Stokes equations in 
3D exterior domains, Ann. Inst. H. Poincar\'{e} Anal. Non Lin\'{e}aire  
28 (2011),  303-313.

\bibitem{Landau} L. Landau,  A new exact solution of Navier-Stokes 
Equations, Dokl. Akad. Nauk SSSR 43 (1944),  299-301.




\bibitem{LLY1} L.  Li,  Y.Y.  Li and X. Yan, Homogeneous solutions of 
stationary Navier-Stokes equations with isolated singularities on the 
unit sphere. I. One singularity, Arch. Ration. Mech. Anal. 227 (2018),  
1091-1163.




\bibitem{LLY2} L.  Li,  Y.Y.  Li and X. Yan,  Homogeneous solutions of 
stationary Navier-Stokes equations with isolated singularities on the 
unit sphere. II. Classification of axisymmetric no-swirl solutions, 
Journal of Differential Equations 264  (2018), 6082-6108.



\bibitem{Luo-Shvydkoy}
X.  Luo and  R.  Shvydkoy,  2D homogeneous solutions to the Euler 
equation, Comm. Partial Differential Equations 40 (2015), 1666-1687.

\bibitem{Maekawa} Y. Maekawa,
On the inviscid limit problem of the vorticity equations for viscous
incompressible flows in the half-plane,
Comm. Pure Appl. Math. 67 (2014), 1045-1128.


\bibitem{Masmoudi} N. Masmoudi,
Remarks about the inviscid limit of the Navier-Stokes system,
Comm. Math. Phys. 270 (2007), 777-788.



\bibitem{MT} H. Miura, and T.-P. Tsai, Point sigularities of 3D 
stationary Navier-Stokes flows, J. Math. Fluid Mech.  14 (2012), 33-41.



\bibitem{PP1}A. F. Pillow and R. Paull, Conically similar viscous flows. 
Part 1. Basic conservation principles and characterization of axial 
causes in swirl-free flow, Journal of Fluid Mechanics 155 (1985),  327-341.

\bibitem{PP2}A. F. Pillow and R. Paull,  Conically similar viscous 
flows. Part 2. One-parameter swirl-free flows, Journal of Fluid 
Mechanics 155 (1985),   343-358.

\bibitem{PP3}A. F. Pillow and R. Paull,  Conically similar viscous 
flows. Part 3. Characterization of axial causes in swirling flow and the 
one-parameter flow generated by a uniform half-line source of kinematic 
swirl angular momentum, Journal of Fluid Mechanics 155 (1985),  359-379.


\bibitem{SC1} M. Sammartino and R. Caflisch,
Zero viscosity limit for analytic solutions, of the Navier-Stokes 
equation on a half-space. I.
Existence for Euler and Prandtl equations,
Comm. Math. Phys. 192 (1998), 433-461.

\bibitem{SC2} M. Sammartino and R. Caflisch,
Zero viscosity limit for analytic solutions of the Navier-Stokes 
equation on a half-space.
II. Construction of the Navier-Stokes solution,
Comm. Math. Phys. 192 (1998), 463-491.

\bibitem{Serrin} J. Serrin, The swirling vortex, Philos. Trans. R. Soc. 
Lond. Ser. A, Math. Phys. Sci. 271 (1972),  325-360.




\bibitem{Shvydkoy}
R.  Shvydkoy,  Homogeneous solutions to the 3D Euler system,
Trans. Amer. Math. Soc. 370 (2018), 2517-2535.





\bibitem{SL} N. A. Slezkin, On an exact solution of the equations of 
viscous flow, Uch. zap. MGU, no. 2, 89-90, 1934.

\bibitem{SQ}H. B. Squire, The round laminar jet, Quart. J. Mech. Appl. 
Math. 4 (1951), 321-329.





\bibitem{Sverak}V. \v{S}ver\'{a}k,  On Landau's solutions of the 
Navier-Stokes equations,
Problems in mathematical analysis. No. 61. J. Math. Sci. 179 (2011), 
208-228.  arXiv:math/0604550.




\bibitem{Temam-Wang}
R. Temam and  X. Wang, On the behavior of the solutions of the 
Navier-Stokes equations at vanishing viscosity,
Ann. Scuola Norm. Sup. Pisa Cl. Sci. (4), 25 (1997)  , no. 3-4, 807-828 
(1998).








\bibitem{TianXin}

G. Tian and Z.P. Xin, One-point singular solutions to the Navier-Stokes 
equations, Topol. Methods Nonlinear Anal. 11 (1998),
135-145.

\bibitem{W}C. Y. Wang, Exact solutions of the steady state Navier-Stokes 
equation, Annu. Rev. Fluid Mech.  23 (1991), 159-177.

\bibitem{Wang} X. Wang, A Kato type theorem on zero viscosity limit of 
Navier-Stokes flows,
  Indiana Univ. Math. J.  50  (2001), Special Issue, 223-241.









\bibitem{Y}V. I. Yatseyev, On a class of exact solutions of the 
equations of motion of a viscous fluid, 1950.


\end{thebibliography}
\end{document}